\newcommand{\parenthezises}[1]{\arabic{#1}}
\newtheorem{thm}{Theorem}[section]
\newtheorem{lem}[thm]{Lemma}
\newtheorem{cor}[thm]{Corollary}
\newtheorem{rk}[thm]{Remark}
\newtheorem{pro}[thm]{Proposition}
\newcommand\spt{\mbox{spt}}
\newcommand\tang{\mbox{Tan}}
\newcommand\dist{\mbox{dist}}
\numberwithin{equation}{section}
\begin{document}

\title[Convergence of the Allen-Cahn equation]
{Convergence of the Allen-Cahn equation \\ with a zero Neumann boundary condition \\ on non-convex domains}

\author{Takashi Kagaya}
\address{Institute of Mathematics for Industry, Kyushu University, Fukuoka, 819-0395, Japan}
\email{kagaya@imi.kyushu-u.ac.jp}

%\medskip

\thanks{The author is partially supported by JSPS Research Fellow Grant number 16J00547.}

\thanks{{\em 2000 Mathematics Subject Classification:}
28A75, 35K20, 53C44
}

\begin{abstract}
We study a singular limit problem of the Allen-Cahn equation with a homogeneous Neumann boundary condition on non-convex domains with smooth boundaries under suitable assumptions for initial data. 
The main result is the convergence of the time parametrized family of the diffused surface energy to Brakke's mean curvature flow with a generalized right angle condition on the boundary of the domain. 
\end{abstract}

\maketitle %\setlength{\baselineskip}{18pt}

\section{Introduction}\label{sec:int}

The Allen-Cahn equation was introduced to model the motion of phase boundaries by surface tension \cite{AC}. 
In this paper, we consider the Allen-Cahn equation with a homogeneous Neumann boundary condition 
\begin{align}
\partial_t u_\varepsilon =&\; \Delta u_\varepsilon - \dfrac{W^\prime(u_\varepsilon)}{\varepsilon^2} \quad \mbox{in} \quad  \Omega \times (0,\infty), \label{ac}\\
\langle \nabla u_\varepsilon, \nu \rangle =&\; 0 \quad \mbox{on} \quad \partial \Omega \times (0,\infty), \label{neumann}\\
u_\varepsilon(x,0) =&\; u_{\varepsilon, 0}(x) \quad \mbox{for} \quad x \in \Omega, \label{initial}
\end{align}
where $\Omega \subset \mathbb{R}^n$ is a bounded domain with smooth boundary, $\varepsilon$ is a small positive parameter, $\nu$ is the outer unit normal to $\partial \Omega$, $W$ is a bi-stable potential with two wells of equal depth at $\pm 1$ and $u_\varepsilon$ is a real-valued function indicating the phase state at each point. 
This equation is the $L^2$ gradient flow of 
\begin{equation}\label{def-energy} 
E_\varepsilon[u] := \int_\Omega \dfrac{\varepsilon |\nabla u|^2}{2} + \dfrac{W(u)}{\varepsilon} \; dx 
\end{equation}
sped up by the factor $1/\varepsilon$. 
Heuristically, for a given family of functions $\{u_\varepsilon\}_{0 < \varepsilon < 1}$ with $\sup_{\varepsilon} E_\varepsilon [u_\varepsilon] < \infty$, $u_\varepsilon$ is close to a characteristic function, with a transition layer of width approximately $\varepsilon$ and slope approximately $C/\varepsilon$. 
Thus $\Omega$ is mostly divided into two regions $\{u_\varepsilon \approx 1\}$ and $\{u_\varepsilon \approx -1\}$ for sufficiently small $\varepsilon$. 
With this heuristic picture, one may expect that the following diffused interface energy 
\begin{equation}\label{def-mut} 
\mu_{\varepsilon}^t := \left(\dfrac{\varepsilon |\nabla u_\varepsilon(\cdot, t)|^2}{2} + \dfrac{W(u_{\varepsilon}(\cdot,t))}{\varepsilon} \right) \mathcal{L}^n \lfloor_\Omega 
\end{equation}
behaves more or less like surface measures of moving phase boundaries. 
Furthermore, one may also expect that the motion of the ``transition layer'' is a mean curvature flow with the right angle condition on $\partial \Omega$ because a formal $L^2$ gradient flow of the surface area is its mean curvature flow. 
A rigorous proof was given by Mizuno and Tonegawa \cite{MT} in the most general setting, which requires extensive use of tools from the geometric measure theory. 
Those authors proved that the family of limit measures of $\mu^t_\varepsilon$ is Brakke's mean curvature flow with a generalized right angle condition on $\partial \Omega$ (see \cite{B} for the details of Brakke's mean curvature flow). 
However, they assumed that the domain is convex. 
Accordingly, we consider the singular limit of \eqref{ac}--\eqref{initial} without the assumption of convexity. 

The singular limit problem of the Allen-Cahn equation without a boundary has been studied by many researchers with different settings and assumptions. 
Here, we focus on some results related to the Brakke flows. 
Ilmanen \cite{I} proved that the family of the diffused surface energy converges to a Brakke flow, and this strategy was extended by \cite{LST, TT} for the singular limit problem of an Allen-Cahn type equation with a transport term. 
One of the keys to analyzing this singular limit problem is to examine the vanishing of the discrepancy measure 
\[
d\xi_{\varepsilon} := \left(\dfrac{\varepsilon|\nabla u_{\varepsilon}(x,t)|^2}{2} - \dfrac{W(u_{\varepsilon}(x,t))}{\varepsilon} \right) d\mathcal{L}^n\lfloor_\Omega(x)dt. 
\]

Mizuno and Tonegawa \cite{MT} use the convexity of the domain essentially in this step, in particular, to prove the uniform boundedness of the discrepancy $\varepsilon |\nabla u_{\varepsilon}|^2/2 - W(u_\varepsilon)/\varepsilon$ from above. 
In the present paper, we give a modified estimate of the upper bound of the discrepancy in the case that the domain is not necessarily convex, and show the vanishing of the discrepancy measure along the line of \cite{LST, TT} to prove that the limit of diffused surface measures is Brakke's mean curvature flow with a generalized right angle condition. 

For the singular limit problem of \eqref{ac}--\eqref{initial} from a different perspective, we refer to \cite{BF, BS, KKR}. 
Those authors basically proved the connection of the singular limit of \eqref{ac}--\eqref{initial} to the unique viscosity solutions of a level set formulation of the mean curvature flow with right angle boundary conditions studied in \cite{GS, Sa}. 
In \cite{KKR}, in order to analyze the asymptotic behavior of the solution of \eqref{ac}--\eqref{initial} as $\varepsilon \to 0$, they apply the comparison principle. 
However, the convexity of the domain is essential for constructing super- and sub-solutions even in their proofs. 
On the other hand, Barles and Da Lio \cite{BF} and Barles and Suganidis \cite{BS} analyzed the connection without the convexity assumption on the domain by introducing a new definition of the generalized propagation of fronts in $\mathbb{R}^n$. 
We also note that in \cite{BF, BS, KKR} we do not know whether or not the particular individual level set obtained as a singular limit of \eqref{ac}--\eqref{initial} satisfies the mean curvature flow equation or the boundary conditions in the sense of measure. 

We refer to more results related to ours. 
Tonegawa \cite{T2} extended Ilmanen's work \cite{I} in bounded domains and proved that the limit measures have integer density a.e.\ modulo division by a constant. 
This result can be applied to our problem, and thus the limit measures of \eqref{def-mut} satisfy the integrality in the interior of the domain, whereas we do not know the integrality of the limit measures on the boundary of the domain. 
If the densities are equal to $1$ a.e.\ in the domain, the interior regularity follows from \cite{B, KaT, T3}. 
For the Brakke flow with a generalized right angle condition, Edelen \cite{E} proved the existence theory (by using a different construction from ours), the compactness theory, the regularity theory associated to tangent flows and so on. 
In order to consider the contact angle of the ``transition layer'' on the boundary of the domain, we mention contact angle conditions in the sense of measure. 
A right angle condition for rectifiable varifolds was studied by Gr\"uter and Jost \cite{GJ}, and general angle conditions for general varifolds were considered by the author and Tonegawa \cite{KT}. 
The contact angle of varifolds can be discussed by using outer unit normal vector of the boundary of the domain and the generalized co-normal vector of the varifolds as in \cite{KT}, and we will consider this kind of discussion after the main results in this paper. 
For a better understanding of the ``phase separation'', we refer to \cite{KT2, M, T} in singular limit problems for critical points of \eqref{def-energy} under the constraint of the total mass of $u$. 
%The singular limit of critical points of \eqref{def-energy} under the constraint of the total mass of $u$ was studied in \cite{KT2, M, T}. 
%According to \cite{T}, the discrepancy measure vanish under suitable assumptions for initial datum even in this problem if the domain is convex. 
%Modica \cite{M} studied $\Gamma$-convergence of more general energy of \eqref{def-energy}, and \cite{KT2} extend the convergence theory for sequence of critical points. 
%From these result, 

The paper is organized as follows. 
Section \ref{sec:not} lists basic notations and recalls some notions related to varifolds. 
Section \ref{sec:as} lists assumptions and the main theorems of the present paper. 
In section \ref{sec:mono}, we fix some notations related to the reflection argument and recall the boundary monotonicity formula proved in \cite{MT}. 
Section \ref{sec:bdd-dis} shows that the growth rate of the discrepancy with respect to $\varepsilon$ is bounded by a negative power of $\varepsilon$. 
We estimate the density ratio of the diffused surface measure in section \ref{sec:den-rat} and prove the vanishing of the discrepancy energy in section \ref{sec:vani-dis}. 
Finally, we prove the main theorems in section \ref{sec:main}. 

%%%%%%%%%%%%%%%%%%%%%%%%%%%%%%%%%%%%%%%%%%%%%%%%%%%%%%%%%
%\medskip

\section{Notations and basic definitions}\label{sec:not}

\subsection{Basic notations}

In this paper, $n$ refers to positive integers. 
For $0<r<\infty$ and $a \in \mathbb{R}^n$ let 
\[B_r(a) := \{ x \in \mathbb{R}^n : |x-a| < r\}. \]  
We denote by $\mathcal{L}^k$ the Lebesgue measure on $\mathbb{R}^k$ and by $\mathcal{H}^k$ the 
$k$-dimensional Hausdorff measure on $\mathbb{R}^n$ for positive integers $k$. 
The restriction of $\mathcal{H}^k$ to a set $A$ is denoted by $\mathcal{H}^k \lfloor_A$.  
We let 
\[ \omega_k := \mathcal{L}^k(\{x \in \mathbb{R}^k : |x| < 1\}). \]
For $x,y \in \mathbb{R}^n$ and $s>t$, we define the backward heat kernels 
\begin{equation}\label{bhk} 
\rho_{(y,s)}(x,t) := \dfrac{1}{(4\pi(s-t))^{\frac{n-1}{2}}} e^{-\frac{|x-y|^2}{4(s-t)}}. 
\end{equation}
For any Radon measure $\mu$ on $\mathbb{R}^n$, $\phi \in C_c(\mathbb{R}^n)$ and $\mu$ measurable set $A$, we often write 
\[ \mu(\phi) := \int_{\mathbb{R}^n} \phi \; d\mu, \quad \mu(A) := \int_{A} d\mu. \] 
Let the support of $\mu$ be 
\[ \spt \mu := \{x \in \mathbb{R}^n : \mu(B_r(x)) > 0 \; \mbox{for} \; r>0 \}. \]

\subsection{Homogeneous maps and varifolds}

Let $\mathbf{G}(n,n-1)$ be the space of $(n-1)$-dimensional subspace of $\mathbb{R}^n$. 
For $S \in \mathbf{G}(n,n-1)$, we identify $S$ with the corresponding orthogonal projection of $\mathbb{R}^n$ onto $S$. 
For two elements $A$ and $B$ of $\mbox{Hom}(\mathbb{R}^n, \mathbb{R}^n)$, we define a scalar product as 
\[ A \cdot B := \sum_{i,j} A_{ij}B_{ij}. \]
The identity of $\mbox{Hom}(\mathbb{R}^n, \mathbb{R}^n)$ is denoted by $I$. 

We recall some notions related to varifold and refer to \cite{A1,S} for more details. 
Let $X \subset \mathbb{R}^n$ be open in the following and $G_{n-1}(X) := X \times \mathbf{G}(n, n-1)$. 
A general $(n-1)$-varifold in $X$ is a Radon measure on $G_{n-1}(X)$ and $\mathbf{V}_{n-1}(X)$ denotes the set of all general $(n-1)$-varifold in $X$. 
For $V \in \mathbf{V}_{n-1}(X)$, let $\|V\|$ be the weight measure of V, namely, 
\[ \|V\|(\phi) := \int_{G_{n-1}(X)} \phi(x) \; dV(x,S) \quad \mbox{for} \; \phi \in C_c(X). \]
We say that $V \in \mathbf{V}_{n-1}(X)$ is rectifiable if there exists an $\mathcal{H}^{n-1}$ measurable countably $(n-1)$-rectifiable set $M \subset X$ 
and a locally $\mathcal{H}^{n-1}$ integrable function $\theta$ defined on $M$ such that 
\begin{equation}\label{recti} 
V(\phi) = \int_M \phi(x,\tang_xM) \theta(x)\; d\mathcal{H}^{n-1}(x) \quad \mbox{for} \; \phi \in C_c(G_{n-1}(X)), 
\end{equation}
where $\mbox{Tan}_xM \in \mathbf{G}(n, n-1)$ is the approximate tangent space that exists $\mathcal{H}^{n-1}$-a.e.\ on $M$. 
Additionally, if $\theta \in \mathbb{N}$ $\mathcal {H}^{n-1}$-a.e.\ on $M$, we say that $V$ is integral. 
A rectifiable $(n-1)$-varifold is uniquely determined by its weight measure through the formula \eqref{recti}. 
For this reason, we naturally say a Radon measure $\mu$ on $X$ is rectifiable (or integral) if there exists a rectifiable (or integral) varifold such that the weight measure is equal to $\mu$. 
The set of all rectifiable and integral $(n-1)$-varifolds in $X$ is denoted by $\mathbf{RV}_{n-1}(X)$ and $\mathbf{IV}_{n-1}(X)$, respectively. 

For $V \in \mathbf{V}_{n-1}(X)$, let $\delta V$ be the first variation of $V$, namely, 
\[ \delta V(g) := \int_{G_n(X)} \nabla g(x) \cdot S \; dV(x,S) \quad \mbox{for} \; g \in C^1_c(X;\mathbb{R}^n). \]
Let $\|\delta V\|$ be the total variation when it exists, and if $\|\delta V\|$ is locally bounded, 
we may apply the Riesz representation theorem and the Lebesgue decomposition theorem (see \cite[Theorem 1.38, Theorem 1.31]{EG}) to $\delta V$ with respect to $\|V\|$. 
Then, we obtain a $\|V\|$ measurable function $h: X \to \mathbb{R}^n$, a Borel set $Z \subset X$ such that $\|V\|(Z)=0$ and a $\|\delta V\|\lfloor_{Z}$ measurable function $\nu_{\rm sing}: Z \to \mathbb{R}^n$ with $|\nu_{\rm sing}| = 1$ $\|\delta V\|$-a.e.\ on $Z$ such that 
\begin{equation}\label{def-mean} 
\delta V(g) = - \int_X \langle h, g\rangle \; d\|V\| + \int_Z \langle \nu_{\rm sing}, g \rangle \; d\|\delta V\| \quad \mbox{for} \; g \in C^1_c(X ; \mathbb{R}^n). 
\end{equation}
The vector field $h$ is called the generalized mean curvature vector of $V$, the vector field $\nu_{\rm sing}$ is called the (outer-pointing) generalized co-normal of $V$ and the Borel set $Z$ is called the generalized boundary of $V$ (see also \cite{Ma} for more details about varifolds with boundary).  

%%%%%%%%%%%%%%%%%%%%%%%%%%%%%%%%%%%%%%%%%%%%%%%%%%%%%%%%%%%%%%%%%%%%%%%%%%%%%%%%%%%%%%%%%%%%

\section{Assumptions and main result}\label{sec:as}

\subsection{Assumptions and a previous result}

In the following, we assume that $\Omega \subset \mathbb{R}^n$ is a bounded domain with smooth boundary $\partial \Omega$. 
Suppose $W \in C^3(\mathbb{R})$ satisfies the following: 
\begin{itemize}
\item[(W1)] $W(\pm 1) = 0$ and $W(s) > 0$ for all $s \neq \pm 1$, 
\item[(W2)] for some $-1 < \gamma < 1$, $W^\prime < 0$ on $(\gamma, 1)$ and $W^\prime > 0$ on $(-1, \gamma)$, 
\item[(W3)] for some $0 < \alpha < 1$ and $\beta > 0$, $W^{\prime \prime}(s) \ge \beta$ for all $\alpha \le |s| \le 1$. 
%\item[(W4)] for some $\lambda > 0$, $(W^\prime (s))^2 \le \lambda W(s)$ for all $s \in [-1, 1]$. 
\end{itemize}
A typical example of such $W$ is $(1-s^2)^2/4$, for which we may set $\alpha = \sqrt{2/3}$, $\beta = 1$ and $\gamma = 0$. 

For a given sequence of positive numbers $\{\varepsilon_i\}_{i=1}^\infty$ with $\lim_{i \to \infty} \varepsilon_i = 0$, suppose $u_{\varepsilon_i, 0} \in C^1(\overline{\Omega})$ satisfies 
\begin{align} 
&\| u_{\varepsilon_i, 0}\|_{L^\infty(\Omega)} \le 1, \label{as-u1} \\
&\sup_i \sup_{x \in \Omega, \; 0<r} \omega_{n-1}^{-1}r^{1-n}\int_{B_r(x) \cap \Omega} \dfrac{\varepsilon_i |\nabla u_{\varepsilon_i,0}(y)|^2}{2} + \dfrac{W(u_{\varepsilon_i,0}(y))}{\varepsilon_i} \; dx \le D_0, \label{as-u3} \\
& \sup_i \max_{x \in \overline{\Omega}} \varepsilon_i |\nabla u_{\varepsilon_i,0}| \le \Cl[c]{c-as-gra}, \label{as-u4} \\
& \max_{x \in \overline{\Omega}} \dfrac{\varepsilon_i |\nabla u_{\varepsilon_i,0}(y)|^2}{2} - \dfrac{W(u_{\varepsilon_i,0}(y))}{\varepsilon_i} \le \Cl[c]{c-as-dis} \varepsilon_i^{-\lambda} \quad \mbox{for} \quad i \in \mathbb{N}, \label{as-u5} \\
& \langle \nabla u_{\varepsilon_i, 0}(x), \nu \rangle = 0 \quad \mbox{for} \quad x \in \partial \Omega, i \in \mathbb{N}, \label{as-u6}
\end{align}
where $D_0, \Cr{c-as-gra}, \Cr{c-as-dis}$ and $\lambda \in [3/5,1)$ are some universal constants. 
We note that the boundedness of the domain $\Omega$ and the assumption \eqref{as-u3} imply 
\begin{equation}\label{as-u2}
\sup_i E_{\varepsilon_i}[u_{\varepsilon_i, 0}] \le \Cl[c]{c-ene0} 
\end{equation}
for some constant $\Cr{c-ene0}$ depending only on $n, D_0$ and the diameter of $\Omega$. 
The conditions \eqref{as-u1} and \eqref{as-u2} are assumed in \cite{MT}. 
\eqref{as-u1} may be dropped if we assume a suitable growth rate upper bound on $W$ as Mizuno and Tonegawa commented in \cite{MT}. 
We need the additional assumptions \eqref{as-u3}--\eqref{as-u6} to apply the argument for the vanishing of the discrepancy measure in \cite{KT, LST}. 

\begin{rk}
We note that for a surface $\Gamma$ with $90$ degree contact angles on $\partial \Omega$ it is possible to construct diffuse approximations that satisfy the assumptions \eqref{as-u1}--\eqref{as-u6} as the following. 
Our construction is standard as in \cite{I, M}. 
Let $\Omega_d$ be 
\[ \Omega_d := \{ (y_1, y^\prime) \in \mathbb{R}^n : y_1 \in \mathbb{R}, |y^\prime| < d \} \] 
for $d>0$ and define $\tilde{\Gamma} := \overline{\Omega}_d \cap \{y_1=0\}$. 
By the standard existence theory for ordinary differential equations, we may choose the unique function $q \in C^4(\mathbb{R})$ such that 
\[ q(0) = 0, \quad \lim_{s \to \pm \infty} q(s) = \pm1, \quad q^\prime(s) = \sqrt{2W(q(s))} \quad \mbox {in} \; \mathbb{R}. \]
Then it is easy to see that the $C^4$ function $v_{\varepsilon_i}(y) := q(y_1/\varepsilon_i)$ defined on $\overline{\Omega}_d$ satisfies 
\begin{equation}\label{def-v-e}
\begin{aligned} 
&\int_{B_r(y_0) \cap \Omega_d} \dfrac{\varepsilon_i |\nabla v_{\varepsilon_i}|^2}{2} + \dfrac{W(v_{\varepsilon_i})}{\varepsilon_i} \; dy \le \sigma \omega_{n-1} r^{n-1} \quad \mbox{for} \; r>0, \; y_0 \in \mathbb{R}^n, \\
&\varepsilon_i |\nabla v_{\varepsilon_i}(y)| \le \max_{|s| \le 1} \sqrt{2W(s)}, \; \dfrac{\varepsilon_i |\nabla v_{\varepsilon_i}(y)|^2}{2} = \dfrac{W(v_{\varepsilon_i}(y))}{\varepsilon_i} \quad \mbox{for} \; y \in \overline{\Omega}_d, \\
&\langle \nabla v_{\varepsilon_i}, \nu_d \rangle = 0 \quad \mbox{on} \; \partial \Omega_d, 
\end{aligned} 
\end{equation}
where $\sigma := \int_{-1}^1 \sqrt{2W(s)} \; dx$ and $\nu_d$ is the out ward unit normal to $\partial \Omega_d$. 
Now we assume that $\tilde{U}$ is a neighborhood of $\tilde{\Gamma}$ and that $\phi$ is a bijective $C^1$ map from $\tilde{U}$ onto $U := \phi(\tilde{U})$ such that 
\[ \phi(\Omega_d \cap \tilde{U}) = \Omega \cap U, \quad \phi(\partial \Omega_d \cap \tilde{U}) = \partial \Omega \cap U, \quad \sup_{x \in U}\|\nabla \phi^{-1}(x)\|\le 1, \quad \sup_{y \in \tilde{U}}\| \nabla \phi(y)\| \le C \]
for a suitable $d>0$ and a constant $C>0$, where $\| \cdot \|$ is the operator norm. 
By using this mapping, \eqref{def-v-e} implies that $u_{\varepsilon_i,0}(x) := v_{\varepsilon_i}\circ \phi^{-1}(x)$ satisfies the assumptions \eqref{as-u1}--\eqref{as-u6} with a positive constant $D_0$ depending only on $\sigma, n$ and $C$, $\Cr{c-as-gra} = 1$ and $\Cr{c-as-dis} = 0$ on the set $\overline{\Omega} \cap U$. 
By expanding $u_{\varepsilon_i,0}$ as a mostly constant function to satisfy the assumptions outside of $U$, we may see the possibility of the initial assumptions in the present paper. 
In this construction, the diffused interface energy for $u_{\varepsilon_i,0}$ should behave like the surface measure of the surface $\Gamma := \phi(\tilde{\Gamma})$ and $\Gamma$ intersects $\partial \Omega$ with 90 degrees. 
\end{rk}

By the standard parabolic existence and regularity theory, for each $i$, there exists a unique solution $u_{\varepsilon_i}$ with 
\begin{equation}\label{ref-sol} 
u_{\varepsilon_i} \in C([0,\infty);C^1(\overline{\Omega})) \cap C^\infty(\overline{\Omega} \times (0, \infty)). 
\end{equation}
By the maximum principle and \eqref{as-u1}, 
\begin{equation}\label{u-bdd} 
\sup_{x \in \overline{\Omega}, t>0} |u_{\varepsilon_i}| \le 1, 
\end{equation}
and due to the gradient structure and \eqref{as-u2}, 
\begin{equation}\label{energy-bdd} 
E_{\varepsilon_i}[u_{\varepsilon_i}(\cdot, T)] + \int_0^T \int_\Omega \varepsilon_i (\partial_t u_{\varepsilon_i})^2 \; dx dt = E_{\varepsilon_i}[u_{\varepsilon_i, 0}] \le \Cr{c-ene0} 
\end{equation}
for any $T>0$. 

The convergence of the diffused interface energy measures is proved in \cite{MT}. 
The proof is based on the gradient structure and dose not require the convexity of $\Omega$. 

\begin{pro}[{\cite[Proposition 5.2]{MT}}] \label{pro-con-den}
Under the assumptions {\rm (W1)-(W3)}, \eqref{as-u1} and \eqref{as-u2}, let $u_{\varepsilon_i}$ be the solution of \eqref{ac}. 
Define $\mu_{\varepsilon_i}^t$ as in \eqref{def-mut}. 
Then there exists a family of Radon measures $\{\mu^t\}_{t \ge 0}$ on $\mathbb{R}^n$ and a subsequence (denoted by the same index) such that $\mu_{\varepsilon_i}^t$ converges to $\mu^t$ as $i \to \infty$ for all $t \ge 0$ on $\mathbb{R}^n$. 
\end{pro}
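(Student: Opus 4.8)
The plan is to follow the gradient‑flow argument of \cite{I, MT}: combine a uniform mass bound with an almost‑monotonicity identity for the test‑function evaluations $t\mapsto\mu_{\varepsilon_i}^t(\phi)$ and then apply Helly's selection theorem together with a diagonal extraction. The uniform bound is immediate from \eqref{energy-bdd}: for all $i$ and all $t\ge 0$ we have $\mu_{\varepsilon_i}^t(\mathbb{R}^n)=E_{\varepsilon_i}[u_{\varepsilon_i}(\cdot,t)]\le\Cr{c-ene0}$, and every $\mu_{\varepsilon_i}^t$ is carried by the fixed compact set $\overline\Omega$, so the family is bounded in mass uniformly in $i$ and $t$.

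Next, fix $\phi\in C_c^2(\mathbb{R}^n)$ with $\phi\ge 0$. Differentiating $\mu_{\varepsilon_i}^t(\phi)$ in $t$ (legitimate for $t>0$ by \eqref{ref-sol}), inserting \eqref{ac}, integrating by parts in the gradient term and discarding the boundary integral $\int_{\partial\Omega}\varepsilon_i\phi\,\partial_t u_{\varepsilon_i}\,\langle\nabla u_{\varepsilon_i},\nu\rangle\,d\mathcal{H}^{n-1}=0$ by the Neumann condition \eqref{neumann} — this is the only place the boundary condition is used, and no convexity of $\Omega$ enters — one obtains
\begin{equation*}
\frac{d}{dt}\mu_{\varepsilon_i}^t(\phi) = -\int_\Omega \varepsilon_i\,\phi\,(\partial_t u_{\varepsilon_i})^2\,dx - \int_\Omega \varepsilon_i\,(\partial_t u_{\varepsilon_i})\,\nabla\phi\cdot\nabla u_{\varepsilon_i}\,dx.
\end{equation*}
Estimating the last term by Young's inequality and the elementary bound $|\nabla\phi|^2\le 2\|D^2\phi\|_{L^\infty}\,\phi$ valid for nonnegative $\phi\in C_c^2$, and then using $\int_\Omega\varepsilon_i|\nabla u_{\varepsilon_i}|^2\,dx\le 2E_{\varepsilon_i}[u_{\varepsilon_i}(\cdot,t)]\le 2\Cr{c-ene0}$, we get
\begin{equation*}
\frac{d}{dt}\mu_{\varepsilon_i}^t(\phi) \le -\frac12\int_\Omega\varepsilon_i\,\phi\,(\partial_t u_{\varepsilon_i})^2\,dx + \|D^2\phi\|_{L^\infty}\int_\Omega\varepsilon_i|\nabla u_{\varepsilon_i}|^2\,dx \le 2\|D^2\phi\|_{L^\infty}\,\Cr{c-ene0} =: C(\phi).
\end{equation*}
Hence $t\mapsto\mu_{\varepsilon_i}^t(\phi)-C(\phi)\,t$ is nonincreasing on $[0,\infty)$ — continuous there by \eqref{ref-sol} and with nonpositive derivative on $(0,\infty)$ — with a bound independent of $i$.

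Now pick a countable family $\{\phi_k\}_{k\ge1}\subset C_c^2(\mathbb{R}^n)$ of nonnegative functions dense, in the uniform norm, among the nonnegative elements of $C_c(U)$ for a fixed bounded open $U\supset\overline\Omega$ (such a family exists by separability and mollification). For each $k$ the functions $t\mapsto\mu_{\varepsilon_i}^t(\phi_k)-C(\phi_k)\,t$ are nonincreasing and, by the mass bound, uniformly bounded on each $[0,T]$; Helly's selection theorem (applied on $[0,T]$, $T=1,2,\dots$) together with a diagonal extraction over $k$ and $T$ yields a subsequence, not relabeled, along which $\mu_{\varepsilon_i}^t(\phi_k)$ converges for every $t\ge0$ and every $k$. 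Since all the measures are carried by $\overline\Omega\subset U$ and satisfy $\mu_{\varepsilon_i}^t(U)\le\Cr{c-ene0}$, the density of $\{\phi_k\}$ and a routine approximation upgrade this to the existence of $\lim_i\mu_{\varepsilon_i}^t(\phi)$ for every $\phi\in C_c(\mathbb{R}^n)$ and every $t\ge0$; for each $t$ this limit is a nonnegative, mass‑bounded linear functional on $C_c(\mathbb{R}^n)$, hence represented, by the Riesz representation theorem, by a Radon measure $\mu^t$ supported in $\overline\Omega$. The resulting family $\{\mu^t\}_{t\ge0}$ satisfies $\mu_{\varepsilon_i}^t\to\mu^t$ for all $t\ge0$, as claimed.

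The computation in the second step is elementary, and the only structural inputs are the energy–dissipation identity \eqref{energy-bdd} and the vanishing of the boundary term via \eqref{neumann}; thus the one point meriting care is the passage from convergence at countably many times to convergence at \emph{every} $t\ge0$, which Helly's theorem delivers precisely because each evaluation becomes monotone after the explicit affine correction $C(\phi)t$. One could alternatively extract first along rational times and then patch at the at most countably many jump points of the limiting monotone functions, using the sandwich $\mu_{\varepsilon_i}^{s_1}(\phi)-C(\phi)s_1\ge\mu_{\varepsilon_i}^{t}(\phi)-C(\phi)t\ge\mu_{\varepsilon_i}^{s_2}(\phi)-C(\phi)s_2$ for rationals $s_1<t<s_2$.
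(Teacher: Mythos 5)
Your proposal is correct and is essentially the argument of \cite[Proposition 5.2]{MT} (itself adapting Ilmanen's semi-decreasing-measure device): the identity for $\frac{d}{dt}\mu^t_{\varepsilon_i}(\phi)$ after integrating by parts and killing the boundary term via \eqref{neumann} --- the same computation this paper reuses in the proof of Lemma \ref{lem-chara-spt} --- combined with the elementary bound $|\nabla\phi|^2/(2\phi)\le\|\phi\|_{C^2}$ and \eqref{energy-bdd} to make $t\mapsto\mu^t_{\varepsilon_i}(\phi)-C(\phi)t$ nonincreasing, followed by Helly's selection theorem and a diagonal extraction over a countable family of nonnegative test functions and then Riesz representation. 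This uses only the gradient structure and the Neumann boundary condition, with no convexity of $\Omega$, exactly as the paper remarks when citing the result.
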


By the definition \eqref{def-mut} and Proposition \ref{pro-con-den}, we see ${\rm spt}\mu^t \subset \overline{\Omega}$ for all time $t \ge 0$. 

\subsection{Main results}

In this paper, our goal is to extend the convergence theory in \cite{MT} to remove the convexity assumption on $\Omega$ as the following. 

\begin{thm}\label{main1}
Under the assumptions {\rm (W1)-(W3)} and \eqref{as-u1}-\eqref{as-u6}, let $u_{\varepsilon_i}$ be the solution to \eqref{ac}. 
Define $\mu_{\varepsilon_i}^t$ as in \eqref{def-mut}. 
Let $\varepsilon_i$ be the subsequence such that Proposition \ref{pro-con-den} holds and $\mu^t$ be the limit of $\mu^t_{\varepsilon_i}$ for all $t \ge 0$. 
Then, $\mu^t$ is rectifiable on $\mathbb{R}^n$ for a.e. $t \ge 0$. 
\end{thm}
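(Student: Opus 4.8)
The plan is to establish rectifiability of $\mu^t$ for a.e.\ $t$ via the standard strategy, going back to Ilmanen and Tonegawa, of combining (i) the vanishing of the discrepancy measure, (ii) a lower density bound for $\mu^t$ on its support, and (iii) Allard's rectifiability theorem applied to the varifolds naturally associated to $u_{\varepsilon_i}$. The reflection construction of Section \ref{sec:mono} lets one work with the ``doubled'' domain so that the boundary $\partial\Omega$ does not cause extra trouble: one reflects $u_{\varepsilon_i}$ across $\partial\Omega$ (using the Neumann condition \eqref{neumann}, together with the smoothness of $\partial\Omega$) to obtain functions defined in a full neighborhood of $\overline{\Omega}$ in $\mathbb{R}^n$, and the associated measures converge to a reflected version of $\mu^t$. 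Rectifiability of the reflected measure is equivalent to rectifiability of $\mu^t$, so it suffices to argue for the reflected objects, which behave like the boundary-free case treated in \cite{I, T2}.

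First I would record the main analytic inputs proved in the earlier sections. From Section \ref{sec:bdd-dis} we have the quantitative upper bound on the discrepancy $\varepsilon_i|\nabla u_{\varepsilon_i}|^2/2 - W(u_{\varepsilon_i})/\varepsilon_i \le \Cr{c-as-dis}\varepsilon_i^{-\lambda}$ (propagated from \eqref{as-u5}), and from Section \ref{sec:vani-dis} we have that the discrepancy measure $\xi_{\varepsilon_i}$ converges to $0$ as a space-time measure; moreover, by the monotonicity formula of Section \ref{sec:mono}, $\xi^t_{\varepsilon_i} := (\varepsilon_i|\nabla u_{\varepsilon_i}|^2/2 - W(u_{\varepsilon_i})/\varepsilon_i)\mathcal{L}^n\lfloor_\Omega \to 0$ for a.e.\ $t$. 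For such a fixed good time $t$, define the varifold $V^t_{\varepsilon_i}$ by assigning to a.e.\ point $x$ (where $\nabla u_{\varepsilon_i}\ne 0$) the hyperplane $\{\nabla u_{\varepsilon_i}(x)\}^\perp$ with weight $\varepsilon_i|\nabla u_{\varepsilon_i}(x)|^2$; its weight measure differs from $\mu^t_{\varepsilon_i}$ by exactly the discrepancy, hence $\|V^t_{\varepsilon_i}\| \to \mu^t$. The first variation of $V^t_{\varepsilon_i}$ can be computed by integration by parts from the equation \eqref{ac}, yielding $\delta V^t_{\varepsilon_i}(g) = -\int \varepsilon_i(\Delta u_{\varepsilon_i})\langle\nabla u_{\varepsilon_i},g\rangle\, dx + (\text{error controlled by the discrepancy})$, and the energy dissipation bound \eqref{energy-bdd} integrated over a good time slice (after extracting a further subsequence of times) gives $\|\delta V^t_{\varepsilon_i}\|(\Omega) \le C$ uniformly. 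Therefore $V^t_{\varepsilon_i}$ has uniformly locally bounded first variation along a subsequence, and a further diagonal extraction produces a limit varifold $V^t$ with $\|V^t\| = \mu^t$ and $\|\delta V^t\|$ locally bounded.

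Next I would supply the lower density bound. The boundary monotonicity formula from \cite{MT} recalled in Section \ref{sec:mono}, applied to the reflected setup, shows that for $x\in\spt\mu^t$ the density ratio $\omega_{n-1}^{-1}r^{1-n}\mu^t(B_r(x))$ is bounded below by a positive constant depending only on $n$ and the structure of $W$ (essentially because a transition layer carries at least energy $\sigma$ per unit area, and the reflection only doubles or preserves this). Combined with the upper density bound from \eqref{as-u3}/\eqref{as-u2} propagated forward, we get $0 < c_0 \le \Theta^{n-1}(\mu^t, x) \le C_0 < \infty$ for $\mu^t$-a.e.\ $x$. With $\|V^t\|$ having locally bounded first variation and density uniformly bounded above and below, Allard's rectifiability theorem (see \cite{A1, S}) applies and gives that $V^t$, hence $\mu^t$, is rectifiable. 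Undoing the reflection, $\mu^t$ is rectifiable on $\mathbb{R}^n$, and since the set of ``good'' times for which the discrepancy vanishes and the first-variation bound holds has full measure, this holds for a.e.\ $t\ge 0$.

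The main obstacle is genuinely in step (i)–(ii) on the non-convex domain: making sure that the vanishing of the discrepancy and the monotonicity-based density bounds survive without convexity. Convexity was used in \cite{MT} precisely to get a clean one-sided bound on the discrepancy and a clean boundary monotonicity; here one must instead use the modified discrepancy estimate of Section \ref{sec:bdd-dis} (the negative-power-of-$\varepsilon$ growth, which is weak enough to hold on non-convex domains but still strong enough) together with the Huisken-type monotonicity with error terms absorbing the curvature of $\partial\Omega$ after reflection. Granting those two ingredients, which are the content of the preceding sections, the rectifiability argument above is the standard Allard machinery and presents no essential difficulty; one just has to be careful that the exceptional set of bad times (where the first-variation bound fails due to the $L^2$-in-time control of $\partial_t u_{\varepsilon_i}$) has measure zero, which follows from \eqref{energy-bdd} and Fatou's lemma.
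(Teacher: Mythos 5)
Your overall architecture --- vanishing discrepancy, bounded first variation of the approximating varifolds, and Allard's rectifiability theorem --- is exactly the architecture of the paper's proof. However, there are two genuine missteps in the execution that would need to be repaired. First, you propose to \emph{reflect the solution $u_{\varepsilon_i}$} across $\partial\Omega$ so as to work on a full neighborhood of $\overline{\Omega}$. For a curved boundary this does not behave the way you hope: the Neumann condition makes the reflected function only $C^{1,1}$ across $\partial\Omega$, and the reflected function does not satisfy the Allen--Cahn equation on the exterior side (the Laplacian is not invariant under reflection across a non-flat hypersurface; curvature correction terms appear). The paper's Section \ref{sec:mono} reflects only the backward heat kernel, producing $\rho_{1,(y,s)}$ and $\rho_{2,(y,s)}$, while $u_{\varepsilon_i}$ and $\mu^t_{\varepsilon_i}$ remain defined on $\Omega$. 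Consequently, the first variation $\delta V^t_{\varepsilon_i}$ is computed directly on $\Omega$ and carries an explicit boundary term $\int_{\partial\Omega}\langle g,\nu\rangle\left(\tfrac{\varepsilon_i|\nabla u_{\varepsilon_i}|^2}{2}+\tfrac{W(u_{\varepsilon_i})}{\varepsilon_i}\right)d\mathcal{H}^{n-1}$ (see \eqref{first-vari}). Your sketch of the first variation omits this term entirely; the paper needs the separate estimate \eqref{boundary-ene}, obtained by testing with $\nabla\phi$ for a suitable cutoff of the distance function, before Proposition \ref{recti-limit} can even be stated.

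Second, you assert a uniform positive lower density bound $\Theta^{n-1}(\mu^t,x)\ge c_0>0$ for $\mu^t$-a.e.\ $x$ as input to Allard. The paper does not establish such a uniform lower bound, and it is not immediate from the boundary monotonicity formula, which controls a Gaussian density ratio centered at a later time rather than the ball density of $\mu^t$ at time $t$. What the paper proves instead is that $\mathcal{H}^{n-1}(\mathrm{spt}\,\mu^t)$ is finite (Corollary \ref{cor-bdd-spt}); combining this with a standard covering argument shows that the set of $x\in\mathrm{spt}\,\mu^t$ where the upper density of $\mu^t$ vanishes has $\mu^t$-measure zero. That weaker, non-uniform statement, together with local boundedness of $\|\delta\tilde V^t\|$ from \eqref{bdd-first-vari-byc}, is already sufficient for Allard's rectifiability theorem \cite[5.5.(1)]{A1} as applied in Proposition \ref{recti-limit}. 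So the uniform $c_0$ in your proposal is both unsupported and unnecessary, and it should be replaced by the $\mathcal{H}^{n-1}$-finiteness-of-support argument.
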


Theorem \ref{main1} shows that the rectifiability of $\mu^t$ holds up to (and including) the boundary $\partial \Omega$ for a.e.\ $t \ge 0$.
By Theorem \ref{main1}, we may define rectifiable varifolds $V^t \in \mathbf{RV}_{n-1}(\mathbb{R}^n)$ as $\|V^t\| = \mu^t$ if $\mu^t$ is rectifiable. 
If $\mu^t$ is not rectifiable, we define $V^t \in \mathbf{V}_{n-1}(\mathbb{R}^n)$ to be an arbitrary varifold with $\|V^t\| = \mu^t$ 
(for example $V^t(\phi) := \int_{\mathbb{R}^n} \phi(\cdot, \mathbb{R}^{n-1} \times \{0\}) \; d\mu^t$ for $\phi \in C_c(G_{n-1}(\mathbb{R}^n)$)). 

\begin{rk}
As we mention in Section \ref{sec:int}, the integrality of the limit varifolds in the interior of $\Omega$ follows from \cite{T2}. 
That is, $\sigma^{-1} V^t\lfloor_\Omega \in \mathbf{IV}_{n-1}(\Omega)$ for a.e.\ $t \ge 0$, where $\sigma = \int^1_{-1} \sqrt{2W(s)}\; ds$. 
\end{rk}

\begin{thm}\label{main2}
Let $V^t$ be defined as above. Then $\|\delta V^t\| (\mathbb{R}^n) = \|\delta V^t\|(\overline{\Omega})$ is finite for a.e.\ $t \ge 0$ and $\int^T_0 \|\delta V^t\|(\overline{\Omega}) \; dt$ is finite for all $T>0$. 
\end{thm}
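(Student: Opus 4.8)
The plan is to bound the total first variation $\|\delta V^t\|(\overline{\Omega})$ by testing the first variation against a vector field generated by the discrepancy-controlled diffuse interface. The starting point is the identity, valid for smooth $u_{\varepsilon_i}$, that for any $g \in C^1_c(\mathbb{R}^n;\mathbb{R}^n)$ one has
\begin{equation*}
\int_\Omega \Bigl( (I - \widehat{\nabla u_{\varepsilon_i}} \otimes \widehat{\nabla u_{\varepsilon_i}}) \cdot \nabla g \Bigr)\, \varepsilon_i |\nabla u_{\varepsilon_i}|^2 \, dx
= \int_\Omega \bigl( \partial_t u_{\varepsilon_i} - \tfrac{\langle \nabla u_{\varepsilon_i},\nu\rangle \text{ boundary terms}}{\cdots} \bigr)\, \varepsilon_i \nabla u_{\varepsilon_i} \cdot g \, dx + (\text{discrepancy} \cdot \nabla g) + (\text{boundary}),
\end{equation*}
which is the standard ``diffuse first variation'' computation obtained by multiplying \eqref{ac} by $\varepsilon_i \nabla u_{\varepsilon_i} \cdot g$ and integrating by parts; the Neumann condition \eqref{neumann} is what makes the boundary integrals on $\partial\Omega$ either vanish or reduce to a term controlled by the second fundamental form of $\partial\Omega$ (here the non-convexity means this term has a sign we cannot discard, but it is bounded by $C \mu_{\varepsilon_i}^t(|g|)$). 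First I would write this identity carefully, isolating the discrepancy term $\xi_{\varepsilon_i}^t(\nabla g)$ on the right.

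Next I would pass to the limit $i \to \infty$. By Proposition \ref{pro-con-den}, $\mu_{\varepsilon_i}^t \to \mu^t = \|V^t\|$; by the vanishing of the discrepancy measure (established earlier in the paper, via Section \ref{sec:vani-dis}, along the lines of \cite{LST,TT}), the term $\int \xi_{\varepsilon_i}^t(\nabla g)$ goes to $0$ for a.e.\ $t$; and the time-integrated version converges by dominated convergence using the energy bound \eqref{energy-bdd}. The left-hand side, by the choice of $V^t$ (on the rectifiable set it carries the tangent-plane projections, and the generalized-tangent-plane structure is exactly what $(I - \widehat{\nabla u_{\varepsilon_i}}\otimes \widehat{\nabla u_{\varepsilon_i}})$ converges to), tends to $\delta V^t(g)$. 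The right-hand side is then bounded by
\begin{equation*}
|\delta V^t(g)| \le \Bigl( \int_\Omega \varepsilon_i |\partial_t u_{\varepsilon_i}|^2 \, dx \Bigr)^{1/2} \Bigl( \int_\Omega \varepsilon_i |\nabla u_{\varepsilon_i}|^2 \, dx \Bigr)^{1/2} \sup |g| + C\, \mu^t(|g|) \cdot (\text{geometry of } \partial\Omega),
\end{equation*}
so that $\|\delta V^t\|(\overline{\Omega}) \le C(t) < \infty$ for a.e.\ $t$, where $C(t)^2$ is controlled by $\int_\Omega \varepsilon_i |\partial_t u_{\varepsilon_i}|^2\, dx$ plus a term linear in $\mu^t(\overline{\Omega})$; integrating in $t$ and using \eqref{energy-bdd} (which bounds $\int_0^T \int_\Omega \varepsilon_i (\partial_t u_{\varepsilon_i})^2\,dx\,dt$ uniformly) together with Cauchy--Schwarz in time gives finiteness of $\int_0^T \|\delta V^t\|(\overline{\Omega})\, dt$. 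Since $\spt \mu^t \subset \overline{\Omega}$, the statement $\|\delta V^t\|(\mathbb{R}^n) = \|\delta V^t\|(\overline{\Omega})$ is immediate once we check $\delta V^t$ assigns no mass to the open exterior, which follows because $\|V^t\|$ is supported in $\overline{\Omega}$ and first variation is absolutely continuous with respect to nothing outside the support in the sense that test fields supported off $\overline{\Omega}$ pair to zero.

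The main obstacle I expect is the boundary term on $\partial\Omega$ arising from the integration by parts in the diffuse first variation identity: on a non-convex domain the curvature contribution does not have a favorable sign, so one must show it is genuinely of lower order, i.e.\ bounded by $C\|\mu^t\|(|g|)$ with $C$ depending only on $\sup |\nabla \nu|$ on $\partial\Omega$, rather than involving the (unbounded as $\varepsilon_i \to 0$) gradient energy density concentrated near $\partial\Omega$. This is precisely where one needs the reflection argument and the boundary monotonicity formula recalled in Section \ref{sec:mono}, together with the upper discrepancy bound from Section \ref{sec:bdd-dis}, to control the diffuse measure $\mu_{\varepsilon_i}^t$ near $\partial\Omega$ uniformly; I would cite those to reduce the boundary term to a harmless $C\mu^t(|g|)$. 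A secondary technical point is justifying that the set of ``good'' times $t$ (where the discrepancy vanishes and $\int_\Omega \varepsilon_i(\partial_t u_{\varepsilon_i})^2$ stays bounded along the subsequence) has full measure, which is a routine Fatou/Fubini argument off \eqref{energy-bdd} and the discrepancy-vanishing result.
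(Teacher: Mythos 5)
Your overall architecture is the same as the paper's: write out the explicit diffuse first-variation identity (the paper's equation~\eqref{first-vari}), pass to the limit using the energy bound and the vanishing of the discrepancy, and deduce a bound on $|\delta V^t(g)|$ that is integrable in time. The choice of ``good'' times via Fatou/Fubini and the claim $\|\delta V^t\|(\mathbb{R}^n) = \|\delta V^t\|(\overline\Omega)$ are both fine and match the paper.

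However, there is a concrete gap in how you propose to handle the boundary term
\[
\int_{\partial\Omega}\langle g,\nu\rangle\left(\frac{\varepsilon_i|\nabla u_{\varepsilon_i}|^2}{2}+\frac{W(u_{\varepsilon_i})}{\varepsilon_i}\right)d\mathcal{H}^{n-1}.
\]
You assert that the density ratio bound (Proposition~\ref{thm-den-rat}) together with the reflected monotonicity formula lets you reduce this to a harmless $C\,\mu^t(|g|)$ with a constant depending only on the geometry of $\partial\Omega$, uniformly in $t$. That does not follow: the density ratio bound controls $\mu_{\varepsilon_i}^t(B_r(y))$ for balls, which is one dimension too high to control the \emph{surface} integral of the energy density over $\partial\Omega$. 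Nothing prevents the diffuse energy from concentrating sharply on $\partial\Omega$ at a given time, and covering arguments only yield $\mu_{\varepsilon_i}^t(N_\delta\cap\Omega)\lesssim\mathcal{H}^{n-1}(\partial\Omega)$, not a bound proportional to $\delta$, so the trace on $\partial\Omega$ is not captured. The paper's actual mechanism is qualitatively different: it substitutes $g=\nabla\phi$, where $\phi$ is a $C^2$ extension of the distance function to $\partial\Omega$, into the same first-variation identity, so that $\langle\nabla\phi,\nu\rangle=-1$ turns the unknown boundary integral into the tested quantity, and then Young's inequality on the time-derivative term yields
\[
\int_{\partial\Omega}\frac{\varepsilon_i|\nabla u_{\varepsilon_i}|^2}{2}+\frac{W(u_{\varepsilon_i})}{\varepsilon_i}\,d\mathcal{H}^{n-1}\le\int_\Omega\varepsilon_i(\partial_t u_{\varepsilon_i})^2\,dx + \Cr{c-boundary-ene},
\]
which is \emph{not} uniform in $t$ but is time-integrable by \eqref{energy-bdd}. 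That trade is essential: the boundary energy at a single time is only bounded in terms of the dissipation rate at that time, not by geometry alone, and Fatou then gives finiteness of $\|\delta V^t\|(\overline\Omega)$ for a.e.\ $t$ and of $\int_0^T\|\delta V^t\|\,dt$. Your proposal needs this substitution $g=\nabla\phi$ step; without it, the claimed uniform bound on the boundary term is unjustified.
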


By Theorem \ref{main2}, we can apply the Riesz representation theorem and the Lebesgue decomposition theorem as in \eqref{def-mean} for a.e.\ $t \ge 0$, and thus the generalized mean curvature vector of $V^t$ is well defined for a.e.\ $t \ge 0$. 
However, to prove that the set of the limit varifolds is a Brakke flow with a generalized right angle condition on the boundary, we have to define the tangential component of the first variation on $\partial \Omega$.
For $t \ge 0$, define 
\begin{equation}\label{tang-first} 
\delta V^t\lfloor_{\partial \Omega}^\top (g) := \delta V^t\lfloor_{\partial \Omega}(g-\langle g, \nu \rangle \nu) \quad \mbox{for} \quad g \in C^1_c(\mathbb{R}^n; \mathbb{R}^n). 
\end{equation}

\begin{thm}\label{main3}
Let $V^t$ be defined as above. 
Also define $\delta V^t\lfloor_{\partial \Omega}^\top$ as in \eqref{tang-first}. 
Then the varifolds $V^t$ satisfy the following: 
\begin{itemize}
\item[(A1)] For a.e.\ $t \ge 0$, $\|\delta V^t\lfloor_{\partial \Omega}^\top + \delta V^t\lfloor_\Omega\| \ll \|V^t\|$ and there exists $h_b^t \in L^2(\|V^t\|)$ such that 
\[
\delta V^t\lfloor_{\partial \Omega}^\top + \delta V^t\lfloor_\Omega = - h_b^t \|V^t\|. 
\] 
\item[(A2)] $h_b^t$ satisfies 
\[ \int^\infty_0 \int_{\overline{\Omega}}|h_b^t|^2 \; d\|V^t\| dt \le \Cr{c-ene0}. \]
\item[(A3)] For $\phi \in C^1(\overline{\Omega} \times [0,\infty); \mathbb{R}_+)$ with $\langle \nabla \phi(\cdot, t), \nu \rangle = 0$ on $\partial \Omega$ and for any $0 \le t_1 < t_2 < \infty$, 
\begin{equation}\label{brakke-neumann} 
\|V^t\|(\phi(\cdot,t)) \Big|_{t=t_1}^{t_2} \le \int^{t_2}_{t_1} \int_{\overline{\Omega}} -\phi|h^t_b|^2 + \langle \nabla \phi, h^t_b \rangle + \partial_t \phi \; d\|V^t\|dt. 
\end{equation}
\end{itemize}
\end{thm}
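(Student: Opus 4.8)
The plan is to follow the scheme of Ilmanen \cite{I} and \cite{LST,TT,KT,MT}, with all boundary contributions controlled by the zero Neumann conditions. Everything rests on two identities obtained by testing \eqref{ac}. Pairing \eqref{ac} with $\varepsilon_i\,\partial_t u_{\varepsilon_i}\,\phi$ and integrating by parts in $\Omega$ --- the boundary term $\int_{\partial\Omega}\varepsilon_i\phi\,\partial_t u_{\varepsilon_i}\langle\nabla u_{\varepsilon_i},\nu\rangle\,d\mathcal H^{n-1}$ vanishing by \eqref{neumann} --- gives, for every $\phi\in C^1(\overline\Omega\times[0,\infty))$ and $0\le t_1<t_2$,
\[
\mu_{\varepsilon_i}^t(\phi(\cdot,t))\Big|_{t=t_1}^{t_2}=\int_{t_1}^{t_2}\Big(\mu_{\varepsilon_i}^t(\partial_t\phi)-\int_\Omega\varepsilon_i\phi\,(\partial_t u_{\varepsilon_i})^2\,dx-\int_\Omega\varepsilon_i\,\partial_t u_{\varepsilon_i}\,\langle\nabla u_{\varepsilon_i},\nabla\phi\rangle\,dx\Big)\,dt .
\]
Secondly, from \eqref{ac} one has $\varepsilon_i\,\partial_t u_{\varepsilon_i}\,\nabla u_{\varepsilon_i}=\mathrm{div}(\varepsilon_i\,\nabla u_{\varepsilon_i}\otimes\nabla u_{\varepsilon_i})-\nabla\big(\tfrac{\varepsilon_i|\nabla u_{\varepsilon_i}|^2}{2}+\tfrac{W(u_{\varepsilon_i})}{\varepsilon_i}\big)$; pairing this with $g\in C^1_c(\mathbb R^n;\mathbb R^n)$, integrating by parts in $\Omega$, and discarding $\int_{\partial\Omega}\varepsilon_i\langle\nabla u_{\varepsilon_i},\nu\rangle\langle\nabla u_{\varepsilon_i},g\rangle\,d\mathcal H^{n-1}$ by \eqref{neumann}, one obtains the identity whose left-hand side is $\int_\Omega\varepsilon_i\,\partial_t u_{\varepsilon_i}\langle\nabla u_{\varepsilon_i},g\rangle\,dx$ and whose right-hand side is the sum of the diffuse first variation $\int_\Omega\varepsilon_i|\nabla u_{\varepsilon_i}|^2\,(I-n_{\varepsilon_i}\otimes n_{\varepsilon_i})\cdot\nabla g\,dx$, where $n_{\varepsilon_i}:=\nabla u_{\varepsilon_i}/|\nabla u_{\varepsilon_i}|$, the discrepancy term $-\int_\Omega\big(\tfrac{\varepsilon_i|\nabla u_{\varepsilon_i}|^2}{2}-\tfrac{W(u_{\varepsilon_i})}{\varepsilon_i}\big)\mathrm{div}\,g\,dx$, and the boundary term $-\int_{\partial\Omega}\big(\tfrac{\varepsilon_i|\nabla u_{\varepsilon_i}|^2}{2}+\tfrac{W(u_{\varepsilon_i})}{\varepsilon_i}\big)\langle g,\nu\rangle\,d\mathcal H^{n-1}$. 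The point for the Neumann problem is that this last boundary term also vanishes whenever $g$ is tangent to $\partial\Omega$ along $\partial\Omega$; in particular it does so for $g=\nabla\phi$ when $\langle\nabla\phi,\nu\rangle=0$ on $\partial\Omega$.

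For (A1) and (A2), fix $g\in C^1_c(\mathbb R^n;\mathbb R^n)$ with $\langle g,\nu\rangle=0$ on $\partial\Omega$. By Proposition \ref{pro-con-den}, $\mu_{\varepsilon_i}^t\to\|V^t\|$ for all $t$; by the vanishing of the discrepancy measure (Section \ref{sec:vani-dis}) the discrepancy term above tends to zero in the $dx\,dt$-integrated sense, so $\varepsilon_i|\nabla u_{\varepsilon_i}|^2\,\mathcal L^n\lfloor_\Omega$ and $\mu_{\varepsilon_i}^t$ share the limit $\|V^t\|$; and the diffuse first variation converges, after integration in $t$, to $\int_{t_1}^{t_2}\delta V^t(g)\,dt$ (this convergence is part of the proof of Theorems \ref{main1}--\ref{main2}; cf.\ \cite{MT}). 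Integrating the second identity in $t$, passing to the limit, and applying Cauchy--Schwarz together with $\int_0^\infty\int_\Omega\varepsilon_i(\partial_t u_{\varepsilon_i})^2\,dx\,dt\le\Cr{c-ene0}$ from \eqref{energy-bdd}, one gets
\[
\Big|\int_{t_1}^{t_2}\delta V^t(g)\,dt\Big|\le(\Cr{c-ene0})^{1/2}\Big(\int_{t_1}^{t_2}\int_{\overline\Omega}|g|^2\,d\|V^t\|\,dt\Big)^{1/2}\qquad\text{for all }0\le t_1<t_2 .
\]
Since $\delta V^t(g)=(\delta V^t\lfloor_{\partial\Omega}^\top+\delta V^t\lfloor_\Omega)(g)$ for such $g$ by \eqref{tang-first}, a by-now-standard differentiation-in-$(t_1,t_2)$ argument together with the Riesz representation theorem applied in the closed subspace of $L^2(\|V^t\|;\mathbb R^n)$ of fields tangent to $\partial\Omega$ along $\partial\Omega$ (as in \cite{I,MT}) produces, for a.e.\ $t$, a field $h_b^t$ in that subspace --- so automatically $\langle h_b^t,\nu\rangle=0$ $\|V^t\|$-a.e.\ on $\partial\Omega$ --- with $\delta V^t\lfloor_{\partial\Omega}^\top+\delta V^t\lfloor_\Omega=-h_b^t\|V^t\|$ and $\int_0^\infty\int_{\overline\Omega}|h_b^t|^2\,d\|V^t\|\,dt\le\Cr{c-ene0}$; the extension of this identity from tangential $g$ to all $g\in C^1_c(\mathbb R^n;\mathbb R^n)$ is a routine decomposition near $\partial\Omega$ using $\langle h_b^t,\nu\rangle=0$ there. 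This proves (A1) and (A2).

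For (A3), let $\phi\in C^1(\overline\Omega\times[0,\infty);\mathbb R_+)$ with $\langle\nabla\phi(\cdot,t),\nu\rangle=0$ on $\partial\Omega$; by the usual reduction we may assume $\phi>0$. In the energy identity the left-hand side and the first right-hand term converge to $\|V^t\|(\phi(\cdot,t))\big|_{t=t_1}^{t_2}$ and $\int_{t_1}^{t_2}\|V^t\|(\partial_t\phi)\,dt$ by Proposition \ref{pro-con-den} and dominated convergence. Since $\nabla\phi$ is tangent to $\partial\Omega$ along $\partial\Omega$, the second identity and (A1) give $-\int_{t_1}^{t_2}\int_\Omega\varepsilon_i\,\partial_t u_{\varepsilon_i}\langle\nabla u_{\varepsilon_i},\nabla\phi\rangle\,dx\,dt\to\int_{t_1}^{t_2}\int_{\overline\Omega}\langle\nabla\phi,h_b^t\rangle\,d\|V^t\|\,dt$. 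For the dissipation one uses, for every $C^1$ field $\psi=\psi(x,t)$ tangent to $\partial\Omega$ along $\partial\Omega$, the elementary inequality $\varepsilon_i\phi(\partial_t u_{\varepsilon_i})^2\ge 2\varepsilon_i\,\partial_t u_{\varepsilon_i}\langle\nabla u_{\varepsilon_i},\psi\rangle-\phi^{-1}\varepsilon_i|\nabla u_{\varepsilon_i}|^2|\psi|^2$; integrating over $\Omega\times(t_1,t_2)$, passing to the limit, and then letting $\psi\to-\phi\,h_b^t$ in $L^2(\|V^t\|\,dt)$ --- admissible precisely because $h_b^t$ is tangent to $\partial\Omega$ along $\partial\Omega$ --- yields $\liminf_i\int_{t_1}^{t_2}\int_\Omega\varepsilon_i\phi(\partial_t u_{\varepsilon_i})^2\,dx\,dt\ge\int_{t_1}^{t_2}\int_{\overline\Omega}\phi|h_b^t|^2\,d\|V^t\|\,dt$. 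By the energy identity the limit of the dissipation integral then exists (it equals the limit of the other three terms), and combining the four limits produces \eqref{brakke-neumann}. Taking $\phi\equiv1$, $t_1=0$, $t_2\to\infty$ re-derives (A2), since $\|V^0\|(\overline\Omega)=\mu^0(\overline\Omega)=\lim_i E_{\varepsilon_i}[u_{\varepsilon_i,0}]\le\Cr{c-ene0}$.

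The genuinely delicate point is the lower-semicontinuity estimate for the dissipation: obtaining the sharp inequality \eqref{brakke-neumann} --- rather than the weaker bound with $\tfrac14\phi^{-1}|\nabla\phi|^2$ in place of $-\phi|h_b^t|^2+\langle\nabla\phi,h_b^t\rangle$ --- requires identifying the weak limit of $-\varepsilon_i\,\partial_t u_{\varepsilon_i}\,\nabla u_{\varepsilon_i}$ with $h_b^t\|V^t\|$ and then optimising the elementary inequality over admissible test fields, which forces one to approximate $-\phi h_b^t$ in $L^2(\|V^t\|)$ by fields tangent to $\partial\Omega$; this rests on the fact, obtained as part of (A1), that $h_b^t$ is itself tangent to $\partial\Omega$ $\|V^t\|$-a.e.\ on $\partial\Omega$ --- the measure-theoretic form of the generalised right-angle condition, and the genuinely new feature relative to the interior theory. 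The remaining substantial inputs --- the upper bound on the discrepancy valid on non-convex $\Omega$ and the vanishing of the discrepancy measure --- are precisely what Sections \ref{sec:bdd-dis}--\ref{sec:vani-dis} supply.
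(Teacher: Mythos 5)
Your proposal is correct in outline and follows essentially the same strategy as the paper: the two identities you write are exactly \eqref{proof-main7} and (the Neumann-simplified form of) \eqref{first-vari}, the boundary term you discard is precisely the one controlled by \eqref{boundary-ene}, and the discrepancy and upper-bound inputs are Sections \ref{sec:bdd-dis}--\ref{sec:vani-dis} as you say. Two implementation points differ, and one of them needs fixing. First, for (A1)--(A2) the paper does not integrate in $t$ and then differentiate; it works at a.e.\ fixed $t$ directly (choosing a $t$-dependent subsequence realizing the $\liminf$, using Proposition \ref{recti-limit} to identify the limit varifold), and restricts to tangent fields not via Riesz in a closed $L^2$ subspace but by the explicit near-boundary cutoff $\tilde g = g-\langle g,\nu^\delta\rangle\nu^\delta$ and estimating $\delta V^t\lfloor_\Omega(g-\tilde g)$. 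Your Riesz-in-the-tangent-subspace argument is a legitimate alternative, but as written your integrated Cauchy--Schwarz bound carries the \emph{global} constant $\Cr{c-ene0}^{1/2}$; if you try to Lebesgue-differentiate that in $(t_1,t_2)$ you only get $|\delta V^t(g)|\lesssim\Cr{c-ene0}^{1/2} h^{-1/2}(\cdots)^{1/2}$, which blows up. What makes the differentiation produce a useful bound is the \emph{time-local} quantity $c(t):=\liminf_i\int_\Omega\varepsilon_i(\partial_t u_{\varepsilon_i})^2\,dx$ (with $\int_0^\infty c(t)\,dt\le\Cr{c-ene0}$ by Fatou), and one should formulate the Cauchy--Schwarz bound with that constant --- which is exactly what the paper's fixed-$t$ argument does. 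Second, for (A3) your optimization of $\varepsilon_i\phi(\partial_t u_{\varepsilon_i})^2\ge 2\varepsilon_i\partial_t u_{\varepsilon_i}\langle\nabla u_{\varepsilon_i},\psi\rangle-\phi^{-1}\varepsilon_i|\nabla u_{\varepsilon_i}|^2|\psi|^2$ over tangent $\psi$ and then approximating $-\phi h_b^t$ is exactly what the paper delegates to \cite[Proposition 8.1]{TT} for the $\phi|h_b^t|^2$ term, and for the cross term the paper instead uses a compactly supported $\tilde\phi\le\phi$ with $\langle\nabla\tilde\phi,\nu\rangle=0$ and lets $\tilde\phi\to\phi$ in $C^2$; both are correct and equivalent in substance. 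Your observation that the tangency of $h_b^t$ on $\partial\Omega$ is the crucial feature enabling the approximation of $-\phi h_b^t$ by admissible tangent fields is precisely the right emphasis.
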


\begin{rk}
From (A1) of Theorem \ref{main3} and the Radon-Nikodym theorem as in \eqref{def-mean}, for a.e.\ $t$ and any $g \in C_c^1(\mathbb{R}^n; \mathbb{R}^n)$ with $\langle g, \nu \rangle = 0$ on $\partial \Omega$, we obtain by the definition of $\delta V^t\lfloor_{\partial \Omega}^\top$ 
\[
- \int \langle h^t_b, g \rangle \; d\|V^t\| = \delta V^t\lfloor_{\partial \Omega}^\top(g) + \delta V^t\lfloor_\Omega(g) = \delta V^t(g) = -\int \langle h^t, g \rangle \; d\|V^t\| + \int_{Z^t} \langle \nu^t_{\rm sing}, g \rangle \; d\|\delta V^t\|, 
\]
where the generalized mean curvature vector $h^t$, the generalized co-normal $\nu_{\rm sing}^t$ and the generalized boundary $Z^t$ of $V^t$ is defined as in \eqref{def-mean}. 
In particular, the absolute continuity in (A1) of Theorem \ref{main3} implies $\|\delta V^t\lfloor_\Omega\| \ll \|V^t\lfloor_\Omega\|$, and hence we see the last term of the equality is zero for $g \in C^1_c(\Omega; \mathbb{R}^n)$. 
In other ward, we can see that (1) $h^t_b$ coincides with $h^t$ $\|V^t\|$-a.e.\ in $\Omega$; and (2) $Z^t$ is a subset of $\partial \Omega$ (or $Z^t \cap \Omega$ is empty). 
Furthermore, applying the properties (1) and (2) for the equality, we have 
\[ -\langle g, h^t_b \rangle \|V^t\|\lfloor_{\partial \Omega} = -\langle g, h^t \rangle \|V^t\|\lfloor_{\partial \Omega} + \langle g, \nu^t_{\rm sing} \rangle \|\delta V^t\|\lfloor_{Z^t} \]
for $g \in C(\partial \Omega; \mathbb{R}^n)$ with $\langle g, \nu \rangle = 0$ on $\partial \Omega$ and a.e.\ $t \ge 0$. 
Thus, from $\|V^t\|(Z^t)=0$, we can also see that (3) $\nu_{\rm sing}^t$ is perpendicular to $\partial \Omega$ $\|\delta V^t\|$-a.e.\ on $Z^t$; and (4) $h^t_b$ is the projection of $h^t$ to the tangent space of $\partial \Omega$ $\|V^t\|$-a.e.\ on $\partial \Omega$. 
Hence (A1) of Theorem \ref{main3} corresponds to the 90 degree angle condition of $V^t$ (see also \cite{KT}). 
For the classical sense, see Figure \ref{fig1} and we note that the divergence theorem on a smooth and oriented hypersurface $M \subset \mathbb{R}^n$ implies 
\[ \delta V_M (g) = \int_M \nabla g (x) \cdot {\rm Tan}_x M \; d\mathcal{H}^{n-1}(x) = -\int_M \langle h_M, g\rangle d\mathcal{H}^{n-1} + \int_{\partial M} \langle \nu_M, g \rangle \; d\mathcal{H}^{n-2} \]
for any $g \in C^1_c(\mathbb{R}^n; \mathbb{R}^n)$, where the varifold $V_M$ is defined by 
\[ V_M(\phi) := \int_M \phi(x, {\rm Tan}_x M) \; d\mathcal{H}^{n-1}(x) \quad {\rm for} \quad \phi \in G_{n-1}(\mathbb{R}^n), \]
${\rm Tan}_x M$ is the tangent space of $M$ at $x \in M$, $h_M$ is the mean curvature vector of $M$, $\partial M$ is the boundary of $M$ and $\nu_M$ is the co-normal vector of $M$. 
\end{rk}

\begin{figure}[t]
\begin{minipage}[t]{0.49\linewidth}
\centering
\includegraphics[width=4.5cm]{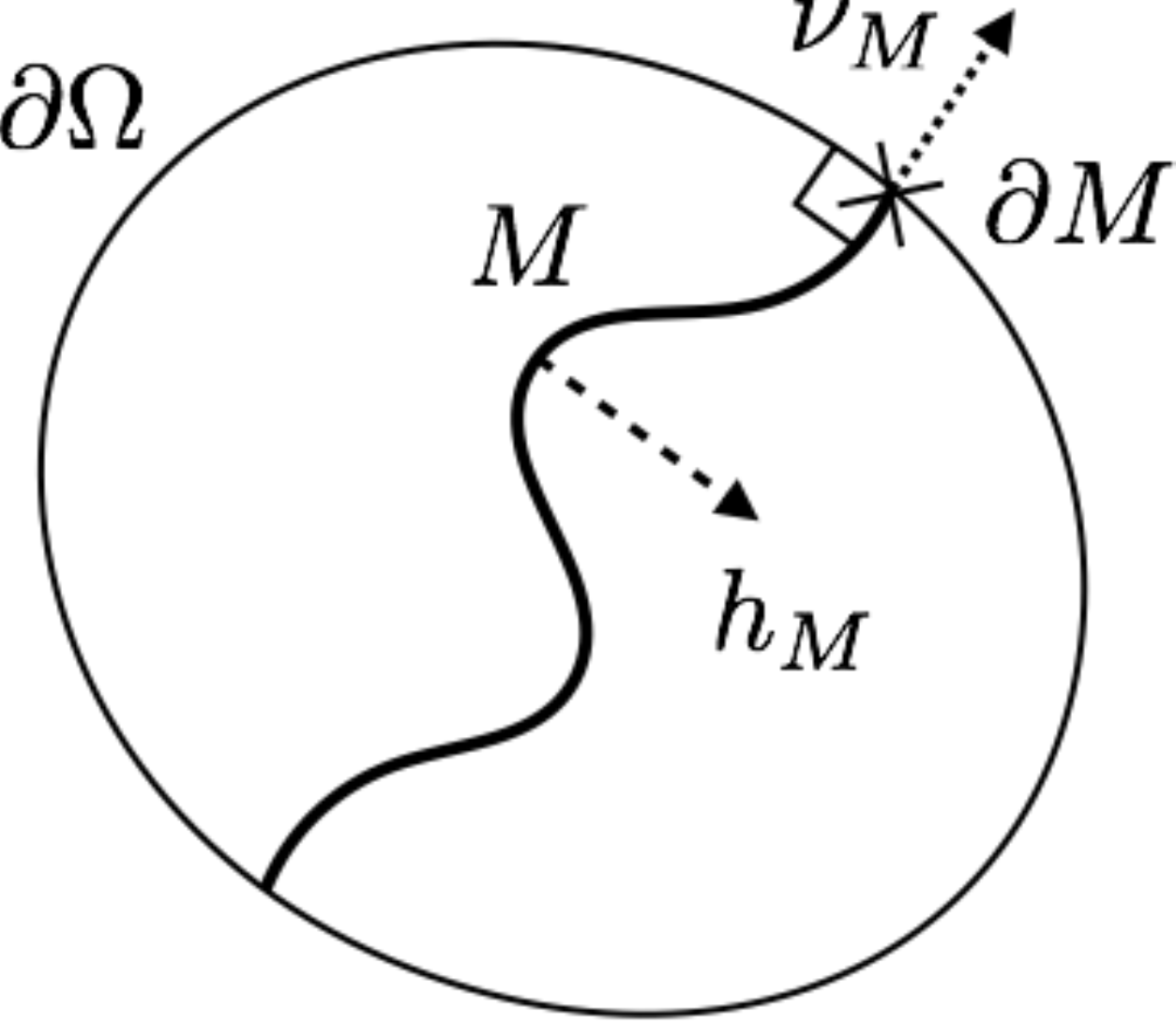}
\caption{Picture of $h_M$, $\partial M$ and $\nu_M$ when $V_M$ satisfies the conditions for $V^t$ of (A1) in Theorem \ref{main3}.}
\label{fig1}
\end{minipage}
\begin{minipage}[t]{0.49\linewidth}
\centering
\includegraphics[width=5cm]{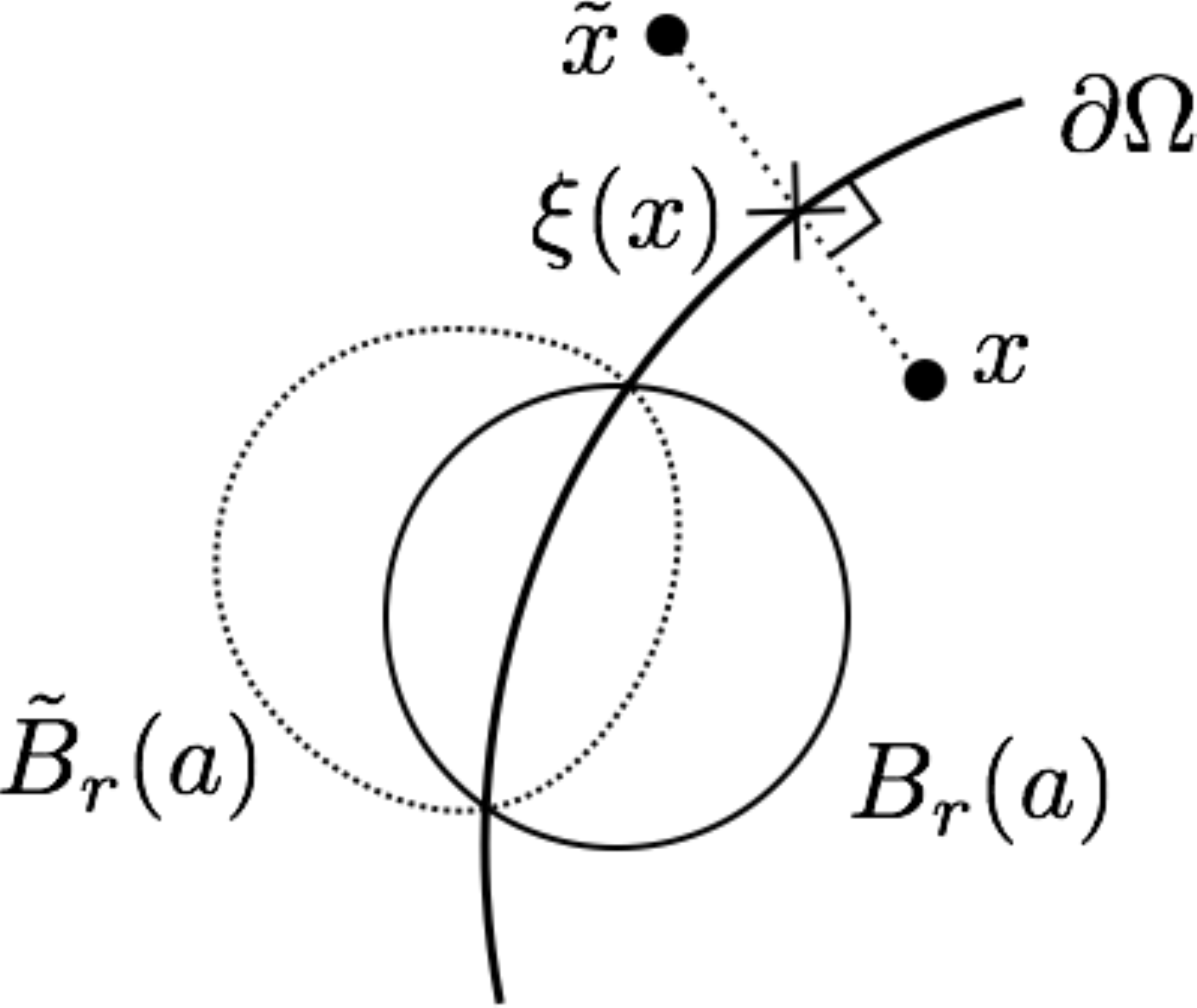}
\caption{Picture of $\xi(x)$, $\tilde{x}$ and $\tilde{B}_r(a)$.}
\label{fig2}
\end{minipage}
\end{figure}

%%%%%%%%%%%%%%%%%%%%%%%%%%%%%%%%%%%%%%%%%%%%%%%%%%%%%%%%%%%%%%%%%%%%%%%%%%%%%%%%

%\medskip

\section{Monotonicity formula}\label{sec:mono}

One of the key tools for analyzing the singular limit problem of the Allen-Cahn equation is the Huisken or Ilmanen type monotonicity formula. 
The boundary monotonicity formula can be derived by using the reflection argument as in \cite{MT}. 
To present the statement, we need some more notations associated with the reflection argument. 
Define $\kappa$ as 
\begin{equation}\label{def-kappa} 
\kappa := \| \mbox{principal curvature of } \partial \Omega\|_{L^{\infty}(\partial \Omega)}. 
\end{equation}
For $s>0$, define a subset $N_s$ of $\mathbb{R}^n$ by 
\[ N_s := \{ x \in \mathbb{R}^n : \dist(x, \partial \Omega) < s \}. \]
There exists a sufficiently small
\begin{equation}\label{def-c2} 
\Cl[c]{c-curva} \in (0, (6\kappa)^{-1}] 
\end{equation}
depending only on $\partial \Omega$ such that all points $x \in N_{6\Cr{c-curva}}$ have a unique point $\xi(x) \in \partial \Omega$ such that $\dist(x, \partial \Omega) = |x - \xi(x)|$ (see also Figure \ref{fig2}). 
By using this $\xi(x)$, we define the reflection point $\tilde{x}$ of $x$ with respect to $\partial \Omega$ as 
\[ \tilde{x} := 2\xi(x) - x \] 
and the reflection ball $\tilde{B}_r(x)$ of $B_r(a)$ with respect to $\partial \Omega$ as 
\[ \tilde{B}_r(a) := \{ x \in \mathbb{R}^n : |\tilde{x} - a| < r \}. \]
We also fix a function $\eta \in C^\infty (\mathbb{R})$ such that 
\[ 0 \le \eta \le 1, \quad \dfrac{d \eta}{d r} \le 0, \quad {\rm spt} \eta \subset [0,\Cr{c-curva}/2), \quad \eta = 1 \; \mbox{on} \; [0,\Cr{c-curva}/4]. \]
For $s > t > 0$ and $x,y \in N_{\Cr{c-curva}}$, we define the truncated version of the $(n-1)$-dimensional backward heat kernel and the reflected backward heat kernel as 
\[ \rho_{1,(y,s)}(x,t) := \eta(|x-y|)\rho_{(y,s)}(x,t), \quad \rho_{2,(y,s)}(x,t) := \eta(|\tilde{x} -y|) \rho_{(y,s)}(\tilde{x},t), \]
where $\rho_{(y,s)}$ is defined as in \eqref{bhk}. 
For $x \in N_{2\Cr{c-curva}} \setminus N_{\Cr{c-curva}}$ and $y \in N_{\Cr{c-curva}/2}$, we have 
\[ |\tilde{x} - y| \ge |\tilde{x} - \xi(y)| - |\xi(y) - y| > \Cr{c-curva} - \dfrac{\Cr{c-curva}}{2} = \dfrac{\Cr{c-curva}}{2}. \]
Thus we may smoothly define $\rho_{2,(y,s)}=0$ for $x \in \mathbb{R}^n \setminus N_{\Cr{c-curva}}$ and $y \in N_{\Cr{c-curva}/2}$. 
We also define the (signed) discrepancy measure $\xi_{\varepsilon_i}^t$ as 
\[
\xi_{\varepsilon_i}^t := \left(\dfrac{\varepsilon_i|\nabla u_{\varepsilon_i}(\cdot,t)|^2}{2} - \dfrac{W(u_{\varepsilon_i}(\cdot,t))}{\varepsilon_i} \right) \mathcal{L}^n\lfloor_\Omega.
\] 

\begin{pro}[Boundary monotonicity formula \cite{MT}]\label{thm:monotonicity}
There exist constants $0 < \Cl[c]{c-mono-1}, \Cl[c]{c-mono-2}<\infty$ depending only on $n, \Cr{c-ene0}$ and $\partial \Omega$ such that 
\begin{equation}\label{monotonicity} 
\begin{aligned}
&\; \dfrac{d}{dt} \left(e^{\Cr{c-mono-1}(s-t)^\frac{1}{4}} \int_\Omega \rho_{1,(y,s)}(x,t) + \rho_{2,(y,s)}(x,t) \; d\mu_{\varepsilon^i}^t(x)\right) \\
&\; \le e^{\Cr{c-mono-1}(s-t)^\frac{1}{4}}\left(\Cr{c-mono-2} + \int_\Omega \dfrac{\rho_{1,(y,s)}(x,t) + \rho_{2,(y,s)}(x,t)}{2(s-t)} \; d\xi_{\varepsilon_i}^t(x)\right) 
\end{aligned}
\end{equation}
for all $s>t>0, y \in N_{\Cr{c-curva}/2}$ and $i \in \mathbb{N}$, 
\begin{equation}\label{monotonicity-2}
\dfrac{d}{dt} \left(e^{\Cr{c-mono-1}(s-t)^\frac{1}{4}} \int_\Omega \rho_{1,(y,s)}(x,t) \; d\mu_{\varepsilon^i}^t(x)\right) \le e^{\Cr{c-mono-1}(s-t)^\frac{1}{4}}\left(\Cr{c-mono-2} + \int_\Omega \dfrac{\rho_{1,(y,s)}(x,t)}{2(s-t)} \; d\xi_{\varepsilon_i}^t(x)\right) 
\end{equation}
for all $s>t>0, y \in \mathbb{R}^n \setminus N_{\Cr{c-curva}/2}$ and $i \in \mathbb{N}$. 
\end{pro}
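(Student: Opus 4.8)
The plan is to establish a diffuse-interface analogue of Huisken's monotonicity formula, in the spirit of Ilmanen's treatment of the Allen--Cahn equation, and to absorb the contributions of $\partial\Omega$ by the reflection device of Mizuno and Tonegawa \cite{MT}. Write $e_{\varepsilon_i}:=\varepsilon_i|\nabla u_{\varepsilon_i}|^2/2+W(u_{\varepsilon_i})/\varepsilon_i$ for the energy density, so that $\mu_{\varepsilon_i}^t=e_{\varepsilon_i}(\cdot,t)\,\mathcal{L}^n\lfloor_\Omega$. The computation rests on two pointwise identities: differentiating $e_{\varepsilon_i}$ in time and using \eqref{ac} gives
\[
\partial_t e_{\varepsilon_i}={\rm div}\bigl(\varepsilon_i\,\partial_t u_{\varepsilon_i}\,\nabla u_{\varepsilon_i}\bigr)-\varepsilon_i(\partial_t u_{\varepsilon_i})^2,
\]
while the backward heat kernel \eqref{bhk} satisfies $\partial_t\rho_{(y,s)}+\Delta\rho_{(y,s)}=-\rho_{(y,s)}/(2(s-t))$.

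First I would treat the case $y\in\mathbb{R}^n\setminus N_{\Cr{c-curva}/2}$, which gives \eqref{monotonicity-2}. Since $\spt\eta$ is a compact subset of $[0,\Cr{c-curva}/2)$, the cutoff forces $\rho_{1,(y,s)}(\cdot,t)$ and $\nabla\rho_{1,(y,s)}(\cdot,t)$ to vanish identically on a neighbourhood of $\partial\Omega$; hence, differentiating $\int_\Omega\rho_{1,(y,s)}\,d\mu_{\varepsilon_i}^t$ in $t$, inserting the two identities above, and integrating by parts produces \emph{no} boundary terms over $\partial\Omega$. Completing the square lets one discard the nonnegative term $\varepsilon_i\,\rho_{1,(y,s)}\,\bigl|\partial_t u_{\varepsilon_i}-(x-y)\cdot\nabla u_{\varepsilon_i}/(2(s-t))\bigr|^2$, after which the surviving contributions are exactly the discrepancy term $\int_\Omega\rho_{1,(y,s)}/(2(s-t))\,d\xi_{\varepsilon_i}^t$ together with error terms supported where $\eta$ is non-constant, i.e.\ on $\{\Cr{c-curva}/4\le|x-y|\le\Cr{c-curva}/2\}$. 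There $\rho_{(y,s)}$ and its derivatives are bounded by a constant, with a super-exponential Gaussian factor absorbing any growth as $s-t\to0$, so by the energy bound \eqref{energy-bdd} these errors are dominated by $\Cr{c-mono-2}$; multiplying through by $e^{\Cr{c-mono-1}(s-t)^{1/4}}$ and taking $\Cr{c-mono-1}$ large enough that $-\tfrac14\Cr{c-mono-1}(s-t)^{-3/4}e^{\Cr{c-mono-1}(s-t)^{1/4}}$ dominates the worst singular terms yields \eqref{monotonicity-2}.

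For $y\in N_{\Cr{c-curva}/2}$ the support of $\rho_{1,(y,s)}(\cdot,t)$ meets $\partial\Omega$, so the same computation now leaves boundary integrals over $\partial\Omega$. After integrating the divergence-form terms ${\rm div}(\varepsilon_i\partial_t u_{\varepsilon_i}\nabla u_{\varepsilon_i})$ and ${\rm div}(\varepsilon_i\nabla u_{\varepsilon_i}\otimes\nabla u_{\varepsilon_i})$ by parts, the boundary terms carrying the factor $\langle\nabla u_{\varepsilon_i},\nu\rangle$ vanish by the Neumann condition \eqref{neumann}; what survives is $-\int_{\partial\Omega}\langle\nabla\rho_{1,(y,s)},\nu\rangle\,e_{\varepsilon_i}\,d\mathcal{H}^{n-1}$, which does not vanish. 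This is exactly why one adds the reflected kernel $\rho_{2,(y,s)}(x,t)=\eta(|\tilde{x}-y|)\rho_{(y,s)}(\tilde{x},t)$ built from the nearest-point reflection $\tilde{x}=2\xi(x)-x$, well defined on $N_{6\Cr{c-curva}}$: if $\partial\Omega$ were a hyperplane, $\tilde{x}$ would be the exact reflection, $\rho_{1,(y,s)}+\rho_{2,(y,s)}$ would have vanishing normal derivative on $\partial\Omega$, and $\rho_{2,(y,s)}$ would solve the backward heat equation exactly, so the boundary term would cancel and no new bulk term would appear. For curved $\partial\Omega$ both facts fail only up to errors: using the smoothness of $\xi$, the curvature bound $\kappa$, and the constraint $\Cr{c-curva}\le(6\kappa)^{-1}$, the Jacobian and Hessian of $x\mapsto\tilde{x}$ differ from those of a flat reflection by $O(\kappa\,\dist(x,\partial\Omega))$ and $O(\kappa)$, so the bulk defect $\partial_t\rho_{2,(y,s)}+\Delta\rho_{2,(y,s)}+\rho_{2,(y,s)}/(2(s-t))$ and the boundary defect $\langle\nabla(\rho_{1,(y,s)}+\rho_{2,(y,s)}),\nu\rangle$ on $\partial\Omega$ are each bounded by $C\,\dist(x,\partial\Omega)$ times first derivatives of $\rho_{(y,s)}$. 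Since $\dist(x,\partial\Omega)\lesssim|x-y|$ on the relevant region and $|x-y|^k(s-t)^{-k/2}e^{-|x-y|^2/(4(s-t))}\le C_k$, these contributions — integrated against $e_{\varepsilon_i}$ over $\Omega$ and against $\mathcal{H}^{n-1}$ over $\partial\Omega$ — carry at worst a time-integrable singularity as $s\to t$ and are absorbed into $\Cr{c-mono-2}$ after multiplying by $e^{\Cr{c-mono-1}(s-t)^{1/4}}$, leaving the discrepancy term $\int_\Omega(\rho_{1,(y,s)}+\rho_{2,(y,s)})/(2(s-t))\,d\xi_{\varepsilon_i}^t$; this gives \eqref{monotonicity}.

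The main obstacle is precisely the estimate of these reflection-error terms: one must verify that the non-flatness of $\partial\Omega$ produces only corrections whose singularity as $s-t\to0$ is no worse than $(s-t)^{-3/4}$ — which is why the exponent $1/4$ appears in the prefactor — so that a single factor $e^{\Cr{c-mono-1}(s-t)^{1/4}}$ with $\Cr{c-mono-1}$ depending only on $n,\Cr{c-ene0}$ and $\partial\Omega$ absorbs them uniformly in $i,y,s,t$. Concretely this means Taylor-expanding $x\mapsto\rho_{(y,s)}(\tilde{x},t)$ and its first two derivatives in the signed distance to $\partial\Omega$, tracking the dependence on $\kappa$ and $\Cr{c-curva}$, and repeatedly using the Gaussian bounds above to trade powers of $|x-y|$ for powers of $(s-t)^{1/2}$; the bound \eqref{energy-bdd} and the boundedness of $\Omega$ then fix the dependence of $\Cr{c-mono-1},\Cr{c-mono-2}$ on the stated data. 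The reason convexity plays no role is that every one of these errors is controlled by $\|\mbox{principal curvature of }\partial\Omega\|_{L^\infty(\partial\Omega)}$ alone, the sign of the curvature never entering, so the formula holds on non-convex domains as well; I would therefore follow the reflection computation of \cite{MT} and simply supply these uniform estimates.
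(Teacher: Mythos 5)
Your proposal reconstructs, in outline, precisely the reflection-based Huisken--Ilmanen computation of Mizuno and Tonegawa; the paper itself gives no proof of this proposition but simply invokes \cite{MT}, remarking only that the argument there uses the bound $\kappa$ on the principal curvatures (not their sign) and never uses $y\in\overline{\Omega}$, which are exactly the two observations you make. The energy-density evolution identity, the $(n-1)$-dimensional backward heat kernel relation $\partial_t\rho+\Delta\rho=-\rho/(2(s-t))$, the cancellation of Neumann boundary terms, the role of the reflected kernel in (approximately) killing the $\partial_\nu\rho_1$ boundary contribution, and the absorption of the $O(\kappa)$ reflection defects into $\Cr{c-mono-1},\Cr{c-mono-2}$ via the $e^{\Cr{c-mono-1}(s-t)^{1/4}}$ prefactor all match the structure of the cited proof, so your sketch takes the same route.
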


The proof of Proposition \ref{thm:monotonicity} in \cite{MT} does not require the convexity of $\Omega$, thus we refer to \cite{MT} for the details. 
We also remark that the reflected monotonicity formula in \cite{MT} can be expand for outside points $y$ of $\Omega$ as \eqref{monotonicity} because the condition $y\in\overline{\Omega}$ is not used in the proof in \cite{MT}. 

%%%%%%%%%%%%%%%%%%%%%%%%%%%%%%%%%%%%%%%%%%%%%%%%%%%%%%%%%
%\medskip

\section{Upper bound for the discrepancy}\label{sec:bdd-dis}

In this section, we estimate the growth rate of the discrepancy as follows. 

\begin{pro}\label{thm-dis-bdd}
There exists a constant $\Cl[c]{c-up-bdd-xi}$ depending only on $n, \kappa, \Cr{c-as-gra}, \Cr{c-as-dis}, \Cr{c-curva}, W$ and $\Omega$ such that 
\begin{equation}\label{dis-bdd} 
\sup_{\Omega \times [0, \infty)} \dfrac{\varepsilon_i|\nabla u_{\varepsilon_i}|^2}{2} - \dfrac{W(u_{\varepsilon_i})}{\varepsilon_i} \le \Cr{c-up-bdd-xi}\varepsilon_i^{-\lambda} 
\end{equation}
for any $0 < \varepsilon_i < 1$, where $\lambda$ is the constant in the assumption \eqref{as-u6}.
\end{pro}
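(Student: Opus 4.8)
The plan is to establish \eqref{dis-bdd} by a maximum principle argument applied to a suitably weighted version of the discrepancy density. Write $e_\varepsilon := \varepsilon|\nabla u_\varepsilon|^2/2 + W(u_\varepsilon)/\varepsilon$ and $\xi_\varepsilon := \varepsilon|\nabla u_\varepsilon|^2/2 - W(u_\varepsilon)/\varepsilon$ for the energy and discrepancy densities of the solution $u_{\varepsilon_i}$ (I will suppress the index $i$ and just write $\varepsilon$). A classical computation (going back to Ilmanen, and used in \cite{MT}) shows that on a fixed time slice the function $\xi_\varepsilon$ satisfies a differential inequality of the form
\begin{equation*}
\partial_t \xi_\varepsilon \le \Delta \xi_\varepsilon - \frac{2}{\varepsilon}\, W''(u_\varepsilon)\, \xi_\varepsilon \quad \text{in } \Omega\times(0,\infty),
\end{equation*}
up to lower order terms; the crucial sign is that near $|u_\varepsilon|$ close to $1$ one has $W''\ge\beta>0$ by (W3), which gives the damping needed to prevent growth in the interior. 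The first step is therefore to reprove this parabolic inequality carefully, keeping track of the region $|u_\varepsilon|\le\alpha$ where $W''$ may be negative; there one controls $|\xi_\varepsilon|$ crudely by the gradient bound, using the parabolic estimate $\varepsilon|\nabla u_\varepsilon|\le C$ on $\overline\Omega\times[0,\infty)$ which follows from \eqref{as-u4}, \eqref{u-bdd} and standard parabolic regularity, so that $\xi_\varepsilon \le C\varepsilon^{-1}$ automatically on that set and the issue is only to propagate the better bound $\varepsilon^{-\lambda}$ with $\lambda<1$ from where $W''>0$.

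The second and genuinely new step — the one where non-convexity enters — is the boundary. On a convex domain, Mizuno and Tonegawa use that $\langle\nabla u_\varepsilon,\nu\rangle=0$ on $\partial\Omega$ together with convexity to deduce $\partial_\nu \xi_\varepsilon \le 0$ (or the analogous sign) on $\partial\Omega$, so that the maximum principle applies with no boundary contribution. When $\Omega$ is not convex this fails, and the curvature of $\partial\Omega$ produces a boundary term proportional to $\kappa\,\varepsilon|\nabla u_\varepsilon|^2 = \kappa\cdot O(\varepsilon^{-1})$. The plan is to absorb this by a barrier: instead of $\xi_\varepsilon$, consider $\zeta_\varepsilon := \xi_\varepsilon - \Lambda\varepsilon^{-\lambda}\psi(x)$ where $\psi$ is a fixed smooth function on $\overline\Omega$, comparable to $1$, with $\partial_\nu\psi$ strictly positive (at least $1$, say) on $\partial\Omega$ — such a $\psi$ exists because $\partial\Omega$ is smooth and compact (e.g. $\psi = 1 + c\,\mathrm{dist}(\cdot,\partial\Omega)$ mollified, or any function with prescribed nonvanishing normal derivative). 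Then on $\partial\Omega$ one gets $\partial_\nu\zeta_\varepsilon = \partial_\nu\xi_\varepsilon - \Lambda\varepsilon^{-\lambda}\partial_\nu\psi$, and choosing $\Lambda$ large (depending on $\kappa$, $\Cr{c-as-gra}$, $W$, and the gap $1-\lambda\ge 0$, using $\varepsilon^{-1}\le \varepsilon^{-1}$ versus $\Lambda\varepsilon^{-\lambda}$ — here I need $\lambda\ge 3/5$ only to match the downstream use, but note $\varepsilon^{-\lambda}$ must dominate the boundary term $\kappa\varepsilon^{-1}$, so in fact I will need to be more careful: the boundary term is $O(\varepsilon^{-1})$, which is \emph{not} $O(\varepsilon^{-\lambda})$) — this is exactly the delicate point discussed below.

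Reconsidering: the correct mechanism is not to beat the $\varepsilon^{-1}$ boundary term pointwise against $\varepsilon^{-\lambda}$, but to use that the reflection framework of Section \ref{sec:mono} (the maps $\xi(x)$, $\tilde x$, the constant $\Cr{c-curva}\le(6\kappa)^{-1}$) lets one compare $u_\varepsilon$ with its reflection and show that the \emph{bad} part of the boundary term is itself controlled by $\xi_\varepsilon$ plus a term of order $\kappa^2\varepsilon\,e_\varepsilon \le \kappa^2 C\varepsilon^{-1}\cdot$something, together with the damping $-2W''\xi_\varepsilon/\varepsilon$. The strategy, following the structure of \cite{MT} but with the extra curvature terms retained, is: (i) extend the computation of $\partial_t\xi_\varepsilon - \Delta\xi_\varepsilon$ to a collar neighbourhood $N_{\Cr{c-curva}}$ using the reflected function, producing a differential inequality with an inhomogeneity bounded by $C(\kappa,W)\varepsilon^{-1}$ that is present only in $N_{\Cr{c-curva}}$; (ii) run a maximum-principle/Gronwall argument on the rescaled quantity $\varepsilon^{\lambda}\xi_\varepsilon$, where the inhomogeneity becomes $C\varepsilon^{\lambda-1}$ — and here the key realisation is that one does not bound $\sup\xi_\varepsilon$ by $\varepsilon^{-\lambda}$ \emph{uniformly in time from $t=0$} by a naive comparison, but rather shows that the set where $\varepsilon^{\lambda}\xi_\varepsilon$ is large shrinks, or uses the integrated energy bound \eqref{energy-bdd} to control the measure of the bad region and hence, via the parabolic smoothing on the scale $\varepsilon$, to upgrade to an $L^\infty$ bound with the stated power. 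The main obstacle, and the heart of the paper, is precisely this: producing the upper bound $\Cr{c-up-bdd-xi}\varepsilon^{-\lambda}$ rather than the trivial $C\varepsilon^{-1}$, by showing that the curvature-induced boundary growth of $\xi_\varepsilon$, though of order $\varepsilon^{-1}$ instantaneously, cannot accumulate to more than $\varepsilon^{-\lambda}$ because of the $W''$-damping in the bulk and the initial bound \eqref{as-u5}; I expect the argument to require a careful bootstrapping of the differential inequality on time intervals of length $\sim\varepsilon^{2\lambda}$ or similar, exploiting that on such short intervals the boundary term contributes only $O(\varepsilon^{2\lambda}\cdot\varepsilon^{-1}) = O(\varepsilon^{2\lambda-1})$, which is $O(\varepsilon^{-\lambda})$ precisely when $\lambda \le 1$ — wait, $2\lambda - 1 \ge -\lambda \iff 3\lambda\ge 1\iff \lambda\ge 1/3$, consistent with the hypothesis $\lambda\in[3/5,1)$ (the stronger lower bound $3/5$ presumably being needed in the later discrepancy-vanishing step rather than here). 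I would formalise this by iterating the weighted maximum principle over $\lceil T/\varepsilon^{2\lambda}\rceil$-many steps and checking the constants close up.
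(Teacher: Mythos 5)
Your overall instinct --- maximum principle with a boundary barrier adapted to the curvature of $\partial\Omega$ --- is the right one, but the proposal stalls at exactly the point where the paper has its one new idea. You observe (correctly) that before rescaling the normal-derivative term from Lemma \ref{normal-di-u} is of size $\kappa\,\varepsilon|\nabla u_\varepsilon|^2 = O(\varepsilon^{-1})$, which a barrier of height $\Lambda\varepsilon^{-\lambda}$ cannot dominate pointwise when $\lambda<1$, and you then retreat into a speculative time-slicing/Gronwall plan that never closes. The paper's resolution is simpler and is missing from your proposal: it performs the \emph{parabolic rescaling} $v_\varepsilon(y,\tau):=u_\varepsilon(\varepsilon y,\varepsilon^2\tau)$ on $\Omega_\varepsilon=\varepsilon^{-1}\Omega$. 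Rescaling shrinks the second fundamental form to $\kappa_\varepsilon=\varepsilon\kappa$, so the boundary term $A_{x,\varepsilon}(\nabla v_\varepsilon,\nabla v_\varepsilon)\le\varepsilon\kappa|\nabla v_\varepsilon|^2$ is now $O(\varepsilon)$ thanks to the uniform bound $|\nabla v_\varepsilon|\le\Cr{c-bdd-nabla}$ of Lemma \ref{lem:bdd-nabra}, and can be beaten by the barrier $\varepsilon\phi(y)$ with $\phi$ built from $\psi(\mathrm{dist}(\cdot,\partial\Omega_\varepsilon))$ and $\partial_{\nu_\varepsilon}\phi=-\kappa(\Cr{c-bdd-nabla}^2+1)$. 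That is the whole mechanism that handles non-convexity; there is no need for short time intervals or iterated Gronwall arguments.

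There is also a substantive error in your interior differential inequality. The $W''$ terms cancel in the computation of $\partial_t\xi_\varepsilon-\Delta\xi_\varepsilon$; the ``raw'' identity is
\[
\partial_\tau\bigl(\tfrac{|\nabla v_\varepsilon|^2}{2}-W(v_\varepsilon)\bigr)-\Delta\bigl(\tfrac{|\nabla v_\varepsilon|^2}{2}-W(v_\varepsilon)\bigr)= (W')^2-|\nabla^2 v_\varepsilon|^2,
\]
which has no $W''$-damping term, so your claim that $W''\ge\beta$ on $\{|u_\varepsilon|\ge\alpha\}$ supplies the needed dissipation does not reflect what actually happens. The paper instead subtracts an auxiliary profile $G(v_\varepsilon)=\varepsilon^{1-\lambda}\bigl(1-\tfrac18(v_\varepsilon-\gamma)^2\bigr)$ from the discrepancy, uses a Cauchy--Schwarz/transport-term manipulation on $|\nabla^2 v_\varepsilon|^2$ to eliminate the Hessian, and gets the damping from $G''|\nabla v_\varepsilon|^2=-\tfrac14\varepsilon^{1-\lambda}|\nabla v_\varepsilon|^2$ (together with $G'W'\ge0$, which exploits the precise sign structure of $W'$ around $\gamma$). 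At a supposed interior maximum of the modified discrepancy of size $\ge C\varepsilon^{1-\lambda}$, one has $|\nabla v_\varepsilon|^2\gtrsim\varepsilon^{1-\lambda}$, so the damping contributes $-c\,\varepsilon^{2(1-\lambda)}$, while the remaining errors are $O(\varepsilon^{(1+\lambda)/2})$ and $O(\varepsilon)$. The contradiction requires $2(1-\lambda)\le(1+\lambda)/2$, i.e.\ $\lambda\ge3/5$, so your guess that the lower bound on $\lambda$ is only needed ``in the later discrepancy-vanishing step'' is wrong --- it is needed right here and is the binding constraint of the proof.
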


In order to prove Proposition \ref{thm-dis-bdd}, we have to control the normal derivative of $|\nabla u_{\varepsilon_i}|^2$ at the boundary of $\Omega$. 

\begin{lem}\label{normal-di-u}
Let $\Omega^\prime$ be an arbitrary domain with smooth boundary and $A_x$ be the second fundamental form of $\partial \Omega^\prime$ at $x \in \partial \Omega^\prime$. 
Suppose that $v \in C^2(\overline{\Omega^\prime})$ satisfies $\langle \nabla v, \nu^\prime\rangle=0$ on $\partial \Omega^\prime$, where $\nu^\prime$ is the unit normal to $\Omega^\prime$. 
Then 
\[ \dfrac{\partial}{\partial \nu^\prime}\dfrac{|\nabla v|^2}{2} = A_x(\nabla v, \nabla v) \]
at $x \in \partial \Omega^\prime$. 
\end{lem}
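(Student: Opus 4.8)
The plan is to compute the normal derivative of $|\nabla v|^2/2$ at a boundary point using an adapted coordinate frame and the Neumann condition. Fix $x_0 \in \partial \Omega'$ and choose a local orthonormal frame $\{e_1, \dots, e_{n-1}, \nu'\}$ near $x_0$ such that $e_1, \dots, e_{n-1}$ are tangent to $\partial \Omega'$ along the boundary and $\nu'$ is the (extended) unit normal, e.g. $\nu' = \nabla d$ where $d$ is the signed distance function to $\partial \Omega'$, which is smooth in a neighborhood of $\partial \Omega'$. Then write $|\nabla v|^2 = \sum_{k=1}^{n-1} (\partial_{e_k} v)^2 + (\partial_{\nu'} v)^2$ and differentiate in the $\nu'$ direction, so that
\[
\frac{\partial}{\partial \nu'} \frac{|\nabla v|^2}{2} = \sum_{k=1}^{n-1} (\partial_{e_k} v)\,(\partial_{\nu'}\partial_{e_k} v) + (\partial_{\nu'} v)\,(\partial_{\nu'}\partial_{\nu'} v).
\]
On $\partial \Omega'$ the Neumann condition gives $\partial_{\nu'} v = 0$, so the last term drops, and we are left with controlling $\partial_{\nu'}\partial_{e_k} v$ along the boundary.

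Next I would commute the derivatives: $\partial_{\nu'}\partial_{e_k} v = \partial_{e_k}\partial_{\nu'} v + [\nu', e_k]\,v$, where the commutator $[\nu', e_k]$ is a vector field whose value along $\partial \Omega'$ is tangent to $\partial \Omega'$ and encodes the second fundamental form. More precisely, since $\partial_{\nu'} v \equiv 0$ on $\partial \Omega'$ and $e_k$ is tangent to $\partial \Omega'$, the tangential derivative $\partial_{e_k}(\partial_{\nu'} v)$ vanishes on $\partial \Omega'$. Hence on the boundary
\[
\frac{\partial}{\partial \nu'} \frac{|\nabla v|^2}{2} = \sum_{k=1}^{n-1} (\partial_{e_k} v)\, \bigl([\nu', e_k] v\bigr).
\]
It remains to identify $[\nu', e_k] v$ restricted to the boundary. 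Writing $[\nu', e_k] = \nabla_{\nu'} e_k - \nabla_{e_k} \nu'$ for the Euclidean connection, the term $\nabla_{\nu'} e_k$ can be arranged to be normal (choosing the frame to be parallel-transported appropriately, or simply noting that only the action on $v$ through tangential components matters since $\partial_{\nu'} v = 0$), while $\nabla_{e_k} \nu' = -\sum_j (A_{x_0})_{kj} e_j$ is exactly the shape operator, i.e. the second fundamental form $A_{x_0}$ acting on $e_k$. Substituting gives $[\nu', e_k] v = \sum_j (A_{x_0})_{kj}\, \partial_{e_j} v$ modulo a normal term killed by $\partial_{\nu'} v = 0$, and therefore
\[
\frac{\partial}{\partial \nu'} \frac{|\nabla v|^2}{2} = \sum_{k,j=1}^{n-1} (A_{x_0})_{kj}\, (\partial_{e_k} v)(\partial_{e_j} v) = A_{x_0}(\nabla v, \nabla v),
\]
using once more that $\nabla v$ is tangential on $\partial \Omega'$ by the Neumann condition, so only its tangential components enter.

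The main obstacle, and the point requiring care, is the bookkeeping with the connection and the commutator term: one must be careful about the sign convention for $A_x$ (whether it is $\nabla \nu'$ or $-\nabla \nu'$, inward- versus outward-pointing normal) and about which pieces of $\nabla_{\nu'} e_k$ and $\nabla_{e_k}\nu'$ survive after contracting with $v$. The cleanest way to avoid frame-dependent errors is probably to work directly with the distance function: set $\nu' = \nabla d$ globally near $\partial\Omega'$, extend $v$ so that $\langle \nabla v, \nabla d\rangle$ is controlled, and compute $\partial_{\nu'}|\nabla v|^2 = 2 D^2 v (\nabla v, \nabla d) = 2 \langle \nabla v, D^2 v\, \nabla d\rangle$; then use $D^2 v \,\nabla d = \nabla(\langle \nabla v, \nabla d\rangle) - D^2 d\, \nabla v$ together with $\langle \nabla v, \nabla d \rangle = 0$ on $\partial \Omega'$ (so its tangential gradient vanishes there) and the fact that $D^2 d$ restricted to the tangent space of $\partial \Omega'$ is the second fundamental form. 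This reduces the whole computation to the identity $D^2 d\big|_{T\partial\Omega'} = A_x$, which is standard.
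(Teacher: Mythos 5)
The paper does not give a proof of this lemma; it simply cites \cite{CH,MT,T}, so there is nothing in the text to compare against. Your frame/commutator argument is a correct and self-contained proof, and the sign convention comes out consistent with how the paper later uses the lemma (principal curvatures $\le 0$ for convex domains, e.g.\ the ball gives $A_{kj}=-\delta_{kj}$, and indeed in polar coordinates one checks $\partial_r\frac{|\nabla v|^2}{2}\big|_{r=1}=-|\nabla v|^2$ under the Neumann condition). The one spot where your argument is looser than it needs to be is the treatment of $\nabla_{\nu'}e_k$: in a generic orthonormal frame this vector has tangential components, so it is not simply ``normal''. The quick fix is either, as you hint, to parallel-transport the $e_k$ along the normal curves so that $\nabla_{\nu'}e_k\equiv 0$, or to observe that the coefficients $c_{kj}:=\langle\nabla_{\nu'}e_k,e_j\rangle$ are antisymmetric in $k,j$ (differentiate $\langle e_k,e_j\rangle=\delta_{kj}$), so $\sum_{k,j}c_{kj}(\partial_{e_k}v)(\partial_{e_j}v)=0$ anyway; this step deserves an explicit sentence. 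Finally, your closing sketch of the distance-function route has a sign slip: from $D^2v\,\nabla d=\nabla\langle\nabla v,\nabla d\rangle-D^2d\,\nabla v$ and the vanishing of the tangential gradient of $\langle\nabla v,\nabla d\rangle$ on $\partial\Omega'$, one gets $\partial_{\nu'}\frac{|\nabla v|^2}{2}=-D^2d(\nabla v,\nabla v)$, so with $\nu'=\nabla d$ the correct identification is $A_x=-D^2d\big|_{T\partial\Omega'}$ rather than $D^2d\big|_{T\partial\Omega'}=A_x$; as written, that version would return the wrong sign. The main argument is nonetheless correct as it stands.
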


This control has been used in a number of papers (for example, see \cite{CH,MT,T}), thus we refer to these papers for the proof. 

From Lemma \ref{normal-di-u}, we have to estimate $|\nabla u_{\varepsilon_i}|$ on the time-space domain $\Omega \times (0,\infty)$ to control the normal derivative of $|\nabla u_{\varepsilon_i}|^2$ at the boundary of $\Omega$. 
In the following, we use a parabolic re-scaling. 
Let 
\begin{equation}\label{scale-domain}
\Omega_{\varepsilon_i} = \{y \in \mathbb{R}^n : \varepsilon_i y \in \Omega\} 
\end{equation}
and we define the function 
\begin{equation}\label{scale-u}
v_{\varepsilon_i}(y,\tau) := u_{\varepsilon_i}(\varepsilon_i y, \varepsilon_i^2 \tau) \quad \mbox{for} \quad y \in \overline{\Omega}_{\varepsilon_i}, \quad \tau \in [0,\infty). 
\end{equation}
We note that 
\begin{equation}\label{scale-kappa}
\kappa_{\varepsilon_i} := \|\mbox{principal curvature of} \; \partial \Omega_{\varepsilon_i}\|_{L^\infty(\partial \Omega_{\varepsilon_i})} = \varepsilon_i \kappa
\end{equation}
holds and $v_{\varepsilon_i}$ satisfies 
\begin{equation}\label{scale-eq}
\begin{cases}
\partial_\tau v_{\varepsilon_i} = \Delta v_{\varepsilon_i} - W'(v_{\varepsilon_i}) \quad {\rm in} \quad \Omega_{\varepsilon_i} \times (0,\infty), \\
\langle \nabla v_{\varepsilon_i}, \nu_{\varepsilon_i} \rangle = 0 \quad {\rm on} \quad \partial \Omega_{\varepsilon_i} \times (0,\infty), 
\end{cases}
\end{equation}
where $\nu_{\varepsilon_i}$ is the outward unit normal to $\partial \Omega_{\varepsilon_i}$. 

\begin{lem}\label{lem:bdd-nabra}
There exists a constant $\Cl[c]{c-bdd-nabla}$ depending only on $\Cr{c-as-gra}, \Cr{c-curva}$ and $W$ such that 
\begin{equation}\label{bdd-nabla-u} 
\sup_{\Omega \times [0, \infty)} \varepsilon_i |\nabla u_{\varepsilon_i}| \le \Cr{c-bdd-nabla} 
\end{equation}
for all $0 < \varepsilon_i < 1$. 
\end{lem}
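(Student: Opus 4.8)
The plan is to work with the rescaled function $v_{\varepsilon_i}$ on $\Omega_{\varepsilon_i}\times(0,\infty)$ introduced in \eqref{scale-domain}--\eqref{scale-eq}, for which the gradient bound \eqref{bdd-nabla-u} is equivalent to $\sup_{\Omega_{\varepsilon_i}\times[0,\infty)}|\nabla v_{\varepsilon_i}|\le \Cr{c-bdd-nabla}$. The crucial structural features after rescaling are that the nonlinearity $W'(v_{\varepsilon_i})$ is uniformly bounded (because $|v_{\varepsilon_i}|\le 1$ by \eqref{u-bdd} and $W\in C^3$), that the rescaled domain has principal curvatures bounded by $\varepsilon_i\kappa\le\kappa$ (so the boundary geometry is \emph{uniformly} mild, in fact flattening as $\varepsilon_i\to 0$), that the initial gradient is bounded by $\Cr{c-as-gra}$ via \eqref{as-u4}, and that the homogeneous Neumann condition persists. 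The idea is a Bernstein-type interior-and-boundary gradient estimate: set $w:=|\nabla v_{\varepsilon_i}|^2$ and derive a differential inequality for $w$ that, together with the boundary behavior of its normal derivative, yields a uniform bound by the maximum principle.

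First I would compute the evolution of $w=|\nabla v_{\varepsilon_i}|^2$. Differentiating \eqref{scale-eq}, one gets the standard identity
\begin{equation*}
\partial_\tau w = \Delta w - 2|\nabla^2 v_{\varepsilon_i}|^2 - 2 W''(v_{\varepsilon_i})\, w \le \Delta w + 2\beta_0\, w,
\end{equation*}
where $\beta_0:=\sup_{|s|\le1}|W''(s)|$; the Hessian term is discarded since it is nonpositive. Next, on $\partial\Omega_{\varepsilon_i}$, Lemma \ref{normal-di-u} applied to $v=v_{\varepsilon_i}(\cdot,\tau)$ on $\Omega'=\Omega_{\varepsilon_i}$ gives $\tfrac{\partial}{\partial\nu_{\varepsilon_i}}\tfrac{w}{2}=A_x(\nabla v_{\varepsilon_i},\nabla v_{\varepsilon_i})\le \varepsilon_i\kappa\, w\le\kappa\, w$. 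Thus $w$ satisfies a parabolic inequality in the interior and an oblique/Robin-type inequality on the boundary, with constants depending only on $W$ and $\kappa$ (hence only on $\Cr{c-curva}$ since $\kappa\le(6\Cr{c-curva})^{-1}$, and on $W$). The initial datum satisfies $w(\cdot,0)=|\nabla v_{\varepsilon_i}(\cdot,0)|^2=|\varepsilon_i\nabla u_{\varepsilon_i,0}(\varepsilon_i\cdot)|^2\le\Cr{c-as-gra}^2$ by \eqref{as-u4}.

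The main obstacle is that this boundary inequality $\partial w/\partial\nu\le\kappa w$ does not by itself give a comparison barrier on the unbounded, merely-bounded-curvature domain $\Omega_{\varepsilon_i}$ — one cannot directly invoke a global maximum principle without controlling growth at infinity, and the curvature term on the boundary has the ``wrong'' sign for a naive barrier. The standard device to handle this is to localize: fix a boundary point, flatten $\partial\Omega_{\varepsilon_i}$ near it by a diffeomorphism whose $C^2$ norm is controlled uniformly by $\kappa$ (using the curvature bound \eqref{def-kappa} and the tubular-neighborhood radius $\Cr{c-curva}$), reflect $v_{\varepsilon_i}$ evenly across the flattened boundary (legitimate because of the zero Neumann condition) to obtain a function on a half-ball that solves a uniformly parabolic equation with bounded coefficients and bounded right-hand side, and then apply \emph{interior} parabolic gradient estimates (e.g.\ De Giorgi--Nash--Moser or Lieberman-type local bounds) on balls of a fixed radius $\sim\Cr{c-curva}$. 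This produces a bound $\sup_{B_{\Cr{c-curva}/2}}|\nabla v_{\varepsilon_i}|\le C(\Cr{c-curva},\Cr{c-as-gra},W)$ near the boundary; in the interior (points at distance $\ge\Cr{c-curva}$ from $\partial\Omega_{\varepsilon_i}$) the same interior estimate applies directly without reflection. Patching the two regions gives \eqref{bdd-nabla-u} with $\Cr{c-bdd-nabla}$ depending only on $\Cr{c-as-gra},\Cr{c-curva}$ and $W$, as claimed. Alternatively, and perhaps more cleanly, one can run a genuine Bernstein argument with an auxiliary function $z:=w\,\varphi$ for a suitable cutoff $\varphi$ supported near a boundary point, absorbing the curvature term into the cutoff gradient; either route reduces everything to local estimates with uniformly controlled data.
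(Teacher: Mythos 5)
Your proposal is correct and takes essentially the same route as the paper: rescale to $v_{\varepsilon_i}$ on $\Omega_{\varepsilon_i}$, note that the $L^\infty$ bound from \eqref{u-bdd}, the initial gradient bound \eqref{as-u4}, the scaled curvature bound $\kappa_{\varepsilon_i}=\varepsilon_i\kappa$, the bounded nonlinearity $W'(v_{\varepsilon_i})$, and the Neumann condition are all uniform in $\varepsilon_i$, and then invoke a parabolic gradient estimate uniform in time. The paper simply cites a standard result (Theorem 7.2 of Chapter 5 of Lady\v{z}enskaja--Solonnikov--Ural'ceva) instead of sketching the Bernstein/reflection argument as you do, but the underlying idea and the resulting dependence of $\Cr{c-bdd-nabla}$ on $\Cr{c-as-gra},\Cr{c-curva},W$ are the same.
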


\begin{proof}
Let $\Omega_{\varepsilon_i}$ and $v_{\varepsilon_i}$ be defined in \eqref{scale-domain} and \eqref{scale-u}, respectively. 
By the assumption \eqref{as-u4} and the property \eqref{u-bdd}, we can see that 
\begin{equation}\label{bdd-nabla1} 
\|v_{\varepsilon_i}\|_{L^\infty(\Omega_{\varepsilon_i} \times (0,\infty))} \le 1, \quad \sup_{y \in \Omega_{\varepsilon_i}} |\nabla v_{\varepsilon_i}(y,0)| \le \Cr{c-as-gra}. 
\end{equation}
By a standard gradient estimate (see \cite[Theorem 7.2 of Chapter 5]{LSU}), we obtain the boundedness of $\sup_{\Omega_{\varepsilon_i} \times (0,\infty)} |\nabla v_{\varepsilon_i}|$ depending only on $\|v_{\varepsilon_i}\|_{L^\infty(\Omega_{\varepsilon_i} \times (0,\infty))}, \sup_{y \in \Omega_{\varepsilon_i}} |\nabla v_{\varepsilon_i}(y,0)|, W$ and the second fundamental form of $\partial \Omega_{\varepsilon_i}$. 
Applying \eqref{scale-kappa} and \eqref{bdd-nabla1}, we have the uniform boundedness of $\sup_{\Omega_{\varepsilon_i} \times (0,\infty)} |\nabla v_{\varepsilon_i}|$ for $0 < \varepsilon_i < 1$. 
This implies the conclusion by the definition of $v_{\varepsilon_i}$. 
\end{proof}

If we assume the convexity of the domain $\Omega$, the normal derivative of the discrepancy $\varepsilon_i|\nabla u_{\varepsilon_i}|^2/2 - W(u_{\varepsilon_i})/\varepsilon_i$ is non-positive at any boundary point of $\Omega$ since all principal curvatures of $\partial \Omega$ are non-positive and Lemma \ref{normal-di-u} holds, hence the maximum principle for the discrepancy works well as in \cite{MT}. 
In the following proof for Proposition \ref{thm-dis-bdd}, we apply the distance function from the boundary $\partial \Omega$ to control the normal derivative of the discrepancy on the boundary $\partial \Omega$ and argue a modified maximum principle. 

\begin{proof}[Proof of Proposition \ref{thm-dis-bdd}]
For simplicity we omit the subscript $i$. 
Let $\Omega_\varepsilon$ and $v_\varepsilon$ be defined as \eqref{scale-domain} and \eqref{scale-u}, respectively. 
For $G\in C^\infty(\mathbb{R})$ and $\phi \in C^\infty(\overline{\Omega}_\varepsilon)$ to be chosen latter, define 
\begin{equation}\label{def-xi-ep} 
\tilde{\xi}_\varepsilon (y,\tau) := \dfrac{|\nabla v_\varepsilon(y,\tau)|^2}{2} - W(v_\varepsilon(y,\tau)) - G(v_\varepsilon(y,\tau)) + \varepsilon\phi(y) 
\end{equation}
for $y \in \overline{\Omega}_\varepsilon$ and $\tau \in [0,\infty)$. 
We compute $ \partial_\tau \tilde{\xi}_\varepsilon - \Delta \tilde{\xi}_\varepsilon$ and obtain 
\begin{equation*}
\begin{aligned}
\partial_\tau \tilde{\xi}_\varepsilon - \Delta \tilde{\xi}_\varepsilon = &\; \langle \nabla v_\varepsilon, \nabla \partial_\tau v_\varepsilon \rangle - (W^\prime + G^\prime) \partial_\tau v_\varepsilon - |\nabla^2 v_\varepsilon|^2 - \langle \nabla v_\varepsilon, \nabla \Delta v_\varepsilon \rangle \\
&\; + (W^\prime + G^\prime) \Delta v_\varepsilon + (W^{\prime\prime} + G^{\prime\prime}) |\nabla v_\varepsilon|^2 - \varepsilon \Delta \phi 
\end{aligned}
\end{equation*}
for $y \in \overline{\Omega}_\varepsilon$ and $\tau \in (0,\infty)$. 
Substituting the equation \eqref{scale-eq}, we have 
\begin{equation}\label{para-xi1}
\partial_\tau \tilde{\xi}_\varepsilon - \Delta \tilde{\xi}_\varepsilon = W^\prime(W^\prime + G^\prime) - |\nabla^2 v_\varepsilon|^2 + G^{\prime\prime}|\nabla v_\varepsilon|^2 - \varepsilon \Delta \phi. 
\end{equation}
Differentiating \eqref{def-xi-ep} with respect to $y_j$ and by using the Cauchy-Schwarz inequality, we have 
\begin{equation}\label{xi-cauchy}
\begin{aligned}
|\nabla v_\varepsilon|^2|\nabla^2 v_\varepsilon|^2 \ge&\; \sum_{j=1}^n\left(\sum_{i=1}^n \partial_{y_i}v_\varepsilon \partial_{y_i y_j}v_\varepsilon \right)^2 
= \sum_{j=1}^n (\partial_{y_j} \tilde{\xi}_\varepsilon + (W^\prime + G^\prime) \partial_{y_j} v_\varepsilon - \varepsilon \partial_{y_j}\phi)^2 \\
\ge &\; 2\langle (W^\prime + G^\prime) \nabla v_\varepsilon - \varepsilon \nabla \phi, \nabla \tilde{\xi}_\varepsilon\rangle + (W^\prime + G^\prime)^2 |\nabla v_\varepsilon|^2 \\
&\; - 2\varepsilon(W^\prime + G^\prime) \langle \nabla v_\varepsilon, \nabla \phi\rangle. 
\end{aligned}
\end{equation}
On $\{|\nabla v_\varepsilon| \neq 0\}$, divide \eqref{xi-cauchy} by $|\nabla v_\varepsilon|^2$ and substitute into \eqref{para-xi1} to obtain 
\begin{equation}\label{para-xi2}
\begin{aligned}
\partial_\tau \tilde{\xi}_\varepsilon - \Delta \tilde{\xi}_\varepsilon \le&\; -(G^\prime)^2 - W^\prime G^\prime - \dfrac{2\langle (W^\prime + G^\prime) \nabla v_\varepsilon - \varepsilon \nabla \phi, \nabla \tilde{\xi}_\varepsilon\rangle}{|\nabla v_\varepsilon|^2} \\
&\; + \dfrac{2\varepsilon(W^\prime + G^\prime)}{|\nabla v_\varepsilon|^2}\langle \nabla v_\varepsilon, \nabla \phi\rangle + G^{\prime\prime}|\nabla v_\varepsilon|^2 - \varepsilon \Delta \phi. 
\end{aligned}
\end{equation}
Now we choose $G$ as 
\[ G(s) := \varepsilon^{1-\lambda}\left(1-\dfrac{1}{8}(s-\gamma)^2\right), \]
where $\gamma$ is as in the assumption (W2). 
Deu to the choice of $G$, the properties 
\begin{equation}\label{pro-g}
0 < G < \varepsilon^{1-\lambda}, \quad G^\prime W^\prime \ge 0, \quad G^{\prime \prime} = -\dfrac{ \varepsilon^{1-\lambda} }{4} 
\end{equation}
hold. 
Next, in oder to choose $\phi$, we define $\psi \in C^\infty([0, \infty);\mathbb{R}^+)$ as 
\begin{equation*}
\psi(s) = s \quad \mbox{for} \; s \in [0, \Cr{c-curva}/2], \quad \psi^\prime(s) = 0 \quad \mbox{for} \; s \in [\Cr{c-curva}, \infty), \quad |\psi^\prime| \le 1, \quad |\psi^{\prime\prime}| \le 4/\Cr{c-curva}. 
\end{equation*}
Let $\phi$ be defined by $\phi(y) := \kappa (\Cr{c-bdd-nabla}^2 + 1)\psi({\rm dist}(\partial \Omega_\varepsilon, y))$ and $\nu_\varepsilon$ be the outward unit normal to $\Omega_\varepsilon$. 
For $\varepsilon < 1$, we note the distance function is smooth on 
\[ N_\varepsilon := \{y \in \overline{\Omega}_\varepsilon : {\rm dist}(\partial \Omega_\varepsilon, y) \le \Cr{c-curva}\} \]
and 
\[ |\nabla {\rm dist}(\Omega_\varepsilon, y)| = 1, \quad \Delta {\rm dist}(\Omega_\varepsilon, y) \le \dfrac{(n-1)\kappa \varepsilon}{1-\Cr{c-curva}\kappa\varepsilon} \le \dfrac{6(n-1)\kappa}{5}\varepsilon \]
for $y \in N_\varepsilon$ since $\Cr{c-curva} \in (0,(6\kappa)^{-1}]$ and \eqref{scale-kappa} holds (see \cite{GT} for the details). 
Furthermore, we may see  
$\frac{\partial}{\partial \nu_\varepsilon}{\rm dist}(\Omega_\varepsilon, y) = - 1$
on $\partial \Omega_\varepsilon$. 
Hence $\phi$ is smooth and satisfies 
\begin{equation}\label{pro-phi}
0 < \phi \le \Cl[m]{m-up-bdd-xi2}, \quad |\nabla \phi| \le \Cr{m-up-bdd-xi2}, \quad \Delta \phi \le \Cr{m-up-bdd-xi2} 
\end{equation}
on $\overline{\Omega}_\varepsilon$ and $\frac{\partial}{\partial \nu_\varepsilon} \phi = -\kappa (\Cr{c-bdd-nabla}^2 + 1)$ on $\partial \Omega_\varepsilon$, where $\Cr{m-up-bdd-xi2}$ is a positive constant depending only on $n, \kappa$, $\Cr{c-curva}$ and $\Cr{c-bdd-nabla}$. 
By applying the inequalities \eqref{pro-g} and \eqref{pro-phi} for \eqref{para-xi2}, we obtain
\begin{equation}\label{para-xi3}
\partial_\tau \tilde{\xi}_\varepsilon - \Delta \tilde{\xi}_\varepsilon \le - \dfrac{2\langle (W^\prime + G^\prime) \nabla v_\varepsilon - \varepsilon \nabla \phi, \nabla \tilde{\xi}_\varepsilon\rangle}{|\nabla v_\varepsilon|^2} + \dfrac{\Cl[m]{m-up-bdd-xi3}\varepsilon}{|\nabla v_\varepsilon|} - \dfrac{\varepsilon^{1-\lambda}}{4}|\nabla v_\varepsilon|^2 + \Cr{m-up-bdd-xi2} \varepsilon 
\end{equation}
for any point $y$ such that $|\nabla v_\varepsilon(y)| \neq 0$, where $\Cr{m-up-bdd-xi3}$ is a positive constant depending only on $\Cr{m-up-bdd-xi2}$ and $\sup_{|s| \le 1} |W^\prime(s)|$. 
Now we fix an arbitrarily large $\tilde{T}>0$ and suppose for a contradiction that 
\begin{equation}\label{as-dis-con} 
\max_{y \in \overline{\Omega}_\varepsilon, \tau \in [0,\tilde{T}]} \tilde{\xi}_\varepsilon(y,\tau) \ge C\varepsilon^{1-\lambda} 
\end{equation}
for $\varepsilon < 1$ and some positive constant $C$ to be chosen. 
By the positivity of $W$ and $G$, the boundedness \eqref{as-u5}, \eqref{pro-phi} and the definition of $\tilde{\xi}_\varepsilon$, we see that $\tilde{\xi}_\varepsilon$ does not attain the maximum on $\overline{\Omega}_\varepsilon \times \{0\}$ if $C > \Cr{c-as-dis} + \Cr{m-up-bdd-xi2}$. 
Furthermore, by Lemma \ref{normal-di-u}, \eqref{neumann}, \eqref{scale-kappa}, \eqref{bdd-nabla-u} and the choice of $\tilde{\xi}_\varepsilon$ and $\phi$, we also see that 
\[ \langle \nabla \tilde{\xi}_\varepsilon, \nu_\varepsilon \rangle = A_{x, \varepsilon}(\nabla v_{\varepsilon}, \nabla v_{\varepsilon}) - \varepsilon \kappa (\Cr{c-bdd-nabla}^2 + 1) \le \varepsilon \kappa |\nabla v_{\varepsilon}|^2 - \varepsilon \kappa (\Cr{c-bdd-nabla}^2 + 1) < 0 \] 
on $\partial \Omega_\varepsilon \times (0, \tilde{T}]$, where $A_{x, \varepsilon}$ is the second fundamental form of $\partial \Omega_{\varepsilon}$. 
Hence the maximum point $(\tilde{y},\tilde{\tau})$ of the left hand side of \eqref{as-dis-con} is in $\Omega_\varepsilon \times (0, \tilde{T}]$, and we also see 
\begin{equation}\label{pro-max1}
\nabla \tilde{\xi}_\varepsilon(\tilde{y},\tilde{\tau}) = 0, \quad \Delta \tilde{\xi}_\varepsilon(\tilde{y},\tilde{\tau}) \le 0, \quad
\partial_\tau \tilde{\xi}_\varepsilon(\tilde{y},\tilde{\tau}) \ge 0. 
\end{equation}
By \eqref{pro-phi} and \eqref{as-dis-con}, we obtain 
\begin{equation}\label{pro-max2} 
|\nabla v_\varepsilon(\tilde{y},\tilde{\tau})|^2 \ge 2C\varepsilon^{1-\lambda} - 2\varepsilon \Cr{m-up-bdd-xi2} \ge 2\varepsilon^{1-\lambda}(C-\Cr{m-up-bdd-xi2}).  
\end{equation}
For sufficiently large $C$ so that the right hand side of \eqref{pro-max2} is positive, we must have $|\nabla v_\varepsilon| > 0$ in the neighborhood of $(\tilde{y},\tilde{\tau})$, thus we can apply \eqref{pro-max1} and \eqref{pro-max2} for \eqref{para-xi3} to obtain 
\[ 0 \le \dfrac{\varepsilon^{\frac{1}{2}+\frac{\lambda}{2}}\Cr{m-up-bdd-xi3}}{\sqrt{2(C-\Cr{m-up-bdd-xi2})}} - \dfrac{\varepsilon^{2-2\lambda}(C-\Cr{m-up-bdd-xi2})}{2} + \varepsilon \Cr{m-up-bdd-xi2}. \]
We note that $2-2\lambda \le (1+\lambda)/2 < 1$ from $\lambda \in [3/5,1)$. 
Thus choosing $C$ sufficiently large depending only on $\Cr{m-up-bdd-xi2}$ and $\Cr{m-up-bdd-xi3}$, we obtain a contradiction. 
Hence we proved 
\[ \max_{y \in \overline{\Omega}_\varepsilon, \tau \in [0,\tilde{T}]} \xi_\varepsilon(y,\tau) \le C\varepsilon^{1-\lambda} \]
for $\varepsilon < 1$ and sufficiently large $C$ depending only on $n, \kappa, \Cr{c-as-gra}, \Cr{c-as-dis}, \Cr{c-curva}, W$ and $\Omega$. 
Since $G \le \varepsilon^{1-\lambda}$, $\phi$ is nonnegative and $\tilde{T}$ is arbitrary, we obtain \eqref{dis-bdd} by choosing $\Cr{c-up-bdd-xi} = C+1$. 
\end{proof}

%%%%%%%%%%%%%%%%%%%%%%%%%%%%%%%%%%%%%%%%%%%%%%%%%%%%%%%%%%
%\medskip

\section{Density ratio upper bound}\label{sec:den-rat}

In this section, we prove the upper density ratio bound for diffused interface energy. 
Define 
\[ 
D_{\varepsilon_i} (t) := \max \left\{\sup_{y \in N_{\Cr{c-curva}/2}\cap \overline{\Omega}, \; 0<r<\Cr{c-curva}} \dfrac{\mu_{\varepsilon_i}^t(B_r(y)) + \mu_{\varepsilon_i}^t(\tilde{B}_r(y))}{\omega_{n-1}r^{n-1}}, \sup_{y \in \Omega \setminus N_{\Cr{c-curva}/2}, \; 0<r<\Cr{c-curva}} \dfrac{\mu_{\varepsilon_i}^t(B_r(y))}{\omega_{n-1}r^{n-1}} \right\} 
\]
for $t \in [0,\infty)$. 
Estimates in this section are similar to \cite{LST,TT}. 
We note that \cite{LST,TT} study the singular limit problem of an Allen-Cahn type equation with a transport term on $\mathbb{T}^n := (\mathbb{R}/\mathbb{N})^n$ or $\mathbb{R}^n$, thus we have to expand their argument for our Neumann problem. 
In order to apply the reflection argument for the argument in \cite{LST,TT}, we consider not only the second density ratio but also the first density ratio of $D_{\varepsilon_i}(t)$. 

\begin{pro}\label{thm-den-rat}
For any $T>0$, there exist $\Cl[c]{c-den-rat}$ and $0<\Cl[e]{e-den-rat}<1$ depending only on $T$, $n$, $D_0$, $\alpha$, $W$, $\lambda$, $\kappa$, $\Cr{c-as-gra}$, $\Cr{c-as-dis}$, $\Cr{c-curva}$ and $\Omega$ such that 
\begin{equation}\label{den-rat-con} 
D_{\varepsilon_i}(t) \le \Cr{c-den-rat} 
\end{equation}
for all $t \in [0, T]$ and $\varepsilon_i \in (0, \Cr{e-den-rat})$. 
\end{pro}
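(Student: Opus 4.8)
The strategy is a Gronwall-type iteration in time built on the boundary monotonicity formula (Proposition \ref{thm:monotonicity}), closely modelled on the arguments in \cite{LST, TT} but adapted to the Neumann setting by working simultaneously with the two density ratios packaged in $D_{\varepsilon_i}(t)$. First I would fix $T>0$ and a time $t_0 \in (0,T]$, a point $y$, and a radius $r$, and apply \eqref{monotonicity} (resp.\ \eqref{monotonicity-2} if $y \notin N_{\Cr{c-curva}/2}$) on the time interval $[t_0 - r^2, t_0]$, integrating the differential inequality to compare the Gaussian density $\int_\Omega \rho_{1,(y,t_0)} + \rho_{2,(y,t_0)} \, d\mu^t_{\varepsilon_i}$ at times close to $t_0$ with its value at an earlier time. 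The standard device is to choose $s = t_0 + r^2$ so that $\rho_{(y,s)}(\cdot, t_0)$ is comparable to $\omega_{n-1}^{-1} r^{1-n}$ on $B_r(y)$; this converts the Gaussian-weighted quantity into the density ratio $\mu^t_{\varepsilon_i}(B_r(y))/(\omega_{n-1}r^{n-1})$ up to universal constants and the cutoff $\eta$.

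The key new ingredient, compared with \cite{LST, TT}, is the control of the error term $\int_\Omega \frac{\rho_1 + \rho_2}{2(s-t)} \, d\xi^t_{\varepsilon_i}$ coming from the discrepancy measure, which is no longer non-positive since $\Omega$ is non-convex. Here I would invoke Proposition \ref{thm-dis-bdd}: the pointwise bound $\xi^t_{\varepsilon_i} \le \Cr{c-up-bdd-xi} \varepsilon_i^{-\lambda} \mathcal{L}^n\lfloor_\Omega$ means that, after integrating against $\frac{\rho_1+\rho_2}{2(s-t)}$ over $t \in [t_0 - r^2, t_0]$ and $x \in \Omega$, the contribution is bounded by $C \varepsilon_i^{-\lambda}$ times a convergent Gaussian integral, hence by $C \varepsilon_i^{-\lambda} r^{\text{(something nonnegative)}}$; the crucial point is that $\lambda < 1$, so for $r$ not too small relative to $\varepsilon_i$ this term is controlled, while for $r$ comparable to or smaller than $\varepsilon_i$ one uses instead the crude gradient bound from Lemma \ref{lem:bdd-nabra} (i.e.\ $\mu^t_{\varepsilon_i} \le C \varepsilon_i^{-1} \mathcal{L}^n$ locally, giving a density ratio bound of order $(\varepsilon_i/r)$ which is $\le C$ when $r \gtrsim \varepsilon_i$, and handled directly when $r \lesssim \varepsilon_i$). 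The two regimes are patched together, and the condition $\lambda \in [3/5,1)$ is what makes the exponents work out; one also uses the initial density ratio bound \eqref{as-u3} at $t=0$ to start the estimate.

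With the error terms controlled, the monotonicity inequality yields an estimate of the form $D_{\varepsilon_i}(t_0) \le C\big(1 + \sup_{t \in [t_1, t_0]} D_{\varepsilon_i}(t)\big)$ where $t_1 = \max\{0, t_0 - \delta\}$ for a small fixed $\delta$ depending only on the data; here the reflection structure of $\rho_2$ and the estimate $|\tilde x - y| \ge \Cr{c-curva}/2$ recorded before Proposition \ref{thm:monotonicity} are used to make the reflected Gaussian integrals over $\Omega$ comparable to integrals over a half-space, so that the reflected term genuinely contributes the second piece of $D_{\varepsilon_i}$. Iterating this over $\lceil T/\delta\rceil$ time steps, starting from the bound at $t=0$, gives \eqref{den-rat-con} with $\Cr{c-den-rat}$ depending only on $T$ and the listed constants, valid once $\varepsilon_i < \Cr{e-den-rat}$ so that $\varepsilon_i^{1-\lambda}$ and $\varepsilon_i/\Cr{c-curva}$ are small enough. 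The main obstacle is precisely the bookkeeping of the discrepancy error across the small-$r$ versus large-$r$ regimes: making sure that the power of $\varepsilon_i$ lost through Proposition \ref{thm-dis-bdd} is always dominated either by a positive power of $r$ or by the crude $\varepsilon_i/r$ bound, uniformly in the range $0 < r < \Cr{c-curva}$, and that the constants do not blow up as one iterates in time.
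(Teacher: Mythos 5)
The central step in your plan does not work as stated: the crude pointwise bound of Proposition~\ref{thm-dis-bdd} is not strong enough to control the discrepancy error in the monotonicity formula. Concretely, using only $\xi^t_{\varepsilon_i}\le \Cr{c-up-bdd-xi}\varepsilon_i^{-\lambda}\mathcal{L}^n\lfloor_\Omega$ together with the Gaussian-integral estimate $\int_\Omega\rho_1+\rho_2\,dx\lesssim\sqrt{s-t}$, integration over a time window of length $r^2$ gives
\[
\int_{t_0-r^2}^{t_0}\int_\Omega\frac{\rho_1+\rho_2}{2(s-t)}\,d\xi^t_{\varepsilon_i}\,dt \;\lesssim\; \varepsilon_i^{-\lambda}\int_{t_0-r^2}^{t_0}\frac{dt}{\sqrt{s-t}}\;\sim\;\varepsilon_i^{-\lambda}\,r,
\]
which diverges as $\varepsilon_i\to 0$ whenever $r\gg\varepsilon_i^{\lambda}$. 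Your claim that ``for $r$ not too small relative to $\varepsilon_i$ this term is controlled'' has the logic reversed: it is controlled only for $r\lesssim\varepsilon_i^{\lambda}$, and the density ratio must be bounded for \emph{all} radii up to $\Cr{c-curva}$. Moreover, integrating the monotonicity formula down to $t=0$ (which you need, since only $D_{\varepsilon_i}(0)$ is a priori controlled) with the crude bound gives an error of order $\varepsilon_i^{-\lambda}\sqrt{t_0}$, which also blows up; and if instead you Gronwall-iterate with step $\delta$, the accumulated error is $\sim(T/\delta)\cdot\varepsilon_i^{-\lambda}\sqrt{\delta}$, which cannot be made $\varepsilon_i$-uniformly bounded by any choice of $\delta$. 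So the bookkeeping you flag as ``the main obstacle'' is not merely delicate --- with only Proposition~\ref{thm-dis-bdd} it genuinely fails.

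What the paper actually does is to derive a much sharper, $(n-1)$-density--type bound on the positive part of the discrepancy before inserting it into the monotonicity formula. Under the bootstrap ansatz $\sup_{[0,T_1]}D_{\varepsilon_i}(t)\le D_1$, Lemma~\ref{lem-lo-bdd-den} gives a lower density bound near points where $|u_{\varepsilon_i}|<\alpha$; a Vitali covering then shows that the ``transition region'' has Lebesgue measure $O(\varepsilon_i^{\lambda^\prime})$, where $\lambda^\prime=(1+\lambda)/2$; and on its complement a parabolic decay estimate using (W3) (the convexity of $W$ near $\pm1$) forces $|\nabla u_{\varepsilon_i}|^2$ to be exponentially small. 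Combining these yields $\int_{B_r(y)\cap\Omega}\bigl(\tfrac{\varepsilon_i|\nabla u_{\varepsilon_i}|^2}{2}-\tfrac{W(u_{\varepsilon_i})}{\varepsilon_i}\bigr)^+dx\lesssim\varepsilon_i^{\lambda^\prime-\lambda}r^{n-1}$ for $r>\varepsilon_i^{\lambda^\prime}$, i.e., the positive discrepancy behaves like an $(n-1)$-dimensional quantity of size $o(1)$, not an $n$-dimensional one of size $\varepsilon_i^{-\lambda}$. Lemma~\ref{lem-dis-mono} then shows the time-integrated discrepancy error in the monotonicity formula is only $O(\varepsilon_i^{\lambda^\prime-\lambda}\log(1/\varepsilon_i))\to 0$, which is what makes the final estimate close. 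The conclusion is then reached by a single continuation/contradiction argument (choose $\Cr{c-den-rat}$ depending only on the data, set $D_1=\Cr{c-den-rat}+1$, derive a contradiction), not by iterating in time. In short, you correctly identify the role of the monotonicity formula, the reflected kernel, and the two regimes in $r$, but the missing idea is the intermediate refinement of the discrepancy bound via the covering and exponential-decay lemmas, without which no choice of iteration scheme closes the estimate.
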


\begin{rk}\label{rk:density}
Proposition \ref{thm-den-rat} and Proposition \ref{pro-con-den} show the boundedness of the density ratio for $\mu^t$, thus we can say ``$\mu^t$ behaves as an $(n-1)$-dimensional measure''. 
We also note that for any $(n-1)$-dimensional $C^1$ embedded submanifold $M \subset \Omega$ such that $\partial M \subset \partial \Omega$ the $(n-1)$-dimensional density 
\[ \Theta_*^{n-1}(x; \mu_M) := 
\begin{cases}
\lim_{r \downarrow 0} \dfrac{\mu_M(B_r(x))+\mu_M(\tilde{B}_r(x))}{\omega_{n-1}r^{n-1}} & {\rm if} \quad x \in N_{\Cr{c-curva}/2} \\
\lim_{r \downarrow 0} \dfrac{\mu_M(B_r(x))}{\omega_{n-1} r^{n-1}} & {\rm if} x \not \in N_{\Cr{c-curva}/2}
\end{cases} \]
of the measure $\mu_M := \mathcal{H}^{n-1}\lfloor_M$ satisfies 
\[ \Theta_*^{n-1}(x; \mu_M) = 1 \quad {\rm for} \quad x \in M \cup \partial M, \quad \Theta_*^{n-1}(x; \mu_M) = 0 \quad {\rm for} \quad x \not\in M \cup \partial M \]
since the usual $(n-1)$-dimensional density $\lim_{r \downarrow 0} \mu_M(B_r(x))/(\omega_{n-1} r^{n-1})$ is $1/2$ at $x \in \partial M$ and $\tilde{B}_r(x)$ is the reflected ball of $B_r(x)$ with respect to $\partial \Omega$. 
\end{rk}

In order to prove Proposition \ref{thm-den-rat}, we have to control the reflection ball, thus we cite the following lemma. 

\begin{lem}[{\cite[Lemma 4.2]{KT}}] \label{lem-ref-ball}
Assume $a \in N_{2\Cr{c-curva}}$ and $r>0$ satisfy ${\rm dist}(a,\partial \Omega) \le r$ and $B_r(a) \subset N_{3\Cr{c-curva}}$. 
Then 
\begin{equation}\label{ref-ball-bdd}
\tilde{B}_r(a) \subset B_{5r}(a).
\end{equation}
\end{lem}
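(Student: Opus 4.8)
The plan is to reduce the statement to a pointwise estimate comparing $|\tilde x - a|$ with $|x-a|$ for $x \in B_r(a)$, and then exploit the assumption $\mathrm{dist}(a,\partial\Omega)\le r$ together with the curvature control on $\partial\Omega$ afforded by $\Cr{c-curva} \le (6\kappa)^{-1}$. First I would recall that for $x \in N_{6\Cr{c-curva}}$ the nearest-point projection $\xi(x) \in \partial\Omega$ is well defined and smooth, and $\tilde x = 2\xi(x) - x$. The key is to show that for $x$ in the relevant neighborhood, $|\tilde x - a| \le 5|x-a|$ whenever $\mathrm{dist}(a,\partial\Omega)\le r$ and $B_r(a)\subset N_{3\Cr{c-curva}}$; taking $x \in B_r(a)$ then gives $|\tilde x - a| < 5r$, i.e. $x \in \tilde B_{5r}(a)$, which is exactly $\tilde B_r(a) \subset B_{5r}(a)$ after unwinding the definition of the reflected ball.

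The main geometric input is a Lipschitz-type bound on the reflection map $x \mapsto \tilde x$ on the tubular neighborhood $N_{3\Cr{c-curva}}$. I would write $\tilde x - a = 2(\xi(x) - \xi(a)) - (x - a) + 2(\xi(a) - a) + (a - \tilde a)\cdot 0$ — more carefully, $\tilde x - \tilde a = 2(\xi(x)-\xi(a)) - (x-a)$, so $|\tilde x - \tilde a| \le (2L+1)|x-a|$ where $L$ is the Lipschitz constant of $\xi$ on $N_{3\Cr{c-curva}}$. Since $\Cr{c-curva}\le (6\kappa)^{-1}$, the projection $\xi$ has Lipschitz constant bounded by $(1 - 3\Cr{c-curva}\kappa)^{-1} \le (1 - 1/2)^{-1} = 2$ on $N_{3\Cr{c-curva}}$, giving $|\tilde x - \tilde a|\le 5|x-a|$. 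It then remains to control $|\tilde a - a|$: since $\mathrm{dist}(a,\partial\Omega)\le r$ and $a \in N_{2\Cr{c-curva}}$, we have $|\tilde a - a| = 2\,\mathrm{dist}(a,\partial\Omega) \le 2r$. Combining, for $x \in B_r(a)$, $|\tilde x - a| \le |\tilde x - \tilde a| + |\tilde a - a| \le 5|x-a| + 2r$, which is only $7r$, not $5r$.

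Thus the naive triangle-inequality split is too lossy, and the actual obstacle — the part I expect to require care — is a sharper, more geometric argument that does not pay separately for $|\tilde a - a|$. The right approach is to estimate $|\tilde x - a|$ directly: write $\tilde x - a = (\tilde x - a) $ and use that $\xi(x)$ lies within $|x - a| + \mathrm{dist}(a,\partial\Omega) \le 2r$ of $a$ along $\partial\Omega$, so $\tilde x = 2\xi(x) - x$ satisfies $|\tilde x - a| \le 2|\xi(x) - a| + |x - a|$, and then bound $|\xi(x) - a| \le |\xi(x) - \xi(a)| + |\xi(a) - a| \le L|x-a| + r \le 2r + r = 3r$... which still overshoots. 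The genuinely sharp bound instead keeps the cancellation between $2\xi(x)$ and $x$ relative to $a$: one uses $\tilde x - a = 2(\xi(x) - x) + (x - a)$ and estimates $|\xi(x) - x| = \mathrm{dist}(x,\partial\Omega) \le \mathrm{dist}(a,\partial\Omega) + |x-a| \le r + r = 2r$, so $|\tilde x - a| \le 2\cdot 2r + r = 5r$. This is the clean argument; I would verify the one subtle point — that $x \in B_r(a)$ with $B_r(a)\subset N_{3\Cr{c-curva}}$ indeed guarantees $\mathrm{dist}(x,\partial\Omega) \le \mathrm{dist}(a,\partial\Omega) + |x - a|$, which is just the triangle inequality for the distance function — and conclude \eqref{ref-ball-bdd}. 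Since this is cited verbatim from \cite[Lemma 4.2]{KT}, in the paper I would simply refer to that source for the details.
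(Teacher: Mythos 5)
The paper does not prove this lemma; it cites \cite[Lemma 4.2]{KT} and moves on, so there is no paper-internal proof to compare against. Your final argument is correct, and it is the right elementary approach: the identity $\tilde x - a = 2(\xi(x)-x) + (x-a)$, the fact that $|\xi(x)-x| = \mathrm{dist}(x,\partial\Omega)$, and the triangle inequality $\mathrm{dist}(x,\partial\Omega) \le \mathrm{dist}(a,\partial\Omega) + |x-a|$ together give $|\tilde x - a| < 2\cdot 2r + r = 5r$ for $x \in B_r(a)$. The hypothesis $B_r(a)\subset N_{3\Cr{c-curva}}$ is only used to guarantee $\xi$ is defined on the ball; the curvature constant never enters the estimate itself, which is fine.

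Two remarks on the writeup. First, the early claim that ``the key is to show $|\tilde x - a| \le 5|x-a|$'' is a false start: a purely multiplicative bound cannot hold (take $x=a$ with $a\notin\partial\Omega$), and the Lipschitz-constant detour that follows both uses the wrong decomposition and overshoots; you correctly discard both, but a cleaner draft would omit them. Second, the phrase ``which is exactly $\tilde B_r(a)\subset B_{5r}(a)$ after unwinding the definition'' hides a genuine step. What you actually prove is that for every $x\in B_r(a)$, one has $\tilde x\in B_{5r}(a)$; to convert this into $\tilde B_r(a)\subset B_{5r}(a)$ you need that $\tilde B_r(a) = \{\tilde x : x\in B_r(a)\}$, which is precisely the involution property $\widetilde{\tilde x}=x$ on the tubular neighborhood. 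This holds because $\tilde x$ lies on the normal line through $\xi(x)$ at distance $\mathrm{dist}(x,\partial\Omega)<6\Cr{c-curva}$, hence $\xi(\tilde x)=\xi(x)$ and $\widetilde{\tilde x}=2\xi(x)-\tilde x = x$. It is standard, but since the direction of the inclusion is exactly the point at issue, you should state it rather than wave at it.
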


\medskip

By the assumption \eqref{as-u3} and Lemma \ref{lem-ref-ball}, it is easy to see 
\begin{equation}\label{pro-d0}
D_{\varepsilon_i}(0) \le (1 + 5^{n-1}) D_0. 
\end{equation}
From now until Lemma \ref{lem-dis-mono}, we assume that 
\begin{equation}\label{as-bdd-den}
\sup_{[0,T_1]} D_{\varepsilon_i}(t) \le D_1 
\end{equation}
holds for some constants $T_1 > 0$ and $D_1 > 0$. 
Here, $D_1 > D_{\varepsilon_i}(0)$ is a constant depending only on $T$, $n$, $D_0$, $\alpha$, $W$, $\lambda$, $\kappa$, $\Cr{c-as-gra}$, $\Cr{c-as-dis}$, $\Cr{c-curva}$, $\Omega$ and not on $\varepsilon_i$, which will be determined in the proof of Proposition \ref{thm-den-rat}. 
Hereafter, to be careful that we do not end up in a circular argument, the dependence of any constant is written in detail. 
We also note that $D_{\varepsilon_i}(t)$ is continuous because of the regularity of $u_{\varepsilon_i}$ as in \eqref{ref-sol}. 
Hence $T_1 > 0$ follows from $D_1 > D_{\varepsilon_i}(0)$ and the continuity of $D_{\varepsilon_i}(t)$. 
For the following argument, we also define 
\[ \lambda^\prime := (1+\lambda)/2 \in (\lambda,1). \]

\begin{lem}\label{lem-lo-bdd-den}
Assume \eqref{as-bdd-den}. 
Then there exist $\Cl[c]{c-lo-bdd-den1}>1$, $0< \Cl[c]{c-lo-bdd-den2} < 1$ and $0<\Cl[e]{e-lo-bdd-den}<1$ depending only on $n$, $D_1$, $\alpha$, $W$, $\lambda$, $\kappa$, $\Cr{c-as-gra}$, $\Cr{c-as-dis}$, $\Cr{c-curva}$ and $\Omega$ with the following property: 
Assume $\varepsilon_i \in (0, \Cr{e-lo-bdd-den})$ and $|u_{\varepsilon_i}(y,s)| < \alpha$ with $y \in \overline{\Omega}$ and $s \in (0,T_1]$. 
Then for any $\max \{0, s-2\varepsilon_i^{2\lambda^\prime}\} \le t \le s$, 
\[
\Cr{c-lo-bdd-den2} \le 
\begin{cases}
\dfrac{1}{R^{n-1}} \left(\mu^t_{\varepsilon_i}(B_R(y)) + \mu^t_{\varepsilon_i}(\tilde{B}_{R}(y))\right) & \mbox{if} \quad y \in N_{\Cr{c-curva}/2}, \\
\dfrac{1}{R^{n-1}} \left(\mu^t_{\varepsilon_i}(B_R(y)) \right) & \mbox{if} \quad y \not\in N_{\Cr{c-curva}/2}
\end{cases}
\]
where $R= \Cr{c-lo-bdd-den1}(s+\varepsilon_i^2 - t)^{1/2}$. 
\end{lem}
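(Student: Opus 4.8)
The plan is to obtain the lower density bound by running the boundary monotonicity formula (Proposition \ref{thm:monotonicity}) \emph{backward} in time, starting from a time $s$ at which the solution is in the transition layer (i.e.\ $|u_{\varepsilon_i}(y,s)|<\alpha$), and combining it with a pointwise lower bound on the localized heat-kernel integral of $\mu^s_{\varepsilon_i}$ coming from the fact that $v_{\varepsilon_i}$ cannot cross the layer $[-\alpha,\alpha]$ too quickly. Concretely, I would fix $y$ and set $\tilde s := s+\varepsilon_i^2$, so that $\rho_{(y,\tilde s)}(\cdot,s)$ is a heat kernel with ``time to singularity'' exactly $\varepsilon_i^2$; this is the standard device (as in \cite{I, TT, LST}) for extracting the lower energy density at a point where $u_{\varepsilon_i}$ is in the transition region. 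The rescaled function $v_{\varepsilon_i}(\cdot,\cdot)=u_{\varepsilon_i}(\varepsilon_i\cdot,\varepsilon_i^2\cdot)$ solves \eqref{scale-eq} with bounded gradient by Lemma \ref{lem:bdd-nabra}, so on a ball of fixed radius (in $y$-variables) around the corresponding rescaled point the energy density of $v_{\varepsilon_i}$ is bounded below by a dimensional constant; unscaling, this gives a lower bound of the form $\int_\Omega (\rho_{1,(y,\tilde s)}+\rho_{2,(y,\tilde s)})\,d\mu^s_{\varepsilon_i}\ge \Cr{c-lo-bdd-den2}'$ (the reflected term $\rho_2$ only helps, being nonnegative, and near $\partial\Omega$ the Neumann condition together with Lemma \ref{lem-ref-ball} keeps the reflected contribution controlled).

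Next I would integrate the monotonicity inequality \eqref{monotonicity} (for $y\in N_{\Cr{c-curva}/2}$) or \eqref{monotonicity-2} (for $y\notin N_{\Cr{c-curva}/2}$) from a general earlier time $t\in[\max\{0,s-2\varepsilon_i^{2\lambda'}\},s]$ up to $s$. The left-hand side produces $e^{\Cr{c-mono-1}(\tilde s-s)^{1/4}}\int(\rho_{1}+\rho_2)\,d\mu^s_{\varepsilon_i} - e^{\Cr{c-mono-1}(\tilde s-t)^{1/4}}\int(\rho_1+\rho_2)\,d\mu^t_{\varepsilon_i}$, and the right-hand side is controlled by $\Cr{c-mono-2}(s-t)$ plus the crucial discrepancy term $\int_t^s\!\int_\Omega \frac{\rho_1+\rho_2}{2(\tilde s-\tau)}\,d\xi^\tau_{\varepsilon_i}\,d\tau$. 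Here the upper discrepancy bound from Proposition \ref{thm-dis-bdd}, namely $\xi^\tau_{\varepsilon_i}\le \Cr{c-up-bdd-xi}\varepsilon_i^{-\lambda}\mathcal L^n\lfloor_\Omega$, is exactly what makes the argument work: on the relevant ball of radius $\sim\Cr{c-curva}$ (the support of $\eta$), $\int_\Omega \frac{\rho_1+\rho_2}{2(\tilde s-\tau)}\,d\mathcal L^n$ is of order $(\tilde s-\tau)^{-1/2}$ times a dimensional constant (together with the density bound \eqref{as-bdd-den} used on a dyadic decomposition of $\Omega$), so the positive-part contribution of the discrepancy term over $[t,s]$ is bounded by $\Cr{c-up-bdd-xi}\varepsilon_i^{-\lambda}\int_t^s (\tilde s-\tau)^{-1/2}\,d\tau \lesssim \varepsilon_i^{-\lambda}(s+\varepsilon_i^2-t)^{1/2}\le \varepsilon_i^{-\lambda}\cdot\varepsilon_i^{\lambda'}\sqrt{C}$, which tends to $0$ since $\lambda'>\lambda$. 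Thus for $\varepsilon_i$ smaller than some $\Cr{e-lo-bdd-den}$ this error, and likewise the $\Cr{c-mono-2}(s-t)=O(\varepsilon_i^{2\lambda'})$ term, is at most half of the lower bound obtained in the first step.

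Rearranging, for $\varepsilon_i\in(0,\Cr{e-lo-bdd-den})$ we get $e^{\Cr{c-mono-1}(\tilde s-t)^{1/4}}\int_\Omega(\rho_{1,(y,\tilde s)}+\rho_{2,(y,\tilde s)})\,d\mu^t_{\varepsilon_i}\ge \tfrac12\Cr{c-lo-bdd-den2}'$; since $\tilde s - t = s+\varepsilon_i^2-t\le 1$ on the relevant range the exponential prefactor is bounded, so $\int_\Omega(\rho_{1,(y,\tilde s)}+\rho_{2,(y,\tilde s)})\,d\mu^t_{\varepsilon_i}\ge c$. Finally I would convert this heat-kernel lower bound into a density-ratio lower bound at scale $R=\Cr{c-lo-bdd-den1}(s+\varepsilon_i^2-t)^{1/2}$: because $\rho_{(y,\tilde s)}(x,t)=(4\pi(\tilde s-t))^{-(n-1)/2}e^{-|x-y|^2/(4(\tilde s-t))}$ decays Gaussianly, the integral over $\{|x-y|\ge R\}$ is at most $e^{-\Cr{c-lo-bdd-den1}^2/4}$ times the total mass bound coming from \eqref{as-bdd-den} (again decomposing $\mathbb R^n$ into dyadic annuli), which for $\Cr{c-lo-bdd-den1}$ chosen large enough is $\le c/2$; hence at least half the integral comes from $B_R(y)$ (plus its reflection), and on that ball $\rho_{1,(y,\tilde s)}\le (4\pi(\tilde s-t))^{-(n-1)/2}$, so $\mu^t_{\varepsilon_i}(B_R(y))+\mu^t_{\varepsilon_i}(\tilde B_R(y))\ge \tfrac{c}{4}(4\pi(\tilde s-t))^{(n-1)/2}\gtrsim R^{n-1}$, giving the claim with a suitable $\Cr{c-lo-bdd-den2}$. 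I expect the main obstacle to be the careful bookkeeping near $\partial\Omega$: one must verify that the reflected kernel $\rho_{2,(y,\tilde s)}$ and the reflection ball $\tilde B_R(y)$ behave well (using Lemma \ref{lem-ref-ball} and the localization to $N_{\Cr{c-curva}}$), that the Neumann condition \eqref{neumann} lets the lower-bound-at-a-point step go through when $y\in\partial\Omega$, and that all constants depend only on the stated quantities — in particular that the density hypothesis \eqref{as-bdd-den} with constant $D_1$, not the target $\Cr{c-den-rat}$, is what enters, so as to avoid circularity.
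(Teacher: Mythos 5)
Your proposal matches the paper's proof of Lemma \ref{lem-lo-bdd-den} essentially step for step: a lower bound coming from $W(u_{\varepsilon_i})\ge c$ on the ball $B_{\gamma_0\varepsilon_i}(y)$ (using the uniform gradient bound of Lemma \ref{lem:bdd-nabra}), integration of the boundary monotonicity formula from $t$ to $s$ with the discrepancy error controlled via Proposition \ref{thm-dis-bdd} and $\lambda'>\lambda$, and conversion to a density-ratio lower bound at scale $R$ by cutting off the Gaussian tail using the assumed bound $D_1$ of \eqref{as-bdd-den} and a large enough $\Cr{c-lo-bdd-den1}$. The one small inaccuracy is that $\int_\Omega\frac{\rho_1+\rho_2}{2(\tilde s-\tau)}\,d\mathcal L^n\lesssim(\tilde s-\tau)^{-1/2}$ is a pure Lebesgue (layer-cake) computation and does not involve \eqref{as-bdd-den}; the density hypothesis enters only in the final Gaussian-tail step, exactly as you use it there.
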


\begin{rk}
As we mention in Section \ref{sec:int}, the domain is mostly divided into two regions $\{u_{\varepsilon_i} \approx 1\}$ and $\{u_{\varepsilon_i} \approx -1\}$ for sufficiently small $\varepsilon_i$ and the diffused interface energy should concentrate on the domain $\{|u_{\varepsilon_i}| < \alpha\}$, hence the estimate as in Lemma \ref{lem-lo-bdd-den} holds. 
For the concentration of the energy, we will discuss in Remark \ref{rk:concent}.
\end{rk}

\begin{proof}[Proof of Lemma \ref{lem-lo-bdd-den}]
For simplicity we omit the subscript $i$. 
First, we fix an arbitrary point $y \in N_{\Cr{c-curva}/2} \cap \overline{\Omega}$. 
Let $\gamma_0$ be a positive constant to be chosen. 
For $x \in B_{\gamma_0 \varepsilon}(y)\cap \Omega$, we obtain by \eqref{bdd-nabla-u} 
\begin{equation}\label{gamma-0} 
|u_\varepsilon(x,s)| \le \gamma_0 \sup_{x \in \Omega}\varepsilon |\nabla u_\varepsilon(y,s)| + |u_\varepsilon(y,s)| \le \Cr{c-bdd-nabla} \gamma_0 + \alpha \le \dfrac{1+\alpha}{2} < 1 
\end{equation}
for sufficiently small $\gamma_0$ depending only on $\alpha$ and $\Cr{c-bdd-nabla}$. 
Due to the assumption (W1), there exists a constant $c>0$ such that $W(u(x,s)) \ge c$ for $x \in B_{\gamma_0 \varepsilon}(y)\cap \Omega$, hence we have 
\begin{equation}\label{lo-bdd-xi0} 
\int_{B_{\gamma_0 \varepsilon}(y)} \rho_{1, (y,s+\varepsilon^2)}(x,s) \; d\mu^s_\varepsilon(x) \ge \dfrac{c}{(4\pi)^\frac{n-1}{2}\varepsilon^n}\int_{B_{\gamma_0 \varepsilon}(y) \cap \Omega} e^{-\frac{|x-y|^2}{4\varepsilon^2}} \; dx \ge \Cl[m]{m-lo-bdd-den1}, 
\end{equation}
where $\Cr{m-lo-bdd-den1}$ is a positive constant depending only on $n, \alpha, \Cr{c-bdd-nabla}, c$ and $\Omega$. 
Since $\tilde{B}_r(y) \cap \Omega = \emptyset$ if $r < {\rm dist}(y,\partial \Omega)$ and \eqref{ref-ball-bdd} with $a=y$ holds if $r \ge {\rm dist}(y,\partial \Omega)$, we obtain 
\begin{equation}\label{lo-bdd-xi1}
\begin{aligned}
&\; \int_\Omega \rho_{1,(y,s+\varepsilon^2)}(x,\tau) + \rho_{2,(y,s+\varepsilon^2)}(x,\tau) \; dx \\
=&\; \dfrac{1}{(4\pi(s+\varepsilon^2 - \tau)^{\frac{n-1}{2}}} \int_0^1 \mathcal{L}^n\left(\left\{x \in \Omega : e^{-\frac{|x-y|^2}{4(s+\varepsilon^2-\tau)}} \ge l\right\}\right) + \mathcal{L}^n\left(\left\{x \in \Omega : e^{-\frac{|x-y|^2}{4(s+\varepsilon^2-\tau)}} \ge l\right\}\right) \; dl \\
=&\; \dfrac{1}{(4\pi(s+\varepsilon^2 - \tau)^{\frac{n-1}{2}}} \int_0^1 \mathcal{L}^n\left(B_{(-4(s+\varepsilon^2-\tau)\log l)^{\frac{1}{2}}}(y) \cap \Omega\right) + \mathcal{L}^n\left(\tilde{B}_{(-4(s+\varepsilon^2-\tau)\log l)^{\frac{1}{2}}}(y) \cap \Omega\right) \; dl\\
\le&\; \dfrac{(1+5^n)\omega_n\sqrt{4\pi(s+\varepsilon^2-\tau)}}{\pi^{\frac{n}{2}}} \int_0^1 (-\log l)^{\frac{n}{2}} \; dl \\
=&\; \dfrac{(1+5^n)\omega_n\sqrt{4\pi(s+\varepsilon^2-\tau)}}{\pi^{\frac{n}{2}}} \int_0^\infty a^{\frac{n}{2}+1}e^{-a} \; da = (1+5^n) \sqrt{4\pi(s+\varepsilon^2-\tau)} 
\end{aligned}
\end{equation}
for $0< \tau < s + \varepsilon^2$. 
Here we use the change of variables $l=e^{-a}$. 
Combining \eqref{monotonicity} where $s,t$ substituted by $s+\varepsilon^2, \tau \in [t,s]$ respectively, \eqref{dis-bdd} and \eqref{lo-bdd-xi1}, we have 
\begin{equation}\label{lo-bdd-xi2}
\begin{aligned} 
&\; \dfrac{d}{d\tau}\left( e^{\Cr{c-mono-1}(s+\varepsilon^2-\tau)^\frac{1}{4}} \int_\Omega \rho_{1,(y,s+\varepsilon^2)}(x,\tau) + \rho_{2,(y,s+\varepsilon^2)} (x,\tau) \; d\mu_\varepsilon^\tau\right) \\
&\; \le e^{\Cr{c-mono-1}3^\frac{1}{4}}\left(\Cr{c-mono-2} + \Cr{c-up-bdd-xi}\varepsilon^{-\lambda}(1+5^n) \dfrac{\sqrt{\pi}}{\sqrt{s+\varepsilon^2-\tau}}\right). 
\end{aligned}
\end{equation}
Here $s-t \le 2\varepsilon^{2\lambda^\prime} \le 2$ is used. 
Integrating \eqref{lo-bdd-xi2} over $[t,s]$, we have by \eqref{lo-bdd-xi0} 
\begin{equation}\label{lo-bdd-xi3}
\Cr{m-lo-bdd-den1} \le \Cl[m]{m-lo-bdd-den2} \int_\Omega \rho_{1,(y,s+\varepsilon^2)}(x,t) + \rho_{2,(y,s+\varepsilon^2)}(x,t) \; d\mu_\varepsilon^t + \Cr{m-lo-bdd-den2}(\varepsilon^{2\lambda^\prime} + \varepsilon^{\lambda^\prime - \lambda}), 
\end{equation}
where $\Cr{m-lo-bdd-den2}$ is a positive constant depending only on $\Cr{c-mono-1}, \Cr{c-mono-2}, \Cr{c-up-bdd-xi}$ and $n$. 
We estimate the integral part of \eqref{lo-bdd-xi3}. 
Let $R = C(s+\varepsilon^2 - t)^{1/2}$, where $C$ is a constant to be chosen latter. 
From ${\rm spt}\rho_1 \subset B_{\Cr{c-curva}/2}(y)$ and ${\rm spt}\rho_2 \subset \tilde{B}_{\Cr{c-curva}/2}(y)$, for sufficiently small $\varepsilon$ so that $R < \Cr{c-curva}/2$, we obtain by the assumption $\sup_{[0,T_1]} D_\varepsilon \le D_1$ 
\begin{equation}\label{lo-bdd-xi4}
\begin{aligned}
&\; \int_\Omega \rho_{1,(y,s+\varepsilon^2)}(x,t) + \rho_{2,(y,s+\varepsilon^2)}(x,t) \; d\mu_\varepsilon^t \\
\le &\; \dfrac{C^{n-1}}{(\sqrt{4\pi}R)^{n-1}} \left(\int_{\Omega \cap B_{\Cr{c-curva}/2}(y)} e^{-\frac{C^2|x-y|^2}{4R^2}} \; d\mu_\varepsilon^t + \int_{\Omega \cap \tilde{B}_{\Cr{c-curva}/2}(y)} e^{-\frac{C^2|\tilde{x}-y|^2}{4R^2}} \; d\mu_\varepsilon^t \right) \\
\le &\; \dfrac{C^{n-1}}{(\sqrt{4\pi}R)^{n-1}} \left(\mu_\varepsilon^t(B_R(y)) + \mu_\varepsilon^t(\tilde{B}_R(y))\right) \\
&\; + \dfrac{C^{n-1}}{(\sqrt{4\pi}R)^{n-1}} \int^{e^{-\frac{C^2}{4}}}_0 \mu_\varepsilon^t \left( \left\{x \in (\Omega \cap B_{\Cr{c-curva}/2}(y)) \setminus B_R(y) : e^{-\frac{C^2|x-y|^2}{4R^2}} \ge l \right\} \right) \\
&\; \quad \quad \quad \quad \quad \quad \quad \quad \quad  + \mu_\varepsilon^t \left( \left\{x \in (\Omega \cap \tilde{B}_{\Cr{c-curva}/2}(y)) \setminus \tilde{B}_R(y) : e^{-\frac{C^2|\tilde{x}-y|^2}{4R^2}} \ge l \right\} \right) \; dl \\
\le &\; \dfrac{C^{n-1}}{(\sqrt{4\pi}R)^{n-1}} \left(\mu_\varepsilon^t(B_R(y)) + \mu_\varepsilon^t(\tilde{B}_R(y))\right) + \dfrac{\omega_{n-1}D_1}{\pi^\frac{n-1}{2}} \int^{e^{-\frac{C^2}{4}}}_0 (-\log l)^\frac{n-1}{2} \; dl \\
= &\; \dfrac{C^{n-1}}{(\sqrt{4\pi}R)^{n-1}} \left(\mu_\varepsilon^t(B_R(y)) + \mu_\varepsilon^t(\tilde{B}_R(y))\right) + \dfrac{2^\frac{n+1}{2}\omega_{n-1}D_1}{\pi^\frac{n-1}{2}} \int_\frac{C^2}{8}^\infty a^\frac{n-1}{2} e^{-2a} \; da \\
\le &\; \dfrac{C^{n-1}}{(\sqrt{4\pi}R)^{n-1}} \left(\mu_\varepsilon^t(B_R(y)) + \mu_\varepsilon^t(\tilde{B}_R(y))\right) + 2^\frac{n+1}{2}D_1 e^{-\frac{C^2}{8}}. 
\end{aligned}
\end{equation} 
Here we use the change of variables $l = e^{-2a}$. 
Now, we fix a sufficiently large $C>0$ to satisfy $\Cr{m-lo-bdd-den2}2^\frac{n+1}{2}D_1 e^{-\frac{C^2}{8}} \le \Cr{m-lo-bdd-den1}/4$. 
Setting 
\[ \Cr{c-lo-bdd-den1} = C, \quad \Cr{c-lo-bdd-den2} = \dfrac{(4\pi)^\frac{n-1}{2}\Cr{m-lo-bdd-den1}}{2C^{n-1}\Cr{m-lo-bdd-den2}} \]
and choosing sufficiently small $\Cr{e-lo-bdd-den}$ to satisfy $\Cr{m-lo-bdd-den2}(\Cr{e-lo-bdd-den}^{2\lambda^\prime} + \Cr{e-lo-bdd-den}^{\lambda^\prime-\lambda}) \le \Cr{m-lo-bdd-den1}/4$ and $R \le C(\Cr{e-lo-bdd-den}^{2\lambda^\prime} + \Cr{e-lo-bdd-den}^{2})^{1/2} < \Cr{c-curva}/2$, we obtain the conclusion from \eqref{lo-bdd-xi3} and \eqref{lo-bdd-xi4}. 
The case of $y \in \Omega \setminus N_{\Cr{c-curva}/2}$ may be proved using \eqref{monotonicity-2}. 
\end{proof}

\begin{lem}
Assume \eqref{as-bdd-den}. 
Then there exist $0 < \Cl[e]{e-dis-den} \le \Cr{e-lo-bdd-den}$ and $\Cl[c]{c-dis-den}$ depending only on $n$, $D_1$, $\alpha$, $W$, $\lambda$, $\kappa$, $\Cr{c-as-gra}$, $\Cr{c-as-dis}$, $\Cr{c-curva}$ and $\Omega$ with the following property: 
For any $\varepsilon_i \in (0, \Cr{e-dis-den}]$, $y \in \overline{\Omega}$, $r \in (\varepsilon_i^{\lambda^\prime}, \Cr{c-curva}/2)$ and $t \in [2\varepsilon_i^{2\lambda^\prime},\infty) \cap [0,T_1]$, 
\begin{equation}\label{dis-den-conclusion1} 
\int_{B_r(y)\cap \Omega} \left(\dfrac{\varepsilon_i |\nabla u_{\varepsilon_i}|^2}{2} - \dfrac{W(u_{\varepsilon_i})}{\varepsilon_i} \right)^+ dx + \int_{\tilde{B}_r(y)\cap \Omega} \left(\dfrac{\varepsilon_i |\nabla u_{\varepsilon_i}|^2}{2} - \dfrac{W(u_{\varepsilon_i})}{\varepsilon_i} \right)^+ dx \le \Cr{c-dis-den} \varepsilon^{\lambda^\prime-\lambda} r^{n-1} 
\end{equation}
if $y \in N_{\Cr{c-curva}/2}$ and 
\[
\int_{B_r(y)\cap \Omega} \left(\dfrac{\varepsilon_i |\nabla u_{\varepsilon_i}|^2}{2} - \dfrac{W(u_{\varepsilon_i})}{\varepsilon_i} \right)^+ dx \le \Cr{c-dis-den} \varepsilon^{\lambda^\prime-\lambda} r^{n-1} 
\]
if $y \not \in N_{\Cr{c-curva}/2}$. 
\end{lem}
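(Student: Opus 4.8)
The plan is to reduce the estimate to the parabolic base scale $\varepsilon_i^{\lambda^\prime}$, where Proposition \ref{thm-dis-bdd} is already sharp, and then to pass back up to scale $r$ by a bounded--overlap covering combined with the density ratio bound \eqref{as-bdd-den}. Write $\varepsilon=\varepsilon_i$ and $\xi_\varepsilon:=\varepsilon|\nabla u_\varepsilon|^2/2-W(u_\varepsilon)/\varepsilon$. At the base scale the estimate is immediate from \eqref{dis-bdd}: for every $z\in\overline\Omega$,
\[
\int_{B_{\varepsilon^{\lambda^\prime}}(z)\cap\Omega}\xi_\varepsilon^+(\cdot,t)\,dx\ \le\ \Cr{c-up-bdd-xi}\,\varepsilon^{-\lambda}\,\mathcal{L}^n\big(B_{\varepsilon^{\lambda^\prime}}(z)\big)\ =\ \Cr{c-up-bdd-xi}\,\omega_n\,\varepsilon^{\lambda^\prime-\lambda}\big(\varepsilon^{\lambda^\prime}\big)^{n-1},
\]
which is exactly \eqref{dis-den-conclusion1} at $r=\varepsilon^{\lambda^\prime}$; note that this is where the matching of the length scale $\varepsilon^{\lambda^\prime}$ with the time scale $\varepsilon^{2\lambda^\prime}$ is used, so that $\varepsilon^{-\lambda}(\varepsilon^{\lambda^\prime})^n=\varepsilon^{\lambda^\prime-\lambda}(\varepsilon^{\lambda^\prime})^{n-1}$. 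For general $r\in(\varepsilon^{\lambda^\prime},\Cr{c-curva}/2)$ I would cover $B_r(y)\cap\overline\Omega$ by balls $B_{\varepsilon^{\lambda^\prime}}(z_j)$ with centres $z_j\in\overline\Omega$ forming a maximal $\varepsilon^{\lambda^\prime}$-separated subset, so that the enlarged balls $B_{C\varepsilon^{\lambda^\prime}}(z_j)\subset B_{2r}(y)$ have overlap bounded by a dimensional constant. The reflected ball is dealt with last: if $r<\dist(y,\partial\Omega)$ then $\tilde{B}_r(y)\cap\Omega=\emptyset$, and otherwise Lemma \ref{lem-ref-ball} gives $\tilde{B}_r(y)\subset B_{5r}(y)$, so the second integral in \eqref{dis-den-conclusion1} is dominated by $\int_{B_{5r}(y)\cap\Omega}\xi_\varepsilon^+(\cdot,t)$ and reduces to the first estimate (which, once proved for radii below $\Cr{c-curva}/2$, extends to a fixed multiple of $\Cr{c-curva}$ since \eqref{as-bdd-den} itself extends to such radii by sub-covering).

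For the summation over $j$ I would split the covering into balls \emph{close} to the transition layer and balls \emph{far} from it. Say $B_{\varepsilon^{\lambda^\prime}}(z_j)$ is close if $|u_\varepsilon(z_j^\ast,t)|<\alpha$ for some $z_j^\ast\in B_{2\varepsilon^{\lambda^\prime}}(z_j)\cap\overline\Omega$. For such a ball, Lemma \ref{lem-lo-bdd-den} applied with $s=t$, centre $z_j^\ast$ and time parameter $t-\varepsilon^{2\lambda^\prime}$ (admissible since $t\ge 2\varepsilon^{2\lambda^\prime}$, so $\varepsilon^{2\lambda^\prime}\le t-\varepsilon^{2\lambda^\prime}\le T_1$) yields $\mu_\varepsilon^{\,t-\varepsilon^{2\lambda^\prime}}\big(B_{C\varepsilon^{\lambda^\prime}}(z_j)\big)\ge c_\ast(\varepsilon^{\lambda^\prime})^{n-1}$ for a suitable $C$ (enlarging $C$ to absorb the reflected version of that lemma near $\partial\Omega$); combined with the base--scale bound this gives
\[
\int_{B_{\varepsilon^{\lambda^\prime}}(z_j)\cap\Omega}\xi_\varepsilon^+(\cdot,t)\,dx\ \le\ C\,\varepsilon^{\lambda^\prime-\lambda}\,\mu_\varepsilon^{\,t-\varepsilon^{2\lambda^\prime}}\big(B_{C\varepsilon^{\lambda^\prime}}(z_j)\big).
\]
Summing over the close balls, the bounded overlap together with the density ratio bound \eqref{as-bdd-den} at time $t-\varepsilon^{2\lambda^\prime}$ produces $C\varepsilon^{\lambda^\prime-\lambda}\mu_\varepsilon^{\,t-\varepsilon^{2\lambda^\prime}}(B_{2r}(y))\le C^\prime\varepsilon^{\lambda^\prime-\lambda}r^{n-1}$. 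If $B_{\varepsilon^{\lambda^\prime}}(z_j)$ is far, then $|u_\varepsilon|\ge\alpha$ on a space--time neighbourhood of $B_{\varepsilon^{\lambda^\prime}}(z_j)\times\{t\}$, and there the uniform convexity (W3) of $W$ near $\pm1$ forces $1-|u_\varepsilon|$, and hence $\varepsilon|\nabla u_\varepsilon|^2$ and $\xi_\varepsilon^+$, to be of size $O(e^{-c\,\varepsilon^{\lambda^\prime-1}})$; since there are at most $(r/\varepsilon^{\lambda^\prime})^n\le\varepsilon^{-n}$ balls in all, the far balls contribute at most $\varepsilon^{-n}e^{-c\varepsilon^{\lambda^\prime-1}}$, negligible against $\varepsilon^{\lambda^\prime-\lambda}r^{n-1}$. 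Adding the two contributions and the reflected--ball reduction gives \eqref{dis-den-conclusion1}.

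The crux is the far--ball step and the exponent bookkeeping it requires. The pointwise bound $\xi_\varepsilon^+\le \Cr{c-up-bdd-xi}\varepsilon^{-\lambda}$ cannot by itself give an estimate of the form \eqref{dis-den-conclusion1} once $r\gg\varepsilon^{\lambda^\prime}$, because integrating $\varepsilon^{-\lambda}$ over an $n$-dimensional region of diameter $r$ produces $\varepsilon^{-\lambda}r^{n}$, which exceeds the target $\varepsilon^{\lambda^\prime-\lambda}r^{n-1}$ precisely for $r>\varepsilon^{\lambda^\prime}$; the $(n-1)$-dimensional gain must therefore come entirely from the density ratio bound \eqref{as-bdd-den}, which forces one to discard almost every covering ball as ``far'' and to prove that $\xi_\varepsilon^+$ is super-small there. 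The substance of that estimate is to promote the instantaneous smallness of $1-|u_\varepsilon|$ on $B_{C\varepsilon^{\lambda^\prime}}(z_j)$ at time $t$ to a genuine parabolic neighbourhood — using the gradient bound of Lemma \ref{lem:bdd-nabra} and the $L^2$-in-time bound \eqref{energy-bdd} to reach nearby times at which $u_\varepsilon$ is still close to $\pm1$, and then the comparison principle for $1-|u_\varepsilon|$ with the uniformly positive zeroth order coefficient $W^{\prime\prime}/\varepsilon^2\ge\beta/\varepsilon^2$ — while tracking every power of $\varepsilon$ and every time window so that the factor $\varepsilon^{\lambda^\prime-\lambda}$ is not lost. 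A final bookkeeping point is that near $\partial\Omega$ every tool is used in reflected form: the covering respects $\Omega$, Lemma \ref{lem-ref-ball} controls $\tilde{B}_r(y)$, Lemma \ref{lem-lo-bdd-den} is invoked in its two-ball version, and \eqref{as-bdd-den} is stated precisely for the pair $(B_r,\tilde{B}_r)$, so that the reflected part of \eqref{dis-den-conclusion1} follows on the same footing.
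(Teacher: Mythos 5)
Your overall architecture---a close/far dichotomy at the parabolic scale $\varepsilon^{\lambda^\prime}$, with Lemma \ref{lem-lo-bdd-den} plus the density ratio bound controlling the close region and the convexity (W3) of $W$ near $\pm1$ killing the far region---is exactly the structure of the paper's proof. Your close-region estimate is fine; in fact, by summing the lower bounds from Lemma \ref{lem-lo-bdd-den} against $\mu_\varepsilon^{t-\varepsilon^{2\lambda^\prime}}(B_{Cr}(y))$ and invoking \eqref{as-bdd-den}, you extract the factor $r^{n-1}$ on the close part, whereas the paper's written estimate \eqref{dis-den4} only invokes the global energy bound \eqref{energy-bdd} and produces $\mathcal{L}^n(A_2)\le C\varepsilon^{\lambda^\prime}$, which does not scale with $r$; your version is the one that actually delivers the stated $r^{n-1}$. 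The reflected-ball reduction via Lemma \ref{lem-ref-ball} is also the same device the paper uses.

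The gap is in the far-ball step, and it is not cosmetic. First, your ``far'' criterion is stated only at the time slice $\{t\}$: a ball is far if $|u_\varepsilon(\cdot,t)|\ge\alpha$ on $B_{2\varepsilon^{\lambda^\prime}}(z_j)$. The paper instead defines $A_1$ using the whole window $[t-\varepsilon^{2\lambda^\prime},t]$, and this is essential: the parabolic damping coming from $W^{\prime\prime}\ge\beta$ can only be exploited along a time interval on which one knows $|u_\varepsilon|\ge\alpha$. With your weaker criterion, $u_\varepsilon$ may have been in the transition range at earlier times, and the comparison/Gronwall argument does not close. (You gesture at ``promoting'' the instantaneous smallness to a parabolic neighbourhood via the gradient bound and the $L^2$-in-time bound on $\partial_t u_\varepsilon$, but neither controls the pointwise time modulus of $u_\varepsilon$, so this promotion is not available.) Second, the quantity you run the comparison principle on is $1-|u_\varepsilon|$, while what enters $\xi_\varepsilon^+$ is $|\nabla u_\varepsilon|^2$; passing from pointwise smallness of $1-|u_\varepsilon|$ to smallness of $|\nabla u_\varepsilon|^2$ is a separate regularity step that you do not supply, and the ``and hence'' in your argument is exactly where that step is missing. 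Third, the conclusion you claim---a super-exponential bound $O(e^{-c\varepsilon^{\lambda^\prime-1}})$ on the far contribution---is stronger than what the mechanism actually gives. The paper's far-region estimate is an \emph{integral} (energy) estimate: one differentiates \eqref{ac}, tests with $\phi^2\partial_{x_j}u_\varepsilon$ for a Lipschitz cutoff $\phi$ supported off the tubular neighbourhood of $A_1$, uses (W3) to obtain $\frac{d}{dt}\int|\nabla u_\varepsilon|^2\phi^2\le 4\varepsilon^{-2\lambda^\prime}\int_{\operatorname{spt}\phi}|\nabla u_\varepsilon|^2-\frac{\beta}{\varepsilon^2}\int|\nabla u_\varepsilon|^2\phi^2$, and integrates in time. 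The source term $4\varepsilon^{-2\lambda^\prime}\int_{\operatorname{spt}\phi}|\nabla u_\varepsilon|^2$, coming from $|\nabla\phi|\lesssim\varepsilon^{-\lambda^\prime}$ at the interface between close and far, yields the \emph{polynomial} bound $\varepsilon^{2(1-\lambda^\prime)}r^{n-1}$, not an exponential one, and it is precisely this boundary-layer influx that your pointwise comparison does not account for. Since $2(1-\lambda^\prime)=1-\lambda>\lambda^\prime-\lambda$ the polynomial rate is still enough, but one must prove it, and the cutoff energy argument is the clean way to do it.

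In short: your close-region computation is correct (and arguably tighter than the paper's as written), but the far-region step needs to be replaced. Either (i) redefine ``far'' over the full window $[t-\varepsilon^{2\lambda^\prime},t]$ and run the barrier argument directly on $w=|\nabla u_\varepsilon|^2$, keeping track of the boundary layer of width $O(\varepsilon)$ near the close/far interface; or (ii) follow the paper and do the cutoff energy estimate \eqref{dis-den5}--\eqref{dis-con3}. As it stands, the far contribution is asserted rather than proved.
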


\begin{proof}
For simplicity we omit the subscript $i$. 
We only need to prove the claim when $T_1 \ge 2\varepsilon^{2\lambda^\prime}$ since the claim is vacuously true otherwise. 
Let $y \in \overline{\Omega}$, $r \in (\varepsilon^{\lambda^\prime}, \Cr{c-curva}/2)$ and $t \in [2\varepsilon^{2\lambda^\prime},\infty) \cap [0,T_1]$ be arbitrary and fixed. 
We define 
\begin{align*} 
&A_1 := \{ x \in B_{10r}(y) \cap \Omega : \mbox{for some} \; \tilde{t} \; \mbox{with} \; t-\varepsilon^{2\lambda^\prime} \le \tilde{t} \le t, |u(x,\tilde{t})| \le \alpha \}, \\
&A_2 := \{ x \in B_{10r + 2\Cr{c-lo-bdd-den1}\varepsilon^{\lambda^\prime}}(y) \cap \Omega : {\rm dist}(A_1, x) < 2\Cr{c-lo-bdd-den1}\varepsilon^{\lambda^\prime} \}. 
\end{align*}
By Vitali's covering theorem applied to $\mathcal{F} = \{\overline{B}_{2\Cr{c-lo-bdd-den1}\varepsilon^{\lambda^\prime}}(x) : x \in A_1\}$, there exists a set of pairwise disjoint balls $\{B_{2\Cr{c-lo-bdd-den1}\varepsilon^{\lambda^\prime}}(x_i)\}_{i=1}^N$ such that 
\begin{equation}\label{dis-den2} 
x_i \in A_1 \; \mbox{for each}\; i=1, \cdots, N, \quad \mbox{and} \quad A_2 \subset \cup_{i=1}^N \overline{B}_{10\Cr{c-lo-bdd-den1}\varepsilon^{\lambda^\prime}}(x_i). 
\end{equation}
By the definition of $A_1$, for each $x_i$ there exists $\tilde{t}_i$ such that 
\[ t-\varepsilon^{2\lambda^\prime} \le \tilde{t}_i \le t, \quad |u(x_i,\tilde{t}_i)| \le \alpha. \]
Thus, the assumption of Lemma \ref{lem-lo-bdd-den} is satisfied for $s=\tilde{t}_i$, $y=x_i$, $t=t-2\varepsilon^{2\lambda^\prime}$ and $R=R_i := \Cr{c-lo-bdd-den1}(\tilde{t}_i + \varepsilon^2 - (t - 2\varepsilon^{2\lambda^\prime}))^\frac{1}{2}$ if $\varepsilon < \Cr{e-lo-bdd-den}$. 
Hence we may conclude that 
\[ \Cr{c-lo-bdd-den2} R_i^{n-1} \le \mu_\varepsilon^{t-2\varepsilon^{2\lambda^\prime}}(B_{R_i}(x_i)) + \mu_\varepsilon^{t-2\varepsilon^{2\lambda^\prime}}(\tilde{B}_{R_i}(x_i)) \quad \mbox{for} \quad i=1, \cdots, N, \]
where here and in the following we set $\tilde{B}_{R_i}(x_i) = \emptyset$ if $x_i \not \in N_{\Cr{c-curva}/2}$. 
Due to the definition of $R_i$ and $-\varepsilon^{2\lambda^\prime} \le \tilde{t}_i - t \le 0$, we obtain 
\[ \Cr{c-lo-bdd-den1}(\varepsilon^{2\lambda^\prime}+\varepsilon^2)^\frac{1}{2} \le R_i \le \Cr{c-lo-bdd-den1}(2\varepsilon^{2\lambda^\prime} + \varepsilon^2)^\frac{1}{2} \le 2 \Cr{c-lo-bdd-den1} \varepsilon^{\lambda^\prime}, \]
which shows 
\begin{equation}\label{dis-den1} 
\Cr{c-lo-bdd-den2}\Cr{c-lo-bdd-den1}^{n-1}\varepsilon^{\lambda^\prime (n-1)} \le \mu_\varepsilon^{t-2\varepsilon^{2\lambda^\prime}}(B_{2\Cr{c-lo-bdd-den1}\varepsilon^{\lambda^\prime}}(x_i)) + \mu_\varepsilon^{t-2\varepsilon^{2\lambda^\prime}}(\tilde{B}_{2\Cr{c-lo-bdd-den1}\varepsilon^{\lambda^\prime}}(x_i)). 
\end{equation}
Note that if $y \not \in N_{11\Cr{c-curva}/2}$ and $\varepsilon$ is sufficiently small so that $2\Cr{c-lo-bdd-den1}\varepsilon^{\lambda^\prime} < \Cr{c-curva}/2$, we can regard $\tilde{B}_{2\Cr{c-lo-bdd-den1}\varepsilon^{\lambda^\prime}}(x_i)$ as the empty set for all $i$. 
Since $\{B_{2\Cr{c-lo-bdd-den1}\varepsilon^{\lambda^\prime}}(x_i)\}_{i=1}^N$ and $\{\tilde{B}_{2\Cr{c-lo-bdd-den1}\varepsilon^{\lambda^\prime}}(x_i)\}_{i=1}^N$ are pairwise disjoint, respectively, $B_{2\Cr{c-lo-bdd-den1}\varepsilon^{\lambda^\prime}}(x_i) \subset B_{10r + 2\Cr{c-lo-bdd-den1}\varepsilon^{\lambda^\prime}}(y)$ and $\tilde{B}_{2\Cr{c-lo-bdd-den1}\varepsilon^{\lambda^\prime}}(x_i) \subset \tilde{B}_{10r + 2\Cr{c-lo-bdd-den1}\varepsilon^{\lambda^\prime}}(y)$, \eqref{dis-den1} gives 
\begin{equation}\label{dis-den3} 
N \Cr{c-lo-bdd-den2}\Cr{c-lo-bdd-den1}^{n-1}\varepsilon^{\lambda^\prime (n-1)} \le \mu_\varepsilon^{t-2\varepsilon^{2\lambda^\prime}}(B_{10r+2\Cr{c-lo-bdd-den1}\varepsilon^{\lambda^\prime}}(y)) + \mu_\varepsilon^{t-2\varepsilon^{2\lambda^\prime}}(\tilde{B}_{10r+2\Cr{c-lo-bdd-den1}\varepsilon^{\lambda^\prime}}(y)) 
\end{equation}
provided $\tilde{B}_{10r+2\Cr{c-lo-bdd-den1}\varepsilon^{\lambda^\prime}}(y) = \emptyset$ if $y \not \in N_{11\Cr{c-curva}/2}$ and $\varepsilon$ is sufficiently small so that $2\Cr{c-lo-bdd-den1}\varepsilon^{\lambda^\prime} < \Cr{c-curva}/2$. 
Thus, the $n$-dimensional volume of $A_2$ is estimated by \eqref{energy-bdd}, \eqref{dis-den2} and \eqref{dis-den3} 
\begin{equation}\label{dis-den4}
\mathcal{L}^n(A_2) \le N\omega_n (10\Cr{c-lo-bdd-den1}\varepsilon^{\lambda^\prime})^n \le \dfrac{10^n\Cr{c-lo-bdd-den1}\omega_n \varepsilon^{\lambda^\prime}}{\Cr{c-lo-bdd-den2}}2\Cr{c-ene0} =: \Cl[m]{m-dis-den1}\varepsilon^{\lambda^\prime}. 
\end{equation}
Hence by \eqref{dis-bdd} and \eqref{dis-den4} 
\begin{equation}\label{dis-con1} 
\int_{A_2 \cap B_r(y)} \left(\dfrac{\varepsilon |\nabla u_{\varepsilon}|^2}{2} - \dfrac{W(u_{\varepsilon})}{\varepsilon} \right)^+ dx \le \mathcal{L}^n(A_2) \Cr{c-up-bdd-xi} \varepsilon^{-\lambda} \le \Cr{m-dis-den1} \Cr{c-up-bdd-xi} \varepsilon^{\lambda^\prime-\lambda}
\end{equation}
if $y \not \in N_{\Cr{c-curva}/2}$ and 
\begin{equation}\label{dis-con2} 
\int_{A_2 \cap B_r(y)} \left(\dfrac{\varepsilon |\nabla u_{\varepsilon}|^2}{2} - \dfrac{W(u_{\varepsilon})}{\varepsilon} \right)^+ dx + \int_{A_2 \cap \tilde{B}_r(y)} \left(\dfrac{\varepsilon |\nabla u_{\varepsilon}|^2}{2} - \dfrac{W(u_{\varepsilon})}{\varepsilon} \right)^+ dx \le 2\Cr{m-dis-den1} \Cr{c-up-bdd-xi} \varepsilon^{\lambda^\prime-\lambda}
\end{equation} 
if $y \in N_{\Cr{c-curva}/2}$. 

Next we estimate the diffused surface energy on the intersection of $\tilde{B}_r(y)$ and the complement of $A_2$ with $y \in N_{\Cr{c-curva}/2}$ which decays very quickly. 
Define $\phi \in {\rm Lip}(\tilde{B}_{2r}(y))$ such that 
\[ \phi(x) := 
\begin{cases}
1 & \mbox{if} \quad x \in \tilde{B}_r(y) \setminus A_2, \\
0 & \mbox{if} \quad {\rm dist} (x, \tilde{B}_r(y) \setminus A_2) \ge \varepsilon^{\lambda^\prime},
\end{cases}
\quad |\nabla \phi| \le 2\varepsilon^{-\lambda^\prime}, \quad 0 \le \phi \le 1. 
\]
Note that $\tilde{B}_{2r}(y) \cap \Omega \subset B_{10r}(y)\cap \Omega$ since $\tilde{B}_{2r}(y) \cap \Omega = \emptyset$ if ${\rm dist}(y, \partial \Omega) > 2r$ and \eqref{ref-ball-bdd} replaced $a,r$ by $y,2r$, respectively, holds if ${\rm dist}(y, \partial \Omega) \le 2r$. 
By $r \ge \varepsilon^{\lambda^\prime}$, $2\Cr{c-lo-bdd-den1} \varepsilon^{\lambda^\prime} > \varepsilon^{\lambda^\prime}$ and the definitions of $A_1$ and $\phi$, we have ${\rm spt} \phi \cap A_1 = \emptyset$, hence 
\begin{equation}\label{dis-den6} 
|u_\varepsilon(x,s)| \ge \alpha, \quad \mbox{for} \quad x \in {\rm spt} \phi \cap \Omega, \; s \in [t-\varepsilon^{2\lambda^\prime}, t]. 
\end{equation}
For each $j$ differentiate the equation \eqref{ac} with respect to $x_j$, multiply $\phi^2 \partial_{x_j} u_\varepsilon$, sum over $j$ and integrate to obtain 
\begin{equation} \label{dis-den5}
\dfrac{d}{dt} \int_\Omega \dfrac{|\nabla u_\varepsilon|^2}{2}\phi^2 \; dx = \int_\Omega \left( \langle \nabla u_\varepsilon, \Delta \nabla u_\varepsilon \rangle - \dfrac{W^{\prime\prime}(u_\varepsilon)}{\varepsilon^2}|\nabla u_\varepsilon|^2 \right) \phi^2 \; dx. 
\end{equation}
By integration by parts, the Cauchy-Schwarz inequality and the Neumann boundary condition \eqref{neumann}, \eqref{dis-den5} gives 
\begin{equation}\label{dis-den7}
\dfrac{d}{dt} \int_\Omega \dfrac{|\nabla u_\varepsilon|^2}{2}\phi^2 \; dx \le \int_\Omega |\nabla \phi|^2 |\nabla u_\varepsilon|^2 \; dx - \int_\Omega \dfrac{W^{\prime \prime}(u_\varepsilon)}{\varepsilon^2} |\nabla u_\varepsilon|^2 \phi^2 \; dx. 
\end{equation}
From \eqref{dis-den6}, the assumption (W3) and the definition of $\phi$, we have by \eqref{dis-den7} 
\begin{equation}\label{dis-den8}
\dfrac{d}{dt} \int_\Omega \dfrac{|\nabla u_\varepsilon|^2}{2}\phi^2 \; dx \le 4 \varepsilon^{-2\lambda^\prime} \int_{{\rm spt} \phi \cap \Omega} |\nabla u_\varepsilon|^2 \; dx - \dfrac{\beta}{\varepsilon^2} \int_\Omega |\nabla u_\varepsilon|^2 \phi^2 \; dx. 
\end{equation}
Integrating \eqref{dis-den8} over $[t-\varepsilon^{2\lambda^\prime},t]$, we obtain 
\begin{equation}\label{dis-den9}
\begin{aligned}
\int_\Omega \dfrac{|\nabla u_\varepsilon|^2}{2}\phi^2 (x,t) \; dx \le &\; e^{-\beta\varepsilon^{2(\lambda^\prime -1)}} \int_\Omega \dfrac{|\nabla u_\varepsilon|^2}{2}\phi^2 (x,t-\varepsilon^{2\lambda^\prime}) \; dx \\
&\; + \int^t_{t-\varepsilon^{2\lambda^\prime}} e^{-\frac{\beta}{\varepsilon^2}(t-s)} 4\varepsilon^{-2\lambda^\prime} \left(\int_{{\rm spt} \phi \cap \Omega} |\nabla u_\varepsilon|^2 \; dx\right) \; ds. 
\end{aligned}
\end{equation}
By ${\rm spt} \phi \subset \tilde{B}_{2r}(y)$, $r \le \Cr{c-curva}/2$ and \eqref{as-bdd-den} we have 
\begin{equation}\label{dis-den10}
\sup_{s \in [t-\varepsilon^{2\lambda^\prime},t]} \int_{{\rm spt}\phi \cap \Omega} \dfrac{|\nabla u_\varepsilon|^2}{2}(x,s) \; dx \le D_1\omega_{n-1}(2r)^{n-1}. 
\end{equation}
Combining \eqref{dis-den9}, \eqref{dis-den10}, $\lambda^\prime < 1$ and $\lambda^\prime - \lambda < 2(1-\lambda^\prime)$, we obtain 
\begin{equation}\label{dis-con3}
\begin{aligned}
\int_{(\tilde{B}_r(y)\cap \Omega)\setminus A_2} \dfrac{|\nabla u_\varepsilon(x,t)|^2}{2} \; dx \le&\; \int_\Omega \dfrac{|\nabla u_\varepsilon|^2}{2} \phi^2(x,t) \; dx \\
\le&\; D_1 \omega_{n-1} (2r)^{n-1} \left( e^{-\beta\varepsilon^{2(\lambda^\prime-1)}} + \dfrac{8}{\beta}\varepsilon^{2(1-\lambda^\prime)} \right) \\
\le&\; \dfrac{9 D_1 \omega_{n-1} (2r)^{n-1}}{\beta} \varepsilon^{\lambda^\prime - \lambda} 
\end{aligned}
\end{equation}
for sufficiently small $\varepsilon$ depending only on $\beta$. 
Similarly, we may obtain 
\begin{equation}\label{dis-con4}
\int_{(B_r(y)\cap \Omega)\setminus A_2} \dfrac{|\nabla u_\varepsilon(x,t)|^2}{2} \; dx \le \dfrac{9 D_1 \omega_{n-1} (2r)^{n-1}}{\beta} \varepsilon^{\lambda^\prime - \lambda}
\end{equation}
for all $y \in \overline{\Omega}$ by replacing $\phi$ as $\phi \in {\rm Lip}(B_{2r}(y))$ such that 
\[ \phi(x) = 
\begin{cases}
1 & \mbox{if} \quad x \in B_r(y) \setminus A_2, \\
0 & \mbox{if} \quad {\rm dist} (x, B_r(y) \setminus A_2) \ge \varepsilon^{\lambda^\prime},
\end{cases}
\quad |\nabla \phi| \le 2\varepsilon^{-\lambda^\prime}, \quad 0 \le \phi \le 1. 
\] 
By \eqref{dis-con1}, \eqref{dis-con2}, \eqref{dis-con3} and \eqref{dis-con4}, we obtain the conclusion with an appropriate choice of $\Cr{e-dis-den}$ and $\Cr{c-dis-den}$. 
\end{proof}

\begin{lem}\label{lem-dis-mono}
Assume \eqref{as-bdd-den}. 
There exists a constant $\Cl[c]{c-dis-mono-bdd}$ depending only on $n$, $D_1$, $\alpha$, $W$, $\lambda$, $\kappa$, $\Cr{c-as-gra}$, $\Cr{c-as-dis}$, $\Cr{c-curva}$ and $\Omega$ such that for $\varepsilon_i < \Cr{e-dis-den}$, $y \in \overline{\Omega}$, $t \in [0,T_1]$ and $t\le s$, 
\begin{equation}\label{dis-mono-conclusion1}
\int^t_0 \int_\Omega \dfrac{\rho_{1,(y,s)}(x,\tau)+\rho_{2,(y,s)}(x,\tau)}{2(s-\tau)} \left(\dfrac{\varepsilon_i |\nabla u_{\varepsilon_i}|^2}{2} - \dfrac{W(u_{\varepsilon_i})}{\varepsilon_i}\right)^+ dx d\tau \le \Cr{c-dis-mono-bdd} \varepsilon_i^{\lambda^\prime-\lambda} (1+ |\log \varepsilon_i| + (\log s)^+) 
\end{equation}
if $y \in N_{\Cr{c-curva}/2}$ and 
\begin{equation}\label{dis-mono-conclusion2}
\int^t_0 \int_\Omega \dfrac{\rho_{1,(y,s)}(x,\tau)}{2(s-\tau)} \left(\dfrac{\varepsilon_i |\nabla u_{\varepsilon_i}|^2}{2} - \dfrac{W(u_{\varepsilon_i})}{\varepsilon_i}\right)^+ dx d\tau \le \Cr{c-dis-mono-bdd} \varepsilon_i^{\lambda^\prime-\lambda} (1+ |\log \varepsilon_i| + (\log s)^+) 
\end{equation}
if $y \not\in N_{\Cr{c-curva}/2}$.  
\end{lem}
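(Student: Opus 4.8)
The plan is to reduce the time integral to a pointwise--in--time bound and then integrate in $\tau$. Concretely, I would first establish that on the ``good'' range of times $\tau \in [2\varepsilon_i^{2\lambda^\prime}, t]$ with $s-\tau \ge \varepsilon_i^{2\lambda^\prime}$ one has
\[
\int_\Omega \frac{\rho_{1,(y,s)}(x,\tau)+\rho_{2,(y,s)}(x,\tau)}{2(s-\tau)}\left(\frac{\varepsilon_i|\nabla u_{\varepsilon_i}|^2}{2}-\frac{W(u_{\varepsilon_i})}{\varepsilon_i}\right)^+ dx \le \frac{C\varepsilon_i^{\lambda^\prime-\lambda}}{s-\tau},
\]
with $C$ depending only on the quantities listed in the statement. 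Integrating this in $\tau$ and setting $\sigma=s-\tau$, which then ranges inside $[\varepsilon_i^{2\lambda^\prime},s]$, produces $\int \sigma^{-1}\,d\sigma \le \log(s/\varepsilon_i^{2\lambda^\prime}) \le C(1+|\log\varepsilon_i|+(\log s)^+)$, i.e.\ the logarithmic factor in \eqref{dis-mono-conclusion1}. The complementary ``bad'' ranges --- namely $\tau\in[0,2\varepsilon_i^{2\lambda^\prime})$, where the density estimate \eqref{dis-den-conclusion1} is unavailable, and $\tau$ with $s-\tau<\varepsilon_i^{2\lambda^\prime}$, where the kernel is too singular for the dyadic argument, together with the degenerate case of small total time $s\le C\varepsilon_i^{2\lambda^\prime}$ --- will be disposed of by the crude pointwise bound $\varepsilon_i|\nabla u_{\varepsilon_i}|^2/2-W(u_{\varepsilon_i})/\varepsilon_i\le\Cr{c-up-bdd-xi}\varepsilon_i^{-\lambda}$ from Proposition \ref{thm-dis-bdd} together with $\int_\Omega\frac{\rho_{1,(y,s)}(x,\tau)+\rho_{2,(y,s)}(x,\tau)}{2(s-\tau)}\,dx\le C(s-\tau)^{-1/2}$; the latter follows from $\int_{\mathbb{R}^n}\rho_{(y,s)}(x,\tau)\,dx=(4\pi(s-\tau))^{1/2}$ and, for the $\rho_2$--part, from the bounded Jacobian of the reflection map $x\mapsto\tilde x$ on $N_{\Cr{c-curva}}$. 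On each of these bad ranges $\int(s-\tau)^{-1/2}\,d\tau$ is at most $C\varepsilon_i^{\lambda^\prime}$, so the contribution is $\le C\varepsilon_i^{\lambda^\prime-\lambda}$ and is absorbed into the stated bound.

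For the pointwise--in--time estimate on the good range I would split according to the size of $\sigma=s-\tau$. If $\sigma\ge(\Cr{c-curva}/4)^2$, then $\rho_{1,(y,s)}(x,\tau)/(2\sigma)\le C\sigma^{-(n+1)/2}\le C\sigma^{-1}$, and covering the support $B_{\Cr{c-curva}/2}(y)$ by boundedly many balls of radius $\Cr{c-curva}/4$ and applying \eqref{dis-den-conclusion1} on each gives the claim, the $\rho_2$--part being treated via the reflected balls. If $\varepsilon_i^{2\lambda^\prime}\le\sigma<(\Cr{c-curva}/4)^2$, I would run a dyadic (layer--cake) decomposition: split $B_{\Cr{c-curva}/2}(y)\cap\Omega$ into the core $B_{2\sqrt\sigma}(y)$ and the annuli $B_{2^{k+1}\sqrt\sigma}(y)\setminus B_{2^k\sqrt\sigma}(y)$ for $k\ge1$ (plus a transitional annulus near $\Cr{c-curva}/2$, harmless since $\sigma$ is bounded and $\sigma\mapsto e^{-c/\sigma}\sigma^{-(n+1)/2}$ is bounded, and beyond $\Cr{c-curva}/2$ the cutoff $\eta$ makes $\rho_1$ vanish), use $e^{-|x-y|^2/4\sigma}\le e^{-4^{k-1}}$ on the $k$-th annulus, bound the mass of $(\cdot)^+$ there by $\Cr{c-dis-den}\varepsilon_i^{\lambda^\prime-\lambda}(2^{k+1}\sqrt\sigma)^{n-1}$ via \eqref{dis-den-conclusion1} --- whose radius constraint $\varepsilon_i^{\lambda^\prime}<2^{k+1}\sqrt\sigma<\Cr{c-curva}/2$ holds because $\sqrt\sigma\ge\varepsilon_i^{\lambda^\prime}$ --- and sum the convergent series $2^{n-1}+\sum_{k\ge1} e^{-4^{k-1}}2^{(k+1)(n-1)}$; collecting the powers of $\sigma$ reproduces $C\varepsilon_i^{\lambda^\prime-\lambda}/\sigma$. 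The $\rho_2$--contribution is identical after decomposing $\tilde B_{\Cr{c-curva}/2}(y)$ and using that \eqref{dis-den-conclusion1} bounds the integrals over $B_r(y)\cap\Omega$ and $\tilde B_r(y)\cap\Omega$ jointly. The case $y\notin N_{\Cr{c-curva}/2}$ in \eqref{dis-mono-conclusion2} is the same argument with $\rho_2\equiv0$ and the second estimate of the same lemma, using \eqref{monotonicity-2} in place of \eqref{monotonicity}.

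The main obstacle is the bookkeeping at the two ends of the $\tau$-integration. Near $\tau=s$ the weight $\rho_{(y,s)}/(2(s-\tau))$ blows up like $(s-\tau)^{-(n+1)/2}$, so one must check that, against the only available $O(\varepsilon_i^{-\lambda})$ bound on the discrepancy, the product is still integrable in $\tau$ and that confining this to the shell $\{s-\tau<\varepsilon_i^{2\lambda^\prime}\}$ buys a factor $\varepsilon_i^{\lambda^\prime}$ that beats $\varepsilon_i^{-\lambda}$ (this uses $2\lambda^\prime-\lambda\ge\lambda^\prime-\lambda$ and $\varepsilon_i<1$). Near $\tau=0$ the density estimate \eqref{dis-den-conclusion1} is genuinely unavailable below $2\varepsilon_i^{2\lambda^\prime}$, so that thin time layer must be absorbed by the crude bound while keeping track of whether $s-\tau$ is then comparable to $s$ (when $s$ is large) or to $\varepsilon_i^{2\lambda^\prime}$ (when $s$ is small); in either case the extra $\varepsilon_i^{-\lambda}$ is compensated. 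Once these patching estimates are in place, the only genuinely log--producing piece is the clean integral $\int\sigma^{-1}\,d\sigma$ over $[\varepsilon_i^{2\lambda^\prime},s]$, and \eqref{dis-mono-conclusion1}--\eqref{dis-mono-conclusion2} follow by summing the finitely many pieces, with $\Cr{c-dis-mono-bdd}$ taken to be a fixed multiple of the constants in Proposition \ref{thm-dis-bdd} and in the preceding density--ratio lemma.
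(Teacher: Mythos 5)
Your proposal is correct and follows the same high-level strategy as the paper: split the $\tau$-integral into the thin "bad" layers near $\tau=0$ and $\tau=s$, absorb those with the crude pointwise bound $\Cr{c-up-bdd-xi}\varepsilon_i^{-\lambda}$ from Proposition \ref{thm-dis-bdd} together with $\int_\Omega(\rho_1+\rho_2)\,dx\lesssim(s-\tau)^{1/2}$, and on the middle range use the density estimate \eqref{dis-den-conclusion1} to reduce to $\int (s-\tau)^{-1}\,d\tau$, which produces the log. The one genuine point of departure is how the spatial integral is handled on the middle range: the paper excises the small ball $B_{\varepsilon_i^{\lambda'}}(y)$ (resp.\ $\tilde B_{\varepsilon_i^{\lambda'}}(y)$), bounds the discrepancy there crudely (contribution $\lesssim\varepsilon_i^{\lambda'-\lambda}$ after integrating $(s-\tau)^{-(n+1)/2}$ over $s-\tau\ge2\varepsilon_i^{2\lambda'}$), and on the complement runs a layer--cake over super-level sets of the Gaussian, applying \eqref{dis-den-conclusion1} to each level set (which now all have radius $\ge\varepsilon_i^{\lambda'}$ thanks to the excision). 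You instead make a dyadic annular decomposition at scale $\sqrt{s-\tau}$, whose core ball already has radius $2\sqrt{s-\tau}\ge2\varepsilon_i^{\lambda'}$, so \eqref{dis-den-conclusion1} applies uniformly to every piece and no excision is needed. Both routes give $C\varepsilon_i^{\lambda'-\lambda}/(s-\tau)$ and hence the same bound; your dyadic version trades the excision step for a separate case when $s-\tau\gtrsim\Cr{c-curva}^2$ and for summing a geometric series, while the paper's layer--cake is more compact once the excision is in place. The only cosmetic point worth correcting is that $\sigma^{-(n+1)/2}\le C\sigma^{-1}$ on $\sigma\ge(\Cr{c-curva}/4)^2$ requires $C$ to depend on $\Cr{c-curva}$ (it is false with $C=1$ when $\sigma<1$); since $\Cr{c-dis-mono-bdd}$ is already allowed to depend on $\Cr{c-curva}$ this is harmless.
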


\begin{proof}
Omit the subscript $i$. 
First, we show 
\begin{equation}\label{dis-mono-con1} 
\int^t_0 \int_\Omega \dfrac{\rho_{2,(y,s)}(x,\tau)}{2(s-\tau)} \left(\dfrac{\varepsilon |\nabla u_{\varepsilon}|^2}{2} - \dfrac{W(u_{\varepsilon})}{\varepsilon}\right)^+ dx d\tau \le C \varepsilon^{\lambda^\prime-\lambda} (1+ |\log \varepsilon| + (\log s)^+) 
\end{equation}
for a constant $C$ to be chosen latter in the case of $y \in N_{\Cr{c-curva}/2}$. 
If $t \le 2\varepsilon^{2\lambda^\prime}$ then by using \eqref{dis-bdd} and the similar argument for \eqref{lo-bdd-xi1} we have 
\begin{equation}\label{dis-mono1}
\int_0^t \int_\Omega \dfrac{\rho_{2,(y,s)}(x,\tau)}{2(s-\tau)} \left(\dfrac{\varepsilon |\nabla u_{\varepsilon}|^2}{2} - \dfrac{W(u_{\varepsilon})}{\varepsilon}\right)^+ dx d\tau  \le \int^t_0 \dfrac{5^n \sqrt{\pi}\Cr{c-up-bdd-xi}\varepsilon^{-\lambda}}{\sqrt{s-\tau}} \; d\tau \le 2 \cdot 5^n \sqrt{2\pi}\Cr{c-up-bdd-xi}\varepsilon^{\lambda^\prime-\lambda}.
\end{equation}
By the similar argument, if $s \ge t \ge s-2\varepsilon^{2\lambda^\prime}$ then we have 
\begin{equation}\label{dis-mono2}
\int_{s-2\varepsilon^{2\lambda^\prime}}^t \int_\Omega \dfrac{\rho_{2,(y,s)}(x,\tau)}{2(s-\tau)} \left(\dfrac{\varepsilon |\nabla u_{\varepsilon}|^2}{2} - \dfrac{W(u_{\varepsilon})}{\varepsilon}\right)^+ dx d\tau  \le 2 \cdot 5^n \sqrt{2\pi}\Cr{c-up-bdd-xi}\varepsilon^{\lambda^\prime-\lambda}.
\end{equation}
Hence we only need to estimate integral over $[2\varepsilon^{2\lambda^\prime},t]$ with $t \le s-2\varepsilon^{2\lambda^\prime}$. 
First we estimate on $\tilde{B}_{\varepsilon^{\lambda^\prime}}(y)\cap \Omega$. 
We compute using Lemma \ref{lem-ref-ball}, \eqref{dis-bdd} and $s-t \ge 2\varepsilon^{2\lambda^\prime}$ that 
\begin{equation}\label{dis-mono3}
\begin{aligned}
&\; \int_{2\varepsilon^{2\lambda^\prime}}^t \int_{\tilde{B}_{\varepsilon^{\lambda^\prime}}(y)\cap \Omega} \dfrac{\rho_{2,(y,s)}(x,\tau)}{2(s-\tau)} \left(\dfrac{\varepsilon |\nabla u_{\varepsilon}|^2}{2} - \dfrac{W(u_{\varepsilon})}{\varepsilon}\right)^+ dx d\tau \\
&\; \le \int_{2\varepsilon^{2\lambda^\prime}}^t \dfrac{5^n \Cr{c-up-bdd-xi} \omega_n \varepsilon^{n\lambda^\prime} \varepsilon^{-\lambda}}{2 (\sqrt{4 \pi})^{n-1} (s-\tau)^\frac{n+1}{2}} \; d\tau \le \dfrac{5^n\Cr{c-up-bdd-xi} \omega_n}{(n-1)(\sqrt{8 \pi})^{n-1}} \varepsilon^{\lambda^\prime - \lambda}. 
\end{aligned}
\end{equation}
On $\Omega \setminus \tilde{B}_{\varepsilon^{\lambda^\prime}}(y)$, by \eqref{dis-den-conclusion1}, $s-t \ge 2\varepsilon^{2\lambda^\prime}$ and computations similar to \eqref{lo-bdd-xi4}, we have 
\begin{equation}\label{dis-mono4}
\begin{aligned}
&\; \int^t_{2\varepsilon^{2\lambda^\prime}} \int_{\Omega \setminus \tilde{B}_{\varepsilon^{\lambda^\prime}}(y)} \dfrac{\rho_{2,(y,s)}(x,\tau)}{2(s-\tau)} \left(\dfrac{\varepsilon |\nabla u_{\varepsilon}|^2}{2} - \dfrac{W(u_{\varepsilon})}{\varepsilon}\right)^+ dx d\tau \\
\le &\; \int^t_{2\varepsilon^{2\lambda^\prime}} \dfrac{d\tau}{2(\sqrt{4\pi})^\frac{n-1}{2}(s-\tau)^\frac{n+1}{2}} \\
&\; \int_0^1\left\{\int_{((\Omega\cap \tilde{B}_{\Cr{c-curva}/2}(y)) \setminus \tilde{B}_{\varepsilon^{\lambda^\prime}}(y)) \cap \{x: e^{-\frac{|\tilde{x}-y|^2}{4(s-\tau)}} \ge l\}} \left(\dfrac{\varepsilon |\nabla u_{\varepsilon}|^2}{2} - \dfrac{W(u_{\varepsilon})}{\varepsilon}\right)^+ \; dx\right\} \; dl \\
\le &\; \Cr{c-dis-den} c(n) \varepsilon^{\lambda^\prime - \lambda} \int^t_{2\varepsilon^{2\lambda^\prime}} \dfrac{1}{s-\tau} \; d\tau 
\le \Cr{c-dis-den} c(n) \varepsilon^{\lambda^\prime - \lambda} (2\lambda^\prime \log(\varepsilon^{-1}) + \log s). 
\end{aligned}
\end{equation}
By \eqref{dis-mono1}--\eqref{dis-mono4}, we obtain \eqref{dis-mono-con1} with a constant $C$ depending only on $n, \Cr{c-up-bdd-xi}$ and $\Cr{c-dis-den}$. 
Similarly, we obtain 
\begin{equation}\label{dis-mono-con2} 
\int^t_0 \int_\Omega \dfrac{\rho_{1,(y,s)}(x,\tau)}{2(s-\tau)} \left(\dfrac{\varepsilon |\nabla u_{\varepsilon}|^2}{2} - \dfrac{W(u_{\varepsilon})}{\varepsilon}\right)^+ dx d\tau \le C \varepsilon^{\lambda^\prime-\lambda} (1 + |\log \varepsilon| + (\log s)^+) 
\end{equation}
for $y \in \overline{\Omega}$. 
Hence \eqref{dis-mono-con1} and \eqref{dis-mono-con2} imply the conclusion by choosing $\Cr{c-dis-mono-bdd} = 2C$. 
\end{proof}

\begin{proof}[Proof of Proposition \ref{thm-den-rat}]
Omit the subscript $i$. 
For $T>0$, we choose $\Cr{c-den-rat}$ as 
\[ \Cr{c-den-rat} := \max\left\{ \dfrac{(4\pi)^\frac{n-1}{2}\cdot e^{\Cr{c-mono-1}(T+\frac{\Cr{c-curva}^2}{16})^\frac{1}{4}}((1+5^{n-1})D_0 + \Cr{c-mono-2} T + 1)}{e^{-\frac{1}{4}}}, \dfrac{4^{n-1} \cdot 2\Cr{c-ene0}}{\Cr{c-curva}^{n-1}}\right\}. \]
Note that this choice of $\Cr{c-den-rat}$ does not depend on $D_1$ and let $D_1 := \Cr{c-den-rat} + 1$. 
For this $\Cr{c-den-rat}$, assume the conclusion \eqref{den-rat-con} was false. 
Then, by the continuity of $D_{\varepsilon}(t)$, there exist $y \in \overline{\Omega}$, $\tilde{t} \in (0,T]$, $0 < r \le \Cr{c-curva}$ and sufficiently small $\varepsilon$ such that 
\begin{equation}\label{den-rat-contra} 
\dfrac{\mu^{\tilde{t}}_\varepsilon(B_r(y)) + \mu^{\tilde{t}}_\varepsilon(\tilde{B}_r(y))}{\omega_{n-1}r^{n-1}} > \Cr{c-den-rat} \quad \mbox{if} \quad  y \in N_{\Cr{c-curva}/2}, \quad \quad \dfrac{\mu^{\tilde{t}}_\varepsilon(B_r(y)) }{\omega_{n-1}r^{n-1}} > \Cr{c-den-rat} \quad \mbox{if} \quad y \not\in N_{\Cr{c-curva}/2} 
\end{equation}
and $\sup_{t \in [0,\tilde{t}]}D(t) \le D_1$. 
First, we consider the case of $y \in N_{\Cr{c-curva}/2}$. 
For $r^\prime \ge \Cr{c-curva}/4$, we have by \eqref{energy-bdd} and the choice of $\Cr{c-den-rat}$
\begin{equation}\label{den-rat1} 
\dfrac{\mu^t_\varepsilon(B_{r^\prime}(y)) + \mu^t_\varepsilon(\tilde{B}_{r^\prime}(y))}{\omega_{n-1}{r^\prime}^{n-1}} \le \dfrac{4^{n-1} \cdot 2\Cr{c-ene0}}{\Cr{c-curva}^{n-1}} \le \Cr{c-den-rat}. 
\end{equation}
By \eqref{den-rat-contra} and \eqref{den-rat1}, we may see that $0 < r < \Cr{c-curva}/4$. 
Integrating \eqref{monotonicity} over $t \in (0,\tilde{t})$ with $s = \tilde{t}+r^2$ and applying \eqref{dis-mono-conclusion1}, we obtain by $\tilde{t} \le T$ and $s \le T + \frac{\Cr{c-curva}^2}{16}$
\begin{equation}\label{den-rat2}
\begin{aligned}
&\; e^{\Cr{c-mono-1}(s-t)^\frac{1}{4}} \int_\Omega \rho_{1,(y,s)}(x,t) + \rho_{2,(y,s)}(x,t) \; d\mu_\varepsilon^t(x) \Big|_{t=0}^{\tilde{t}} \\
&\; \le \int^{\tilde{t}}_0 e^{\Cr{c-mono-1}(s-t)^\frac{1}{4}} \left( \Cr{c-mono-2} + \int_\Omega \dfrac{\rho_{1,(y,s)}(x,t) + \rho_{2,(y,s)}(x,t)}{2(s-t)} \; d\xi^t_\varepsilon(x) \right) dt \\
&\; \le e^{\Cr{c-mono-1}(T+\frac{\Cr{c-curva}^2}{16})^\frac{1}{4}}\left\{\Cr{c-mono-2} T + \Cr{c-dis-mono-bdd} \varepsilon^{\lambda^\prime - \lambda}\left(1 + |\log \varepsilon| + \left(\log \left(T+\frac{\Cr{c-curva}^2}{16}\right)\right)^+\right)\right\}.
\end{aligned}
\end{equation}
By $s \le T + \frac{\Cr{c-curva}^2}{16}$, \eqref{pro-d0} and computations similar to \eqref{lo-bdd-xi4}, we obtain 
\begin{equation}\label{den-rat3}
e^{\Cr{c-mono-1}s^\frac{1}{4}} \int_\Omega \rho_{1,(y,s)}(x,0) + \rho_{2,(y,s)}(x,0) \; d\mu_\varepsilon^0(x) \le e^{\Cr{c-mono-1}(T+\frac{\Cr{c-curva}^2}{16})^\frac{1}{4}}((1+5^{n-1})D_0). 
\end{equation}
By $s = \tilde{t} +r^2$, $r < \Cr{c-curva}/4$, $\eta = 1$ on $B_{\Cr{c-curva}/4}$ and \eqref{den-rat-contra}, we have 
\begin{equation}\label{den-rat4}
\begin{aligned} 
&\; e^{\Cr{c-mono-1}(s-\tilde{t})^\frac{1}{4}} \int_\Omega \rho_{1,(y,s)}(x,\tilde{t}) + \rho_{2,(y,s)}(x,\tilde{t}) \; d\mu_\varepsilon^{\tilde{t}}(x) \\
&\; \ge \int_{\Omega \cap B_r(y)} \dfrac{e^{-\frac{|x-y|^2}{4r^2}}}{(4\pi r^2)^\frac{n-1}{2}} \; d\mu_\varepsilon^{\tilde{t}} + \int_{\Omega \cap \tilde{B}_r(y)} \dfrac{e^{-\frac{|\tilde{x}-y|^2}{4r^2}}}{(4\pi r^2)^\frac{n-1}{2}} \; d\mu_\varepsilon^{\tilde{t}} \\
&\; \ge \dfrac{e^{-\frac{1}{4}}}{(4\pi)^\frac{n-1}{2}r^{n-1}}(\mu_\varepsilon^{\tilde{t}}(B_r(y)) + \mu_\varepsilon^{\tilde{t}}(\tilde{B}_r(y))) > \dfrac{e^{-\frac{1}{4}}}{(4\pi)^\frac{n-1}{2}}\Cr{c-den-rat}. 
\end{aligned}
\end{equation}
Now, we choose $0 < \Cr{e-den-rat} \le \Cr{e-dis-den}$ so that 
\[ \Cr{c-dis-mono-bdd} \varepsilon^{\lambda^\prime - \lambda}\left(1 + |\log \varepsilon| + \left(\log \left(T+\frac{\Cr{c-curva}^2}{16}\right)\right)^+\right) \le 1 \]
for $\varepsilon \in (0, \Cr{e-den-rat})$. 
Then, by combining \eqref{den-rat2}--\eqref{den-rat4} and the choice of $\Cr{c-den-rat}$, we obtain a contradiction for $\varepsilon \in (0, \Cr{e-den-rat})$. 
In the case of $y \not \in N_{\Cr{c-curva}/2}$, we may obtain a contradiction by similar computations as above. 
\end{proof}

%%%%%%%%%%%%%%%%%%%%%%%%%%%%%%%%%%%%%%%%%%%%%%%%%%%%%%%%%
%\medskip

\section{Vanishing of the discrepancy}\label{sec:vani-dis}

In the following, we define the Radon measure $\mu_{\varepsilon_i}$ and $|\xi_{\varepsilon_i}|$ on $\mathbb{R}^n \times [0,\infty)$ as 
\begin{align*}
d\mu_{\varepsilon_i} := d\mu_{\varepsilon_i}^t dt, \quad d|\xi_{\varepsilon_i}| := \left|\dfrac{\varepsilon_i|\nabla u_{\varepsilon_i}|}{2} - \dfrac{W(u_{\varepsilon_i})}{\varepsilon_i}\right| d\mathcal{L}^n\lfloor_\Omega dt. 
\end{align*} 
From the boundedness \eqref{energy-bdd}, we obtain subsequence limits $\mu$ and $|\xi|$ of $\mu_{\varepsilon_i}$ and $|\xi_{\varepsilon_i}|$ on $\mathbb{R}^n \times [0,\infty)$, respectively. 
Since $\mu_{\varepsilon_i}^t(\Omega)$ is bounded uniformly with respect to $\varepsilon_i$ and $t \in [0,\infty)$, the dominated convergence theorem shows $d\mu = d\mu^tdt$, where $\mu^t$ is the limit measure of $\mu^t_{\varepsilon_i}$ obtained by Proposition \ref{pro-con-den}. 
On the other hand, we note that ${\rm spt} \mu$ may not be the same as $\cup_{t \ge 0} {\rm spt}\mu^t \times \{t\}$. 
Though the following lemma can be proved as \cite[Lemma 5.1]{TT}, we include the proof for the convenience of the reader. 

\begin{lem}\label{lem-chara-spt}
For all $t \ge 0$, 
\[
{\rm spt} \mu^t \subset \{x \in \overline{\Omega} : (x,t) \in {\rm spt} \mu\}. 
\]
\end{lem}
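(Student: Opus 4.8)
The plan is to argue by contradiction: suppose there exist $t \ge 0$ and a point $x_0 \in \operatorname{spt}\mu^t$ with $(x_0,t) \notin \operatorname{spt}\mu$. By the definition of the support of the space-time measure $\mu$, there is a radius $r>0$ and a time interval such that $\mu$ assigns zero mass to the parabolic neighborhood $B_r(x_0) \times (t-r^2, t+r^2)$ (intersected with $[0,\infty)$ in time). Writing $d\mu = d\mu^s\,ds$, this means $\int_{t-r^2}^{t+r^2} \mu^s(B_r(x_0))\,ds = 0$, hence $\mu^s(B_r(x_0)) = 0$ for a.e.\ $s$ in that interval. The goal is then to upgrade this to $\mu^t(B_{r/2}(x_0)) = 0$, contradicting $x_0 \in \operatorname{spt}\mu^t$.

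First I would fix a test function $\phi \in C_c^\infty(B_r(x_0))$ with $0 \le \phi \le 1$ and $\phi \equiv 1$ on $B_{r/2}(x_0)$, arranged so that additionally $\langle \nabla \phi, \nu \rangle = 0$ on $\partial\Omega$ near $x_0$ (if $x_0 \in \partial\Omega$ one builds $\phi$ as a function of the distance to $\partial\Omega$ and of a tangential variable, or one simply uses the full backward-heat-kernel monotonicity of Proposition \ref{thm:monotonicity} which already incorporates the reflection). The cleanest route is to use the boundary monotonicity formula \eqref{monotonicity}--\eqref{monotonicity-2}: for $s'>t$ and $y$ near $x_0$, the quantity $e^{\Cr{c-mono-1}(s'-\tau)^{1/4}}\int_\Omega (\rho_{1,(y,s')} + \rho_{2,(y,s')})\,d\mu_{\varepsilon_i}^\tau$ has a controlled time derivative, with the discrepancy term on the right-hand side handled by Lemma \ref{lem-dis-mono}. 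Passing $\varepsilon_i \to 0$, one gets an almost-monotonicity statement for $s \mapsto e^{\Cr{c-mono-1}(s'-s)^{1/4}}\int (\rho_{1,(y,s')}+\rho_{2,(y,s')})\,d\mu^s$ valid for a.e.\ $s$, which extends by lower semicontinuity of $\mu^s$ under the convergence in Proposition \ref{pro-con-den}.

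The key step is then: choose $s_0 \in (t-r^2, t)$, with $s_0$ a time for which $\mu^{s_0}(B_r(x_0)) = 0$ and the monotonicity inequality holds; apply the monotonicity formula on $[s_0, t]$ with pole $y = x_0$ and scale $s' = t$ (or $s' = t + \delta$, letting $\delta \downarrow 0$ at the end). Because $\mu^{s_0}$ vanishes on $B_r(x_0)$ and the kernels $\rho_{1},\rho_2$ decay, the integral $\int (\rho_{1,(x_0,t)} + \rho_{2,(x_0,t)})(\cdot,s_0)\,d\mu^{s_0}$ is bounded by the tail contribution from $\mathbb{R}^n \setminus B_r(x_0)$, which is small when $t - s_0$ is small relative to $r^2$ (using the global energy bound \eqref{energy-bdd} and the density bound of Proposition \ref{thm-den-rat} to control the tail). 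The almost-monotonicity — the error terms $\Cr{c-mono-2}(t-s_0)$ and the vanishing discrepancy contribution — then forces $\liminf_{s' \downarrow t}\int (\rho_{1,(x_0,s')} + \rho_{2,(x_0,s')})(\cdot,t)\,d\mu^t$ to be small, and since this Gaussian-weighted integral controls $\mu^t(B_{r/2}(x_0))$ from below by a positive constant times $\mu^t(B_{r/2}(x_0))/r^{n-1}$, we conclude $\mu^t(B_{r/2}(x_0)) = 0$, the desired contradiction.

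The main obstacle I expect is the interchange of limits at the pole scale $s' = t$: the monotonicity formula is naturally stated for $s' > \tau$ strictly, and one must take $s' \downarrow t$ carefully while simultaneously controlling that $\mu^s$ depends only measurably (not continuously) on $s$, so that the "good" time $s_0$ and the a.e.-valid monotonicity must be combined with lower semicontinuity of $s \mapsto \mu^s(\text{open set})$ along the subsequence. A secondary technical point is the case $x_0 \in \partial\Omega$, where one must use the reflected kernel $\rho_2$ and Lemma \ref{lem-ref-ball} to ensure the tail estimates and the lower bound $\int \rho \, d\mu^t \gtrsim \mu^t(B_{r/2}(x_0))/r^{n-1}$ still go through; this is exactly why Proposition \ref{thm:monotonicity} and Lemma \ref{lem-dis-mono} were set up with both $\rho_1$ and $\rho_2$, so the argument is parallel to the interior case with $\tilde B_r$ replacing the missing part of $B_r$ outside $\Omega$.
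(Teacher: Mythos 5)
The paper's proof of Lemma \ref{lem-chara-spt} is completely elementary and makes no use of the monotonicity formula. It takes the same test function $\phi \in C^2_c(B_r(x);\mathbb{R}^+)$ with $\phi=1$ on $B_{r/2}(x)$, and then observes — using the gradient structure, completing the square, and the estimate $|\nabla\phi|^2/(2\phi) \le \|\phi\|_{C^2}$ — that
\[
\frac{d}{dt}\mu^t_{\varepsilon_i}(\phi) = \int_\Omega -\varepsilon_i\phi(\partial_t u_{\varepsilon_i})^2 - \varepsilon_i\langle\nabla\phi,\nabla u_{\varepsilon_i}\rangle\partial_t u_{\varepsilon_i}\,dx \le c(\|\phi\|_{C^2},\Cr{c-ene0}).
\]
This one-sided Lipschitz bound on $t\mapsto\mu_{\varepsilon_i}^t(\phi)$, uniform in $\varepsilon_i$, passes to the limit and gives $\mu^s(\phi) > \mu^t(\phi)/2 > 0$ for all $s < t$ close enough to $t$; integrating in $s$ then contradicts $\mu(B_r(x)\times(t-r^2,t+r^2))=0$. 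Nothing heavier than \eqref{energy-bdd} and the Neumann condition is used. You propose instead to run the boundary monotonicity formula from an earlier ``good'' time $s_0$ (where $\mu^{s_0}(B_r(x_0))=0$) forward to $t$. That is a genuinely different and much heavier route, but as written it has a real gap at the last step.

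The problematic claim is that $\liminf_{s'\downarrow t}\int(\rho_{1,(x_0,s')}+\rho_{2,(x_0,s')})(\cdot,t)\,d\mu^t$ ``controls $\mu^t(B_{r/2}(x_0))$ from below by a positive constant times $\mu^t(B_{r/2}(x_0))/r^{n-1}$.'' As $s'\downarrow t$ the kernel $\rho_{(x_0,s')}(\cdot,t)$ concentrates at the single point $x_0$ like an approximate identity at scale $\sqrt{s'-t}$; for $|x-x_0|\sim r$ the factor $e^{-|x-x_0|^2/4(s'-t)}$ tends to $0$ exponentially, so the integral gives no lower bound on $\mu^t(B_{r/2}(x_0))$, only on mass in a ball of radius $O(\sqrt{s'-t})$. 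The converse choice — keeping the pole scale comparable to $r$ so that the kernel does weight all of $B_{r/2}(x_0)$ — destroys the other half of your argument: with $s'-s_0\sim r^2$ the tail of the Gaussian outside $B_r(x_0)$ at time $s_0$ is of order one, so $\mu^{s_0}(B_r(x_0))=0$ no longer forces the starting Gaussian integral to be small. The two requirements (small tail at $s_0$; lower bound on $\mu^t(B_{r/2})$ at $t$) pull the scale in opposite directions and cannot both hold with a single pole. A correct version of your strategy would have to show the Gaussian density vanishes at every $y$ near $x_0$ and then invoke a Hausdorff-measure bound on $\operatorname{spt}\mu^t$ (Corollary \ref{cor-bdd-spt}) to upgrade pointwise vanishing density to $\mu^t(B_{r/4}(x_0))=0$ — but Corollary \ref{cor-bdd-spt} is itself proved using Lemma \ref{lem-chara-spt}, so this would be circular in the paper's logical order. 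The direct differential-inequality argument avoids all of this.
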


\begin{proof}
Suppose $x \in {\rm spt} \mu^t$ and assume for a contradiction that $(x,t) \not \in {\rm spt} \mu$. 
Then there exists $r>0$ such that $\mu(B_r(x) \times (t-r^2, t+r^2)) = 0$. 
Take $\phi \in C^2_c(B_r(x) ; \mathbb{R}^+)$ with $\phi = 1$ on $B_{r/2}(x)$. 
Since $x \in {\rm spt} \mu^t$, we have $\mu^t(\phi) > 0$. 
By using integration by parts and the Neumann boundary condition \eqref{neumann}, we obtain 
\begin{equation}\label{chara-spt1}
\begin{aligned}
\dfrac{d}{dt} \mu_{\varepsilon_i}^t(\phi) =&\; \int_{\Omega} -\varepsilon_i \phi (\partial_t u_{\varepsilon_i})^2 - \varepsilon_i \langle \nabla \phi, \nabla u_{\varepsilon_i}\rangle \partial_t u_{\varepsilon_i} \; dy \\
=&\; \int_{\Omega \cap {\rm spt}\phi} -\varepsilon_i \phi \left(\partial_t u_{\varepsilon_i} + \dfrac{\langle \nabla \phi, \nabla u_{\varepsilon_i} \rangle}{2\phi} \right)^2 + \varepsilon_i \dfrac{(\langle \nabla \phi, \nabla u_{\varepsilon_i} \rangle)^2}{4\phi} \; dy \le c(\|\phi\|_{C^2}, \Cr{c-ene0}), 
\end{aligned}
\end{equation}
where $c(\|\phi\|_{C^2}, \Cr{c-ene0})$ is a constant depending only on $\|\phi\|_{C^2}$ and $\Cr{c-ene0}$. 
Here we have used $\frac{|\nabla \phi|^2}{2\phi} \le \|\phi\|_{C^2}$ and \eqref{energy-bdd}. 
Integrating \eqref{chara-spt1} over $[s,t]$ and taking the limit $i \to \infty$, we see that $\mu^s(\phi) > \mu^t(\phi)/2 > 0$ if $s$ is sufficient close to $t$. 
Thus, we obtain 
\[ \iint_{\overline{\Omega} \times (t-r^2, t+r^2)} \phi(x) \; d\mu(y,s) >0 \]
from $d\mu = d\mu^tdt$, which contradicts $\mu(B_r(x) \times (t-r^2, t+r^2)) = 0$. 
\end{proof}

In this section, our aim is to prove the vanishing of $|\xi|$. 

\begin{pro}\label{thm-vani-dis}
$|\xi| = 0$ on $\mathbb{R}^n \times (0,\infty)$. 
\end{pro}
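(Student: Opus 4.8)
The plan is to split the discrepancy into its positive and negative parts, dispose of the positive part via the bound of Lemma \ref{lem-dis-mono}, and then eliminate the negative part by feeding the boundary monotonicity formula its own output. Write the Jordan decomposition $\xi_{\varepsilon_i}^t=\xi_{\varepsilon_i}^{t,+}-\xi_{\varepsilon_i}^{t,-}$ of the discrepancy density ($\xi_{\varepsilon_i}^{t,\pm}\ge0$), set $\xi_{\varepsilon_i}^{\pm}:=\xi_{\varepsilon_i}^{t,\pm}\,dt$, and note $|\xi_{\varepsilon_i}|=\xi_{\varepsilon_i}^{+}+\xi_{\varepsilon_i}^{-}$. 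Since $|\xi_{\varepsilon_i}^t|(\mathbb{R}^n)\le E_{\varepsilon_i}[u_{\varepsilon_i}(\cdot,t)]\le\Cr{c-ene0}$ uniformly in $t$ and $i$, after passing to a further subsequence (not relabeled) we have $\xi_{\varepsilon_i}^{\pm}\rightharpoonup\xi^{(\pm)}$ on $\mathbb{R}^n\times[0,\infty)$, so $|\xi|=\xi^{(+)}+\xi^{(-)}$; and since $\xi_{\varepsilon_i}^{t,\pm}\le\mu_{\varepsilon_i}^t$ (because $W\ge0$), passing to the limit and disintegrating $\xi^{(\pm)}=\xi^{(\pm),t}\,dt$ gives $\xi^{(\pm),t}\le\mu^t$ for a.e.\ $t$, so $\xi^{(-),t}$ inherits the density-ratio bound of Proposition \ref{thm-den-rat}. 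It suffices to show $\xi^{(+)}=0$ and $\xi^{(-)}=0$ on $\mathbb{R}^n\times(0,\infty)$.

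\emph{Step 1: the positive part.} Fix $y\in\overline\Omega$ and $0<t<s$. On $[0,t]$ one has $\tfrac1{2(s-\tau)}\ge\tfrac1{2s}$, so by Lemma \ref{lem-dis-mono} (in the form \eqref{dis-mono-conclusion1} if $y\in N_{\Cr{c-curva}/2}$, \eqref{dis-mono-conclusion2} otherwise),
\[
\int_0^t\!\int_\Omega\rho_{1,(y,s)}(x,\tau)\,d\xi_{\varepsilon_i}^{\tau,+}(x)\,d\tau\ \le\ 2s\,\Cr{c-dis-mono-bdd}\,\varepsilon_i^{\lambda^\prime-\lambda}\bigl(1+|\log\varepsilon_i|+(\log s)^+\bigr)\ \xrightarrow[i\to\infty]{}\ 0 .
\]
As $\rho_{1,(y,s)}(\cdot,\tau)\ge c>0$ on $B_{\Cr{c-curva}/4}(y)$ uniformly for $\tau$ in a compact subinterval of $(0,t)$, the weak-$*$ lower semicontinuity $\nu(U)\le\liminf_i\nu_i(U)$ for open $U$ gives $\xi^{(+)}(B_{\Cr{c-curva}/4}(y)\times(0,t))=0$; varying $y\in\overline\Omega$, $t>0$, and using $\spt\xi^{(+)}\subset\overline\Omega\times[0,\infty)$, we get $\xi^{(+)}=0$ on $\mathbb{R}^n\times(0,\infty)$, hence $|\xi|=\xi^{(-)}$.

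\emph{Step 2: a limiting monotonicity bound.} Fix $y\in N_{\Cr{c-curva}/2}$, $s>0$, and $0<t_1<t_2<s$. Integrating \eqref{monotonicity} over $t\in(t_1,t_2)$, splitting $\xi_{\varepsilon_i}^t=\xi_{\varepsilon_i}^{t,+}-\xi_{\varepsilon_i}^{t,-}$, and letting $i\to\infty$ — using Proposition \ref{pro-con-den} for the $\mu^{t_j}$-terms, Lemma \ref{lem-dis-mono} to kill the $\xi_{\varepsilon_i}^{t,+}$-contribution, and weak-$*$ lower semicontinuity for the favourably signed $\xi_{\varepsilon_i}^{t,-}$-term — and then discarding the nonnegative $t_2$-term, one obtains that $\int_{t_1}^{t_2}\!\int_\Omega\tfrac{\rho_{1,(y,s)}+\rho_{2,(y,s)}}{2(s-t)}\,d\xi^{(-),t}\,dt$ is bounded above by $e^{\Cr{c-mono-1}(s-t_1)^{1/4}}\int_\Omega(\rho_{1,(y,s)}+\rho_{2,(y,s)})(\cdot,t_1)\,d\mu^{t_1}+\Cr{c-mono-2}\int_{t_1}^{t_2}e^{\Cr{c-mono-1}(s-t)^{1/4}}\,dt$. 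By the density-ratio bound (Proposition \ref{thm-den-rat}) and an estimate of the type \eqref{lo-bdd-xi4}, the right-hand side is bounded by a constant depending only on $s$ (and $n,\Cr{c-curva},\Cr{c-den-rat},\Cr{c-mono-1},\Cr{c-mono-2}$); letting $t_1\downarrow0$, $t_2\uparrow s$, and using monotone convergence,
\[
\int_0^s\!\int_\Omega\frac{\rho_{1,(y,s)}(x,t)+\rho_{2,(y,s)}(x,t)}{2(s-t)}\ d\xi^{(-),t}(x)\,dt\ <\ \infty\qquad\text{for every }s>0,\ y\in N_{\Cr{c-curva}/2},
\]
and similarly, with $\rho_{2,(y,s)}$ removed, for every $y\in\overline\Omega$, starting from \eqref{monotonicity-2}.

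\emph{Step 3: the negative part vanishes (main obstacle).} Since $t\mapsto\tfrac1{2(s-t)}$ is not integrable at $t=s$ and $\xi^{(-),t}\ge0$, Step 2 forces
\[
\operatorname*{ess\,liminf}_{t\uparrow s}\ \int_\Omega(\rho_{1,(y,s)}+\rho_{2,(y,s)})(x,t)\,d\xi^{(-),t}(x)=0\qquad\text{for every }s>0,\ y,
\]
equivalently, using $\eta\equiv1$ near $y$ and Lemma \ref{lem-ref-ball} to control the reflected kernel near $\partial\Omega$, $\operatorname*{ess\,liminf}_{t\uparrow s}(s-t)^{-(n-1)/2}\,\xi^{(-),t}\!\left(B_{\sqrt{s-t}}(y)\right)=0$. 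The decisive remaining point is to upgrade this time-averaged, parabolic-scale decay to the pointwise-in-time conclusion $\xi^{(-),t}=0$ for a.e.\ $t$, i.e.\ $\xi^{(-)}=0$. This is done by contradiction, combining the above with the spacetime identity $d\xi^{(-)}=d\xi^{(-),t}\,dt$ and the uniform density-ratio bound for $\mu^t$ (hence for $\xi^{(-),t}$): if $\xi^{(-)}$ gave positive mass to some parabolic cylinder, a Vitali covering argument of the type used in the proof of Lemma \ref{lem-lo-bdd-den} would produce a spacetime point at which the above $\operatorname{ess\,liminf}$ is strictly positive. This is exactly where the argument of \cite{LST,TT} is imported; the only new ingredient in the present non-convex setting is the reflected kernel $\rho_{2,(y,s)}$, for which Lemma \ref{lem-ref-ball} makes every reflected ball comparable to an ordinary one, so that the interior estimates go through uniformly up to and including $\partial\Omega$. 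I expect Step 3 to be the crux of the proof; Steps 1 and 2 are essentially a repackaging of Lemma \ref{lem-dis-mono} and the boundary monotonicity formula (Proposition \ref{thm:monotonicity}).
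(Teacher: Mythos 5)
Your Steps 1 and 2 are a reasonable repackaging of what the paper also does (splitting off the positive part via Lemma \ref{lem-dis-mono} and integrating the monotonicity formula). The genuine gap is Step 3: the finiteness you obtain there is of the form
\[
\int_0^s \int_\Omega \frac{\rho_{1,(y,s)}(x,t)+\rho_{2,(y,s)}(x,t)}{2(s-t)}\;d\xi^{(-)}(x,t) < \infty
\quad\text{for every }(y,s),
\]
i.e.\ for a \emph{fixed} spatial centre $y$ you control the discrepancy mass in the shrinking parabolic ball $B_{\sqrt{s-t}}(y)$ as $t\uparrow s$. This does not rule out a nonzero $\xi^{(-)}$: for instance, $|\xi|$-mass that ``escapes'' the parabolic ball faster than $\sqrt{s-t}$ as $t\uparrow s$ would make the essential liminf vanish at every $(y,s)$ without $\xi^{(-)}$ itself being zero, and nothing in the estimates so far prevents this (Proposition \ref{thm-den-rat} gives only an \emph{upper} bound on the density ratio). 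The ``Vitali covering argument of the type used in Lemma \ref{lem-lo-bdd-den}'' that you gesture at does not bridge this gap: that covering is used there for an unrelated purpose (covering the set $A_1=\{|u_\varepsilon|\le\alpha\}$), and no density theorem of the required one-sided-in-time type is established.

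The paper's route to the conclusion at this point is fundamentally different and carries three ingredients you have not reproduced. First, the estimate is written with the \emph{transposed} kernel $e^{-|x-\tilde y|^2/4(s-t)}$ (centre variable in the reflected argument), and the reflection inequalities \eqref{ref-po-2} --- the specific new ingredient for non-convex $\Omega$, replacing the convexity inequality $|x-\tilde y|\ge|x-y|$ of \cite{MT} --- are used to dominate this transposed kernel by $\rho_{2,(y,s)}$, so that the monotonicity formula still applies. Second, the resulting uniform-in-$(y,s)$ bound is integrated against $d\mu^s\,ds$ and Fubini is applied, yielding for $|\xi|$-a.e.\ $(x,t)$ the condition $\int_t^T\frac{1}{2(s-t)}\overline{\mu}^s_{\sqrt{s-t},x}\,ds<\infty$, which controls the \emph{limit measure} $\mu^s$ near $(x,t)$ as $s\downarrow t$ --- i.e.\ the roles of $(x,t)$ (where $|\xi|$ lives) and $(y,s)$ (the backward-kernel centre) are exchanged so the condition holds \emph{at} $|\xi|$-points. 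Third, the essential liminf is upgraded to a genuine limit, $\lim_{s\downarrow t}\overline{\mu}^s_{\sqrt{s-t},x}=0$, via the approximate monotonicity of $s\mapsto\overline{\mu}^s_{r,x}$ together with Lemma \ref{lem-con-heat}, and then the clearing-out machinery of Lemma \ref{lem-con-spt} and Lemma \ref{lem-vani-spt} (which rests on Lemma \ref{sptmu1} and the parabolic-paraboloid covering of $Z^-$) shows $\mu(Z^-(T))=0$; since $|\xi|\le\mu$ is concentrated on $Z^-(T)$, this gives $|\xi|=0$. None of this --- the Fubini swap, the reflection inequalities needed to make that swap legal in the non-convex case, the liminf-to-lim upgrade, or the clearing-out lemma --- appears in your Step 3, so as written the proposal does not establish the proposition.
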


The proof of the vanishing of $\xi$ in this paper is similar to that of \cite{MT}. 
However, Mizuno and Tonegawa \cite{MT} used the inequality $|x-\tilde{y}| \ge |x-y|$ for any two interior points $x$ and $y$ of convex domains (such that $\tilde{y}$ is well-defined) to control $\rho_{2,(x,s)}(y,t)$ as $\rho_{2,(x,s)}(y,t) \le \rho_{1,(x,s)}(y,t) = \rho_{1,(y,s)}(x,t)$. 
In this paper, we modify \cite[Lemma 3.4]{I} (and other related arguments) to include the reflection argument and apply an inequality between $|x-\tilde{y}|$ and $|\tilde{x} - y|$ to control $\rho_{2,(x,s)}(y,t)$ by $\rho_{2,(y,s)}(x,t)$. 

For all $t \ge 0$ and the limit measure $\mu^t$, we define 
\[ 
\overline{\mu}^t_{r,y} := 
\begin{cases}
\displaystyle \int_{\overline{\Omega}} \dfrac{\eta(|x-y|)e^{-\frac{|x-y|^2}{4r^2}} + \eta(|\tilde{x}-y|)e^{-\frac{|\tilde{x}-y|^2}{4r^2}}}{(2\sqrt{\pi} r)^{n-1}} \; d\mu^t(x) & \mbox{if} \quad y \in N_{\Cr{c-curva}/2}, \\
\displaystyle \int_{\overline{\Omega}} \dfrac{\eta(|x-y|)e^{-\frac{|x-y|^2}{4r^2}}}{(2\sqrt{\pi} r)^{n-1}} \; d\mu^t(x) & \mbox{if} \quad y \not\in N_{\Cr{c-curva}/2}.
\end{cases}
\]

\begin{lem}\label{lem-con-heat}
For any $T>0$ and $\delta > 0$, there exist $0< \Cl[c]{c-lim-rat1} <1$ and $\Cl[c]{c-lim-rat2}$ depending only on  $T$, $n$, $D_0$, $\alpha$, $W$, $\lambda$, $\kappa$, $\Cr{c-as-gra}$, $\Cr{c-as-dis}$, $\Cr{c-curva}$ and $\Omega$ with the following properties: 

(1) For $0 < r \le \Cr{c-curva}/2$, $y, y_0 \in \overline{\Omega}$ with $|y-y_0| \le \Cr{c-lim-rat1} r$ and $0 \le t \le T$, 
\[ \overline{\mu}^t_{r,y} \le \overline{\mu}^t_{r,y_0} + \delta. \]

(2) For $0<r, R$ with $1 \le R/r \le 1+\Cr{c-lim-rat2}$, $y \in \overline{\Omega}$ and $0 \le t \le T$, 
\[ \overline{\mu}^t_{R,y} \le \overline{\mu}^t_{r,y} + \delta. \]
\end{lem}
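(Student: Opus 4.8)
The plan is to read both parts as quantitative continuity estimates for the Gaussian-weighted (reflected) density, proved by a dyadic decomposition tested against the uniform density-ratio bounds $\mu^t(B_\rho(y))\le \Cr{c-den-rat}\omega_{n-1}\rho^{n-1}$ and $\mu^t(\tilde B_\rho(y))\le \Cr{c-den-rat}\omega_{n-1}\rho^{n-1}$ (for $y\in\overline\Omega$, $0<\rho<\Cr{c-curva}$, $0\le t\le T$) which follow from Proposition \ref{thm-den-rat} and Proposition \ref{pro-con-den} as in Remark \ref{rk:density}. This is the scalar argument of \cite[Lemma 3.4]{I} and \cite[Lemma 5.1]{TT}; the only new feature is that the reflected kernel has to be carried along in parallel with the usual Gaussian.

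First I would make a harmless reformulation. Since $|\tilde x - y|\ge\dist(y,\partial\Omega)$ wherever $\tilde x$ is defined and $\spt\eta\subset[0,\Cr{c-curva}/2)$, the reflected summand $\eta(|\tilde x - y|)e^{-|\tilde x - y|^2/(4r^2)}$ vanishes identically on $\overline\Omega$ once $\dist(y,\partial\Omega)\ge\Cr{c-curva}/2$; so, with the convention that it is $0$ when $x\notin N_{\Cr{c-curva}}$, the quantity $\overline\mu^t_{r,y}$ is given for \emph{every} $y\in\overline\Omega$ by the single expression $\int_{\overline\Omega}(K^r_y+\widetilde K^r_y)\,d\mu^t$, where $K^r_y(x)=\eta(|x-y|)e^{-|x-y|^2/(4r^2)}(2\sqrt\pi r)^{1-n}$ and $\widetilde K^r_y(x)$ is the same expression with $|x-y|$ replaced by $|\tilde x - y|$. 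This removes the apparent discontinuity across $\partial N_{\Cr{c-curva}/2}$, so the two cases in the definition need not be distinguished in the estimates.

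For part (1) I would fix $y,y_0$ with $|y-y_0|\le\Cr{c-lim-rat1}r$ and bound $K^r_y-K^r_{y_0}$ (and $\widetilde K^r_y-\widetilde K^r_{y_0}$) by integrating the base-point derivative along the segment $y_\sigma=(1-\sigma)y_0+\sigma y$: one has $|\nabla_yK^r_y(x)|\le Cr^{-n}G(|x-y|/r)$ with $G$ bounded and of Gaussian decay, and the identical bound for $|\nabla_y\widetilde K^r_y(x)|$ in $|\tilde x - y|$ (since $\tilde x$ is a frozen parameter). Taking $\Cr{c-lim-rat1}\le\frac14$ keeps $|x-y_\sigma|$ comparable to $|x-y_0|$ away from $B_r(y_0)$ and all supports inside $B_{\Cr{c-curva}}(y_0)$, giving $|K^r_y(x)-K^r_{y_0}(x)|\le C\Cr{c-lim-rat1}r^{1-n}\overline G(|x-y_0|/r)$ with $\overline G$ of Gaussian decay, and the corresponding reflected bound in $|\tilde x - y_0|/r$. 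I would then split $\overline\Omega$ into $B_r(y_0)$ together with the dyadic shells $B_{2^{k+1}r}(y_0)\setminus B_{2^kr}(y_0)$ (only finitely many meet the supports, since $2^{k+1}r<\Cr{c-curva}$ is forced there), bound $\mu^t$ on each shell by the density ratio, and sum the convergent series $\sum_k2^{(k+1)(n-1)}e^{-c4^k}$ to obtain $\int_{\overline\Omega}|K^r_y-K^r_{y_0}|\,d\mu^t\le C\Cr{c-lim-rat1}$; the reflected term is handled the same way with the shells $\tilde B_{2^{k+1}r}(y_0)\setminus\tilde B_{2^kr}(y_0)$ and the reflected density bound, which when $y_0\notin N_{\Cr{c-curva}/2}$ is just $0$ on small shells — this is exactly why no case-mismatch arises when $y,y_0$ straddle the threshold. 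Choosing $\Cr{c-lim-rat1}$ small, depending on $\delta$ and the constants in the statement, makes the total $\le\delta$.

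Part (2) is the same with $y$ fixed and the derivative taken in the scale: $|\partial_sK^s_y(x)|\le Cs^{-n}H(|x-y|/s)$ with $H$ of Gaussian decay, so for $1\le R/r\le1+\Cr{c-lim-rat2}\le2$ one gets $|K^R_y(x)-K^r_y(x)|\le C\Cr{c-lim-rat2}r^{1-n}\overline H(|x-y|/r)$ and likewise for $\widetilde K$, and the same dyadic estimate gives $\le C\Cr{c-lim-rat2}$; here $y$ is common to both scales, so the reflected terms are simultaneously present or absent and no threshold subtlety occurs, and for $r\ge\Cr{c-curva}$ the $\eta$-cutoff confines the integral to $B_{\Cr{c-curva}/2}(y)$ and the bound is immediate. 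The step I expect to be the main obstacle is the bookkeeping for the reflected kernel — checking the continuous-by-zero extension across $\partial N_{\Cr{c-curva}/2}$, and controlling the reflected shells $\tilde B_\rho(y)$ via the \emph{reflected} part of the density ratio in Proposition \ref{thm-den-rat} (which is precisely why that proposition bounds $\mu^t_{\varepsilon_i}(B_r(y))+\mu^t_{\varepsilon_i}(\tilde B_r(y))$ rather than the plain ball) together with the fact that $x\mapsto\tilde x$ is a bi-Lipschitz involution of $N_{\Cr{c-curva}}$ with constants depending only on $\kappa$ and $\Cr{c-curva}$; once these are in place, everything reduces to the scalar Gaussian estimates of \cite{I,TT}.
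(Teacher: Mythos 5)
Your proof is correct and follows essentially the same route as the paper: a Taylor/mean-value bound on the kernel (in the base point for (1), in the scale for (2)), paired with the density-ratio bound of Proposition~\ref{thm-den-rat} applied to the limit measure, with the reflected kernel tracked in parallel and the threshold $\partial N_{\Cr{c-curva}/2}$ handled by observing that the reflected summand vanishes there anyway. The only cosmetic difference is that you integrate the Gaussian tail over dyadic shells where the paper uses the layer-cake computation of~\eqref{lo-bdd-xi4}; these are interchangeable.
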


\begin{proof}
In order to prove (1), assume $|y-y_0| \le \Cr{c-lim-rat1} r$, where $\Cr{c-lim-rat1} \in (0,1)$ is a constant to be chosen later. 
First, we estimate 
\begin{equation}\label{lim-rati0} 
\int_{\overline{\Omega}} \dfrac{\eta(|\tilde{x}-y|) e^{-\frac{|\tilde{x}-y|^2}{4r^2}}}{(2\sqrt{\pi}r)^{n-1}} \; d\mu^t(x) 
\end{equation}
in the case of $y, y_0 \in N_{\Cr{c-curva}/2}$. 
For any $x \in N_{6\Cr{c-curva}}$, let 
\[ f(y) := \eta(|\tilde{x}-y|)e^{-\frac{|\tilde{x}-y|^2}{4r^2}}. \]
By the Taylor expansion, we obtain 
\begin{equation}\label{lim-rati1} 
\begin{aligned}
f(y) =&\; f(y_0) + e^{-\frac{|\tilde{x}-y^\prime|^2}{4r^2}} \left(\dfrac{\eta(|\tilde{x}-y^\prime|)}{2r^2}\langle y^\prime-\tilde{x}, y-y_0 \rangle + \eta^\prime(|\tilde{x}-y^\prime|) \left\langle \dfrac{y^\prime - \tilde{x}}{|y^\prime - \tilde{x}|}, y-y_0 \right\rangle \right) \\
\le&\; f(y_0) + e^{-\frac{|\tilde{x}-y^\prime|^2}{4r^2}} \left(\Cr{c-lim-rat1}\eta(|\tilde{x}-y^\prime|)\dfrac{|y^\prime-\tilde{x}|}{2r} + \Cr{c-lim-rat1}r\left|\eta^\prime(|\tilde{x} - y^\prime|)\right|\right), 
\end{aligned}
\end{equation}
where $y^\prime = \theta y + (1-\theta) y_0$ with some $\theta \in (0,1)$. 
From $|\tilde{x} - y^\prime|^2 \ge \frac{2}{3}|\tilde{x} - y_0|^2 - \Cr{c-lim-rat1}^2r^2$ and $se^{-\frac{s^2}{2}} \le c$ for some constant $c$ and any $0 \le s < \infty$, \eqref{lim-rati1} gives 
\begin{equation}\label{lim-rati2}
f(y) \le f(y_0) + c\Cr{c-lim-rat1}e^\frac{\Cr{c-lim-rat1}^2}{8} e^{-\frac{|\tilde{x}-y_0|^2}{12r^2}}\left(\eta(|\tilde{x}-y^\prime|) + r \left|\eta^\prime(|\tilde{x}-y^\prime|)\right|\right). 
\end{equation}
Since $\eta$ and $|\eta^\prime|$ are bounded, $|\tilde{x} - y_0| \le |\tilde{x} - y^\prime| + \theta|y-y_0| < \Cr{c-curva}$ if $|\tilde{x} - y^\prime| < \Cr{c-curva}/2$ and ${\rm spt} (\eta(|\tilde{\cdot} - y^\prime|)) \subset \tilde{B}_{\Cr{c-curva}/2}(y^\prime)$, \eqref{lim-rati2} gives 
\begin{equation}\label{lim-rati3}
\begin{aligned} 
\int_{\overline{\Omega}} \dfrac{\eta(|\tilde{x}-y|) e^{-\frac{|\tilde{x}-y|^2}{4r^2}}}{(2\sqrt{\pi}r)^{n-1}} \; d\mu^t(x) \le&\; \int_{\overline{\Omega}} \dfrac{\eta(|\tilde{x}-y_0|) e^{-\frac{|\tilde{x}-y_0|^2}{4r^2}}}{(2\sqrt{\pi}r)^{n-1}} \; d\mu^t(x) \\
&\; + c(\Cr{c-curva}) \Cr{c-lim-rat1}e^{\frac{\Cr{c-lim-rat1}^2}{8}} \int_{\overline{\Omega} \cap \tilde{B}_{\Cr{c-curva}}(y_0)} \dfrac{(1+r)e^{-\frac{|\tilde{x}-y_0|^2}{12r^2}}}{(2\sqrt{\pi}r)^{n-1}} \; d\mu^t(x), 
\end{aligned}
\end{equation}
where $c(\Cr{c-curva})$ is a positive constant depending only on $\Cr{c-curva}$. 
For the last integral of \eqref{lim-rati3}, by applying Proposition \ref{thm-den-rat}, $r \le \Cr{c-curva}/2$ and computations similar to \eqref{lo-bdd-xi4}, we obtain 
\begin{equation}\label{lim-rati4}
\int_{\overline{\Omega}} \dfrac{\eta(|\tilde{x}-y|) e^{-\frac{|\tilde{x}-y|^2}{4r^2}}}{(2\sqrt{\pi}r)^{n-1}} \; d\mu^t(x) \le \int_{\overline{\Omega}} \dfrac{\eta(|\tilde{x}-y_0|) e^{-\frac{|\tilde{x}-y_0|^2}{4r^2}}}{(2\sqrt{\pi}r)^{n-1}} \; d\mu^t(x) + c(\Cr{c-curva}, \Cr{c-den-rat})\Cr{c-lim-rat1}e^\frac{\Cr{c-lim-rat1}^2}{8} 
\end{equation}
for $y, y_0 \in \overline{\Omega} \cap N_{\Cr{c-curva}/2}$, where $c(\Cr{c-curva}, \Cr{c-den-rat})$ is a positive constant depending only on $\Cr{c-curva}$ and $\Cr{c-den-rat}$. 
By the similar argument as above, we obtain 
\begin{equation}\label{lim-rati5}
\int_{\overline{\Omega}} \dfrac{\eta(|x-y|) e^{-\frac{|x-y|^2}{4r^2}}}{(2\sqrt{\pi}r)^{n-1}} \; d\mu^t(x) \le \int_{\overline{\Omega}} \dfrac{\eta(|x-y_0|) e^{-\frac{|x-y_0|^2}{4r^2}}}{(2\sqrt{\pi}r)^{n-1}} \; d\mu^t(x) + c(\Cr{c-curva}, \Cr{c-den-rat})\Cr{c-lim-rat1}e^\frac{\Cr{c-lim-rat1}^2}{8} 
\end{equation}
for $y, y_0 \in \overline{\Omega}$. 
Since ${\rm spt} (\eta(|\tilde{\cdot} - y^\prime|)) \cap \overline{\Omega} = \emptyset$ if $y \not \in N_{\Cr{c-curva}/2}$, we can regard the integral \eqref{lim-rati0} as zero, and hence \eqref{lim-rati4} and \eqref{lim-rati5} imply the conclusion of (1) with an appropriate choice of $\Cr{c-lim-rat1}$. 

We may prove (2) by the similar argument by using Taylor expansion for $e^{-\frac{|x-y|^2}{4R^2}}$ with respect to $R$ around $r$ and applying the inequality $r \le R$ for the denominator of the integral function of $\overline{\mu}^t_{r,y}$. 
\end{proof}

The following lemma is needed when exchanging the center and the space variable of the reflected backward heat kernel $\rho_2$. 

\begin{lem}
(1) For $x \in N_{6\Cr{c-curva}}$ and $b \in \partial \Omega$, 
\begin{equation}\label{ref-po-1}
|\tilde{x} - b| \le \left(1+ \dfrac{2 \kappa |x-b|}{1- \kappa |x - b|} \right) |x-b|. 
\end{equation}

(2) For $x, y \in \overline{\Omega}$ with $|x - \tilde{y}| \le \Cr{c-curva}/2$ and $y \in N_{\Cr{c-curva}/2}$, 
\begin{equation}\label{ref-po-2}
|\tilde{x} - y| \le 2|x-\tilde{y}|, \quad |\tilde{x} - y| \le (1+12\kappa|\tilde{x} - y|)|x - \tilde{y}|, 
\end{equation}
where $\kappa$ is the constant defined by \eqref{def-kappa}.
\end{lem}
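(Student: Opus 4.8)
The plan is to derive both inequalities from two exact identities, together with two standard facts about the smooth compact hypersurface $\partial\Omega$ whose principal curvatures are bounded by $\kappa$ in absolute value. Writing $\nu(b)$ for the outer unit normal to $\partial\Omega$ at $b\in\partial\Omega$, these facts are: (a) for $b,b'\in\partial\Omega$ within a fixed geometric distance of each other (in particular whenever $|b-b'|\le 2\Cr{c-curva}$, using $\Cr{c-curva}\le(6\kappa)^{-1}$) the distance of $b$ to the tangent hyperplane of $\partial\Omega$ at $b'$ satisfies $|\langle b-b',\nu(b')\rangle|\le\tfrac{\kappa}{2}|b-b'|^2$; and (b) on the same scale the Gauss map is $\kappa$-Lipschitz, so $|\nu(b)-\nu(b')|\le\kappa|b-b'|$ and hence $\langle\nu(b),\nu(b')\rangle\ge 1-\tfrac{\kappa^2}{2}|b-b'|^2$. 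I use throughout that on $N_{6\Cr{c-curva}}$ the reflection $p\mapsto\tilde p=\xi(p)-(p-\xi(p))$ is an involution with $\xi(\tilde p)=\xi(p)$ and $\dist(\tilde p,\partial\Omega)=\dist(p,\partial\Omega)$, and that $p-\xi(p)$ is parallel to $\nu(\xi(p))$ with $|p-\xi(p)|=\dist(p,\partial\Omega)$.

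\textbf{Part (1).} Set $d:=|x-\xi(x)|$; the case $d=0$ is trivial since then $\tilde x=x$, so assume $d>0$ and put $\mathbf n:=(x-\xi(x))/d=\pm\nu(\xi(x))$. From $x=\xi(x)+d\mathbf n$ and $\tilde x=\xi(x)-d\mathbf n$ one reads off the identities $|x-b|^2=|\xi(x)-b|^2+2d\langle\xi(x)-b,\mathbf n\rangle+d^2$ and $|\tilde x-b|^2=|\xi(x)-b|^2-2d\langle\xi(x)-b,\mathbf n\rangle+d^2$, hence $|\tilde x-b|^2=|x-b|^2-4d\langle\xi(x)-b,\mathbf n\rangle$. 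By (a), $|\langle\xi(x)-b,\mathbf n\rangle|=|\langle\xi(x)-b,\nu(\xi(x))\rangle|\le\tfrac{\kappa}{2}|\xi(x)-b|^2$; plugging this into the first identity gives $|x-b|^2\ge|\xi(x)-b|^2(1-\kappa d)$, so $|\xi(x)-b|^2\le|x-b|^2/(1-\kappa d)$ because $\kappa d<1$ on $N_{6\Cr{c-curva}}$. Therefore $|\tilde x-b|^2\le|x-b|^2+2\kappa d\,|\xi(x)-b|^2\le|x-b|^2\bigl(1+\tfrac{2\kappa d}{1-\kappa d}\bigr)$, and since $d\le|x-b|$ and $t\mapsto\tfrac{2\kappa t}{1-\kappa t}$ is increasing, the bracket is at most $1+\tfrac{2\kappa|x-b|}{1-\kappa|x-b|}$. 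Taking square roots and using $\sqrt{1+s}\le 1+s$ for $s\ge0$ yields \eqref{ref-po-1}.

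\textbf{Part (2).} Put $b_1:=\xi(x)$, $b_2:=\xi(y)$, $\delta_x:=x-b_1$, $\delta_y:=y-b_2$, so $\tilde x=b_1-\delta_x$, $\tilde y=b_2-\delta_y$, and hence $\tilde x-y=(b_1-b_2)-(\delta_x+\delta_y)$, $x-\tilde y=(b_1-b_2)+(\delta_x+\delta_y)$; subtracting squared norms gives the key identity $|\tilde x-y|^2=|x-\tilde y|^2-4\langle b_1-b_2,\delta_x+\delta_y\rangle$. The hypotheses supply the needed smallness: from $y\in N_{\Cr{c-curva}/2}$ we get $\dist(y,\partial\Omega)<\Cr{c-curva}/2=\dist(\tilde y,\partial\Omega)$, so $|x-\tilde y|\le\Cr{c-curva}/2$ forces $\dist(x,\partial\Omega)<\Cr{c-curva}$; writing $d_x:=\dist(x,\partial\Omega)$, $d_y:=\dist(y,\partial\Omega)$ we get $\kappa(d_x+d_y)<\tfrac32\kappa\Cr{c-curva}\le\tfrac14$ and $|b_1-b_2|\le d_x+|x-\tilde y|+d_y<2\Cr{c-curva}$, so (a) and (b) apply to $b_1,b_2$. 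Using $\delta_x\parallel\nu(b_1)$, $\delta_y\parallel\nu(b_2)$, fact (a) gives $|\langle b_1-b_2,\delta_x+\delta_y\rangle|\le\tfrac{\kappa}{2}(d_x+d_y)|b_1-b_2|^2$. Expanding $|x-\tilde y|^2=|b_1-b_2|^2+2\langle b_1-b_2,\delta_x+\delta_y\rangle+|\delta_x+\delta_y|^2$ and using $2|\langle b_1-b_2,\delta_x+\delta_y\rangle|\le\tfrac14|b_1-b_2|^2$ gives $|x-\tilde y|^2\ge\tfrac34|b_1-b_2|^2+|\delta_x+\delta_y|^2$, whence $|b_1-b_2|\le\tfrac{2}{\sqrt3}|x-\tilde y|$ and $|\delta_x+\delta_y|\le|x-\tilde y|$; moreover $|\delta_x+\delta_y|^2=d_x^2+d_y^2+2d_xd_y\langle\nu(b_1),\nu(b_2)\rangle\ge(d_x+d_y)^2\bigl(1-\tfrac{\kappa^2}{4}|b_1-b_2|^2\bigr)\ge\tfrac12(d_x+d_y)^2$ by (b) and the smallness above, so $d_x+d_y\le\sqrt2\,|x-\tilde y|$. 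Combining, $|\langle b_1-b_2,\delta_x+\delta_y\rangle|\le\tfrac{\kappa}{2}\cdot\sqrt2|x-\tilde y|\cdot\tfrac43|x-\tilde y|^2<\kappa|x-\tilde y|^3$, so $\bigl||\tilde x-y|^2-|x-\tilde y|^2\bigr|<4\kappa|x-\tilde y|^3$. Since $|x-\tilde y|\le\Cr{c-curva}/2$ and $2\kappa\Cr{c-curva}\le\tfrac13$, the upper bound reads $|\tilde x-y|^2\le|x-\tilde y|^2(1+2\kappa\Cr{c-curva})<4|x-\tilde y|^2$, i.e.\ the first inequality in \eqref{ref-po-2}; the lower bound reads $|\tilde x-y|^2\ge|x-\tilde y|^2(1-2\kappa\Cr{c-curva})\ge\tfrac23|x-\tilde y|^2$, so $|x-\tilde y|\le\sqrt{3/2}\,|\tilde x-y|$. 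Feeding this back into $|\tilde x-y|^2\le|x-\tilde y|^2(1+4\kappa|x-\tilde y|)$ and using $\sqrt{1+s}\le1+s$ gives $|\tilde x-y|\le|x-\tilde y|(1+4\kappa|x-\tilde y|)\le|x-\tilde y|(1+4\sqrt{3/2}\,\kappa|\tilde x-y|)\le(1+12\kappa|\tilde x-y|)|x-\tilde y|$, the second inequality in \eqref{ref-po-2}.

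\textbf{Main difficulty.} There is no conceptual obstacle: the heart of the matter is the two exact identities for $|\tilde x-b|^2$ and $|\tilde x-y|^2$, after which everything reduces to the standard curvature estimates (a) and (b) plus numerical bookkeeping. The one point needing care is that (a) and (b) are local statements, so before using them one must confine the relevant base points $\xi(x),\xi(y)$ — and, in part (1), the pair $\xi(x),b$, which in all applications of the lemma lies within the geometric scale — to a subset of $\partial\Omega$ of diameter $\lesssim\Cr{c-curva}$, which is exactly where $\xi$, the reflection and those estimates are well-behaved; this is where $\Cr{c-curva}\le(6\kappa)^{-1}$ enters. Tracking the constants down to $2$ and $12$ is routine, with ample slack in each step.
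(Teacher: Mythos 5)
Your proof of part~(2) is correct but takes a genuinely different route from the paper's. The paper never manipulates $\xi(x)$ and $\xi(y)$ directly: it picks the point $b$ where the segment $[x,\tilde y]$ crosses $\partial\Omega$ (which exists because $x\in\overline\Omega$ while $\tilde y\notin\Omega$), so that $|x-\tilde y|=|x-b|+|b-\tilde y|$ with both summands $\le\Cr{c-curva}/2$, then applies part~(1) to the pairs $(x,b)$ and $(\tilde y,b)$ to get $|\tilde x-y|\le|x-\tilde y|+6\kappa|x-\tilde y|^2$; the reverse bound $|x-\tilde y|\le 2|\tilde x-y|$ comes from the symmetric argument with a crossing point $b'\in[\tilde x,y]\cap\partial\Omega$, and combining the two gives the constant~$12$. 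That route uses the curvature input only in the packaged form of~(1), essentially only fact~(a). Your route instead expands $|\tilde x-y|^2-|x-\tilde y|^2=-4\langle b_1-b_2,\delta_x+\delta_y\rangle$ exactly and estimates each factor; it is self-contained (independent of~(1)) and makes very explicit where the curvature enters, at the price of also invoking the Gauss-map Lipschitz bound~(b), which the paper avoids. Both approaches yield the stated constants with slack.

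Part~(1) — which the paper does not prove, citing Gr\"uter--Jost — is where you have a genuine gap. The key step is your application of fact~(a) to the pair $\xi(x),b$, but~(a) as you state it assumes $|\xi(x)-b|\le2\Cr{c-curva}$, whereas the lemma imposes no closeness on $b\in\partial\Omega$ at all. You observe in your closing remarks that in ``all applications'' the pair lies within the geometric scale, and this is indeed true of how~(1) is used inside~(2); but that is a comment about the applications, not a proof of the lemma as stated, and the gap is not a formality. With the constants as given, \eqref{ref-po-1} can actually fail for a far-away $b$ on a non-convex domain: take $\Omega=B_{10}(0)\setminus\overline{B_1((8.6,0))}\subset\mathbb R^2$, so $\kappa=1$ and the reach at the $0.4$-wide gap forces $\Cr{c-curva}\le 1/30$; with $x=(9.75,0)$ (hence $\dist(x,\partial\Omega)=0.15<6\Cr{c-curva}$, $\xi(x)=(9.6,0)$, $\tilde x=(9.45,0)$) and $b=(10,0)$ one finds $|x-b|=0.25$ and $|\tilde x-b|=0.55$, while the right-hand side of \eqref{ref-po-1} equals $5/12\approx0.42$. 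So a smallness hypothesis such as $|x-b|\le\Cr{c-curva}$ must be written into the statement of~(1) (which is in fact what always holds when it is applied), or else $\Cr{c-curva}$ must be shrunk further and the far regime disposed of separately by the trivial bound $|\tilde x-b|\le|x-b|+2\dist(x,\partial\Omega)$. Your proof as written does neither; it establishes only the local version of~(1), which is, to be fair, all the paper ever needs.
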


\begin{proof}
(1) is proved in \cite{GJ}, thus we refer to \cite{GJ} for the details. 
For $x, y \in \overline{\Omega}$ with $|x - \tilde{y}| \le \Cr{c-curva}/2$ and $y \in N_{\Cr{c-curva}/2}$, since $x \in \overline{\Omega}$ and $\tilde{y} \not \in \Omega$, we may fix a boundary point $b \in \partial \Omega$ such that 
\begin{equation}\label{ref1}
|x - \tilde{y}| = |x - b| + |b - \tilde{y}|. 
\end{equation}
By \eqref{ref1} and $|x - \tilde{y}| \le \Cr{c-curva}/2$, we obtain  
\begin{equation}\label{ref3}
|x - b|, |b - \tilde{y}| \le \dfrac{\Cr{c-curva}}{2}. 
\end{equation}
From $\Cr{c-curva} \in (0, (6\kappa)^{-1}]$, \eqref{ref-po-1} and \eqref{ref3} imply 
\begin{equation}\label{ref4}
|\tilde{x}-b| \le (1+ 3\kappa|x-b|) |x - b|, \quad |y - b| \le (1+3\kappa|\tilde{y}-b|)|\tilde{y} - b|. 
\end{equation}
Since $|\tilde{x} - y| \le |\tilde{x} - b| + |b - y|$, we obtain by \eqref{ref1}, \eqref{ref4} and $|x-b|, |\tilde{y} - b| \le |x-\tilde{y}|$
\begin{equation}\label{ref5} 
|\tilde{x} - y| \le |x-\tilde{y}| + 3\kappa(|x-b|^2 + |\tilde{y}-b|^2) \le |x-\tilde{y}| + 6\kappa|x-\tilde{y}|^2. 
\end{equation} 
Thus we obtain the first inequality of \eqref{ref-po-2} from $|x - \tilde{y}| \le (12\kappa)^{-1}$. 
We also note that the first inequality of \eqref{ref-po-2} and $|x-\tilde{y}| \le \Cr{c-curva}/2$ imply $|\tilde{x} - y| \le \Cr{c-curva}$, thus we may obtain 
\begin{equation}\label{ref2}
|x-\tilde{y}| \le 2|\tilde{x}-y|
\end{equation}
by the similar argument using a boundary point $b^\prime \in \partial \Omega$ such that $|\tilde{x} - y| = |\tilde{x} - b^\prime| + |b^\prime - y| $ instead of $b$. 
By combining \eqref{ref5} and \eqref{ref2}, we have the second inequality of \eqref{ref-po-2}. 
\end{proof}

The statement of the following lemma is exactly the same as \cite[Lemma 6.1]{MT} without the convexity assumption on $\Omega$. 
However they used the convexity of $\Omega$ at some technical points, thus we give a new proof for the following lemma. 

\begin{lem}\label{sptmu1}
For any $(y,s) \in {\rm spt} \mu$ with $y \in \overline{\Omega}$ and $s > 0$, there exists a sequence $\{x_i, t_i\}_{i=1}^\infty$ and a subsequence $\varepsilon_i$ (denoted by the same index) such that $t_i > 0$, $x_i \in \Omega$, $(x_i,t_i) \to (y,s)$ as $i \to \infty$ and $|u_{\varepsilon_i}(x_i,t_i)| < \alpha$ for all $i \in \mathbb{N}$. 
\end{lem}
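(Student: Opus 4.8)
The plan is to argue by contradiction: if no such sequence exists, then $|u_{\varepsilon_i}| \ge \alpha$ on a whole space-time neighborhood of $(y,s)$ for all large $i$, and then the energy measure $\mu_{\varepsilon_i}$ must vanish near $(y,s)$, contradicting $(y,s)\in\mathrm{spt}\,\mu$. More precisely, suppose the conclusion fails. Then there exist $r>0$ and a subsequence (not relabeled) such that $|u_{\varepsilon_i}(x,t)|\ge\alpha$ for all $x\in B_r(y)\cap\Omega$ and all $t\in(s-r^2,s+r^2)$ with $t>0$; shrinking $r$ we may assume $B_r(y)\cap\Omega$ is connected and $B_r(y)\subset N_{3\Cr{c-curva}}$ if $y\in\partial\Omega$ (or disjoint from $\partial\Omega$ if $y\in\Omega$). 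By continuity of $u_{\varepsilon_i}$ and connectedness, for each fixed $i$ and $t$ the sign of $u_{\varepsilon_i}(\cdot,t)$ is constant on $B_r(y)\cap\Omega$, so $|u_{\varepsilon_i}|\ge\alpha$ forces $u_{\varepsilon_i}$ to stay in one of the intervals $[\alpha,1]$ or $[-1,-\alpha]$ there.

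The heart of the argument is an energy-decay estimate on $B_{r/2}(y)\cap\Omega$ exploiting assumption (W3), which gives $W''(u_{\varepsilon_i})\ge\beta$ on $\{\alpha\le|u_{\varepsilon_i}|\le1\}$. I would take a cutoff $\phi\in C_c^2(B_r(y);\mathbb{R}_+)$ with $\phi\equiv1$ on $B_{r/2}(y)$, and run essentially the same differential inequality used in the proof of the preceding Lemma (the estimate leading to \eqref{dis-den8}): differentiating \eqref{ac} in $x_j$, multiplying by $\phi^2\partial_{x_j}u_{\varepsilon_i}$, summing, integrating by parts, and using the Neumann condition \eqref{neumann} to kill the boundary term, one obtains
\[
\frac{d}{dt}\int_\Omega \frac{\varepsilon_i|\nabla u_{\varepsilon_i}|^2}{2}\phi^2\,dx \le \int_\Omega \varepsilon_i|\nabla\phi|^2|\nabla u_{\varepsilon_i}|^2\,dx - \frac{\beta}{\varepsilon_i}\int_\Omega\frac{\varepsilon_i|\nabla u_{\varepsilon_i}|^2}{2}\phi^2\,dx ,
\]
valid on the time interval where $|u_{\varepsilon_i}|\ge\alpha$ on $\mathrm{spt}\,\phi\cap\Omega$. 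Gronwall over $[s-r^2/2,\,t]$ then shows $\int_{B_{r/2}(y)\cap\Omega}\varepsilon_i|\nabla u_{\varepsilon_i}|^2(x,t)\,dx\to0$ as $i\to\infty$ for $t$ near $s$, because the ``source'' term is controlled via \eqref{energy-bdd} by a constant times $\varepsilon_i/\varepsilon_i$... more carefully, the exponential damping factor $e^{-\beta r^2/(2\varepsilon_i^2)}$ annihilates the initial term and the Duhamel term is $O(\varepsilon_i^2)$ times the energy bound $\Cr{c-ene0}$. Simultaneously, since $|u_{\varepsilon_i}|\ge\alpha$ there, the potential term $W(u_{\varepsilon_i})/\varepsilon_i$ needs a separate bound: here one uses that $W$ is small only near $\pm1$, but $W$ itself can be of order $1/\varepsilon_i$ a priori, so one must instead estimate $\int_{B_{r/2}(y)\cap\Omega}W(u_{\varepsilon_i})/\varepsilon_i\,dx$ by relating it to the gradient term through the discrepancy bound \eqref{dis-bdd} of Proposition \ref{thm-dis-bdd}: $W(u_{\varepsilon_i})/\varepsilon_i \le \varepsilon_i|\nabla u_{\varepsilon_i}|^2/2 + \Cr{c-up-bdd-xi}\varepsilon_i^{-\lambda}$, and $\mathcal{L}^n(B_{r/2}(y)\cap\Omega)\,\varepsilon_i^{-\lambda}$ is not small. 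This is the main obstacle, and the fix is exactly the mechanism used in the previous lemma: away from the transition layer one does \emph{not} integrate the crude discrepancy bound over a fixed ball, but instead one first shows (via Lemma \ref{lem-lo-bdd-den} and the covering argument) that the set where $|u_{\varepsilon_i}|<\alpha$ cannot meet $B_{r/2}(y)$ for large $i$ — which is precisely our contradiction hypothesis — and then on the complement of that set the energy decays via the Gronwall estimate above, so that $\mu_{\varepsilon_i}^t(B_{r/2}(y))\to0$ for a.e.\ $t\in(s-r^2/2,s+r^2/2)$.

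Once we have $\mu_{\varepsilon_i}^t(B_{r/2}(y))\to0$ for a.e.\ $t$ in a neighborhood of $s$, dominated convergence (using the uniform bound $\mu_{\varepsilon_i}^t(\Omega)\le\Cr{c-ene0}$ from \eqref{energy-bdd}) gives $\mu(B_{r/2}(y)\times(s-r^2/2,s+r^2/2))=0$, which contradicts $(y,s)\in\mathrm{spt}\,\mu$. I would finally note that the case $y\in\Omega$ is strictly easier (no reflection, the cutoff is supported away from $\partial\Omega$ so the boundary term in the integration by parts is absent), and the case $y\in\partial\Omega$ requires only that the Neumann condition \eqref{neumann} make the boundary term vanish in the identity for $\frac{d}{dt}\int\varepsilon_i|\nabla u_{\varepsilon_i}|^2\phi^2/2$, together with the elementary observation that $B_{r/2}(y)\cap\Omega$ is connected for small $r$ because $\partial\Omega$ is smooth. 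The only genuinely delicate point is ensuring that the ``$|u_{\varepsilon_i}|\ge\alpha$ on all of $\mathrm{spt}\,\phi\cap\Omega$'' hypothesis holds on a full time interval, not just at isolated times — but this is exactly what the negation of the statement provides, after passing to the subsequence.
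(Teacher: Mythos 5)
Your overall strategy (contradiction, shrink a ball, show the energy measure vanishes near $(y,s)$) is exactly the paper's, and the opening step — negating the statement to obtain $|u_{\varepsilon_i}| \ge \alpha$ on a fixed parabolic neighborhood of $(y,s)$ for a subsequence — is correct. But the middle of your argument has a genuine gap, one that you correctly sense but do not actually close.

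You try to reuse the Gronwall inequality for $\int\varepsilon_i|\nabla u_{\varepsilon_i}|^2\phi^2$ (the differential inequality leading to \eqref{dis-den8}), obtained by differentiating \eqref{ac} in $x_j$ and testing with $\phi^2\partial_{x_j}u_{\varepsilon_i}$. Combined with (W3) and the negation hypothesis, this does force the \emph{gradient} part of the energy to decay on $B_{r/2}(y)\cap\Omega$. But as you note yourself, the \emph{potential} part $\int_{B_{r/2}(y)\cap\Omega}W(u_{\varepsilon_i})/\varepsilon_i\,dx$ is not touched by this computation, and the crude discrepancy bound \eqref{dis-bdd} integrated over a fixed ball gives a useless $O(\varepsilon_i^{-\lambda})$. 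Your proposed remedy — invoking Lemma \ref{lem-lo-bdd-den} and the covering argument of the density-ratio lemma and observing that the contradiction hypothesis empties the transition set $A_1$ inside the ball — is circular: it merely re-expresses the hypothesis you started with and yields gradient decay a second time, while $\int W(u_{\varepsilon_i})/\varepsilon_i$ remains unbounded. The proof cannot be completed along this route without a new idea.

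The paper's proof closes the gap by choosing a different test function. It multiplies \eqref{ac} by $\varepsilon_i\phi^2W'(u_{\varepsilon_i})$ (not the differentiated equation), integrates by parts, uses the Neumann condition \eqref{neumann}, applies (W3) to $W''$, and then Young's inequality on the cross term; this gives
\[
\int_\Omega\phi^2\left(\varepsilon_i\beta|\nabla u_{\varepsilon_i}|^2+\frac{(W'(u_{\varepsilon_i}))^2}{2\varepsilon_i}\right)dx
\le 2\varepsilon_i^3\int_\Omega|\nabla\phi|^2|\nabla u_{\varepsilon_i}|^2\,dx
-\varepsilon_i\frac{d}{dt}\int_\Omega\phi^2W(u_{\varepsilon_i})\,dx .
\]
Integrating in time and using \eqref{u-bdd} and \eqref{energy-bdd}, \emph{both} $\iint\varepsilon_i|\nabla u_{\varepsilon_i}|^2$ and $\iint(W'(u_{\varepsilon_i}))^2/\varepsilon_i$ tend to $0$. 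The decisive extra ingredient you are missing is the pointwise conversion: by (W1) and (W3), on $[\alpha,1]$ one has $W(s)\le c(W)(s-1)^2$ and $|W'(s)|\ge\beta|s-1|$, hence $W(u_{\varepsilon_i})/\varepsilon_i\le c(W,\beta)(W'(u_{\varepsilon_i}))^2/\varepsilon_i$ wherever $|u_{\varepsilon_i}|\ge\alpha$ (WLOG $u_{\varepsilon_i}\ge\alpha$ on the ball; the case $u_{\varepsilon_i}\le-\alpha$ is symmetric). That comparison, not the discrepancy bound, is what kills the potential term and produces $\mu(B_{r_0/2}(y)\times(s-r_0^2,s+r_0^2))=0$.
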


\begin{rk}\label{rk:concent}
From Lemma \ref{sptmu1} and Lemma \ref{lem-chara-spt}, we can see that ${\rm spt} \mu^t \cap \{x : (x,t) \in A\}$ is empty if $|u_{\varepsilon_i}| \ge \alpha$ for any sufficiently small $\varepsilon_i$ in an open set $A \subset \overline{\Omega} \times [0,\infty)$. 
Roughly speaking, this means that the diffused interface energy concentrate on the domain $\{|u_{\varepsilon_i}| < \alpha\}$. 
\end{rk}

\begin{proof}[Proof of Lemma \ref{sptmu1}]
For simplicity we omit the subscript $i$. 
For a contradiction, assume that there exists $0 < r_0 < \sqrt{s}$ such that 
\begin{equation}\label{as-alpha} 
\inf_{(B_{r_0}(y) \cap \Omega) \times (s-r_0^2, s+r_0^2)} |u_\varepsilon| \ge \alpha 
\end{equation}
for all sufficiently small $\varepsilon > 0$. 
Fix $\phi \in C^1_c (B_{r_0}(y))$ such that 
\[ |\nabla \phi| \le \dfrac{3}{r_0}, \quad \phi \equiv 1 \quad \mbox{on} \quad B_{r_0/2}(y). \]
Multiplying \eqref{ac} by $\varepsilon \phi^2 W^\prime(u_\varepsilon)$, integrating on $\Omega$, integrating by parts and applying the Neumann boundary condition \eqref{neumann}, the assumptions (W3) and \eqref{as-alpha} imply 
\begin{equation}\label{mu-spt1}
\begin{aligned}
\varepsilon \dfrac{d}{dt} \int_\Omega \phi^2 W(u_\varepsilon) \; dx =&\; \int_\Omega \varepsilon \phi^2 W^\prime(u_\varepsilon)\Delta u_\varepsilon - \dfrac{(W^\prime(u_\varepsilon))^2}{\varepsilon} \; dx \\ 
\le &\; - \int_\Omega \varepsilon \beta \phi^2 |\nabla u_\varepsilon|^2 + \varepsilon 2 \phi W^\prime(u_\varepsilon) \langle \nabla u_\varepsilon, \nabla \phi \rangle + \dfrac{(W^\prime(u_\varepsilon))^2}{\varepsilon} \; dx
\end{aligned}
\end{equation}
for $s-r_0^2 < t < s+r_0^2$. 
By applying the Young inequality and rearranging terms, \eqref{mu-spt1} implies 
\begin{align*}
\int_\Omega \phi^2 \left(\varepsilon \beta |\nabla u_\varepsilon|^2 + \dfrac{(W^\prime(u_\varepsilon))^2}{2\varepsilon} \right) \; dx 
\le 2 \varepsilon^3 \int_\Omega |\nabla \phi|^2 |\nabla u_\varepsilon|^2 \; dx - \varepsilon \dfrac{d}{dt} \int_\Omega \phi^2 W(u_\varepsilon) \; dx 
\end{align*}
for $s-r_0^2 < t < s+r_0^2$. 
Integrating from $s-r_0^2$ to $s+r_0^2$ with respect to $t$, we have by the boundedness \eqref{u-bdd} and \eqref{energy-bdd} 
\begin{equation}\label{mu-spt2} 
\int_{s-r_0^2}^{s+r_0^2} \int_{B_{r_0/2}(y)} \varepsilon |\nabla u_\varepsilon|^2 + \dfrac{(W^\prime(u_\varepsilon))^2}{\varepsilon} \; dx dt \to 0 \quad \mbox{as} \; \varepsilon \to 0. 
\end{equation}
By the continuity of $u_\varepsilon$ and \eqref{as-alpha}, we may assume $\alpha \le u_\varepsilon \le 1$ on $(B_{r_0}(y) \cap \Omega) \times (s-r_0^2, s+r_0^2)$ without loss of generality. 
Otherwise we have $-1 \le u_\varepsilon \le -\alpha$ and we may argue similarly. 
From the assumption (W1), there exists a positive constant $c(W)$ such that $W(s) \le c(W)(s-1)^2$ for all $s \in [\alpha, 1]$. 
Furthermore, the assumptions (W1) and (W3) imply $W^\prime(s) = W^\prime(s) - W^\prime(1) \le \beta(s-1) \le 0$ for all $s \in [\alpha, 1]$. 
Thus, the inequality 
\begin{equation}\label{mu-spt3} 
\int_{s-r_0^2}^{s+r_0^2} \int_{B_{r_0/2}(y)} \dfrac{W(u_\varepsilon)}{\varepsilon} \; dx dt \le c(W,\beta)\int_{s-r_0^2}^{s+r_0^2} \int_{B_{r_0/2}(y)} \dfrac{(W^\prime(u_\varepsilon))^2}{\varepsilon} \; dx dt 
\end{equation}
holds for some positive constant $c(W,\beta)$. 
Hence we conclude by combining \eqref{mu-spt2} and \eqref{mu-spt3}  
\[ \mu(B_{r_0/2}(y) \times (s-r_0^2, s+r_0^2)) = 0, \]
which contradicts $(y,s) \in {\rm spt} \mu$. 
\end{proof}

\begin{lem}\label{lem-con-spt}
For any $T>0$, there exist $\delta_0, r_1, \Cl[c]{c-con-spt} >0$ depending only on $T$, $n$, $D_0$, $\alpha$, $W$, $\lambda$, $\kappa$, $\Cr{c-as-gra}$, $\Cr{c-as-dis}$, $\Cr{c-curva}$ and $\Omega$ such that the following holds: 
For $0 < t < s < \min\{t+r_1^2, T\}$ and $y \in \overline{\Omega}$, assume 
\begin{equation}\label{con-spt-conclusion1} 
\overline{\mu}_{r,y}^s < \delta_0, 
\end{equation}
where $r = \sqrt{s-t}$. 
Then $(y^\prime,t^\prime) \not\in {\rm spt} \mu$ for all $y^\prime \in B_{\Cr{c-con-spt} r}(y) \cap \overline{\Omega}$, where $t^\prime = 2s-t$. 
\end{lem}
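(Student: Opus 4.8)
The plan is to argue by contradiction, propagating the lower density bound of Lemma \ref{lem-lo-bdd-den} forward in time via the boundary monotonicity formula and combining it with the smallness hypothesis \eqref{con-spt-conclusion1}. So suppose there are $y' \in B_{\Cr{c-con-spt} r}(y) \cap \overline{\Omega}$ with $(y',t') \in {\rm spt}\,\mu$, where $r = \sqrt{s-t} < r_1$ and $t' = 2s - t = s + r^2$. First I would invoke Lemma \ref{sptmu1} to produce, after passing to a subsequence, points $(x_i,t_i) \to (y',t')$ with $x_i \in \Omega$, $t_i > 0$ and $|u_{\varepsilon_i}(x_i,t_i)| < \alpha$. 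Putting $\tau_i := \max\{0,\, t_i - 2\varepsilon_i^{2\lambda^\prime}\}$ and $R_i := \Cr{c-lo-bdd-den1}(t_i + \varepsilon_i^2 - \tau_i)^{1/2}$, we have $R_i \le 2\Cr{c-lo-bdd-den1}\varepsilon_i^{\lambda^\prime} \to 0$ and $\tau_i > s > 0$ for $i$ large, so Lemma \ref{lem-lo-bdd-den} (with its $s,y,t$ taken to be $t_i, x_i, \tau_i$) yields
\[ \Cr{c-lo-bdd-den2}\, R_i^{n-1} \le \mu_{\varepsilon_i}^{\tau_i}(B_{R_i}(x_i)) + \mu_{\varepsilon_i}^{\tau_i}(\tilde{B}_{R_i}(x_i)), \]
with $\tilde{B}_{R_i}(x_i) = \emptyset$ and the second alternative of Lemma \ref{lem-lo-bdd-den} used when $x_i \notin N_{\Cr{c-curva}/2}$.

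Next I would center the monotonicity formula at $(x_i, s_i')$ with $s_i' := \tau_i + R_i^2$. Since $\eta \equiv 1$ on $[0,\Cr{c-curva}/4]$ and $R_i \to 0$, one has $\rho_{1,(x_i,s_i')}(\cdot,\tau_i) \ge e^{-1/4}(4\pi R_i^2)^{-(n-1)/2}$ on $B_{R_i}(x_i)$ and, for $i$ large, $\rho_{2,(x_i,s_i')}(\cdot,\tau_i) \ge e^{-1/4}(4\pi R_i^2)^{-(n-1)/2}$ on $\tilde{B}_{R_i}(x_i)$, so the displayed bound gives
\[ \int_\Omega \rho_{1,(x_i,s_i')}(x,\tau_i) + \rho_{2,(x_i,s_i')}(x,\tau_i) \; d\mu_{\varepsilon_i}^{\tau_i}(x) \ge \frac{e^{-1/4}}{(4\pi)^{(n-1)/2}}\Cr{c-lo-bdd-den2} =: c_0 > 0. \]
I would then integrate \eqref{monotonicity} (or \eqref{monotonicity-2} if $x_i \notin N_{\Cr{c-curva}/2}$) over $\tau \in [s,\tau_i]$ with center time $s_i'$, use $e^{\Cr{c-mono-1}(s_i'-\tau_i)^{1/4}} \ge 1$ together with the lower bound $c_0$ on the left, and bound the signed discrepancy term on the right by its positive part, controlled through Lemma \ref{lem-dis-mono} (with $t=\tau_i$, $s=s_i'$). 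This yields
\[ c_0 \le e^{\Cr{c-mono-1}(s_i'-s)^{1/4}} \left( \int_\Omega \rho_{1,(x_i,s_i')}(x,s) + \rho_{2,(x_i,s_i')}(x,s)\; d\mu_{\varepsilon_i}^{s}(x) + \Cr{c-mono-2}(\tau_i-s) + \Cr{c-dis-mono-bdd}\,\varepsilon_i^{\lambda^\prime-\lambda}(1 + |\log\varepsilon_i| + (\log s_i')^+) \right). \]
Letting $i\to\infty$: $\tau_i, s_i' \to t'$, $s_i'-s \to r^2$, the last error term vanishes because $\lambda^\prime > \lambda$, and since $\mu_{\varepsilon_i}^s \to \mu^s$ (Proposition \ref{pro-con-den}) while the kernels $(\rho_1+\rho_2)_{(x_i,s_i')}(\cdot,s)$ converge uniformly, together with their supports, to $(\rho_1+\rho_2)_{(y',t')}(\cdot,s)$, the first term converges to $\overline{\mu}_{r,y'}^s$ (using $t'-s=r^2$ and $(2\sqrt{\pi}r)^{n-1}=(4\pi r^2)^{(n-1)/2}$). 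Hence $c_0 \le e^{\Cr{c-mono-1}r^{1/2}}(\overline{\mu}_{r,y'}^s + \Cr{c-mono-2}r^2)$.

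To close the argument I would use Lemma \ref{lem-con-heat}(1): since $r \le \Cr{c-curva}/2$ and $|y'-y| \le \Cr{c-con-spt}r$, taking $\Cr{c-con-spt} := \Cr{c-lim-rat1}$ corresponding to a $\delta>0$ to be fixed gives $\overline{\mu}_{r,y'}^s \le \overline{\mu}_{r,y}^s + \delta < \delta_0 + \delta$, and with $r < r_1$ this yields $c_0 \le e^{\Cr{c-mono-1}r_1^{1/2}}(\delta_0 + \delta + \Cr{c-mono-2}r_1^2)$. Since $c_0$ depends only on the admissible parameters, I would then fix $r_1 \le \Cr{c-curva}/2$ small enough that $e^{\Cr{c-mono-1}r_1^{1/2}} \le 2$ and $\Cr{c-mono-2}r_1^2 \le c_0/8$, set $\delta = \delta_0 := c_0/16$, and take $\Cr{c-con-spt} = \Cr{c-lim-rat1}$ for this $\delta$; then $c_0 \le c_0/2$, a contradiction. (Because $t' < 2T$, one runs Proposition \ref{thm-den-rat} and the subsequent lemmas on $[0,2T]$, so all invoked constants still depend only on $T$ and the remaining data, as required.)

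The main obstacle, and the genuinely boundary-specific part compared with the interior case, is the systematic bookkeeping of the reflected heat kernel and of the dichotomy $x_i, y', y \in N_{\Cr{c-curva}/2}$ versus $\notin N_{\Cr{c-curva}/2}$: one must consistently pair the two alternatives of Lemma \ref{lem-lo-bdd-den}, of Proposition \ref{thm:monotonicity}, of Lemma \ref{lem-dis-mono} and of the definition of $\overline{\mu}$. This is manageable since $R_i \to 0$ and $\Cr{c-con-spt}r$ is small, so for large $i$ the membership of $x_i$ in $N_{\Cr{c-curva}/2}$ is stable and agrees with that of $y'$, with $y' \in \partial N_{\Cr{c-curva}/2}$ a harmless limiting case. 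The only other delicate point is the passage $i\to\infty$ in $\int(\rho_1+\rho_2)_{(x_i,s_i')}(\cdot,s)\,d\mu_{\varepsilon_i}^s$, which relies on Proposition \ref{pro-con-den}, the uniform energy bound \eqref{energy-bdd}, and the fact that $s_i'-s$ stays bounded away from $0$ so no kernel singularity appears; all remaining errors are $O(r_1^2)$ or $O(\varepsilon_i^{\lambda^\prime-\lambda}|\log\varepsilon_i|)\to 0$.
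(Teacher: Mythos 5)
Your argument is correct and follows essentially the same contradiction scheme as the paper: invoke Lemma \ref{sptmu1}, get a lower bound on a truncated backward heat kernel centered near $(y',t')$, propagate it back to time $s$ via Proposition \ref{thm:monotonicity} and Lemma \ref{lem-dis-mono}, pass to the limit to conclude $c_0 \lesssim \overline{\mu}^s_{r,y'}+O(r^2)$, and then transfer to $\overline{\mu}^s_{r,y}$ with Lemma \ref{lem-con-heat}(1). The one structural difference is minor: you first invoke Lemma \ref{lem-lo-bdd-den} at an intermediate time $\tau_i = t_i - 2\varepsilon_i^{2\lambda'}$ (which itself already runs the monotonicity formula) and then convert the resulting density lower bound into a kernel lower bound with center time $s_i'=\tau_i+R_i^2$; the paper instead obtains the kernel lower bound directly at time $t_i$ with center time $T_i=t_i+(\gamma_0\varepsilon_i)^2$ by the same computation as \eqref{lo-bdd-xi0}, which is one step shorter and avoids running the monotonicity formula twice. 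Both choices of center time converge to $t'$ and both produce the same limiting inequality $c_0\le e^{\Cr{c-mono-1} r^{1/2}}(\overline{\mu}^s_{r,y'}+\Cr{c-mono-2} r^2)$, so the remaining bookkeeping — the choice of $r_1$, $\delta_0$, $\Cr{c-con-spt}=\Cr{c-lim-rat1}$, and the observation that all estimates must be set up on $[0,2T]$ since $t'<2T$ — matches the paper.
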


\begin{proof}
First, we argue in the case of $y^\prime \in N_{\Cr{c-curva}/2}$. 
Let us assume $(y^\prime, t^\prime) \in {\rm spt} \mu$ for a contradiction. 
From Lemma \ref{sptmu1}, there exists a sequence $\{y_i, t_i\}_{i=1}^\infty$ such that $(y_i,t_i) \to (y^\prime, t^\prime)$ as $i \to \infty$ and $|u_{\varepsilon_i}(y_i,t_i)| < \alpha$ for all $i \in \mathbb{N}$. 
Note that $y_i \in N_{\Cr{c-curva}/2}$ for sufficiently large $i$. 
Put $r_i := \gamma_0 \varepsilon_i$ and $T_i := t_i + r_i^2$, where $\gamma_0 > 0$ is the constant satisfying \eqref{gamma-0} with $y=y_i$. 
By the similar argument for \eqref{lo-bdd-xi0}, we obtain
\begin{equation}\label{con-spt1}
\int_{B_{r_i}(y_i)} \rho_{1,(y_i,T_i)}(x,t_i) \; d\mu_{\varepsilon_i}^{t_i}(x) \ge \Cl[m]{m-con-spt1}, 
\end{equation}
where $\Cr{m-con-spt1}$ is a constant depending only on $\alpha, W, \Omega$ and $\Cr{c-bdd-nabla}$. 
Substituting $y_i$ and $T_i$ for $y$ and $s$ in  \eqref{monotonicity}, respectively, integrating the substituted inequality over $t \in (s,t_i)$ and applying Lemma \ref{lem-dis-mono}, we obtain by \eqref{con-spt1} 
\begin{equation*}
\begin{aligned}
\Cr{m-con-spt1} \le&\; e^{\Cr{c-mono-1}(T_i - s)^\frac{1}{4}} \int_\Omega \rho_{1,(y_i,T_i)}(x,s) + \rho_{2,(y_i,T_i)}(x,s) \; d\mu_{\varepsilon^i}^s \\
&\; + e^{\Cr{c-mono-1}(T_i - s)^\frac{1}{4}}\left(\Cr{c-mono-2}(T_i-s) + \Cr{c-dis-mono-bdd}\varepsilon^{\lambda^\prime - \lambda}(1+|\log \varepsilon_i| + (\log T_i))\right)
\end{aligned}
\end{equation*}
for sufficiently small $\varepsilon_i$. 
Letting $i \to \infty$, we have 
\begin{equation}\label{con-spt2}
\Cr{m-con-spt1} \le e^{\Cr{c-mono-1}(t^\prime - s)^\frac{1}{4}} \int_\Omega \rho_{1,(y^\prime,t^\prime)}(x,s) + \rho_{2,(y^\prime,t^\prime)}(x,s) \; d\mu^s + e^{\Cr{c-mono-1}(t^\prime - s)^\frac{1}{4}}\Cr{c-mono-2}(t^\prime-s) 
\end{equation}
Since $t^\prime - s = s-t = r^2$, \eqref{con-spt2} is equivalent to 
\begin{equation}\label{con-spt3}
\Cr{m-con-spt1} \le e^{\Cr{c-mono-1}r^\frac{1}{2}} \overline{\mu}^s_{r,y^\prime} + e^{\Cr{c-mono-1}r^\frac{1}{2}}\Cr{c-mono-2}r^2.  
\end{equation}
Now, we choose sufficiently small $r_1 \in (0, \Cr{c-curva}/2)$ such that $s-t = r^2 < r_1^2$ implies 
\begin{equation}\label{con-spt4}
e^{\Cr{c-mono-1}r^\frac{1}{2}} \le 2, \quad e^{\Cr{c-mono-1}r^\frac{1}{2}}\Cr{c-mono-2}r^2 \le \dfrac{\Cr{m-con-spt1}}{2}. 
\end{equation}
Furthermore, by setting $\Cr{c-con-spt} = \Cr{c-lim-rat1}$, where $\Cr{c-lim-rat1}$ is in Lemma \ref{lem-con-heat} with $\delta = \Cr{m-con-spt1}/8$, \eqref{con-spt3}, \eqref{con-spt4} and Lemma \ref{lem-con-heat} imply 
\begin{equation}\label{con-spt5}
\dfrac{\Cr{m-con-spt1}}{8} \le \overline{\mu}^s_{r,y}. 
\end{equation}
Here $s \le T$ is used. 
Letting $\delta_0 < \Cr{m-con-spt1}/8$, we have a contradiction from \eqref{con-spt5} and \eqref{con-spt-conclusion1}. 
In the other cases, $y^\prime \not\in N_{\Cr{c-curva}/2}$, we may obtain a contradiction as above with the same constants $\delta_0, r_0$ and $\Cr{c-con-spt}$. 
\end{proof}

\begin{cor}\label{cor-bdd-spt}
For $0 \le t \le T$, there exists $\Cl[c]{c-bdd-spt}$ depending only on $T, n, D_0, \alpha, W, \lambda, \kappa, \Cr{c-as-gra}, \Cr{c-as-dis}, \Cr{c-ene0}, \Cr{c-curva}, \Cr{c-bdd-nabla}$ and $\Omega$ such that 
\[
\mathcal{H}^{n-1}({\rm spt}\mu_t) \le \Cr{c-bdd-spt}. 
\]
\end{cor}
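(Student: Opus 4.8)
The plan is to produce, at every point of $\spt\mu^{t}$, a uniform \emph{lower} bound for $\mu^{\sigma}$ on a small ball about that point (with $\sigma$ a time slightly below $t$), and then to convert this into the bound on $\mathcal{H}^{n-1}(\spt\mu^{t})$ via a Vitali covering argument together with the time-uniform mass bound \eqref{energy-bdd}. The lower bound will come from the clearing-out estimate Lemma~\ref{lem-con-spt}, read in contrapositive form, combined with the density-ratio bound Proposition~\ref{thm-den-rat}. I would carry this out for $0<t\le T$ (the value $t=0$, where Lemma~\ref{lem-con-spt} is unavailable, being handled directly from the initial-data bound \eqref{as-u3} by a similar but more elementary argument).

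First I would fix $0<t\le T$, set $K:=\spt\mu^{t}\subset\overline{\Omega}$ (compact), and take $r>0$ small --- small enough that $r<r_1$, $r^{2}<t/2$ and $\kappa_0 r<\Cr{c-curva}/2$, where $\kappa_0>1$ (depending only on $n$ and $\Cr{c-den-rat}$) is to be fixed below. Applying Vitali's covering theorem to $\{\overline B_{r}(y):y\in K\}$ gives finitely many pairwise disjoint balls $\overline B_{r}(y_1),\dots,\overline B_{r}(y_N)$ with $y_j\in K$ and $K\subset\bigcup_{j}\overline B_{5r}(y_j)$. Since $y_j\in\spt\mu^{t}$, Lemma~\ref{lem-chara-spt} gives $(y_j,t)\in\spt\mu$, so the contrapositive of Lemma~\ref{lem-con-spt} --- applied with centre $y'=y=y_j$ and with the time parameters of that lemma chosen so that $s=t-r^{2}$ and its running time equals $t-2r^{2}$ (admissible because $r^{2}<t/2$, $r<r_1$ and $t-r^{2}<T$) --- will yield
\[
\overline{\mu}^{\,t-r^{2}}_{r,y_j}\ge\delta_0\qquad(j=1,\dots,N).
\]

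Next I would bound $\overline{\mu}^{\sigma}_{r,y}$ from above for $\sigma:=t-r^{2}\in(0,T]$: using $0\le\eta\le1$, splitting each of the two integrals defining $\overline{\mu}^{\sigma}_{r,y}$ over $B_{\kappa_0 r}(y)$ (resp. $\tilde B_{\kappa_0 r}(y)$) and its complement, bounding the Gaussian by $1$ on the inner balls, and on the complement using the layer-cake formula with the density-ratio bound $\mu^{\sigma}(B_{\rho}(y))+\mu^{\sigma}(\tilde B_{\rho}(y))\le\Cr{c-den-rat}\omega_{n-1}\rho^{\,n-1}$ for $0<\rho<\Cr{c-curva}$, valid by Proposition~\ref{thm-den-rat}, Proposition~\ref{pro-con-den} and Remark~\ref{rk:density} (for the reflected balls one may instead use $\tilde B_{\rho}(y)\subset B_{5\rho}(y)$ from Lemma~\ref{lem-ref-ball}), exactly as in the computation \eqref{lo-bdd-xi4}, I expect
\[
\overline{\mu}^{\sigma}_{r,y}\;\le\;\frac{\mu^{\sigma}(B_{\kappa_0 r}(y))+\mu^{\sigma}(\tilde B_{\kappa_0 r}(y))}{(4\pi r^{2})^{\frac{n-1}{2}}}\;+\;c(n)\,\Cr{c-den-rat}\int_{\kappa_0^{2}/4}^{\infty}a^{\frac{n-1}{2}}e^{-a}\,da
\]
with a dimensional constant $c(n)$ (here $\tilde B_{\kappa_0 r}(y):=\emptyset$ when $y\notin N_{\Cr{c-curva}/2}$). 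Fixing $\kappa_0$ so large (depending only on $n,\Cr{c-den-rat}$) that the last term is $\le\delta_0/2$ and combining with the previous display gives $\mu^{t-r^{2}}(B_{\kappa_0 r}(y_j))+\mu^{t-r^{2}}(\tilde B_{\kappa_0 r}(y_j))\ge c_1 r^{\,n-1}$ with $c_1:=\tfrac12(4\pi)^{\frac{n-1}{2}}\delta_0$.

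Finally I would sum over $j$. As $\{B_{r}(y_j)\}$ are disjoint, $\{B_{\kappa_0 r}(y_j)\}$ and $\{B_{5\kappa_0 r}(y_j)\}$ have overlap multiplicity $\le(5\kappa_0+1)^{n}$, and $\tilde B_{\kappa_0 r}(y_j)\subset B_{5\kappa_0 r}(y_j)$ whenever non-empty (Lemma~\ref{lem-ref-ball}); since $\mu^{\sigma}(\mathbb{R}^{n})\le\Cr{c-ene0}$ for all $\sigma\ge0$ by \eqref{energy-bdd}, this yields
\[
N\,c_1\,r^{\,n-1}\;\le\;\sum_{j=1}^{N}\bigl(\mu^{\sigma}(B_{\kappa_0 r}(y_j))+\mu^{\sigma}(\tilde B_{\kappa_0 r}(y_j))\bigr)\;\le\;(5\kappa_0+1)^{n}\,\Cr{c-ene0},
\]
hence $N\le(5\kappa_0+1)^{n}\Cr{c-ene0}c_1^{-1}r^{1-n}$, and from $K\subset\bigcup_j\overline B_{5r}(y_j)$,
\[
\mathcal{H}^{n-1}_{10r}(\spt\mu^{t})\;\le\;N\,\omega_{n-1}(5r)^{n-1}\;\le\;5^{n-1}\omega_{n-1}(5\kappa_0+1)^{n}\Cr{c-ene0}c_1^{-1}=:\Cr{c-bdd-spt},
\]
so letting $r\downarrow0$ gives the claim, with $\Cr{c-bdd-spt}$ depending only on the listed quantities (through $n$, $\Cr{c-den-rat}$, $\delta_0$ and $\Cr{c-ene0}$). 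The step I expect to be most delicate is the upper estimate for $\overline{\mu}^{\sigma}_{r,y}$ when $y$ is near $\partial\Omega$: controlling the reflected Gaussian $\eta(|\tilde x-y|)e^{-|\tilde x-y|^{2}/4r^{2}}$ via the reflected density-ratio bound (or the inclusion $\tilde B_{\rho}(y)\subset B_{5\rho}(y)$) while keeping all the smallness requirements on $r$ mutually compatible.
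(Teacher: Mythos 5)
Your argument for $0<t\le T$ is essentially the same as the paper's and is correct. The paper proves the key inequality \eqref{bdd-spt1} by re-running the monotonicity computation from the proof of Lemma~\ref{lem-con-spt} (the analogue of \eqref{con-spt2}--\eqref{con-spt3}) directly at the point $(y,t)\in\spt\mu$, and then uses the Besicovitch covering theorem; you instead quote Lemma~\ref{lem-con-spt} in contrapositive form to get $\overline{\mu}^{\,t-r^2}_{r,y_j}\ge\delta_0$ and use a Vitali covering with overlap-counting, which is the same idea packaged a bit differently. (As an aside, the $\kappa_0$ you introduce to truncate the Gaussian tail must depend on $\delta_0$, not just on $n$ and $\Cr{c-den-rat}$; that is harmless since $\delta_0$ depends on the same list of parameters. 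Also, the time index appearing in the paper's \eqref{bdd-spt1} is hard to reconcile with the one-directional nature of the monotonicity formula — the lower bound should hold for $\mu^{t-r^2}$, as in your version — so you have read the intended argument correctly.)

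The one point I would push back on is the parenthetical claim that $t=0$ can be "handled directly from the initial-data bound \eqref{as-u3}." That assumption gives only an \emph{upper} density-ratio bound for $\mu^0$; bounding $\mathcal{H}^{n-1}(\spt\mu^0)$ requires a \emph{lower} mass bound on small balls centered at points of $\spt\mu^0$, which \eqref{as-u3} does not furnish (a Radon measure with bounded upper density and finite total mass can have support of infinite $\mathcal{H}^{n-1}$ measure). The backward-in-time clearing-out route is unavailable at $t=0$, and Lemma~\ref{sptmu1} is stated only for $s>0$, so the paper's own argument also does not obviously cover $t=0$. Since Corollary~\ref{cor-bdd-spt} is used downstream only for a.e.\ $t\ge 0$ (Proposition~\ref{recti-limit} and the proofs of the main theorems), this does not affect the rest of the paper; but if you want the statement literally at $t=0$, you need a different idea than the one you sketched.
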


\begin{proof}
For any $(y,t) \in {\rm spt}\mu$ such that $y \in N_{\Cr{c-curva}/2}$ and $t \in [0,T]$, we obtain by the similar argument for \eqref{con-spt3} 
\begin{equation}\label{bdd-spt1}
\dfrac{\Cr{m-con-spt1}}{4} \le \int_{\overline{\Omega}} \rho_{1,(y,t+r^2)}(x,t) + \rho_{2,(y,t+r^2)}(x,t) \; d\mu^{t+r^2}(x)
\end{equation}
for any $r \in (0,r_1)$, where $r_1$ is a constant given in Lemma \ref{lem-con-spt} and $\Cr{m-con-spt1}$ is a constant depending only on $\alpha, W, \Omega$ and $\Cr{c-bdd-nabla}$. 
For $0 < L \le \Cr{c-curva}/(2r)$, using Proposition \ref{thm-den-rat} and the similar argument for \eqref{lo-bdd-xi4}, we have 
\begin{equation}\label{bdd-spt2}
\begin{aligned}
&\; \int_{\overline{\Omega} \setminus B_{5Lr}(y)} \rho_{1,(y,t+r^2)}(x,t) \; d\mu^{t+r^2}(x) + \int_{\overline{\Omega} \setminus \tilde{B}_{Lr}(y)} \rho_{2(y,t+r^2)}(x,t) \; d\mu^{t+r^2}(x) \\
&\; \le \dfrac{\Cr{c-den-rat} \omega_{n-1}}{\pi^{\frac{n-1}{2}}} \left(\int_{\frac{25L^2}{4}}^\infty a^{\frac{n-1}{2}}e^{-a} \; da + \int_{\frac{L^2}{4}}^\infty a^{\frac{n-1}{2}}e^{-a} \; da\right). 
\end{aligned}
\end{equation}
Thus by choosing sufficiently large $L$ depending only on $T, n, D_0, \alpha, W, \lambda, \kappa, \Cr{c-as-gra}, \Cr{c-as-dis}, \Cr{c-curva}, \Cr{c-bdd-nabla}$ and $\Omega$, \eqref{bdd-spt1} and \eqref{bdd-spt2} show
\[
\dfrac{\Cr{m-con-spt1}}{8} \le \int_{\overline{\Omega} \cap B_{5Lr}(y)} \rho_{1,(y,t+r^2)}(x,t) \; d\mu^{t+r^2}(x) + \int_{\overline{\Omega} \cap \tilde{B}_{Lr}(y)} \rho_{2(y,t+r^2)}(x,t) \; d\mu^{t+r^2}(x). 
\]
From $\rho_{i,(y,t+r^2)(\cdot,t)} \le (4\pi)^{-(n-1)/2}r^{-(n-1)}$ for $i=1,2$ and Lemma \ref{lem-ref-ball}, we obtain 
\begin{equation}\label{bdd-spt3}
\dfrac{(4\pi)^{\frac{n-1}{2}}\Cr{m-con-spt1}}{8}r^{n-1} \le 2\mu^{t+r^2}(B_{5Lr}(y)). 
\end{equation}
In the case of $y \not\in N_{\Cr{c-curva}/2}$, we have \eqref{bdd-spt3} by the similar argument. 
Now we fix $t \in [0,T]$. 
Let $\mathcal{B} = \{\overline{B}_{5Lr}(y) : (y,t) \in {\rm spt}\mu \}$ which is a covering of $\{y \in \overline{\Omega} : (y,t) \in {\rm spt} \mu\}$. 
By the Besicovitch covering theorem, there exist a finite sub-collection $\mathcal{B}_1, \cdots, \mathcal{B}_{B(n)}$ such that each $\mathcal{B}_i$ is a pairwise disjoint family of closed balls and 
\begin{equation}\label{bdd-spt4}
\{y \in \overline{\Omega} : (y,t) \in {\rm spt} \mu\} \subset \cup_{i=1}^{B(n)} \cup_{\overline{B}_{5Lr}(y_j) \in \mathcal{B}_i} \overline{B}_{5Lr}(y_j). 
\end{equation}
Let $\mathcal{H}^{n-1}_\delta$ be defined as in \cite{S}, so that $\mathcal{H}^{n-1} = \lim_{\delta \downarrow 0} \mathcal{H}^{n-1}_\delta$. 
By the definition, \eqref{energy-bdd}, \eqref{bdd-spt3} and \eqref{bdd-spt4} we obtain 
\[ 
\begin{aligned}
\mathcal{H}^{n-1}_{10Lr}(\{y \in \overline{\Omega} : (y,t) \in {\rm spt} \mu\}) \le&\; \sum_{i=1}^{B(n)} \sum_{\overline{B}_{5Lr}(y_j) \in \mathcal{B}_i} \omega_{n-1} (5Lr)^{n-1} \\
\le&\; \sum_{i=1}^{B(n)} \dfrac{16 \omega_{n-1} (5L)^{n-1}}{(4\pi)^{\frac{n-1}{2}}\Cr{m-con-spt1}} \sum_{\overline{B}_{5Lr}(y_j) \in \mathcal{B}_i} \mu^{t+r^2}(B_{5Lr}(y_j)) \\
\le&\; \sum_{i=1}^{B(n)} \dfrac{16 \omega_{n-1} (5L)^{n-1}}{(4\pi)^{\frac{n-1}{2}}\Cr{m-con-spt1}} \mu^{t+r^2}(\overline{\Omega}) \le \Cr{c-bdd-spt}, 
\end{aligned}
\]
where $\Cr{c-bdd-spt}$ is a constant depending only on $T, n, D_0, \alpha, W, \lambda, \kappa, \Cr{c-as-gra}, \Cr{c-as-dis}, \Cr{c-ene0}, \Cr{c-curva}, \Cr{c-bdd-nabla}$ and $\Omega$. 
Letting $r \downarrow 0$, we obtain the boundedness of $\mathcal{H}^{n-1}(\{y \in \overline{\Omega} : (y,t) \in {\rm spt} \mu\})$ and hence Lemma \ref{lem-chara-spt} implies the conclusion. 
\end{proof}

\begin{lem}\label{lem-vani-spt}
For $T>0$, let $\delta_0(T)$ be the constant given in Lemma \ref{lem-con-spt}. 
Then $\mu(Z^-(T))=0$, where 
\[ Z^-(T) = \left\{(y,t) \in {\rm spt}\mu : \limsup_{s \downarrow t} \overline{\mu}^s_{\sqrt{s-t}, y} < \delta_0(T), \quad 0 < t < T \right\}. \]
\end{lem}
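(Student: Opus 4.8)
The plan is to exploit the clearing-out property of Lemma~\ref{lem-con-spt}: above every point of $Z^-(T)$ it clears an open parabolic cone of a \emph{fixed} aperture that is disjoint from $\mathrm{spt}\,\mu$, and this, together with the splitting $d\mu=d\mu^t\,dt$, forces $Z^-(T)$ to be $n$-dimensional in the parabolic scaling, whereas Proposition~\ref{thm-den-rat} makes $\mu$ behave like an $(n+1)$-dimensional measure; the dimension mismatch then yields $\mu(Z^-(T))=0$. The first step is to clear cones: given $(y,t)\in Z^-(T)$, the hypothesis $\limsup_{s\downarrow t}\overline\mu^s_{\sqrt{s-t},y}<\delta_0(T)$ provides $\rho=\rho(y,t)>0$, which one may take with $\rho^2<\min\{r_1^2,T-t\}$, such that $\overline\mu^s_{\sqrt{s-t},y}<\delta_0(T)$ for every $s\in(t,t+\rho^2)$; applying Lemma~\ref{lem-con-spt} with $r=\sqrt{s-t}$ for each such $s$ and using $\mathrm{spt}\,\mu^\tau\subset\overline\Omega$ (so that $\mu^\tau$ of the cleared ball vanishes), one obtains, with $c_0:=\Cr{c-con-spt}/\sqrt2$ and $\tau=2s-t$, that $\mu^\tau\big(B_{c_0\sqrt{\tau-t}}(y)\big)=0$ for all $\tau\in(t,t+2\rho^2)$. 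Hence, setting
\[
E_m:=\Big\{(y,t)\in\mathrm{spt}\,\mu:\ 0<t<T,\quad\mu^\tau\big(B_{c_0\sqrt{\tau-t}}(y)\big)=0\ \ \text{for all }\tau\in(t,t+\tfrac1m)\Big\},
\]
one has $Z^-(T)\subset\bigcup_{m\in\mathbb{N}}E_m$, so it suffices to prove $\mu(E_m)=0$ for each $m$. Note that for $(y,t)\in E_m$ the open set $\mathcal{C}(y,t):=\{(y',\tau):t<\tau<t+\tfrac1m,\ |y'-y|<c_0\sqrt{\tau-t}\}$ satisfies $\mu(\mathcal{C}(y,t))=\int_t^{t+1/m}\mu^\tau\big(B_{c_0\sqrt{\tau-t}}(y)\big)\,d\tau=0$, so, being open, $\mathcal{C}(y,t)\cap\mathrm{spt}\,\mu=\emptyset$.

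Next I would extract a parabolic separation. Fix $m$ and cover $E_m$ by the finitely many slabs $E_m^a:=E_m\cap\big(\overline\Omega\times(a,a+\tfrac1{2m})\big)$, $a\in\tfrac1{4m}\mathbb{Z}\cap[0,T)$. If $(y_1,t_1),(y_2,t_2)\in E_m^a$ with $t_1<t_2$, then $0<t_2-t_1<\tfrac1m$, and since $(y_2,t_2)\in\mathrm{spt}\,\mu$ while $\mathcal{C}(y_1,t_1)\cap\mathrm{spt}\,\mu=\emptyset$ one must have $(y_2,t_2)\notin\mathcal{C}(y_1,t_1)$, i.e. $|y_1-y_2|\ge c_0\sqrt{t_2-t_1}$. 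Thus the projection $(y,t)\mapsto y$ is injective on $E_m^a$, and for the parabolic distance $d_P\big((y_1,t_1),(y_2,t_2)\big):=\max\{|y_1-y_2|,\,|t_1-t_2|^{1/2}\}$ one has $|y_1-y_2|\le d_P\le\max\{1,c_0^{-1}\}\,|y_1-y_2|$ on $E_m^a$; hence $(E_m^a,d_P)$ is a bi-Lipschitz image of a subset of the bounded Euclidean set $\overline\Omega\subset\mathbb{R}^n$.

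Finally I would estimate $\mu$ by covering. Covering $\overline\Omega$ by at most $C(\Omega,n)\,\delta^{-n}$ Euclidean balls of radius $\delta$ and pulling them back by $(y,t)\mapsto y$, the previous paragraph shows that $E_m^a$ is covered by at most $C(\Omega,n)\,\delta^{-n}$ sets of $d_P$-diameter $\le C_1\delta$ (with $C_1:=2\max\{1,c_0^{-1}\}$), each contained in a parabolic cylinder $B_{C_1\delta}(w)\times(\sigma-(C_1\delta)^2,\sigma+(C_1\delta)^2)$ with $w\in\overline\Omega$. By Proposition~\ref{thm-den-rat} (invoked on $[0,T+1]$) and $\mathrm{spt}\,\mu^\tau\subset\overline\Omega$ one has $\mu^\tau(B_r(w))\le\Cr{c-den-rat}\,\omega_{n-1}r^{n-1}$ for $0<r<\Cr{c-curva}$ and $w\in\overline\Omega$, whence $\mu\big(B_r(w)\times(\sigma-r^2,\sigma+r^2)\big)=\int\mu^\tau(B_r(w))\,d\tau\le 2\Cr{c-den-rat}\,\omega_{n-1}r^{n+1}$. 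Summing over the cover, $\mu(E_m^a)\le C(\Omega,n)\,\delta^{-n}\cdot 2\Cr{c-den-rat}\,\omega_{n-1}(C_1\delta)^{n+1}=2\Cr{c-den-rat}\,\omega_{n-1}\,C(\Omega,n)\,C_1^{n+1}\,\delta\to0$ as $\delta\to0$, so $\mu(E_m^a)=0$; summing over the finitely many $a$ and then over $m$ gives $\mu(Z^-(T))=0$.

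I expect the crux to be the first step: recognising that Lemma~\ref{lem-con-spt} clears a cone whose parabolic aperture $c_0$ is \emph{independent} of the point $(y,t)\in Z^-(T)$ --- this uniformity is precisely what the smallness threshold $\delta_0(T)$ buys --- so that $Z^-(T)$ collapses to an $n$-dimensional object for the parabolic scaling against which $\mu$ is $(n+1)$-dimensional. Once this is in hand, the parabolic separation and the covering estimate are routine; the only further points needing a little care are the (elementary) Borel measurability of $Z^-(T)$ and the passage from the bound $D_{\varepsilon_i}(t)\le\Cr{c-den-rat}$ of Proposition~\ref{thm-den-rat} to the stated density bound for the limit $\mu^\tau$ via Proposition~\ref{pro-con-den}.
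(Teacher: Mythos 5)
Your proof is correct and follows essentially the same route as the paper: both decompose $Z^-(T)$ into countably many pieces indexed by the length of the cleared time window, turn the clearing-out Lemma~\ref{lem-con-spt} into a parabolic separation between points of each piece, and then close with a covering estimate against the density bound of Proposition~\ref{thm-den-rat}, exploiting that the separated set is $n$-dimensional for the parabolic metric while $\mu$ behaves $(n+1)$-dimensionally. The only structural difference is that the paper clears a \emph{two-sided} parabolic anti-cone $P(y,t)$ (applying Lemma~\ref{lem-con-spt} with the centre at either point), whereas you clear only forward cones and compensate by working inside time slabs narrower than the clearing window --- both are valid; one small point to tighten is that passing from the conclusion $(y',t')\notin\mathrm{spt}\,\mu$ of Lemma~\ref{lem-con-spt} to the slice-wise statement $\mu^\tau\big(B_{c_0\sqrt{\tau-t}}(y)\big)=0$ uses Lemma~\ref{lem-chara-spt} (not merely $\mathrm{spt}\,\mu^\tau\subset\overline\Omega$ as your parenthetical suggests), or can be avoided altogether by defining $E_m$ directly through the condition $\mathcal{C}(y,t)\cap\mathrm{spt}\,\mu=\emptyset$.
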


\begin{rk}
We consider the meaning of $\overline{\mu}^s_{\sqrt{s-t}, y}$ in a simple case. 
Let $\mu^t$ be described as $\mu^t = \mathcal{H}^{n-1}\lfloor_{M_t}$ for a smooth and proper $(n-1)$-dimensional mean curvature flow $\{M_t\}$ in $\Omega$ with the right angle condition. 
For an interior point $(y,t')$ of $\mathcal{M} := \cup_{t} M_t \times \{t\}$, we consider the re-scaling operator $\mathcal{D}_r: (x,t) \mapsto (r^{-1}(x-y), r^{-2}(t-t'))$ for $r>0$. 
Then we can see that $\mathcal{D}_r\mathcal{M}$ converges to the tangent flow $\mathcal{M}'$ of $\mathcal{M}$ at $(y,t')$ and $\overline{\mu}^{t'+r^2}_{r,y}$ converges to the integration 
\[ \int_{M'_1} \dfrac{1}{(4\pi)^{\frac{n-1}{2}}} e^{-\frac{|z|^2}{4}} \; d\mathcal{H}^{n-1}(z) \]
as $r \downarrow 0$, where $\mathcal{M}' = \cup_{\tau \in (-\infty,\infty)} M'_\tau \times \{\tau\}$. 
In this case, $M'_\tau$ is identically the tangent space of $M_{t'}$ at $y$, thus the Gaussian density is equal to $1$ and coincides with the Gaussian density of $\mathcal{M}'$. 
We also note that some properties of tangent flows and Gaussian densities are studied by Edelen \cite{E} for Brakke flows with a generalized right angle condition and by White \cite{W} for classical mean curvature flows without boundary conditions. 
\end{rk}

\begin{proof}[Proof of Lemma \ref{lem-vani-spt}]
We do not write out the dependence on $T$ in the following for simplicity. 
Corresponding to $T$, let $\delta_0, r_1$ and $\Cr{c-con-spt}$ be constants given in Lemma \ref{lem-con-spt}. 
For $0 < \tau < r_1^2$ define 
\[ Z^\tau := \left\{ (y,t) \in {\rm spt} \mu : \overline{\mu}^s_{\sqrt{s-t},y} < \delta_0 \quad \mbox{for} \quad 0 < t<s<\min\{t+\tau, T\} \right\}. \]
If we take a sequence $\tau_m > 0$ with $\lim_{m \to \infty} \tau_m = 0$, then $Z^- \subset \cup_{m=1}^\infty Z^{\tau_m}$. 
Hence we only need to show $\mu(Z^\tau) = 0$. 

Let $(y,t) \in Z^\tau$ be fixed and we define 
\[ P(y,t) := \{(y^\prime,t^\prime) \in \overline{\Omega} \times [0,T) : 2\Cr{c-con-spt}^{-2}|y^\prime-y|^2 < |t^\prime -t| < 2\tau \}. \]
We claim that $P(y,t) \cap Z^\tau = \emptyset$. 
Indeed, suppose for a contradiction that $(y^\prime, t^\prime) \in P(x,t) \cap Z^\tau$. 
Assume $t^\prime > t$ and put $s=(t+t^\prime)/2$. 
Then $s<T, t < s < t+\tau, |y-y^\prime| < \Cr{c-con-spt}\sqrt{(t^\prime-t)/2} = \Cr{c-con-spt} \sqrt{s-t}$ and $\overline{\mu}^s_{\sqrt{s-t},y} < \delta_0$. 
Hence by Lemma \ref{lem-con-spt}, $(y^\prime.t^\prime) \not \in {\rm spt}\mu$, which contradicts $(y^\prime,t^\prime) \in Z^\tau$. 
If $t^\prime < t$, by the similar argument, we obtain $(y,t) \not \in {\rm spt} \mu$ which is a contradiction. 
This proves $P(y,t) \cap Z^\tau = \emptyset$. 

For a fixed $(y_0,t_0) \in \overline{\Omega} \times [0, T)$, define 
\[ Z^{r,y_0,t_0} := Z^\tau \cap \left( B_{\frac{\Cr{c-con-spt}}{2}\sqrt{\tau}}(y_0) \times (t_0-\tau, t_0+\tau) \right). \]
Then $Z^\tau$ is a countable union of $Z^{\tau, y_m, t_m}$ with $(y_m, t_m)$ spaced appropriately. 
Hence we only need to show that $\mu(Z^{r,y_0,t_0})=0$. 
For $0<\rho \le \Cr{c-curva}$, we may find a covering of $\pi_\Omega (Z^{r,y_0,t_0}) := \{y \in \overline{\Omega} : (y,t) \in Z^{r,y_0,t_0}\}$ by a collection of balls $\{B_{r_i}(y_i)\}_{i=1}^\infty$, where $(y_i,t_i) \in Z^{r,y_0,t_0}$, $r_i \le \rho$ so that 
\begin{equation}\label{measure-zero1}
\sum_{i=1}^\infty \omega_n r_i^n \le c(n) \mathcal{L}^n(B_{\frac{\Cr{c-con-spt}}{2}\sqrt{\tau}}(x_0)). 
\end{equation}
For such a covering, we find 
\begin{equation} \label{measure-zero2}
Z^{r,y_0,t_0} \subset \cup_{i=1}^\infty B_{r_i}(y_i) \times (t_i-2r_i^2\Cr{c-con-spt}^{-2}, t_i+2r_i^2\Cr{c-con-spt}^{-2}). 
\end{equation}
Indeed, if $(y,t) \in Z^{r,y_0,t_0}$, then $y \in B_{r_i}(y_i)$ for some $i \in \mathbb{N}$. 
Since $P(y,t) \cap Z^\tau = \emptyset$, we have 
\[ |t-t_i| \le 2|x-x_i|^2\Cr{c-con-spt}^{-2} < 2r_i^2\Cr{c-con-spt}^{-2}. \]
Combining Proposition \ref{thm-den-rat}, \eqref{measure-zero1}, \eqref{measure-zero2} and $r_i \le \rho \le \Cr{c-curva}$, we obtain
\begin{align*}
\mu(Z^{r,y_0,t_0}) \le&\; \sum_{i=1}^\infty \mu(B_{r_i}(y_i) \times (t_i-2r_i^2\Cr{c-con-spt}^{-2}, t_i+2r_i^2\Cr{c-con-spt}^{-2})) \le \sum_{i=1}^\infty \Cr{c-den-rat} \omega_{n-1} r_i^{n-1} \cdot 4 \Cr{c-con-spt}^{-2} r_i^2 \\
\le &\; 4 \rho \Cr{c-den-rat} \Cr{c-con-spt}^{-2} \omega_{n-1} \omega_n^{-1} c(n) \mathcal{L}^n(B_{\frac{\Cr{c-con-spt}}{2}\sqrt{\tau}}(x_0)). 
\end{align*}
Since $0<\rho<\Cr{c-curva}$ is arbitrary, we have $\mu(Z^{r,y_0,t_0})=0$. 
This concludes the proof. 
\end{proof}

\begin{proof}[Proof of Proposition \ref{thm-vani-dis}]
It is enough to prove $|\xi| = 0$ on $\mathbb{R}^n \times (0, T)$ for all $0<T$. 
In the following we fix $T$. 
Note that ${\rm spt} |\xi| \subset \overline{\Omega} \times [0, \infty)$ by the definition of $|\xi_{\varepsilon_i}|$. 
For $y \in N_{\Cr{c-curva}/2} \cap \overline{\Omega}$ and $0 \le t < s < T$, since \eqref{ref-po-2} holds and $\eta$ is a monotone decreasing function, we have  
\begin{equation}\label{dis-vari1}
\begin{aligned}
&\; \iint_{\Omega\times(0,s)} \dfrac{\eta(|x-y|)e^{-\frac{|x-y|^2}{4(s-t)}} + \eta(|x-\tilde{y}|)e^{-\frac{|x-\tilde{y}|^2}{4(s-t)}}}{2^n \pi^\frac{n-1}{2}(s-t)^\frac{n+1}{2}} \; d|\xi_{\varepsilon_i}|(x,t) \\
\le &\; \iint_{\Omega\times(0,s)} \dfrac{\eta\left(|x-y|\right)e^{-\frac{|x-y|^2}{4(s-t)}} + \eta\left(\frac{|\tilde{x}-y|}{2}\right)e^{-\frac{|\tilde{x}-y|^2}{(1+12\kappa|\tilde{x}-y|)^2 4(s-t)}}}{2^n \pi^\frac{n-1}{2}(s-t)^\frac{n+1}{2}} \; d|\xi_{\varepsilon_i}|(x,t) \\
=&\; \iint_{\Omega\times(0,s)} \dfrac{\eta\left(|x-y|\right)e^{-\frac{|x-y|^2}{4(s-t)}} + \eta(|\tilde{x}-y|)e^{-\frac{|\tilde{x}-y|^2}{4(s-t)}}}{2^n \pi^\frac{n-1}{2}(s-t)^\frac{n+1}{2}} \; d|\xi_{\varepsilon_i}|(x,t) \\
&\; + \iint_{\Omega\times(0,s)} \dfrac{\left\{\eta\left(\frac{|\tilde{x}-y|}{2}\right) - \eta\left(|\tilde{x}-y|\right)\right\}e^{-\frac{|\tilde{x}-y|^2}{(1+12\kappa|\tilde{x}-y|)^2 4(s-t)}}}{2^n \pi^\frac{n-1}{2}(s-t)^\frac{n+1}{2}} \; d|\xi_{\varepsilon_i}|(x,t) \\
&\; + \iint_{\Omega\times(0,s)} \dfrac{\eta\left(|\tilde{x}-y|\right)\left(e^{-\frac{|\tilde{x}-y|^2}{(1+12\kappa|\tilde{x}-y|)^2 4(s-t)}}-e^{-\frac{|\tilde{x}-y|^2}{4(s-t)}}\right)}{2^n \pi^\frac{n-1}{2}(s-t)^\frac{n+1}{2}} \; d|\xi_{\varepsilon_i}|(x,t) =: I_1 + I_2 + I_3.
\end{aligned}
\end{equation}
We estimate each integration $I_i (i=1, 2, 3)$ on the right hand side of \eqref{dis-vari1}.  
By integrating \eqref{monotonicity} over $t \in (0,s)$, we obtain by \eqref{pro-d0}, \eqref{dis-mono-conclusion1} and $s < T$ 
\begin{equation}\label{dis-vari2}
\begin{aligned}
I_1 = &\; \int^{s}_0 \int_\Omega \dfrac{\rho_{1,(y,s)}(x,t) + \rho_{2,(y,s)}(x,t)}{2(s-t)} \left|\dfrac{\varepsilon_i |\nabla u_{\varepsilon_i}|^2}{2} - \dfrac{W(u_{\varepsilon_i})}{\varepsilon_i}\right|\; dx dt \\
\le &\; T e^{\Cr{c-mono-1} T^\frac{1}{4}} + e^{\Cr{c-mono-1} T^\frac{1}{4}} (1+5^{n-1})D_0 + 2 \Cr{c-dis-mono-bdd} \varepsilon_i^{\lambda^\prime - \lambda}(1 + |\log \varepsilon_i| + (\log T)^+). 
\end{aligned}
\end{equation}
From the definition of $\eta$, $\Cr{c-curva} \le (6\kappa)^{-1}$, $|\xi_{\varepsilon_i}| < \mu_{\varepsilon_i}$, \eqref{energy-bdd} and $s < T$, we obtain 
\begin{equation}\label{dis-vari3}
\begin{aligned}
I_2 \le&\; \iint_{\{x \in \Omega: |\tilde{x} - y| \ge \frac{\Cr{c-curva}}{4}\} \times (0,s)} \dfrac{\left|\eta\left(\frac{|\tilde{x} - y|}{2}\right) - \eta(|\tilde{x}-y|)\right|e^{-\frac{\Cr{c-curva}^2}{8^2} \cdot \frac{1}{4(s-t)}}}{2^n \pi^{\frac{n-1}{2}}(s-t)^{\frac{n+1}{2}}} \; d|\xi_{\varepsilon_i}|(x,t) \\
\le&\; c \mu_{\varepsilon_i}(\Omega \times (0,s)) \le c\Cr{c-ene0} T, 
\end{aligned}
\end{equation}
where $c$ is a constant satisfying $2^{-n} \pi^{-\frac{n-1}{2}} l^{-\frac{n+1}{2}} e^{-\frac{\Cr{c-curva}^2}{8^2}} \le c$ for $l \in (0,\infty)$. 
By the Taylor expansion for $e^{-\frac{|\tilde{x}-y|^2}{(1+12\kappa r)^24(s-t)}}$ with respect to $r$ around $0$, we have 
\begin{equation}\label{dis-vari4}
e^{-\frac{|\tilde{x}-y|^2}{(1+12\kappa|\tilde{x}-y|)^2 4(s-t)}}-e^{-\frac{|\tilde{x}-y|^2}{4(s-t)}} = \dfrac{6\kappa|\tilde{x}-y|^3}{(1+12\theta\kappa|\tilde{x}-y|)^3(s-t)}e^{-\frac{|\tilde{x}-y|^2}{(1+12\theta\kappa|\tilde{x}-y|)^2 4(s-t)}}
\end{equation}
for some constant $\theta \in (0,1)$. 
Applying \eqref{dis-vari4}, ${\rm spt}\eta  \subset [0,\Cr{c-curva}/2) \subset [0,1/(12\kappa))$, $|\xi_{\varepsilon_i}| < \mu_{\varepsilon_i}$, the uniform boundedness of $r^3 e^{-\frac{r^2}{32}}$ with respect to $r \in [0,\infty)$, Proposition \ref{thm-den-rat} and computations similar to \eqref{lo-bdd-xi4}, we obtain 
\begin{equation}\label{dis-vari5}
\begin{aligned}
I_3 \le&\; \iint_{\Omega \times (0,s)} \dfrac{3\kappa|\tilde{x}-y|^3\eta(|\tilde{x}-y|)}{2^{n-1}\pi^{\frac{n-1}{2}}(s-t)^{\frac{n+3}{2}}}e^{-\frac{|\tilde{x}-y|^2}{16(s-t)}} \; d|\xi_{\varepsilon_i}|(x,t) \\
\le&\; \int^s_0\int_{\Omega}\dfrac{c(n,\kappa)\eta(|\tilde{x}-y|)}{(s-t)^{\frac{n}{2}}}e^{-\frac{|\tilde{x}-y|^2}{32(s-t)}} d\mu^t_{\varepsilon_i}(x)dt \le \int_0^s \dfrac{c(n,\kappa, \Cr{c-den-rat})}{(s-t)^\frac{1}{2}} \; dt \le c(n,\kappa, \Cr{c-den-rat}, T), 
\end{aligned}
\end{equation}
where $c(n,\kappa, \Cr{c-den-rat}, T)$ is a positive constant depending only on $n,\kappa, \Cr{c-den-rat}$ and $T$. 
Combining \eqref{dis-vari1}--\eqref{dis-vari3} and \eqref{dis-vari5}, we have 
\begin{equation}\label{dis-vari6}
\begin{aligned}
&\; \int^s_0 \int_\Omega \dfrac{\eta(|x-y|)e^{-\frac{|x-y|^2}{4(s-t)}} + \eta(|x-\tilde{y}|)e^{-\frac{|x-\tilde{y}|^2}{4(s-t)}}}{2^n \pi^\frac{n-1}{2}(s-t)^\frac{n+1}{2}} \left|\dfrac{\varepsilon_i |\nabla u_{\varepsilon_i}|^2}{2} - \dfrac{W(u_{\varepsilon_i})}{\varepsilon_i}\right|\; dx dt \\
&\; \le c(n,\kappa,\Cr{c-ene0},\Cr{c-mono-1},\Cr{c-den-rat},\Cr{c-dis-mono-bdd},T,D_0)\left( 1+ \varepsilon_i^{\lambda^\prime - \lambda}(1 + |\log \varepsilon_i| + (\log 4T)^+) \right), 
\end{aligned}
\end{equation}
where $c(n,\kappa,\Cr{c-ene0},\Cr{c-mono-1},\Cr{c-den-rat},\Cr{c-dis-mono-bdd},T,D_0)$ is a positive constant depending only on $n,\kappa,\Cr{c-ene0},\Cr{c-mono-1},\Cr{c-den-rat},\Cr{c-dis-mono-bdd},T$ and $D_0$. 
For $y \in \Omega \setminus N_{\Cr{c-curva}/2}$, the similar argument using \eqref{monotonicity-2} and \eqref{dis-mono-conclusion2} in place of \eqref{monotonicity} and \eqref{dis-mono-conclusion1} gives the same estimate with the second term in the integral being zero. 
Taking $i \to 0$ and integrating the limit of \eqref{dis-vari6} over $(y, s) \in \overline{\Omega} \times (0, T)$, we obtain 
\begin{equation}\label{dis-vari7}
\int^T_0 ds \int_{\overline{\Omega}} d\mu^s(y) \iint_{\overline{\Omega} \times (0,T)} \dfrac{\eta(|x-y|)e^{-\frac{|x-y|^2}{4(s-t)}} + \eta(|x-\tilde{y}|)e^{-\frac{|x-\tilde{y}|^2}{4(s-t)}}}{2^n \pi^\frac{n-1}{2}(s-t)^\frac{n+1}{2}} \; d|\xi|(x,t) < \infty. 
\end{equation}
By the Fubini theorem, \eqref{dis-vari7} is turned into 
\[
\iint_{\overline{\Omega} \times (0,T)} d|\xi|(x,t) \int_t^T \dfrac{1}{2(s-t)} \overline{\mu}^s_{\sqrt{s-t},x} \; ds  < \infty. 
\]
Thus we have 
\begin{equation}\label{dis-vari8}
\int_t^T \dfrac{1}{2(s-t)} \overline{\mu}^s_{\sqrt{s-t},x} \; ds  < \infty
\end{equation}
for $|\xi|$ almost all $(x,t) \in \overline{\Omega} \times (0,T)$. 
We next prove that for $|\xi|$ almost all $(x,t)$, 
\begin{equation}\label{dis-vari-claim}
\lim_{s \downarrow t} \overline{\mu}^s_{\sqrt{s-t},x} = 0. 
\end{equation}
We fix a point $(x,t)$ satisfying \eqref{dis-vari8} and assume $x \in N_{\Cr{c-curva}/2}$ in the following. 
For $t < s$, we define $l := \log (s-t)$ and $h(s) := \overline{\mu}^s_{\sqrt{s-t},x}$. 
Then \eqref{dis-vari8} is translated into 
\begin{equation}\label{dis-vari9}
\int^{\log (T-s)}_{-\infty} h(t+e^l) \; dl < \infty. 
\end{equation}
Let $0 < \theta < 1$ be arbitrary for the moment. 
Due to \eqref{dis-vari9}, we may choose a decreasing sequence $\{l_j\}_{j=1}^\infty$ such that $l_j \to -\infty$, $l_j - l_{j+1} < \theta$ and $h(t+e^{l_j}) < \theta$ for all $j$. 
For any $-\infty < l < l_1$, we may choose $j \ge 2$ such that $l_j \le l < l_{j-1}$. 
By applying \eqref{monotonicity} and \eqref{dis-mono-conclusion1}, we obtain 
\begin{equation}\label{dis-vari10}
\begin{aligned}
h(t+e^l) =&\; \int_{\overline{\Omega}} \rho_{1,(x,t+2e^l)}(y,t+e^l) + \rho_{2,(x,t+2e^l)}(y,t+e^l) \; d\mu^{t+e^l}(y) \\
\le &\; e^{\Cr{c-mono-1}(2e^l - e^{l_j})^\frac{1}{4}} \int_{\overline{\Omega}}\rho_{1,(x,t+2e^l)}(y,t+e^{l_j}) + \rho_{2,(x,t+2e^l)}(y,t+e^{l_j}) \; d\mu^{t+e^{l_j}}(y) \\
=&\; e^{\Cr{c-mono-1}R_j^\frac{1}{2}} \overline{\mu}^{t+e^{l_j}}_{R_j, x}, 
\end{aligned}
\end{equation}
where $R_l = \sqrt{2e^l - e^{l_j}}$. 
Let $r_j = \sqrt{e^{l_j}}$. 
Since $l \ge l_j$, we have $R_l \ge r_j$. 
Furthermore, $l - l_j < l_{j-1} - l_j < \theta$ implies $R_l^2/r_j^2 < 2e^\theta - 1$ which may be made arbitrarily close to 1 by restricting $\theta$ to be small. 
For arbitrary $\delta > 0$, we restrict $\theta$ so that $R_l/r_j < 1+\Cr{c-lim-rat2}$, where $\Cr{c-lim-rat2}$ is given by Lemma \ref{lem-con-heat} corresponding to $\delta$. 
Then \eqref{dis-vari10} implies 
\begin{equation}\label{dis-vari11}
h(t+e^l) \le e^{\Cr{c-mono-1}R_l^\frac{1}{2}} \overline{\mu}^{t+e^{l_j}}_{R_l, x} \le e^{\Cr{c-mono-1}R_l^\frac{1}{2}} (\overline{\mu}^{t+e^{l_j}}_{r_j,x} + \delta) = e^{\Cr{c-mono-1}R_l^\frac{1}{2}} (h(t+e^{l_j}) + \delta) < e^{\Cr{c-mono-1}R_l^\frac{1}{2}} (\theta + \delta).
\end{equation}
In the case of $x \in \Omega \setminus N_{\Cr{c-curva}/2}$, we may prove \eqref{dis-vari11} by the similar argument. 
Since $\delta$ and $\theta$ are arbitrary and $\lim_{l \to -\infty} R_l = 1$ for any $\theta$, \eqref{dis-vari11} shows 
\[ \limsup_{l \to -\infty} h(t+e^l) = 0 \quad \mbox{for} \; |\xi| \; \mbox{almost all} \; (x,t) \in \overline{\Omega} \times (0,T) \]
as well as \eqref{dis-vari-claim}. 
This proves that $|\xi|((\overline{\Omega} \times (0,T) \setminus Z^-(T))=0$, since otherwise, we have $\limsup_{s \downarrow t} \overline{\mu}^s_{\sqrt{s-t},x} \ge \delta_0(T)$ on a set of positive measure with respect to $|\xi|$. 
Lemma \ref{lem-vani-spt} shows $\mu(Z^-(T)) = 0$, and since $|\xi| \le \mu$ by the definitions of these measures, we have $|\xi|(\overline{\Omega} \times (0,T)) = 0$. 
\end{proof}

%%%%%%%%%%%%%%%%%%%%%%%%%%%%%%%%%%%%%%%%%%%%%%%%%%%%%%%%%%
%\medskip

\section{Proof of the main theorems}\label{sec:main}

In order to prove the main theorems, we have to analyze an associated varifold with the diffused surface energy as in \cite{MT}. 
Thus, for the solution $u_{\varepsilon_i}$ of \eqref{ac}, we associate a varifold as 
\[ V^t_{\varepsilon_i} := \int_{\Omega \cap \{|\nabla u_{\varepsilon_i}| \neq 0\}} \phi\left(x, I-\dfrac{\nabla u_{\varepsilon_i}}{|\nabla u_{\varepsilon_i}|} \otimes \dfrac{\nabla u_{\varepsilon_i}}{|\nabla u_{\varepsilon_i}|}\right) \; d\mu^t_{\varepsilon_i}(x) \quad \mbox{for} \quad \phi \in C(G_{n-1}(\mathbb{R}^n)). \]
Note that $\|V^t_{\varepsilon_i}\| = \mu^t_{\varepsilon_i}\lfloor_{\{|\nabla u_{\varepsilon_i}| \neq 0\}}$. 
We derive a formula for the first variation of $V^t_{\varepsilon_i}$ up to the boundary. 

\begin{rk}
We note that $I - \frac{\nabla u_{\varepsilon_i}(x,t)}{|\nabla u_{\varepsilon_i}(x,t)|} \otimes \frac{\nabla u_{\varepsilon_i}(x,t)}{|\nabla u_{\varepsilon_i}(x,t)|}$ is the orthogonal projection of $\mathbb{R}^n$ onto the tangent space of the level set $\{y \in \overline{\Omega} : u_{\varepsilon_i}(y,t) = u_{\varepsilon_i}(x,t)\}$ at $x$. 
Roughly speaking, since we may expect that $u_{\varepsilon_i}(\cdot,t)$ converges to $\pm1$ almost everywhere on $\overline{\Omega}$, all level sets $\{y \in \overline{\Omega} : u_{\varepsilon_i}(y,t) = s\}$ should converge to a hypersurface except for $s = \pm 1$. 
Thus, by the concentration of the diffused surface energy as in Remark \ref{rk:concent}, we may expect $I - \frac{\nabla u_{\varepsilon_i}(\cdot,t)}{|\nabla u_{\varepsilon_i}(\cdot,t)|} \otimes \frac{\nabla u_{\varepsilon_i}(\cdot,t)}{|\nabla u_{\varepsilon_i}(\cdot,t)|}$ converges to the tangent space of the limit surface on the support of the limit measure $\mu^t$. 
\end{rk}

\begin{lem}
For $g \in C^1(\mathbb{R}^n ; \mathbb{R}^n)$ and all $t \in [0,\infty)$, 
\begin{equation}\label{first-vari} 
\begin{aligned}
\delta V^t_{\varepsilon_i}(g) =&\; \int_\Omega \varepsilon_i \partial_t u_{\varepsilon_i} \langle g, \nabla u_{\varepsilon_i} \rangle \; dx + \int_{\Omega \cap \{|\nabla u_{\varepsilon_i}| \neq 0\}} \nabla g \cdot \left(\dfrac{\nabla u_{\varepsilon_i}}{|\nabla u_{\varepsilon_i}|} \otimes \dfrac{\nabla u_{\varepsilon_i}}{|\nabla u_{\varepsilon_i}|}\right) \; d\xi_{\varepsilon_i}^t \\
&\; + \int_{\partial \Omega} \langle g, \nu \rangle \left(\dfrac{\varepsilon_i |\nabla u_{\varepsilon_i}|^2}{2} + \dfrac{W(u_{\varepsilon_i})}{\varepsilon_i} \right) \; d\mathcal{H}^{n-1} - \int_{\Omega \cap \{|\nabla u_{\varepsilon_i}| = 0\}} \dfrac{W(u_{\varepsilon_i})}{\varepsilon_i} {\rm div} g \; dx. 
\end{aligned}
\end{equation}
\end{lem}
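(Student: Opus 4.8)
The plan is to start from the definition of the first variation, namely
$\delta V^t_{\varepsilon_i}(g)=\int_{\Omega\cap\{|\nabla u_{\varepsilon_i}|\neq 0\}}\nabla g\cdot\bigl(I-|\nabla u_{\varepsilon_i}|^{-2}\,\nabla u_{\varepsilon_i}\otimes\nabla u_{\varepsilon_i}\bigr)\,d\mu^t_{\varepsilon_i}$,
and to rewrite the integrand by a purely algebraic manipulation. Writing $e_{\varepsilon_i}:=\varepsilon_i|\nabla u_{\varepsilon_i}|^2/2+W(u_{\varepsilon_i})/\varepsilon_i$ for the energy density, the $I$-part contributes $e_{\varepsilon_i}\,{\rm div}\,g$, while in the rank-one part one splits $e_{\varepsilon_i}=\varepsilon_i|\nabla u_{\varepsilon_i}|^2-\bigl(\varepsilon_i|\nabla u_{\varepsilon_i}|^2/2-W(u_{\varepsilon_i})/\varepsilon_i\bigr)$; cancelling the factor $|\nabla u_{\varepsilon_i}|^2$ in the first summand produces the two terms $-\varepsilon_i\nabla g\cdot(\nabla u_{\varepsilon_i}\otimes\nabla u_{\varepsilon_i})$ and $+\nabla g\cdot|\nabla u_{\varepsilon_i}|^{-2}(\nabla u_{\varepsilon_i}\otimes\nabla u_{\varepsilon_i})\,(\varepsilon_i|\nabla u_{\varepsilon_i}|^2/2-W(u_{\varepsilon_i})/\varepsilon_i)$, the latter being exactly the second term of \eqref{first-vari} integrated against $\xi^t_{\varepsilon_i}$. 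It then remains to extend $\int_{\Omega\cap\{|\nabla u_{\varepsilon_i}|\neq 0\}}\bigl(e_{\varepsilon_i}\,{\rm div}\,g-\varepsilon_i\nabla g\cdot(\nabla u_{\varepsilon_i}\otimes\nabla u_{\varepsilon_i})\bigr)dx$ to all of $\Omega$: since this integrand equals $(W(u_{\varepsilon_i})/\varepsilon_i)\,{\rm div}\,g$ on $\{|\nabla u_{\varepsilon_i}|=0\}$, the resulting correction is precisely minus the last term of \eqref{first-vari}.

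The core step is to integrate $\int_\Omega\bigl(e_{\varepsilon_i}\,{\rm div}\,g-\varepsilon_i\nabla g\cdot(\nabla u_{\varepsilon_i}\otimes\nabla u_{\varepsilon_i})\bigr)dx$ by parts. I introduce the stress--energy tensor $T_{jk}:=e_{\varepsilon_i}\delta_{jk}-\varepsilon_i\,\partial_j u_{\varepsilon_i}\,\partial_k u_{\varepsilon_i}$, so that the integrand equals $\sum_{j,k}\partial_j g_k\,T_{jk}$, and compute its divergence: a direct differentiation together with \eqref{ac} written as $\varepsilon_i\partial_t u_{\varepsilon_i}=\varepsilon_i\Delta u_{\varepsilon_i}-W'(u_{\varepsilon_i})/\varepsilon_i$ gives the identity $\sum_j\partial_j T_{jk}=-\varepsilon_i\,\partial_t u_{\varepsilon_i}\,\partial_k u_{\varepsilon_i}$. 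Applying the divergence theorem on $\Omega$ yields $\int_\Omega\sum_{j,k}\partial_j g_k\,T_{jk}\,dx=-\int_\Omega\sum_k g_k\bigl(\sum_j\partial_j T_{jk}\bigr)dx+\int_{\partial\Omega}\sum_{j,k}g_k T_{jk}\,\nu_j\,d\mathcal{H}^{n-1}$, and the interior term becomes exactly $\int_\Omega\varepsilon_i\partial_t u_{\varepsilon_i}\langle g,\nabla u_{\varepsilon_i}\rangle\,dx$, the first term of \eqref{first-vari}. For the boundary term, $\sum_{j,k}g_k T_{jk}\nu_j=e_{\varepsilon_i}\langle g,\nu\rangle-\varepsilon_i\langle g,\nabla u_{\varepsilon_i}\rangle\langle\nabla u_{\varepsilon_i},\nu\rangle$, and the Neumann condition \eqref{neumann} annihilates the second summand, leaving $\int_{\partial\Omega}\langle g,\nu\rangle\bigl(\varepsilon_i|\nabla u_{\varepsilon_i}|^2/2+W(u_{\varepsilon_i})/\varepsilon_i\bigr)d\mathcal{H}^{n-1}$, the third term of \eqref{first-vari}. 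Assembling the four pieces gives the claimed identity.

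For $t>0$ every manipulation above is legitimate because $u_{\varepsilon_i}\in C^\infty(\overline{\Omega}\times(0,\infty))$, so $T$ is $C^1$ up to $\partial\Omega$ and the divergence theorem applies verbatim; the value at $t=0$ is recovered by letting $t\downarrow 0$, using $u_{\varepsilon_i}\in C([0,\infty);C^1(\overline{\Omega}))$ and the compatibility condition \eqref{as-u6}. The only points needing care are this boundary-regularity bookkeeping and the handling of $\{|\nabla u_{\varepsilon_i}|=0\}$, where $\nabla u_{\varepsilon_i}/|\nabla u_{\varepsilon_i}|$ is undefined but all the relevant integrands remain bounded, so that neither causes a genuine difficulty; the substantive content is the stress-tensor identity $\sum_j\partial_j T_{jk}=-\varepsilon_i\,\partial_t u_{\varepsilon_i}\,\partial_k u_{\varepsilon_i}$, which is where the equation \eqref{ac} enters.
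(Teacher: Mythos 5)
Your proof is correct and follows essentially the same route as the paper: start from the definition of the first variation, split off the discrepancy factor in the rank-one part, account for the set $\{|\nabla u_{\varepsilon_i}|=0\}$, and then integrate the remaining bulk term by parts against $g$, using the Allen--Cahn equation and the Neumann condition to produce the $\varepsilon_i\partial_t u_{\varepsilon_i}\langle g,\nabla u_{\varepsilon_i}\rangle$ and boundary terms. The only difference is cosmetic: you bundle the two integrations by parts (one on $\varepsilon_i|\nabla u_{\varepsilon_i}|^2\,{\rm div}\,g/2$ and one on $W(u_{\varepsilon_i})\,{\rm div}\,g/\varepsilon_i$) into a single application of the divergence theorem to the stress--energy tensor $T_{jk}=e_{\varepsilon_i}\delta_{jk}-\varepsilon_i\partial_ju_{\varepsilon_i}\partial_ku_{\varepsilon_i}$ via the identity $\sum_j\partial_jT_{jk}=-\varepsilon_i\partial_tu_{\varepsilon_i}\partial_ku_{\varepsilon_i}$, which is a tidy repackaging but yields the same computations.
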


\begin{proof}
Omit the subindex $i$. 
By the definition of the first variation of varifolds, we have 
\begin{equation}\label{first-vari1}
\delta V^t_\varepsilon (g) = \int_{\Omega \cap \{|\nabla u_{\varepsilon}| \neq 0\}} \nabla g \cdot \left(I-\dfrac{\nabla u_{\varepsilon}}{|\nabla u_{\varepsilon}|} \otimes \dfrac{\nabla u_{\varepsilon}}{|\nabla u_{\varepsilon}|}\right) \; d\mu^t_{\varepsilon}. 
\end{equation}
Using the boundary condition \eqref{neumann} and integration by parts, we have 
\begin{equation}\label{first-vari2}
\int_\Omega \dfrac{|\nabla u_\varepsilon|^2}{2} {\rm div} g \; dx = \int_{\partial \Omega} \langle g, \nu \rangle \dfrac{|\nabla u_\varepsilon|^2}{2} \; d\mathcal{H}^{n-1} + \int_\Omega \nabla g \cdot (\nabla u_\varepsilon \otimes \nabla u_\varepsilon) + \langle g, \nabla u_\varepsilon \rangle \Delta u_\varepsilon \; dx. 
\end{equation}
Also by integration by parts, 
\begin{equation}\label{first-vari3}
\begin{aligned}
\int_{\Omega \cap \{|\nabla u_{\varepsilon}| \neq 0\}} W(u_\varepsilon) {\rm div} g \; dx =&\; - \int_{\Omega \cap \{|\nabla u_\varepsilon| = 0\}} W(u_\varepsilon) {\rm div} g \; dx - \int_\Omega \langle g, \nabla u_\varepsilon \rangle W^\prime(u_\varepsilon) \; dx \\
&\; + \int_{\partial \Omega} \langle g, \nu \rangle W(u_\varepsilon) \; d\mathcal{H}^{n-1}. 
\end{aligned}
\end{equation}
Substituting \eqref{first-vari2} and \eqref{first-vari3} into \eqref{first-vari1}, applying the equation \eqref{ac} and recalling the definition of $\xi_\varepsilon^t$, we obtain \eqref{first-vari}. 
\end{proof}

\begin{lem}
There exists a constant $\Cl[c]{c-boundary-ene}$ depending only on $n, \Cr{c-ene0}, \Cr{c-curva}, \kappa$ and $\Omega$ such that 
\begin{equation}\label{boundary-ene}
\int_{\partial \Omega} \dfrac{\varepsilon_i |\nabla u_{\varepsilon_i}|^2}{2} + \dfrac{W(u_{\varepsilon_i})}{\varepsilon_i} \; d\mathcal{H}^{n-1} \le \int_\Omega \varepsilon_i (\partial_t u_{\varepsilon_i})^2 \; dx + \Cr{c-boundary-ene}
\end{equation}
for all $t \in [0, \infty)$. 
\end{lem}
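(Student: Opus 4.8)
The plan is to apply the first variation formula \eqref{first-vari} to a vector field $g$ that coincides with the outer unit normal $\nu$ on $\partial\Omega$. Then $\langle g,\nu\rangle\equiv 1$ there, so the boundary integral in \eqref{first-vari} becomes exactly the left-hand side of \eqref{boundary-ene}, and it remains to bound the other four terms of \eqref{first-vari} by $\int_\Omega\varepsilon_i(\partial_t u_{\varepsilon_i})^2\,dx$ plus a universal constant.

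For the construction of $g$, let $\bar d$ denote the signed distance to $\partial\Omega$, which is smooth on $N_{6\Cr{c-curva}}$ with $|\nabla\bar d|=1$ and $|\nabla^2\bar d|$ bounded in terms of $\kappa$ and $\Cr{c-curva}$, and with $\nu=-\nabla\bar d$ on $\partial\Omega$. Fix $\chi\in C_c^\infty(N_{\Cr{c-curva}})$ with $\chi\equiv 1$ on a neighborhood of $\partial\Omega$ and set $g:=-\chi\nabla\bar d$, extended by $0$; then $g\in C^1(\mathbb{R}^n;\mathbb{R}^n)$, $g=\nu$ on $\partial\Omega$, and $\|g\|_{L^\infty},\|Dg\|_{L^\infty}$ are bounded by a constant depending only on $n,\kappa,\Cr{c-curva}$ and $\Omega$. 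Rearranging \eqref{first-vari} with this $g$ gives
\begin{equation*}
\begin{aligned}
\int_{\partial\Omega}\frac{\varepsilon_i|\nabla u_{\varepsilon_i}|^2}{2}+\frac{W(u_{\varepsilon_i})}{\varepsilon_i}\,d\mathcal{H}^{n-1}
=&\;\delta V^t_{\varepsilon_i}(g)-\int_\Omega\varepsilon_i\,\partial_t u_{\varepsilon_i}\langle g,\nabla u_{\varepsilon_i}\rangle\,dx\\
&\;-\int_{\Omega\cap\{|\nabla u_{\varepsilon_i}|\neq 0\}}\nabla g\cdot\Big(\frac{\nabla u_{\varepsilon_i}}{|\nabla u_{\varepsilon_i}|}\otimes\frac{\nabla u_{\varepsilon_i}}{|\nabla u_{\varepsilon_i}|}\Big)\,d\xi_{\varepsilon_i}^t\\
&\;+\int_{\Omega\cap\{|\nabla u_{\varepsilon_i}|=0\}}\frac{W(u_{\varepsilon_i})}{\varepsilon_i}\,{\rm div}\,g\,dx.
\end{aligned}
\end{equation*}

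Next I would estimate each term on the right using the energy bound \eqref{energy-bdd}, i.e.\ $\mu_{\varepsilon_i}^t(\Omega)=E_{\varepsilon_i}[u_{\varepsilon_i}(\cdot,t)]\le\Cr{c-ene0}$ and $\int_\Omega\varepsilon_i|\nabla u_{\varepsilon_i}|^2\,dx\le 2\Cr{c-ene0}$. Since $\delta V^t_{\varepsilon_i}(g)=\int_{\Omega\cap\{|\nabla u_{\varepsilon_i}|\neq 0\}}\nabla g\cdot\big(I-\tfrac{\nabla u_{\varepsilon_i}}{|\nabla u_{\varepsilon_i}|}\otimes\tfrac{\nabla u_{\varepsilon_i}}{|\nabla u_{\varepsilon_i}|}\big)\,d\mu^t_{\varepsilon_i}$ and the projection $I-\tfrac{\nabla u_{\varepsilon_i}}{|\nabla u_{\varepsilon_i}|}\otimes\tfrac{\nabla u_{\varepsilon_i}}{|\nabla u_{\varepsilon_i}|}$ has Hilbert--Schmidt norm $\sqrt{n-1}$, we get $|\delta V^t_{\varepsilon_i}(g)|\le\sqrt{n-1}\,\|Dg\|_{L^\infty}\Cr{c-ene0}$. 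In the discrepancy term $\big|\tfrac{\nabla u_{\varepsilon_i}}{|\nabla u_{\varepsilon_i}|}\otimes\tfrac{\nabla u_{\varepsilon_i}}{|\nabla u_{\varepsilon_i}|}\big|=1$ and $\big|\tfrac{\varepsilon_i|\nabla u_{\varepsilon_i}|^2}{2}-\tfrac{W(u_{\varepsilon_i})}{\varepsilon_i}\big|\le\tfrac{\varepsilon_i|\nabla u_{\varepsilon_i}|^2}{2}+\tfrac{W(u_{\varepsilon_i})}{\varepsilon_i}$, so this term is $\le\|Dg\|_{L^\infty}\Cr{c-ene0}$; likewise the last term is $\le\|{\rm div}\,g\|_{L^\infty}\Cr{c-ene0}\le n\|Dg\|_{L^\infty}\Cr{c-ene0}$. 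Finally, by the Young inequality $ab\le a^2+\tfrac14 b^2$ with $a=\sqrt{\varepsilon_i}|\partial_t u_{\varepsilon_i}|$ and $b=\sqrt{\varepsilon_i}\,\|g\|_{L^\infty}|\nabla u_{\varepsilon_i}|$,
\[
\int_\Omega\varepsilon_i\big|\partial_t u_{\varepsilon_i}\langle g,\nabla u_{\varepsilon_i}\rangle\big|\,dx\le\int_\Omega\varepsilon_i(\partial_t u_{\varepsilon_i})^2\,dx+\frac{\|g\|_{L^\infty}^2}{4}\int_\Omega\varepsilon_i|\nabla u_{\varepsilon_i}|^2\,dx\le\int_\Omega\varepsilon_i(\partial_t u_{\varepsilon_i})^2\,dx+\frac{\|g\|_{L^\infty}^2}{2}\Cr{c-ene0}.
\]
Summing the four estimates yields \eqref{boundary-ene} with $\Cr{c-boundary-ene}$ depending only on $n,\Cr{c-ene0},\Cr{c-curva},\kappa$ and $\Omega$.

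There is no real obstacle here; the one point demanding care is producing the coefficient $1$ — rather than a larger constant — in front of $\int_\Omega\varepsilon_i(\partial_t u_{\varepsilon_i})^2\,dx$, which is exactly why the asymmetric Young weight $a^2+\tfrac14 b^2$ is used, after which every remaining contribution is a fixed multiple of $\Cr{c-ene0}$. One should also note that the left-hand side of \eqref{boundary-ene} is finite — for $t>0$ by the smoothness \eqref{ref-sol}, and at $t=0$ by $u_{\varepsilon_i,0}\in C^1(\overline{\Omega})$ — so that the rearrangement of \eqref{first-vari} is legitimate.
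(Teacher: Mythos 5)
Your proof is correct and follows the same route as the paper: test the first variation formula \eqref{first-vari} with a vector field that equals the (outer) unit normal on $\partial\Omega$, isolate the boundary term, and control the remaining terms via the energy bound \eqref{energy-bdd} together with Young's inequality weighted to give coefficient $1$ in front of $\int_\Omega\varepsilon_i(\partial_t u_{\varepsilon_i})^2\,dx$. The only cosmetic difference is the choice of test field — you take $g=-\chi\nabla\bar d$ with $\bar d$ the signed distance and a cutoff $\chi$, while the paper takes $g=\nabla\phi$ for a modified distance function $\phi$ (so $g=-\nu$ on $\partial\Omega$, with the sign absorbed when rearranging) — but the estimates and the resulting constant are the same.
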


\begin{proof}
Let $\phi \in C^2(\overline{\Omega})$ be a positive function so that $\phi(x) = {\rm dist}(x, \partial \Omega)$ near $\partial \Omega$ and smoothly becomes a constant function on $\Omega \setminus N_{\Cr{c-curva}}$. 
We may construct such a function so that $\| \phi \|_{C^2(\overline{\Omega})}$ is bounded depending only on $n, \Cr{c-curva}, \kappa$ and $\Omega$. 
We also note that $\langle \nabla \phi, \nu \rangle = -1$ on $\partial \Omega$. 
By substituting $\nabla \phi$ into \eqref{first-vari}, applying Young's inequality and using $\xi_{\varepsilon_i}^t \le \mu_{\varepsilon_i}^t$, \eqref{energy-bdd} and the definition of the first variation $\delta V^t$, we obtain 
\begin{equation}\label{bdd-boundary-ene1}
\begin{aligned}
&\; \int_{\partial \Omega} \dfrac{\varepsilon_i |\nabla u_{\varepsilon_i}|^2}{2} + \dfrac{W(u_{\varepsilon_i})}{\varepsilon_i} \; d\mathcal{H}^{n-1} \\
=&\; \int_{\partial \Omega} \langle \nabla \phi, \nu\rangle \dfrac{\varepsilon_i |\nabla u_{\varepsilon_i}|^2}{2} + \dfrac{W(u_{\varepsilon_i})}{\varepsilon_i} \; d\mathcal{H}^{n-1} \\
=&\; \int_{\Omega \cap \{|\nabla u_{\varepsilon_i}| \neq 0\}} \nabla^2 \phi \cdot \left(I-\dfrac{\nabla u_{\varepsilon_i}}{|\nabla u_{\varepsilon_i}|} \otimes \dfrac{\nabla u_{\varepsilon_i}}{|\nabla u_{\varepsilon_i}|}\right) \; d\mu^t_{\varepsilon} - \int_\Omega \varepsilon_i \partial_t u_{\varepsilon_i} \langle \nabla \phi, \nabla u_{\varepsilon_i} \rangle \; dx \\
&\; - \int_{\Omega \cap \{|\nabla u_{\varepsilon_i}| \neq 0\}} \nabla^2 \phi \cdot \left(\dfrac{\nabla u_{\varepsilon_i}}{|\nabla u_{\varepsilon_i}|} \otimes \dfrac{\nabla u_{\varepsilon_i}}{|\nabla u_{\varepsilon_i}|}\right) \; d\xi_{\varepsilon_i}^t + \int_{\Omega \cap \{|\nabla u_{\varepsilon_i}| = 0\}} \dfrac{W(u_{\varepsilon_i})}{\varepsilon_i} \Delta \phi \; dx \\
\le&\; c(\|\phi\|_{C^2(\overline{\Omega})}) \mu_{\varepsilon_i}^t(\Omega) + c(\|\phi\|_{C^1(\overline{\Omega})}) \int_{\Omega} \dfrac{\varepsilon_i|\nabla u_{\varepsilon_i}|^2}{2} \; dx + \int_\Omega \varepsilon_i (\partial_t u_{\varepsilon_i})^2 \; dx \\
\le&\; c(\|\phi\|_{C^2(\overline{\Omega})}, \Cr{c-ene0}) + \int_\Omega \varepsilon_i (\partial_t u_{\varepsilon_i})^2 \; dx, 
\end{aligned}
\end{equation}
where $c(\|\phi\|_{C^2(\overline{\Omega})}, \Cr{c-ene0})$ is a positive constant depending only on $\|\phi\|_{C^2(\overline{\Omega})}$ and $\Cr{c-ene0}$. 
Here, we have used the boundedness of the operator norm of $I$ and $\frac{\nabla u_{\varepsilon_i}}{|\nabla u_{\varepsilon_i}|} \otimes \frac{\nabla u_{\varepsilon_i}}{|\nabla u_{\varepsilon_i}|}$. 
From the dependence of $\|\phi\|_{C^2(\overline{\Omega})}$, \eqref{bdd-boundary-ene1} implies the conclusion. 
\end{proof}

\begin{pro}\label{recti-limit}
Assume $V^t_{\varepsilon_{i_j}}$ converges to $\tilde{V}^t \in \mathbf{V}_{n-1}(\mathbb{R}^n)$ and 
\begin{equation}\label{as-convergence}
\liminf_{j \to \infty} \int_\Omega \varepsilon_{i_j} (\partial_t u_{\varepsilon_{i_j}})^2 \; dx < \infty, \quad \lim_{j \to \infty} \int_\Omega \left|\dfrac{\varepsilon_{i_j} |\nabla u_{\varepsilon_{i_j}}|^2}{2} - \dfrac{W(u_{\varepsilon_{i_j}})}{\varepsilon_{i_j}}\right| \; dx = 0
\end{equation}
for a subsequence $\varepsilon_{i_j}$ and a time $t \ge 0$. 
Then 
\begin{equation}\label{bdd-first-vari-byc}
|\delta \tilde{V}^t (g)| \le \left( 2\liminf_{j \to \infty} \int_\Omega \varepsilon_{i_j} (\partial_t u_{\varepsilon_{i_j}})^2 \; dx + \Cr{c-ene0} + \Cr{c-boundary-ene} \right) \sup_{\Omega} |g| 
\end{equation}
for $g \in C^1_c(\mathbb{R}^n; \mathbb{R}^n)$, $\mu^t$ is rectifiable and $\tilde{V}^t$ is the rectifiable varifold associated to $\mu^t$. 
\end{pro}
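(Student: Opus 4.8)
The plan is to start from the exact first-variation identity \eqref{first-vari} for $\delta V^t_{\varepsilon_{i_j}}(g)$, estimate its four terms, and only afterwards let $j\to\infty$, so that the two terms carrying derivatives of $g$ are discarded. For $\int_\Omega\varepsilon_{i_j}\partial_t u_{\varepsilon_{i_j}}\langle g,\nabla u_{\varepsilon_{i_j}}\rangle\,dx$ I would use Cauchy--Schwarz and Young together with $\int_\Omega\varepsilon_{i_j}|\nabla u_{\varepsilon_{i_j}}|^2/2\le\Cr{c-ene0}$ from \eqref{energy-bdd}, giving the bound $\sup_\Omega|g|\,\bigl(\tfrac12\int_\Omega\varepsilon_{i_j}(\partial_t u_{\varepsilon_{i_j}})^2\,dx+\Cr{c-ene0}\bigr)$. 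The boundary term is at most $\sup_\Omega|g|$ times the left-hand side of \eqref{boundary-ene}, hence at most $\sup_\Omega|g|\,\bigl(\int_\Omega\varepsilon_{i_j}(\partial_t u_{\varepsilon_{i_j}})^2\,dx+\Cr{c-boundary-ene}\bigr)$. The term against $\xi^t_{\varepsilon_{i_j}}$ is bounded by $c\,\|g\|_{C^1}\int_\Omega\bigl|\varepsilon_{i_j}|\nabla u_{\varepsilon_{i_j}}|^2/2-W(u_{\varepsilon_{i_j}})/\varepsilon_{i_j}\bigr|\,dx$ (the matrix $|\nabla u_{\varepsilon_{i_j}}|^{-2}\nabla u_{\varepsilon_{i_j}}\otimes\nabla u_{\varepsilon_{i_j}}$ having unit Frobenius norm), and the term over $\{|\nabla u_{\varepsilon_{i_j}}|=0\}$ is bounded by $\|g\|_{C^1}\int_{\{|\nabla u_{\varepsilon_{i_j}}|=0\}}W(u_{\varepsilon_{i_j}})/\varepsilon_{i_j}\,dx=\|g\|_{C^1}\int_{\{|\nabla u_{\varepsilon_{i_j}}|=0\}}\bigl|\varepsilon_{i_j}|\nabla u_{\varepsilon_{i_j}}|^2/2-W(u_{\varepsilon_{i_j}})/\varepsilon_{i_j}\bigr|\,dx$; both tend to $0$ by the second hypothesis of \eqref{as-convergence}.

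Next I would pass to a further (unrelabeled) subsequence along which $\int_\Omega\varepsilon_{i_j}(\partial_t u_{\varepsilon_{i_j}})^2\,dx$ converges to its $\liminf$; the varifold convergence $V^t_{\varepsilon_{i_j}}\to\tilde V^t$ persists, and since $(x,S)\mapsto\nabla g(x)\cdot S$ is a fixed element of $C_c(G_{n-1}(\mathbb{R}^n))$ we get $\delta V^t_{\varepsilon_{i_j}}(g)=\int\nabla g(x)\cdot S\,dV^t_{\varepsilon_{i_j}}(x,S)\to\delta\tilde V^t(g)$. Collecting the four estimates, the coefficient of $\int_\Omega\varepsilon_{i_j}(\partial_t u_{\varepsilon_{i_j}})^2\,dx$ is $\tfrac32\le 2$ and the remaining $C^1$-dependent error vanishes, so letting $j\to\infty$ yields \eqref{bdd-first-vari-byc}.

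From \eqref{bdd-first-vari-byc}, $\delta\tilde V^t$ extends to a bounded functional on $C_c(\mathbb{R}^n;\mathbb{R}^n)$, so by the Riesz representation theorem $\|\delta\tilde V^t\|$ is a finite Radon measure supported in $\overline\Omega$. Also $\|\tilde V^t\|=\lim_j\|V^t_{\varepsilon_{i_j}}\|=\lim_j\mu^t_{\varepsilon_{i_j}}\lfloor_{\{|\nabla u_{\varepsilon_{i_j}}|\neq 0\}}=\mu^t$, since $\mu^t_{\varepsilon_{i_j}}\lfloor_{\{|\nabla u_{\varepsilon_{i_j}}|=0\}}$ has total mass $\int_{\{|\nabla u_{\varepsilon_{i_j}}|=0\}}W(u_{\varepsilon_{i_j}})/\varepsilon_{i_j}\,dx\to 0$ as above. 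Allard's rectifiability theorem (see \cite{A1,S}) then gives that $\tilde V^t\lfloor_{\{x:\Theta^{*n-1}(\mu^t,x)>0\}}$ is rectifiable; on the complementary set $\mu^t$ has zero upper density, and because $\mu^t$ is carried by $\spt\mu^t$ with $\mathcal{H}^{n-1}(\spt\mu^t)<\infty$ by Corollary \ref{cor-bdd-spt}, the standard density comparison $\mu^t(A)\le 2^{n-1}\bigl(\sup_{A}\Theta^{*n-1}(\mu^t,\cdot)\bigr)\mathcal{H}^{n-1}(A)$ forces that part to be null. Hence $\tilde V^t$ is rectifiable, so $\mu^t$ is rectifiable, and since a rectifiable $(n-1)$-varifold is uniquely determined by its weight measure, $\tilde V^t$ is the rectifiable varifold associated to $\mu^t$.

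The main obstacle is the order of limits: the raw bound for $\delta V^t_{\varepsilon_{i_j}}(g)$ unavoidably contains $\|g\|_{C^1}$ through the discrepancy term and the $\{|\nabla u_{\varepsilon_{i_j}}|=0\}$ term, which must not survive into \eqref{bdd-first-vari-byc}; one therefore has to send $j\to\infty$ first, along a subsequence realizing the $\liminf$ of the $\partial_t$-energy while keeping the varifold convergence, and only then read off the clean estimate. A secondary point, needed to upgrade rectifiability of the positive-density part of $\tilde V^t$ to rectifiability of $\mu^t$ itself, is precisely the finiteness of $\mathcal{H}^{n-1}(\spt\mu^t)$ supplied by Corollary \ref{cor-bdd-spt}.
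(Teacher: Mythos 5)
Your proposal is correct and follows essentially the same route as the paper: estimate the four terms of the exact first-variation identity \eqref{first-vari} using \eqref{energy-bdd}, \eqref{boundary-ene}, Young's inequality and the vanishing of the discrepancy, pass to the limit $j\to\infty$ (so the $\|g\|_{C^1}$-dependent terms drop out) to obtain \eqref{bdd-first-vari-byc}, conclude that $\|\delta\tilde V^t\|$ is a finite Radon measure, and then combine Allard's rectifiability theorem with the density comparison $\mu^t(A)\le 2^{n-1}l\,\mathcal{H}^{n-1}(A)$ and Corollary \ref{cor-bdd-spt} to kill the zero-density part. The paper compresses the four-term estimate into ``it is easy to see by \eqref{energy-bdd}, \eqref{first-vari}, \eqref{boundary-ene} and Young's inequality''; you have simply spelled it out (and your constant $3/2$ is in fact slightly sharper than the stated $2$, which is fine since $\tfrac32\le 2$).
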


\begin{proof}
We note that $\|V^t_{\varepsilon_{i_j}}\|$ converges to $\mu^t$ from the second assumption on \eqref{as-convergence}, thus it is enough to prove the rectifiability of $\tilde{V}^t$ from the uniqueness of the rectifiable varifold. 
Let 
\[ c(t) := \liminf_{j \to \infty} \int_\Omega \varepsilon_{i_j} (\partial_t u_{\varepsilon_{i_j}})^2 \; dx. \]
Since $\lim_{j \to \infty} \delta V_{\varepsilon_{i_j}}^t = \delta \tilde{V}^t$, it is easy to see by \eqref{energy-bdd}, \eqref{first-vari}, \eqref{boundary-ene} and Young's inequality
\[
|\delta \tilde{V}^t(g)| \le (2c(t) + \Cr{c-ene0} + \Cr{c-boundary-ene}) \max_{\overline{\Omega}} |g| 
\]
for $g \in C^1_c(\mathbb{R}^n; \mathbb{R}^n)$. 
This shows that the total variation $\|\delta \tilde{V}^t\|$ is a Radon measure. 
Thus, Allard's rectifiability theorem \cite[5.5.\ (1)]{A1} shows $\tilde{V}^t\lfloor_{\{x : \limsup_{r \downarrow 0} \|\tilde{V}^t\|(B_r(x))/(\omega_{n-1}r^{n-1}) > 0\}\times \mathbf{G}(n,n-1)}$ is rectifiable. 
On the other hand, a standard measure theoretic argument (see for example \cite[3.2(2)]{S}) and Corillary \ref{cor-bdd-spt} show 
\[
\mu^t \left(\left\{x \in {\rm spt} \mu^t : \limsup_{r \downarrow 0} \dfrac{\|\tilde{V}^t\|(B_r(x))}{\omega_{n-1}r^{n-1}} < l \right\}\right) \le 2^{n-1} l \mathcal{H}^{n-1}({\rm spt}\mu^t) \le 2^{n-1} l \Cr{c-bdd-spt}  
\]
for any $l > 0$, thus we obtain 
\[ \mu^t \left(\left\{x \in {\rm spt} \mu^t : \lim_{r \downarrow 0} \dfrac{\|\tilde{V}^t\|(B_r(x))}{\omega_{n-1}r^{n-1}} = 0 \right\}\right) = 0. \]
This equality and $\|\tilde{V}^t\| = \mu^t$ imply 
\[ \tilde{V}^t\lfloor_{\{x : \limsup_{r \downarrow 0} \|\tilde{V}^t\|(B_r(x))/(\omega_{n-1}r^{n-1}) > 0\}\times \mathbf{G}(n,n-1)} = \tilde{V}^t \]
and hence we have the conclusion. 
\end{proof}

\begin{proof}[Proof of Theorem \ref{main1} and Theorem \ref{main2}]
From \eqref{energy-bdd} and Proposition \ref{thm-vani-dis}, we may see 
\begin{equation}\label{t-choose}
\liminf_{i \to \infty} \int_\Omega \varepsilon_i (\partial_t u_{\varepsilon_i})^2 \; dx < \infty \quad \mbox{and} \quad \lim_{i \to \infty} \int_\Omega \left|\dfrac{\varepsilon_i |\nabla u_{\varepsilon_i}|^2}{2} - \dfrac{W(u_{\varepsilon_i})}{\varepsilon_i}\right| \; dx = 0
\end{equation}
for a.e.\ $t \ge 0$. 
We fix a time $t$ satisfying \eqref{t-choose}. 
By the boundedness of the diffused surface energy \eqref{energy-bdd}, the definition of $V_{\varepsilon_i}^t$ and \eqref{t-choose}, there exist a subsequence $\varepsilon_{i_j}$ such that 
\begin{equation}\label{t-choose2} 
\lim_{j \to \infty} \int_\Omega \varepsilon_{i_j} (\partial_t u_{\varepsilon_{i_j}})^2 \; dx = \liminf_{i \to \infty} \int_\Omega \varepsilon_i (\partial_t u_{\varepsilon_i})^2 \; dx 
\end{equation}
and $V_{\varepsilon_{i_j}}^t$ converges to a varifold $\tilde{V}^t$. 
Then we can apply Proposition \ref{recti-limit} and hence we have the conclusion except for the boundedness of $\int^T_0 \|\delta V^t\|(\overline{\Omega}) \; dt$. 
Since the right hand side of \eqref{bdd-first-vari-byc} is locally uniformly integrable, Fatou's lemma shows this boundedness. 
\end{proof}

\begin{proof}[Proof of Theorem \ref{main3}]
We fix a time $t$ satisfying \eqref{t-choose} and take a subsequence $\varepsilon_{i_j}$ such that \eqref{t-choose2} holds and $V^t_{\varepsilon_{i_j}}$ converges to $V^t$. 
By \eqref{first-vari}, we have 
\[ |\delta V^t(g)| \le \left(\int_\Omega |g|^2 \; d\|V^t\|\right)^\frac{1}{2} \liminf_{i \to \infty} \left(\int_\Omega \varepsilon_{i} (\partial_t u_{\varepsilon_{i}})^2 \; dx \right)^\frac{1}{2} \]
for $g \in C^1_c(\Omega; \mathbb{R}^n)$. 
This shows $\| \delta V^t \lfloor_\Omega \| \ll \|V^t\lfloor_\Omega\|$ and $\delta V^t\lfloor_\Omega = - h^t \|V^t\lfloor_\Omega\|$ for $h^t \in L^2(\|V^t\lfloor_\Omega\|)$. 
Now, for given arbitrary $\delta > 0$, let $\nu^\delta \in C^1(\overline{\Omega}; \mathbb{R}^n)$ be such that $\nu^\delta \lfloor_{\partial \Omega} = \nu$, $|\nu^\delta| \le 1$ and ${\rm spt}\; \nu^\delta \subset N_\delta$. 
For $g \in C^1(\overline{\Omega}; \mathbb{R}^n)$, define $\tilde{g} := g - \langle g, \nu^\delta \rangle \nu^\delta$. 
Then $\langle \tilde{g}, \nu \rangle = 0$ on $\partial \Omega$ thus $\delta V^t\lfloor_{\partial \Omega}^\top (g) = \delta V^t\lfloor_{\partial \Omega} (\tilde{g})$. 
By \eqref{first-vari}, \eqref{t-choose} and $|\tilde{g}| \le |g|$, we have 
\begin{equation}\label{proof-main3}
\begin{aligned}
\delta V^t\lfloor_{\partial \Omega}^\top(g) + \delta V^t\lfloor_\Omega(g) =&\; \delta V^t(\tilde{g}) + \delta V^t\lfloor_\Omega (g-\tilde{g}) \\
\le&\; \left(\int_{\overline{\Omega}} |g|^2 \; d\|V^t\|\right)^\frac{1}{2} \liminf_{i \to \infty} \left(\int_\Omega \varepsilon_{i} (\partial_t u_{\varepsilon_{i}})^2 \; dx\right)^\frac{1}{2} + \delta V^t\lfloor_\Omega (g-\tilde{g}). 
\end{aligned}
\end{equation}
Since ${\rm spt} \; \nu^\delta \subset N_\delta$, we have 
\begin{equation}\label{proof-main4}
|\delta V^t\lfloor_\Omega (g-\tilde{g})| \le \sup|g| \int_{\Omega \cap N_\delta} |h^t| \; d\|V^t\| \to 0 
\end{equation}
as $\delta \to 0$. 
Combining \eqref{energy-bdd}, \eqref{proof-main3} and \eqref{proof-main4}, we conclude (A1) by letting 
\[
h_b^t := 
\begin{cases}
- \frac{\delta V^t\lfloor_{\partial \Omega}^\top}{\|V^t\|} & \mbox{on} \quad \partial \Omega \\
- \frac{\delta V^t\lfloor_{\Omega}}{\|V^t\|} & \mbox{on} \quad \Omega. 
\end{cases}
\]
Furthermore, we may carry out an approximation argument (see \cite[Proposition 8.1]{TT} for detail) to obtain 
\begin{equation}\label{proof-main5}
\int_{\overline{\Omega}} \phi |h_b^t|^2 \; d\|V^t\| \le \liminf_{i \to \infty} \int_\Omega \varepsilon_i (\partial_t u_{\varepsilon_i})^2 \phi \; dx
\end{equation}
for general $\phi \in C_c(\mathbb{R}^n; \mathbb{R}^+)$. 
Integrate \eqref{proof-main5} with $\phi\lfloor_{\overline{\Omega}}\equiv 1$ over $t \in (0,\infty)$ and apply Fatou's Lemma and \eqref{energy-bdd} to conclude (A2). 

Next, we prove (A3). 
It is enough to prove \eqref{brakke-neumann} for $\phi \in C^2(\overline{\Omega} \times [0,\infty); \mathbb{R}^+)$ with $\langle \nabla \phi(\cdot, t), \nu \rangle = 0$ on $\partial \Omega$. 
From \eqref{ac} and \eqref{neumann}, we have 
\begin{equation}\label{proof-main7}
\int_\Omega \phi \; d\mu^t_{\varepsilon_i} \Big|^{t_2}_{t=t_1} = \int^{t_2}_{t_1} \left(\int_\Omega -\varepsilon_i(\partial_t u_{\varepsilon_i})^2 \phi - \varepsilon_i \partial_t u_{\varepsilon_i} \langle \nabla \phi, \nabla u_{\varepsilon_i} \rangle \; dx + \int_\Omega \partial_t \phi \; d\mu^t_{\varepsilon_i} \right) dt 
\end{equation}
for all $0 \le t_1 < t_2 < \infty$. 
Since $\mu^t_{\varepsilon_i}$ converges to $\|V^t\|$ for all $t \ge 0$, the left hand side of \eqref{proof-main7} converges to that of \eqref{brakke-neumann}, and so is the last term of the right hand side. 
Thus we may finish the proof if we prove 
\begin{equation}\label{remain-main}
\lim_{i \to \infty} \int^{t_2}_{t_1} \int_\Omega \varepsilon_i(\partial_t u_{\varepsilon_i})^2 \phi + \varepsilon_i \partial_t u_{\varepsilon_i} \langle \nabla \phi, \nabla u_{\varepsilon_i} \rangle \; dx dt \ge \int^{t_2}_{t_1} \int_{\overline{\Omega}} \phi|h_b^t|^2 - \langle \nabla \phi, h_b^t \rangle \; d\|V^t\| dt. 
\end{equation}
Here, we note that \eqref{proof-main7} also implies by H\"older's inequality and \eqref{energy-bdd}
\begin{equation}\label{add}
\begin{aligned}
&\; \lim_{i \to \infty} \int^{t_2}_{t_1} \int_\Omega \varepsilon_i(\partial_t u_{\varepsilon_i})^2 \phi + \varepsilon_i \partial_t u_{\varepsilon_i} \langle \nabla \phi, \nabla u_{\varepsilon_i} \rangle \; dx dt \\
=&\; \mu^t(\phi)\Big|_{t=t_1}^{t_2} - \iint_{\overline{\Omega} \times [t_1, t_2]} \partial_t \phi \; d\mu \le c(\Cr{c-ene0}, t_1, t_2, \|\phi\|_{C^1(\Omega \times [t_1,t_2])}), 
\end{aligned}
\end{equation}
where $c(\Cr{c-ene0}, t_1, t_2, \|\phi\|_{C^1(\Omega \times [t_1,t_2])})$ is a constant depending only on $\Cr{c-ene0}, t_1, t_2$ and $\|\phi\|_{C^1(\Omega \times [t_1,t_2])}$. 
From $\frac{|\nabla \phi(\cdot,t)|^2}{2\phi(\cdot,t)} \le \|\phi(\cdot,t)\|_{C^2(\Omega)}$ for any $t \ge 0$ and \eqref{energy-bdd}, we obtain 
\[ 
\begin{aligned}
&\; \int_\Omega \varepsilon_i(\partial_t u_{\varepsilon_i})^2 \phi + \varepsilon_i \partial_t u_{\varepsilon_i} \langle \nabla \phi, \nabla u_{\varepsilon_i} \rangle \; dx \\
=&\; \int_\Omega \varepsilon_i \phi \left(\partial_t u_{\varepsilon_i} + \dfrac{\langle \nabla \phi, \nabla u_{\varepsilon_i}\rangle}{2\phi}\right)^2 - \dfrac{\varepsilon_i \langle \nabla \phi, \nabla u_{\varepsilon_i} \rangle^2}{4\phi} \; dx \ge - \Cr{c-ene0} \|\phi(\cdot,t)\|_{C^2(\Omega)} 
\end{aligned}
\]
for any $t \ge 0$. 
Thus by Fatou's lemma,  
\begin{equation}\label{proof-main8}
\begin{aligned}
&\; \lim_{i \to \infty} \int^{t_2}_{t_1} \int_\Omega \varepsilon_i(\partial_t u_{\varepsilon_i})^2 \phi + \varepsilon_i \partial_t u_{\varepsilon_i} \langle \nabla \phi, \nabla u_{\varepsilon_i} \rangle \; dx dt \\
&\; \ge \int^{t_2}_{t_1} \liminf_{i \to \infty} \int_\Omega \varepsilon_i(\partial_t u_{\varepsilon_i})^2 \phi + \varepsilon_i \partial_t u_{\varepsilon_i} \langle \nabla \phi, \nabla u_{\varepsilon_i} \rangle \; dx dt. 
\end{aligned}
\end{equation}
By \eqref{add}, \eqref{proof-main8} and Proposition \ref{thm-vani-dis}, for a.e.\ $t \in (t_1,t_2)$, we can choose a subsequence $\varepsilon_{i_j}$ (depending on $t$) such that 
\begin{equation}\label{proof-main9}
\begin{aligned}
\lim_{j \to \infty} \int_\Omega \varepsilon_{i_j}(\partial_t u_{\varepsilon_{i_j}})^2 \phi + \varepsilon_{i_j} \partial_t u_{\varepsilon_{i_j}} \langle \nabla \phi, \nabla u_{\varepsilon_{i_j}} \rangle \; dx =&\; \liminf_{i \to \infty} \int_\Omega \varepsilon_i(\partial_t u_{\varepsilon_i})^2 \phi + \varepsilon_i \partial_t u_{\varepsilon_i} \langle \nabla \phi, \nabla u_{\varepsilon_i} \rangle \; dx < \infty \\
\lim_{j \to \infty} \int_\Omega \left|\dfrac{\varepsilon_{i_j} |\nabla u_{\varepsilon_{i_j}}|^2}{2} - \dfrac{W(u_{\varepsilon_{i_j}})}{\varepsilon_{i_j}}\right| \; dx =&\; 0
\end{aligned}
\end{equation}
and $V^t_{\varepsilon_{i_j}}$ converges to some varifold $\tilde{V}^t \in \mathbf{V}_{n-1}(\mathbb{R}^n)$. 
We fix such $t$ and subsequence $\varepsilon_{i_j}$. 
By \eqref{energy-bdd} and Young's inequality, we obtain 
\[ \int_\Omega \varepsilon_{i_j}(\partial_t u_{\varepsilon_{i_j}})^2 \phi + \varepsilon_{i_j} \partial_t u_{\varepsilon_{i_j}} \langle \nabla \phi, \nabla u_{\varepsilon_{i_j}} \rangle \; dx \ge \dfrac{1}{2} \int_\Omega \varepsilon_{i_j}(\partial_t u_{\varepsilon_{i_j}})^2 \phi \; dx - c(\Cr{c-ene0}, \|\phi\|_{C^2}), \]
hence \eqref{proof-main9} implies 
\[ \limsup_{j \to \infty} \int_\Omega \varepsilon_{i_j}(\partial_t u_{\varepsilon_{i_j}})^2 \phi \; dx < \infty. \]
Arguing as the proof of Proposition \ref{recti-limit}, we may prove $\tilde{V}^t\lfloor_{\{x : \phi(x,t) > 0\}\times \mathbf{G}(n,n-1)}$ is rectifiable and $\tilde{V}^t\lfloor_{\{x : \phi(x,t) > 0\}\times \mathbf{G}(n,n-1)} = V^t\lfloor_{\{x : \phi(x,t) > 0\}\times \mathbf{G}(n,n-1)}$. 
For $\tilde{\phi} \in C^2_c(\{\phi > 0\}; \mathbb{R}^+)$ with $\langle \nabla \tilde{\phi}, \nu \rangle = 0$ on $\partial \Omega$ and $\tilde{\phi} \le \phi$, we obtain by the definition of $h^t_b$ and \eqref{first-vari}
\begin{equation}\label{proof-main10}
-\int_{\overline{\Omega}} \langle \nabla\tilde{\phi}, h_b^t \rangle \; d\|V^t\| = \delta V(\nabla \tilde{\phi}) = \lim_{j \to \infty} \int_\Omega \varepsilon_{i_j} \partial_t u_{\varepsilon_{i_j}} \langle \nabla \tilde{\phi}, \nabla u_{\varepsilon_{i_j}} \rangle \; dx. 
\end{equation}
From $h^t_b \in L^2(\|V^t\|)$ and 
\begin{align*} 
\int_\Omega \varepsilon_{i_j} \partial_t u_{\varepsilon_{i_j}} \langle \nabla \tilde{\phi} - \nabla \phi, \nabla u_{\varepsilon_{i_j}} \rangle \; dx \le&\; \left(\int_\Omega \varepsilon_{i_j} (\partial_t u_{\varepsilon_{i_j}})^2 \phi \; dx\right)^{\frac{1}{2}} \left(\int_\Omega \dfrac{|\nabla \tilde{\phi} - \nabla \phi|^2}{\phi - \tilde{\phi}} \varepsilon_{i_j} |\nabla u_{\varepsilon_{i_j}}|^2 \; dx \right)^\frac{1}{2} \\
\le &\; 2\Cr{c-ene0}^\frac{1}{2} \left(\int_\Omega \varepsilon_{i_j} (\partial_t u_{\varepsilon_{i_j}})^2 \phi \; dx\right)^{\frac{1}{2}} \|\tilde{\phi} - \phi \|_{C^2}, 
\end{align*}
we may obtain 
\begin{equation}\label{proof-main11}
-\int_{\overline{\Omega}} \langle \nabla \phi, h_b^t \rangle \; d\|V^t\| = \lim_{j \to \infty} \int_\Omega \varepsilon_{i_j} \partial_t u_{\varepsilon_{i_j}} \langle \nabla \phi, \nabla u_{\varepsilon_{i_j}} \rangle \; dx 
\end{equation}
by letting $\tilde{\phi} \to \phi$ in $C^2$ for \eqref{proof-main10}. 
Hence we conclude \eqref{remain-main} from \eqref{proof-main5}, \eqref{proof-main8}, \eqref{proof-main9} and \eqref{proof-main11}. 
\end{proof}

%%%%%%%%%%%%%%%%%%%%%%%%%%%%%%%%%%%%%%%%%%%%%%%%%%%%%%%%%%


\begin{thebibliography}{99}

\bibitem{A1} W.\ K.\ Allard, 
{\em On the first variation of a varifold}, 
Ann.\ of Math.\ {\bf 95} (1972), pp417--491. 

\bibitem{AC} S.\ M.\ Allen and J.\ W.\ Cahn, 
{\em A macroscopic theory for antiphase boundary motion and its application to antiphase domain coarsening}, 
Acta Metallurgy {\bf 27} (1979), pp1085--1095. 

\bibitem{BF} G.\ Barles and F.\ Da Lio, 
{\em A geometrical approach to front propagation problems in bounded domains with Neumann-type boundary conditions}, 
Interface Free Bound.\ {\bf 5} (2003), pp239--274. 

\bibitem{BS} G.\ Barles and P.\ E.\ Souganidis, 
{\em A new approach to front propagation problems: theory and applications}, 
Arch.\ Rational Mech.\ Anal.\ {\bf 141} (1998), pp237--296. 

\bibitem{B} K.\ A.\ Brakke, 
{\em The motion of a surface by its mean curvature}, 
Mathematical Notes, Vol.\ 20, Princeton University Press, (1978). 

\bibitem{CH} R.\ G.\ Casten and C.\ J.\ Holland, 
{\em Instability results for reaction diffusion equations with Neumann boundary conditions}, 
J.\ Differential Equations {\bf 27} (1978), pp266--273. 

%\bibitem{C} X.\ Chen, 
%{\em Global asymptotic limit of solutions of the Cahn-Hilliard equation}, 
%J.\ Differential Geom.\ {\bf 44} (1996), pp262--311. 

\bibitem{E} N.\ Edelen, 
{\em The free-boundary Brakke flow}, 
J.\ Reine Angew.\ Math., DOI: https://doi.org/10.1515/crelle-2017-0053. 

\bibitem{EG} L.\ C.\ Evans and R.\ F.\ Gariepy, 
{\em Measure theory and fine properties of functions}, 
Studies in Advanced Math., CRC Press, Revised ed., (2015). 

\bibitem{GS} Y.\ Giga and M.-H.\ Sato, 
{\em Neumann problem for singular degenerate parabolic equations}, 
Differential Integral Equations {\bf 6} (1993), pp1217--1230. 

\bibitem{GT} D.\ Gilbarg and N.\ S.\ Trudinger, 
{\em Elliptic partial equations of second order}, 
2nd edn.\ Springer, (1983). 

\bibitem{GJ} M.\ Gr\"{u}ter and J.\ Jost, 
{\em Allard type regularity for varifolds with free boundaries}, 
Ann.\ Scuola Norm.\ Sup.\ Pisa Cl.\ Sci.\ {\bf 13} (1986), pp129--169.

\bibitem{I} T.\ Ilmanen, 
{\em Convergence of the Allen-Cahn equation to Brakke's motion by mean curvature}, 
J.\ Diff.\ Geom.\ {\bf 38} (1993), pp417--461. 

\bibitem{KT} T.\ Kagaya and Y.\ Tonegawa, 
{\em A contact angle condition for varifolds}, 
Hiroshima Math.\ J.\ {\bf 47} (2017), pp139--153. 

\bibitem{KT2} T.\ Kagaya and Y.\ Tonegawa, 
{\em A singular perturbation limit of diffused interface energy with a fixed contact angle condition}, 
accepted to appear in Indiana Univ.\ Math.\ J.\ arXiv:1609.00191. 

\bibitem{KaT} K.\ Kasai and Y.\ Tonegawa, 
{\em A general regularity theory for weak mean curvature flow}, 
Calc.\ Var.\ Partial Differential Equations {\bf 50} (2014), pp1--68. 

\bibitem{KKR} M.\ Katsoulakis, G.\ T.\ Kossioris and F.\ Reitich, 
{\em Generalized motion by mean curvature with Neumann conditions and the Allen-Cahn model for phase transitions}, 
J.\ Geom.\ Anal.\ {\bf 5} (1995), pp255-279. 

\bibitem{LSU} O.\ A.\ Lady\v{z}enskaja, V.\ A.\ Solonnikov and N.\ N.\ Ural'ceva, 
{\em Linear and quasilinear equations of parabolic type}, 
American Mathematical Society, Providence, R.I., (1967). 

\bibitem{LST} C.\ Liu, N. Sato and Y. Tonegawa, 
{\em On the existence of mean curvature flow with transport term}, 
Interfaces Free Bound.\ {\bf 12} (2010), pp251--277. 

\bibitem{Ma} C.\ Mantegazza, 
{\em Curvature varifolds with boundary}, 
J.\ Differential Geom.\ {\bf 43} (1996), pp807--843. 

\bibitem{MT} M.\ Mizuno and Y.\ Tonegawa, 
{\em Convergence of the Allen-Cahn equation with Neumann boundary conditions}, 
SIAM J.\ Math.\ Anal.\ {\bf 47} (2015), pp1906--1932. 

\bibitem{M} L.\ Modica, 
{\em The gradient theory of phase transitions and the minimal interface criterion}, 
Arch.\ Rational Mech.\ Anal.\ {\bf 98} (1987), pp123--142. 

\bibitem{Sa} M.-H.\ Sato, 
{\em Interface evolution with Neumann boundary condition}, 
Adv.\ Math.\ Sci.\ Appl.\ {\bf 4} (1994), pp249--264. 

\bibitem{S} L.\ Simon, 
{\em Lectures on geometric measure theory}, 
Proceedings of the Centre for Mathematical Analysis, Australian National University {\bf 3} (1983). 

%\bibitem{So1} H.\ M.\ Soner, 
%{\em Ginzburg-Landau equation and motion by mean curvature.\ I.\ Convergence}, 
%J.\ Geom.\ Anal.\ {\bf 7} (1997), pp437--475. 

%\bibitem{So2} H.\ M.\ Soner, 
%{\em Ginzburg-Landau equation and motion by mean curvature.\ II.\ Development of the initial interface}, 
%J.\ Geom.\ Anal.\ {\bf 7} (1997), pp477--491. 

\bibitem{TT} K.\ Takasao and Y. Tonegawa, 
{\em Existence and regularity of mean curvature flow with transport term in higher dimensions}, 
Math.\ Ann.\ {\bf 364} (2016), no.\ 3-4, pp857--935. 

\bibitem{T} Y.\ Tonegawa, 
{\em Domain dependent monotonicity formula for a singular perturbation problem}, 
Indiana Univ.\ Math.\ J.\ {\bf 52} (2003), pp69--83. 

\bibitem{T2} Y.\ Tonegawa, 
{\em Integrality of varifolds in the singular limit of reaction-diffusion equations}, 
Hiroshima Math.\ J.\ {\bf 33} (2003), pp323--341. 

\bibitem{T3} Y.\ Tonegawa, 
{\em A second derivative H\"older estimate for weak mean curvature flow}, 
Adv.\ Calc.\ Var.\ {\bf 7} (2014), pp91--138. 

\bibitem{W} B.\ White, 
{\em A local regularity theorem for mean curvature flow}, 
Ann.\ of Math.\ {\bf 161} (2005), pp1487--1519. 

\end{thebibliography}
\end{document}